\newtheorem{definition}{Definition}[section]
\newtheorem{theorem}{Theorem}[section]
\newtheorem{prop}[theorem]{Proposition}
\newtheorem{lemma}[theorem]{Lemma}
\newtheorem{corollary}[theorem]{Corollary}
\newtheorem{coro}[theorem]{Corollary}
\newtheorem{remark}[theorem]{Remark}
\def\C{\mathbb{C}}
\def\H{\mathbb{H}}
\def\e{\epsilon}
\def\a{\alpha}
\def\b{\beta}
\def\d{\delta}
\def\th{\theta}
\def\f{\varphi}
\def\k{\kappa}
\def\z{\zeta}
\newcommand{\real}{\mathbb{R}}
\newcommand{\nat}{\mathbb{N}}
\newcommand{\eps}{\epsilon}
\newcommand{\chrc}[1]{\mathrm{1}_{#1}}
\newcommand{\del}{\partial}
\newcommand{\hsd}{\mathcal{H}^1}
\title{Structure of one-phase free boundaries in the plane}
\date{}
\author{David Jerison and Nikola Kamburov}
\thanks{The first author was supported by NSF grant
DMS 1069225 and the Stefan Bergman Trust.} 
\address{David Jerison, Department of Mathematics, Massachusetts Institute of
Technology, 77 Massachusetts Avenue, Cambridge, MA 02139, USA}
\email{jerison@math.mit.edu}
\address{Nikola Kamburov, Department of Mathematics, The University of Arizona,
617 N Santa Rita Ave, P.O. Box 210089, Tucson AZ 85721, USA}
\email{kamburov@math.arizona.edu}
\subjclass[2010]{35R35, 35N25, 35Q35, 35BXX, 49Q05}
\keywords{one-phase free boundary problem, overdetermined elliptic problem, minimal surfaces,
compactness and singular limits}
\begin{document}

\begin{abstract}
We study classical solutions to the one-phase free boundary problem
in which the free boundary consists of smooth curves and the
components of the positive phase are simply-connected. We show that
if two components of the free boundary are close, then the solution
locally resembles an entire solution discovered by Hauswirth, H\'elein and Pacard, 
whose free boundary has the shape of a double
hairpin. Our results are analogous to theorems of Colding and
Minicozzi characterizing embedded minimal  annuli, and a direct
connection between our theorems and theirs can be made using a
correspondence due to Traizet.
\end{abstract}

\bibliographystyle{alpha}

\maketitle

\section{Introduction.}

The one-phase free boundary problem in a disk
$B\subseteq \real^2$,
\begin{equation}\label{FBP_0}
\begin{array}{ccl}
 u \ge 0 & \text{in} & B \\
     \Delta u   = 0  & \text{in} & B^+(u):=\{x\in B:u(x)>0\}\\
     |\nabla u|  = 1 & \text{on}& F(u) := \del B^+(u)\cap
     B
\end{array}
\end{equation}
arises as the Euler-Lagrange equation for the functional
\begin{equation}\label{functional}
    I(u,B) = \int_B |\nabla u|^2 + \chrc{\{u>0\}} ~dx \qquad
    u:B\to[0,\infty)
\end{equation}
and appears in a variety of applications (e.g. jet flows in
hydrodynamics, see \cite{CafSalsa}). The interior regularity theory of
minimizers of the functional $I(u,B)$ with fixed boundary conditions on $\partial B$
is well understood.
Alt and Caffarelli \cite{AC} proved that
the free boundary $F(u)$ is locally a graph of a
$C^\infty$ function (and hence analytic by \cite{KindNiren}).
Alt and Caffarelli also proved partial regularity of free boundaries
in higher dimensions and established a strong analogy
between the theory of free boundaries and the
theory of minimal surfaces.

In keeping with \cite{AC} and many subsequent results (\cite{ACF, CafI, CafII, Weiss1, CJK, DSJ, JS})
one should expect that most theorems about
minimal surfaces have counterparts in the theory of free
boundaries and vice versa.   Here we consider classical solutions
to \eqref{FBP_0} that are higher critical points rather than minimizers of
the functional $I(u,B)$ with one additional purely topological assumption,
namely that
\begin{equation}\label{topo_assmpt}
\text{no connected component of $F(u)$ is compact
in the open disk $B$.}
\end{equation}
By classical solution we mean one for which $F(u)$ is a finite
union of analytic curves.  The topological assumption is
equivalent to saying that the connected components of the positive phase
are simply-connected.   It is also equivalent to saying that the analytic
curves, although they may become tangent at interior points,
end at $\partial B$.

Our work is inspired by the groundbreaking work of Colding and
Minicozzi  on the structure of limits of sequences of
embedded minimal surfaces of fixed genus in a ball in $\real^3$
(\cite{CM1, CM2, CM3, CM4}).   As it turns out, because
of recent work of Traizet \cite{Traizet}, there is a direct overlap between
our {\it a priori} estimates and rigidity results for families
of solutions to \eqref{FBP_0} and the description
of embedded minimal topological annuli due to Colding and Minicozzi.

Our starting place is the family of simply-connected planar regions
$\Omega_a = a\Omega_1$, discovered by Hauswirth, Helein, and Pacard
\cite{HHPac},
which solve the free boundary problem \eqref{FBP_0}.  They are defined
by
\begin{equation*}
    \Omega_a: =
    \{(x_1, x_2)\in \real^2: |x_1/a|<\pi/2+\cosh(x_2/a)\}, \quad a>0.
\end{equation*}
The boundary $\partial \Omega_a$ consists of two curves that we will refer to as hairpins.    Hauswirth et al found a positive harmonic function $H_a(x) = aH_1(x/a)$
on $\Omega_a$ that satisfies the free boundary conditions $H_a=0$ and
$|\nabla H_a|=1$ on $\partial \Omega_a$.  Extending $H_a$ to be zero in
the complement of $\Omega_a$, we have an entire solution to \eqref{FBP_0}.
(See Section 10 for the explicit formula for $H_a$ using conformal mapping.)

Our first main result characterizes blow-up limits of classical
solutions with simply-connected positive phase.
\begin{theorem}\label{theorem_Main}
Let $u_k$ be a sequence of classical solutions of \eqref{FBP_0} in the
disk $B_{R_k}= B_{R_k}(0)$, with radius $R_k\nearrow \infty$,
satisfying $0\in F(u_k)$ and \eqref{topo_assmpt}.
Then a subsequence converges
uniformly on compact subsets of $\real^2$ to some rigid motion
of one of the following

\begin{enumerate}[(a)]
\item  $P(x) := x_2^+$, a \emph{half-plane solution},

\item $W_b(x) := x_2^+ + (x_2+b)^-$, for
some $b \ge 0$, a \emph{two-plane solution}, or

\item $H_a(x)$, for some $a>0$, a \emph{hairpin solution}  as mentioned
above and defined in \eqref{eq:haipin_def} of Section \ref{Sec:CurvBounds}.
\end{enumerate}

\end{theorem}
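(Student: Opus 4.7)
The plan is a compactness-plus-classification argument in the spirit of Colding and Minicozzi's work on limits of embedded minimal surfaces. First I would extract a locally uniform subsequential limit $u_\infty:\mathbb{R}^2\to[0,\infty)$, verify that it is an entire classical solution of \eqref{FBP_0} that inherits the simply-connected topology of the $u_k$, and then classify all such $u_\infty$ using conformal-mapping rigidity in two dimensions.

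For the compactness step I would establish uniform local Lipschitz bounds on $u_k$. Since each $u_k$ is harmonic on $\{u_k>0\}$, vanishes on the analytic curves $F(u_k)$, and satisfies $|\nabla u_k|=1$ there, combining Hopf-type boundary gradient bounds with interior harmonic estimates yields a uniform local Lipschitz bound (extended by zero off the positive set). With the normalization $u_k(0)=0$ forced by $0\in F(u_k)$, Arzel\`a--Ascoli produces a Lipschitz limit $u_\infty$ along a subsequence, locally uniformly on $\mathbb{R}^2$. Interior harmonicity passes to the limit on $\{u_\infty>0\}$, and uniform $C^{1,\alpha}$ (in fact analytic) regularity of $F(u_k)$ on compact sets gives smooth convergence of the free boundary curves away from singular tangencies, so the condition $|\nabla u_\infty|=1$ holds on the smooth part of $F(u_\infty)$. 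I would then verify that \eqref{topo_assmpt} is inherited: if some component of $F(u_\infty)$ were compact in $\mathbb{R}^2$, a limiting argument using Hausdorff convergence of free boundaries on compact sets would produce, for all large $k$, a compact component of $F(u_k)$ in $B_{R_k}$, contradicting the hypothesis. Hence every positive-phase component of $u_\infty$ is simply-connected and unbounded.

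The heart of the proof is the rigidity step: classifying entire classical solutions whose positive components are simply-connected and unbounded. For each such component $\Omega$, the Riemann mapping theorem together with the fact that positive harmonic functions on $\mathbb{H}$ vanishing on $\mathbb{R}$ are exactly the nonnegative multiples of the imaginary part, produces a conformal map $\phi:\Omega\to\mathbb{H}$ for which $u_\infty=\mathrm{Im}\,\phi$ on $\Omega$. The free boundary condition then becomes the overdetermined equation
\[
|\phi'(z)|=1 \quad\text{for every } z\in\partial\Omega.
\]
Applying Schwarz reflection across the analytic curves of $\partial\Omega$ extends $\phi'$ (and $\log\phi'$) as a meromorphic object on an enlarged domain, and a monodromy analysis—using that $\partial\Omega$ has no compact components and $\Omega$ is simply-connected—forces $\phi'$ to be either constant (so $\Omega$ is a half-plane and we recover the half-plane solution $P$ after a rigid motion) or the explicit form corresponding to the Hauswirth--H\'elein--Pacard domain $\Omega_a$, the only free parameter being the scale $a>0$. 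A separate planar-packing argument then restricts the global configuration of positive components: a hairpin component cannot coexist with any other unbounded simply-connected positive component, and at most two half-plane components are possible, which must then be parallel and oppositely oriented, producing the two-plane family $W_b$.

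I expect the principal obstacle to be this final rigidity analysis of the overdetermined conformal-map equation $|\phi'|\equiv 1$ on $\partial\Omega$: one must carefully track possible branching of the Schwarz-reflected extension, rule out degenerations at infinity that are inconsistent with $u_\infty\geq 0$ on all of $\mathbb{R}^2$, and verify that the hairpin and half-plane exhaust the list. A secondary, more technical difficulty is the inheritance of \eqref{topo_assmpt} in the limit, since the free boundary components of $u_k$ that exit $\partial B_{R_k}$ could in principle converge in the large-$k$ limit to a curve that closes up inside $\mathbb{R}^2$; ruling this out requires the uniform $C^{1,\alpha}$ convergence of free boundaries established in the compactness step.
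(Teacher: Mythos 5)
Your compactness step is broadly sound, but the proposal silently glosses over what the paper identifies as the \emph{central technical difficulty}, and that omission is a real gap, not a detail to be filled in. You write that you would ``verify that [$u_\infty$] is an entire classical solution of \eqref{FBP_0},'' citing smooth convergence of $F(u_k)$ ``away from singular tangencies.'' But the claim that $F(u_\infty)$ is a finite union of analytic curves is precisely what has to be proved, and nothing in your compactness argument delivers it. The Alt--Caffarelli regularity constants for $F(u_k)$ on a compact set degenerate as two strands of $F(u_k)$ approach one another, so uniform $C^{1,\alpha}$ bounds do \emph{not} pass to the limit near the would-be tangency. In the limit, two components of $\{u_k = 0\}$ can collapse onto each other (the wedge $|x_2|$ arises exactly this way as a blow-up of a hairpin), and one must rule out the limit free boundary developing cusps, corners, or wilder singularities. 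The paper handles this with an entirely different toolkit: it introduces a hybrid notion of weak solution (variational in the sense of Weiss, viscosity in the sense of Caffarelli) that demonstrably passes to limits, proves universal nondegeneracy for the $u_k$ using \eqref{topo_assmpt} (Proposition~\ref{prop_supnondeg}, without which $u_\infty$ could vanish identically near $0$; you never mention nondegeneracy), classifies blow-ups and blow-downs of the weak limit via the Weiss monotonicity formula, derives the key gradient bound $|\nabla u_\infty|\leq 1$ and hence positive curvature of the smooth part of $F(u_\infty)$, and then uses a delicate topological-geometric argument (Proposition~\ref{prop_MainTheoAux}, Lemmas~\ref{lemma_NoExcPoint}, \ref{lemma_TwoCurves}, \ref{lemma_TwoPtsBlowup}) to show that $F(u_\infty)$ is smooth with at most two proper arcs. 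Only \emph{after} all this is established does the paper invoke the classification of smooth entire solutions, citing Traizet.

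Your ``heart of the proof''---Riemann mapping $\Omega\to\mathbb{H}$, $|\phi'|\equiv 1$ on $\partial\Omega$, Schwarz reflection and monodromy---is essentially the Khavinson--Lundberg--Teodorescu / Traizet classification of entire classical solutions with simply connected positive phase. The paper explicitly states that it could \emph{not} use this line of argument to prove Theorem~\ref{theorem_Main}, precisely because the regularity hypothesis needed to run the conformal machinery is not available a priori for the limit. So while your sketch of the rigidity step is not wrong, it only applies after the regularity of $u_\infty$ is established, and establishing that regularity is where the bulk of the paper's work (Sections~\ref{Sec_Prelims}--\ref{Sec_Classfn}) lives. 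As it stands your argument is circular on this point: the conformal analysis presupposes the conclusion that the limit is a classical solution. Finally, your worry about a free boundary arc ``closing up'' in the limit is legitimate but misdirected: the paper shows this is ruled out by Hausdorff convergence of free boundaries and the unboundedness of $F(u_k)$ components; the genuinely hard failure mode is not a compact loop appearing but the smooth strands merging and leaving a non-classical singularity at the contact point.
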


Note that unlike property \eqref{topo_assmpt}, connectivity of the positive
phase is not inherited in the limit. For example, blow-up limits of suitable
translates and  dilates of $H_1$ are two-plane solutions.

Theorem \ref{theorem_Main} is closely related to earlier
classifications of entire solutions with simply-connected
positive phase due to Khavinson, Lundberg and Teodorescu
\cite{KhavLundTeo} and Traizet \cite{Traizet}.   Traizet
showed that classical entire solutions satisfying \eqref{topo_assmpt}
must be of the form (a), (b), or (c).    Khavinson et al showed
that the same conclusion is true under a
natural, weak regularity assumption on the free boundary known as
the Smirnov property.   We were not able to use this result to prove our
theorem, and this is a central technical difficulty of the
paper.   Instead, we define another notion of weak solution
that we can show is preserved under blow-up limits.   Our weak solutions will
satisfy both the properties of non-degenerate viscosity solutions introduced by L. Caffarelli and variational solutions introduced by G. Weiss. This PDE-theoretic approach has the benefit that it does not rely on complex function theory and so it could conceivably be extended to a higher-dimensionsional setting.

Our next result says that near points where the
curvature of the free boundary is large, the boundary resembles
a double hairpin.
\begin{theorem}\label{thm:BigPicture} Given $\d>0$
there exist  positive numbers $r$, $\kappa$, $\eps$ and $\eps_1$
with $0<\eps_1< \eps/2< 1/100$, and an integer $N_0\ge 0$ such that if $u$
is a classical solution of \eqref{FBP_0} in $B_1$, satisfying
\eqref{topo_assmpt}, then there are $N\leq N_0$ points
$\{z_j\}_{j=1}^N \subseteq B_{3/4}$, with the
properties:
\begin{enumerate}[(a)]
\item The curvature of $F(u)$ is less than $\kappa$ at any point
of $F(u)\cap \left(B_{1/2}\setminus \bigcup_{j=1}^N
B_{r}(z_j)\right)$.

\item Near $z_j$, $u$ is approximated by a hairpin solution, i.e. there exists some $a_j<
\eps_1 r$ such that
\begin{equation*}
    |u(z_j + x) - H_{a_j}(\rho_j x)| \leq \d a_j \quad \text{for
    all} \quad
    |x|\leq 2a_j/\eps
\end{equation*}
for some rotation $\rho_j$.

\item In $B_{2r}(z_j)\backslash B_{a_j/\eps}(z_j)$ the free
boundary consists of four curves which are graphs in some
common direction with small Lipschitz norms.  More precisely,
there exist $f, \, g: \real\to \real$ such that $f<g$,
\[
\|f\|_{L^\infty} + \|g\|_{L^\infty}  \le \delta r,
\quad
\|f'\|_{L^\infty} + \|g'\|_{L^\infty}  \le \delta,
\]
and
\[
\{u=0\} \cap (B_{2r}(z_j) \backslash B_{a_j/\eps}(z_j)
= z_j + \rho_j (\{x: f(x_1) \le x_2 \le g(x_1)\} \cap
\{x: a_j/\eps < |x| < 2r\})
\]
\end{enumerate}
\end{theorem}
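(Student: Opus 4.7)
The plan is to prove Theorem \ref{thm:BigPicture} by contradiction, using Theorem \ref{theorem_Main} as the compactness engine. Fix $\delta > 0$; the goal is to choose parameters $(r,\kappa,\eps,\eps_1,N_0)$ depending only on $\delta$ so that the three conclusions hold for every admissible $u$.

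The central intermediate step is a pointwise hairpin lemma: for every $\eta > 0$ there exists $\kappa_0 = \kappa_0(\eta)$ such that whenever $u$ is a classical solution of \eqref{FBP_0} in $B_1$ satisfying \eqref{topo_assmpt} and $p \in F(u) \cap B_{3/4}$ has free-boundary curvature $\kappa_p \geq \kappa_0$, there exist $a_p$ comparable to $1/\kappa_p$ and a rotation $\rho_p$ with
\[
|u(p+x) - H_{a_p}(\rho_p x)| \leq \eta a_p \qquad \text{for } |x| \leq a_p/\eta.
\]
To prove this, I would suppose it fails: then a sequence $(u_k, p_k)$ with $\kappa_{p_k} \to \infty$ violates the conclusion. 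Rescaling about $p_k$ by the scale-invariance of \eqref{FBP_0}, one gets solutions $\tilde{u}_k$ on balls of radius $\gtrsim \kappa_{p_k}/4 \to \infty$ whose free boundary has unit curvature at the origin. Because \eqref{topo_assmpt} is preserved under rescaling, Theorem \ref{theorem_Main} applies to a subsequential limit, and the limit is a half-plane, two-plane, or hairpin. The first two have identically zero free-boundary curvature, contradicting the normalized unit curvature at $0$, so the limit must be a hairpin $H_{a^*}$ with $a^*$ bounded and bounded away from $0$. Uniform convergence on compact sets then contradicts the assumed failure for all large $k$.

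With the lemma in hand, I would choose $\eta$ small depending on $\delta$ and select the $z_j$ greedily as a maximal $(2r)$-separated subset of $\{p \in F(u) \cap B_{3/4}: \kappa_p \geq \kappa_0\}$. Conclusion (a) follows from the stopping rule (with a small buffer around $\partial B_{1/2}$), the bound $N \leq N_0 \lesssim r^{-2}$ is a packing estimate, and (b) is the lemma applied at $z_j$, with $a_j < \eps_1 r$ arranged by picking $\kappa_0$ large relative to $1/(\eps_1 r)$. Conclusion (c) in the inner region $|x| \leq Ca_j/\eta$ is immediate from the explicit four-graph structure of $H_{a_j}$ away from its throat. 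The main obstacle is propagating (c) outward from the intrinsic scale $O(a_j)$ to the fixed scale $2r$: I would do this by a second compactness argument on the annulus, in which a putative failure point, after rescaling, produces yet another Theorem \ref{theorem_Main} limit; the limit cannot be a hairpin (by maximality of the selection, combined with the curvature bound (a) that rules out high-curvature in the annulus) nor non-flat (again by (a)), so it must be a half-plane or two-plane, which is precisely the graphical structure required. Arranging the parameters $(r,\kappa,\eps,\eps_1,N_0)$ in the correct order so that all the nested compactness arguments close up simultaneously, and coordinating the rotation $\rho_j$ consistently between the inner (hairpin) and outer (flat) scales, is the delicate bookkeeping that remains.
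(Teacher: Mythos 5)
Your approach to parts (a) and (b) is sound and closely parallels the paper's, though the paper organizes the dichotomy by the proximity of two components of the zero set rather than by a curvature threshold (Propositions~\ref{prop:bounded curvature}, \ref{prop:smalldistImpliesHPin} and \ref{prop:hairpin_loc}); the two criteria are equivalent up to constants via Proposition~\ref{prop:bounded curvature}. One imprecision in your pointwise lemma that should be fixed: on a model hairpin $\partial\Omega_a$ the curvature at $z$ scales like $a/|z|^2$, so the curvature at a single point constrains only the ratio of $a$ to the squared distance to the throat, not $a$ itself. Your claim that $a_p$ is comparable to $1/\kappa_p$, with the model hairpin centered at $p$, should be replaced by $a_p\le C/\kappa_p$ with the model centered at the nearby saddle point of $u$, as in the paper; with that repair the compactness argument for (a) and (b) goes through.

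The genuine gap is in part (c). You correctly flag the passage from the intrinsic scale $O(a_j)$ to the fixed scale $r$ as the main obstacle, but a second compactness argument does not close it. Compactness at any single scale $s$ inside the annulus $B_{2r}(z_j)\setminus B_{a_j/\eps}(z_j)$ produces, after rescaling by $1/s$, a global limit which is a half-plane, two-plane, or wedge, and hence shows that $F(u)$ is a Lipschitz graph with constant $O(\delta)$ over \emph{some} line $L_s$ on that dyadic annulus. But the direction $L_s$ is permitted to rotate by $O(\delta)$ from one dyadic scale to the next; summing over the roughly $\log(r/a_j)$ dyadic scales in the annulus allows a total rotation of order $\delta\log(r/a_j)$, which is unbounded as $a_j\to 0$. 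That is exactly the spiral which part (c) must rule out, and no blow-up argument can rule it out, because a slowly winding spiral has a flat blow-up at every scale. The paper's proof of the corresponding statement, Theorem~\ref{thm:fourgraphs}, is not a compactness argument: Lemma~\ref{lemma:U_confMap} shows the conjugate map $U=iu-v$ conformally straightens the positive phase to a half-annulus with precise dyadic estimates, and Lemma~\ref{lemma:removableSing} controls the cumulative turning of $\nabla u$ along $F(u)$ by bounding $\log U'\circ U^{-1}\circ\exp$ on a conformal strip via a $\cosh t\cos\theta$ barrier and the Hopf lemma, giving an $O(\delta)$ bound on the total turning that is \emph{uniform} in the number of scales. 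Without a quantitative ingredient of this type, replacing the logarithmic divergence by a scale-independent bound, your proof of (c) cannot close.
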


The proof of parts (a) and (b) of Theorem \ref{thm:BigPicture} follow
from the classification of blow-up solutions in Theorem \ref{theorem_Main}.
The proof of part (c) uses conformal mapping and is of independent
interest.   The usual flat $\implies$ Lipschitz step in regularity theory
implies that the boundaries are Lipschitz graphs with small Lipschitz
constant separately on each  dyadic annulus, $2^{k-1} < |x-z_j| < 2^k$
for  $a_j/\eps_0 < 2^k < r_0$.   What part (c)  rules out
is the possibility of a spiral.   It can be viewed as a quantitative
version of the  flat $\implies$ Lipschitz step, in which no
information is used about the solution in a neighborhood
$|x-z_j| < 50a_j$. Colding and Minicozzi call the analogous
bound in the setting of minimal surfaces an effective \emph{removable singularities theorem} \cite[Theorem 0.3]{CM3}.
This crucial estimate plays a large role elsewhere in their work as well.


The technique of conformal mapping then allows us
to obtain a more detailed rigidity theorem on a fixed-size
neighborhood of each hairpin-like structure.
\begin{theorem}\label{thm:CurvBounds} There are absolute
constants $r_0$, $\kappa_0$, and $N_0$
such that if $u$ is a classical solution to \eqref{FBP_0} in $B_1$
satisfying \eqref{topo_assmpt}, then there is $N$, $0 \le N \le N_0$
and $N$ saddle points $\{z_j\}_{j=1}^N$ of $u$ with the following
properties:

\begin{enumerate}[(a)]
\item  $F(u)$ has curvature at most $\kappa_0$ on
$ \displaystyle F(u) \cap B_{1/2} \backslash \bigcup_{j=1}^N B_{r_0}(z_j)$.

\item For each $j$, $a_j : = u(z_j) \le r_0/100$, and there is
an injective  conformal mapping
\[
\phi_j: B_{2r_0} \cap \bar \Omega_{a_j} \to \real^2 \quad
\mbox{such that} \quad \phi_j(0) = z_j, \quad \mbox{and} \quad
B_{r_0}(z_j)^+ (u) \subset   \phi_j(B_{2r_0}\cap \Omega_{a_j}) \subset
B_{4r_0}(z_j)^+ (u).
\]
Moreover, there is $\theta_j \in \real$ such that 
for all $z\in B_{2r_0} \cap \Omega_{a_j}$,
\[
|\phi_j'(z) -e^{i\theta_j}| \le |z|/(100 r_0); \quad
|\phi_j''(z) | \le 1/(100 r_0).
\]

\item  If $\kappa$ denotes the curvature of $F(u)$ and $\kappa_{a}$ denotes
the curvature of $\partial \Omega_a$, then
\[
|\kappa(\phi_j(z)) - \kappa_{a_j}(z)| \le 1/(100 r_0), \quad z\in B_{2r_0} \cap
\partial \Omega_{a_j}.
\]
\end{enumerate}
\end{theorem}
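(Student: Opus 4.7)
The plan is to deduce this theorem from Theorem \ref{thm:BigPicture} by constructing each $\phi_j$ via harmonic conjugates and controlling its derivatives using both the hairpin approximation of Theorem \ref{thm:BigPicture}(b) near the saddle and the Lipschitz-graph description of Theorem \ref{thm:BigPicture}(c) on the surrounding annulus.  Choose $r_0$ equal to the $r$ of Theorem \ref{thm:BigPicture} applied with a sufficiently small $\delta$; part (a) is then immediate from Theorem \ref{thm:BigPicture}(a).  I first refine each $z_j$: since $H_a$ has a unique nondegenerate saddle at the origin with $H_a(0)=a$, and boundary $C^{1,\alpha}$ regularity \cite{KindNiren} upgrades the $C^0$ closeness $|u-H_{a_j}(\rho_j\,\cdot)|\le\delta a_j$ to the corresponding $C^1$ closeness of gradients, the implicit function theorem locates a unique nearby critical point of $u$; rename it $z_j$.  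Then $\nabla u(z_j)=0$ and, after a harmless readjustment, $u(z_j)=a_j$.

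To construct $\phi_j$, let $B^+_j$ be the component of $\{u>0\}$ containing $z_j$; by \eqref{topo_assmpt} it is simply-connected.  Pick a single-valued harmonic conjugate $u^*$ on $B^+_j$ with $u^*(z_j)=0$, and set $F_j:=u+iu^*$, a holomorphic map with a unique simple critical point at $z_j$, critical value $a_j$, and $|F_j'|=|\nabla u|=1$ on the free boundary.  Likewise set $f:=H_{a_j}+iH^*_{a_j}$ on $\Omega_{a_j}$ with $H^*_{a_j}(0)=0$.  Choosing the appropriate branch and a phase $\theta_j'$ tied to $\rho_j$, define
\[
\phi_j := F_j^{-1}\circ\bigl(a_j + e^{i\theta_j'}(f-a_j)\bigr),
\]
extended by analytic continuation from the critical point.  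A direct Taylor expansion at $0$ shows $\phi_j$ is a regular conformal map there with $\phi_j(0)=z_j$, and provided $\delta$ is small enough the continuation proceeds throughout $B_{2r_0}\cap\bar\Omega_{a_j}$.

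The heart of the proof is the derivative estimate.  On the inner neck $|z|\le a_j/\eps$, Theorem \ref{thm:BigPicture}(b) and $C^{1,\alpha}$ boundary regularity give $|\nabla u-\nabla H_{a_j}|=O(\delta)$ after rescaling, so $|\phi_j'-e^{i\theta_j}|=O(\delta)$ on this neck, where $\theta_j:=\arg\phi_j'(0)$.  On the outer annulus $a_j/\eps\le|z|\le 2r_0$, Theorem \ref{thm:BigPicture}(c) places both $\partial B^+_j$ and $\partial\Omega_{a_j}$ between two $O(\delta)$-Lipschitz graphs with small $L^\infty$ norm; conformal modulus estimates then force each of $F_j$ and $f$ to approximate the corresponding affine strip-straightening map with error $O(\delta)$, so again $|\phi_j'-e^{i\theta_j}|=O(\delta)$ on the annulus.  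Since $\phi_j'-e^{i\theta_j}$ is holomorphic on $B_{2r_0}\cap\Omega_{a_j}$, vanishes at $0$, and satisfies $|\phi_j'|=|f'|=1$ on $\partial\Omega_{a_j}\cap B_{2r_0}$, a Schwarz-lemma argument adapted to the domain geometry promotes the boundary bound $O(\delta)$ to the sharp interior estimate
\[
|\phi_j'(z)-e^{i\theta_j}|\le |z|/(100r_0),
\]
provided $\delta$ is small enough, and Cauchy's formula then gives $|\phi_j''(z)|\le 1/(100r_0)$.  The image containments in (b) follow by integrating $\phi_j'-e^{i\theta_j}$, and part (c) is then routine from the standard conformal transformation law for curvature, which is linear in $|\phi_j'|-1$ and $\phi_j''/\phi_j'$.

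The main obstacle is this Schwarz-lemma step.  The transition zone $|z|\sim a_j/\eps$ is the one regime in which neither description from Theorem \ref{thm:BigPicture} is in its sharpest form; moreover the inradius of $B_{2r_0}\cap\Omega_{a_j}$ at $0$ is of order $a_j$, not $r_0$, so naive Schwarz-lemma estimates scale with $1/a_j$ rather than $1/r_0$.  Overcoming this requires the effective removable-singularities content of Theorem \ref{thm:BigPicture}(c) to tie together the hairpin core and the outer strip without relying on the geometry of the neck; without it one could not rule out a slow rotation of the conformal parameter across the transition zone, and the uniform angle $\theta_j$ in (b) would be lost.
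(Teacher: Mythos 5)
Your overall construction matches the paper's: you form the holomorphic completion $F_j = u + iu^*$ of $u$ (the paper calls it $U$), the hairpin's holomorphic extension $f = H_{a_j}+iH^*_{a_j}$ (the paper's $V_{a_j}$), and set $\phi_j = F_j^{-1}\circ(\text{normalized }f)$, exactly as the paper defines $\psi = V_{a_0}^{-1}\circ U$ in Lemma \ref{lemma:confmapConstruction} and then takes $\phi_j=\psi^{-1}$. The paper also locates the saddle via closeness to the hairpin (Corollary \ref{coro:hairpinC^2}, via an inverse function theorem for $\nabla H$ plus the nondegeneracy bound of Lemma \ref{lemma:hairpinGrad}). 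Up to this point you are on the paper's track.

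The gap is precisely the step you yourself flag. Your derivative estimate combines an $O(\delta)$ bound on the inner neck with an $O(\delta)$ bound on the outer annulus, and then hopes a ``Schwarz-lemma argument adapted to the domain geometry'' upgrades this to $|\phi_j'(z)-e^{i\theta_j}|\le |z|/(100r_0)$ and $|\phi_j''|\le 1/(100r_0)$. Notice that the first-derivative bound in the theorem is really a consequence of the second-derivative bound, by integrating from $0$ (where $\phi_j'(0)=e^{i\theta_j}$); but your $O(\delta)$ neck bound does not improve as $|z|\to 0$, so it cannot produce the vanishing in $|z|$, and a pure Schwarz-lemma step through the neck scales like $1/a_j$ rather than $1/r_0$ — as you yourself observe. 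There is no adaptation of the Schwarz lemma supplied that would overcome this, so the proof as written does not close.

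The paper's route through this obstacle is different and worth contrasting. Writing $\psi'=e^G$ with $g=\mathrm{Re}\,G=\log|\psi'|$, one notes $g\equiv 0$ on $F(u)$ (since $|\nabla u|=1$ there and $|V'_{a_0}|=1$ on $\partial\Omega_{a_0}$) and $g=O(\delta)$ everywhere by a maximum/minimum modulus argument. The crucial step is a boundary Harnack estimate in the Lipschitz pieces $B_{1/4}(z_\pm)^+(u)$ far from the neck, giving $|g|\le C\delta\, u$; since $u$ grows linearly at the free boundary and is comparable to $1$ on the outer circle, this globalizes by the maximum principle to $|g|\le C\delta\, u$ in $B_{1/4}^+(u)$. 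The Hopf lemma then yields $|\nabla g|\le C\delta$ on $F(u)$, a direct interior estimate gives $|\nabla g|\le C\delta$ on $\partial B_{1/4}$, and subharmonicity of $|G'|$ closes the estimate $|\psi''|=O(\delta)$ uniformly, including across the neck. Using $u$ itself as the comparison barrier is exactly what eliminates the $1/a_j$ scaling in the thin region, because $u$ also degenerates through the neck. This is the missing idea in your proposal.
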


To interpret part (c) of this theorem, note that
\[
\kappa_a(z) \sim a/|z|^2, \quad z\in \partial \Omega_a
\]
Hence
\[
|z| \le \sqrt{ar_0} \implies  \kappa_a(z)\gg \frac1{100r_0}
\]
Furthermore, $a$ is comparable to the separation distance between
the two hairpins.  Thus, for points closer to $z_j$ than
the geometric mean of the separation distance between the
two hairpins and the distance $r_0$, the bound
in part (c) says that the curvature
of the approximate hairpins is close to that of the
standard model.    In particular, the two components of the zero set
are convex in this range.  At distances significantly
larger than this geometric mean, one can no longer guarantee
that $\kappa(\phi_j(z))$ is positive, but the bound in part (c)
still implies that $|\kappa(\phi_j(z)| \le 1/(50r_0)$.  This
is a nontrivial bound.    At the largest scale, $r_0 < |z| < 2r_0$
it is the same as the standard interior 2nd derivative bounds for flat
free boundaries, but at smaller dyadic scales it is a stronger
curvature constraint.

In \cite{Traizet}, Traizet found a remarkable change of
variables that converts the free boundary problem into
a problem about minimal surfaces with a plane of symmetry.
If $|\nabla u| <1$, then the minimal surfaces are embedded,
and otherwise they are immersed.   This means
that although neither problem is strictly contained
in the other, there is direct overlap between the results of Colding
and Minicozzi and the results proved here.   The extra
hypothesis $|\nabla u| <1$ removes nearly all the difficulties
from the free boundary classification problem we are considering because
in that case the zero set of $u$ consists of convex components.
Nevertheless, in this simple overlapping case Traizet's
change of variables allows us to make a direct comparison
with results of \cite{CMAnnuli}.

Under Traizet's correspondence, the standard double hairpin becomes
the standard catenoid,
\begin{equation*}
\Sigma_{\rho} = \{(x_1, x_2, x_3)\in \real^3: (x_2/\rho)^2 +
(x_3/\rho)^2 = \cosh^2(x_1/\rho))\}, \quad \rho>0.
\end{equation*}
Denote $\mathcal{B}_r := \{x\in \real^3: |x|<r\}$.

\begin{corollary}\label{thm:minimal_annulus}
Let $M\subseteq\mathcal{B}_R$ be an embedded
minimal surface, homeomorphic to an annulus, with $\del M\subseteq
\del \mathcal{B}_R$.  Suppose that $M$ is symmetric with respect to the
reflection $x_3 \mapsto -x_3$ and that $M^+=M\cap \{x_3>0\}$ is a simply-connected graph over the $x_1x_2$-plane.
Suppose that the shortest closed geodesic of $M$ has length
$\eps$ and passes through the origin in $\mathcal{B}_R$.
There are absolute
constants $R_0<\infty$ and $\eps_0>0$ such that if $R\ge R_0$ and
$\eps \le \eps_0$, then there exists $\rho>0$,
\begin{equation*}
  | 2\pi \rho - \eps| \le \eps/100
\end{equation*}
and an injective conformal mapping $\phi: \Sigma_\rho \cap \mathcal{B}_1\to M$
that is isometric up to a factor 
$1\pm |x|/100$, and
the Gauss curvatures $K$ of $M$ and $K_{\rho}$ of $\Sigma_{\rho}$ are
related by
\begin{equation*}
    | K(\phi(x))-  K_\rho(x)|  \leq 
\begin{cases} (1/100)(\epsilon/|x|^2), & \quad |x| \le  \sqrt{\epsilon} \\
1/100, &  \quad \sqrt{\epsilon} \le |x| \le 1
\end{cases} 
   \end{equation*}
\end{corollary}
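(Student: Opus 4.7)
The strategy is to use Traizet's correspondence to translate the problem into the one-phase free boundary setting, apply Theorem \ref{thm:CurvBounds}, and translate the conclusion back.

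Traizet's correspondence sends any classical solution $v$ of \eqref{FBP_0} in a planar disk that satisfies \eqref{topo_assmpt} and $|\nabla v|<1$ in its positive phase to an embedded minimal surface in a ball of comparable radius in $\real^3$, symmetric about $\{x_3=0\}$, with upper half a graph over the positive phase. Running the correspondence in reverse, the hypotheses on $M$ (embedded annulus, symmetric, $M^+$ a simply-connected graph) produce such a $u$ on a disk of radius comparable to $R$, satisfying \eqref{topo_assmpt}. Under Traizet's normalization, the standard hairpin $\Omega_a$ corresponds to the standard catenoid $\Sigma_a$, whose shortest closed geodesic has length $2\pi a$. The neck of $M$ at the origin, of length $\eps$, therefore corresponds to a saddle point of $u$ at the origin with $u(0)$ of order $\eps/(2\pi)$.

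Rescaling so that $u$ lives on $B_1$, I would apply Theorem \ref{thm:CurvBounds}. Since $0$ is a saddle of $u$, it must be one of the $N\le N_0$ saddles $z_j$, and I may take $z_1=0$. Setting $a_1=u(0)$, the smallness of $\eps$ together with the closeness of $u$ near $0$ to a hairpin solution (guaranteed in turn by Theorems \ref{theorem_Main}--\ref{thm:BigPicture}) ensures $a_1\le r_0/100$. The theorem then yields an injective conformal map $\phi_1:B_{2r_0}\cap \bar\Omega_{a_1}\to \real^2$ with $\phi_1(0)=0$ and the stated bounds on $\phi_1'$, $\phi_1''$, and on $|\kappa\circ\phi_1-\kappa_{a_1}|$.

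Finally, I would lift $\phi_1$ back to the minimal surface side. Under Traizet's correspondence, the induced metric on the minimal surface is a conformal factor times the flat metric on the corresponding planar domain, with the factor determined by $|\nabla u|$, so $\phi_1$ lifts to a conformal map $\phi:\Sigma_\rho\cap \mathcal{B}_1\to M$ with $\rho=a_1$. The bound $|\phi_1'(z)-e^{i\theta_1}|\le |z|/(100r_0)$ then translates into $\phi$ being a near-isometry of factor $1\pm|x|/100$; since the shortest closed geodesic of $\Sigma_\rho$ has length $2\pi\rho$ and $\phi$ carries it to a closed curve on $M$ close to its actual shortest geodesic of length $\eps$, we get $|2\pi\rho-\eps|\le \eps/100$. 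The Gauss curvature of a minimal surface is determined by its conformal factor, so the bounds on $\phi_1''$ and on $|\kappa\circ\phi_1-\kappa_{a_1}|$ combine to give $|K(\phi(x))-K_\rho(x)|\le 1/100$ for $\sqrt{\eps}\le |x|\le 1$; the sharper relative-error bound for $|x|\le \sqrt{\eps}$ mirrors the observation immediately after Theorem \ref{thm:CurvBounds} that $\kappa_a(z)\sim a/|z|^2$ dominates the absolute error $1/(100r_0)$ exactly when $|z|\lesssim \sqrt{a r_0}$. The main difficulty throughout is bookkeeping through Traizet's correspondence: one must verify that planar conformal maps lift to conformal maps between minimal surfaces, that planar free boundary curvature matches Gauss curvature up to the conformal factor, and that the parameter identification $a\leftrightarrow \rho$ is normalized so that $2\pi\rho$ is the shortest closed geodesic length.
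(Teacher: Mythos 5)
Your overall strategy -- run Traizet's correspondence in reverse to produce a one-phase free boundary solution $u$, apply the paper's curvature/conformal-map theorem (Theorem \ref{thm:curvaturebounds}, of which Theorem \ref{thm:CurvBounds} is the packaged version), then lift the conclusions back to the minimal surface -- matches the paper's approach. However, you gloss over the two places where the real work happens.

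First, the sentence ``Running the correspondence in reverse, the hypotheses on $M$ \dots produce such a $u$ on a disk of radius comparable to $R$'' hides the central technical step. Traizet's correspondence in the forward direction takes a solution $u$ with $|\nabla u|<1$ to a minimal bigraph, but constructing a solution $u$ from the given $M$ requires defining $\f(p)=\int_0^p g^{-1}dh$ on $M^+$ and proving that this holomorphic map is injective, so that $u = X_3\circ\f^{-1}$ is a well-defined solution of \eqref{FBP_0} on $\Omega = \f(M^+)$. The paper's injectivity argument is nontrivial: it decomposes an arbitrary straight segment $[a,b]$ into pieces inside $\hat\Omega$ and pieces along $\del\hat\Omega$, and shows on each piece that $\langle\overline{\phi}(z_{2k})-\overline{\phi}(z_{2k-1}), (b-a)/|b-a|\rangle > |z_{2k}-z_{2k-1}|$ using the inequalities $|g|<1$ in $M^+$ and the boundary relation $g^{-1}=\overline{g}$, $\overline{dh}=-dh$. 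Without this, the object $u$ you want to apply Theorem \ref{thm:curvaturebounds} to does not yet exist, and \eqref{topo_assmpt} is unverified (it follows from $M^+$ being simply connected only once $\f$ is injective).

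Second, your route to the Gauss curvature bound is off. You propose to ``combine'' the bound on $\phi_1''$ with the free-boundary curvature bound $|\kappa\circ\phi_1 - \kappa_{a_1}|\le 1/(100r_0)$, but the free boundary curvature is a boundary quantity $\kappa = -\tfrac{1}{2}\text{Re}\bigl(U''\overline{U'}{}^2\bigr)$ and does not control the interior Gauss curvature $K = -4|U''|^2/(1+|U'|^2)^4$. The paper instead plugs $U = V_a\circ\tilde\psi$ into the Gauss curvature formula and uses the interior estimates $|\tilde\psi''| = O(\d)$ and $|\tilde\psi'-1| = O(\d(|z|+a))$ from \eqref{thm:curvaturebounds:eq_psiEstimates}, not the curvature bound, to extract $K = K_\rho + O(\d\sqrt{|K_\rho|}) + O(\d^2)$. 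You do mention the $\phi_1''$ bound, which is what is actually needed, but invoking the free-boundary curvature bound $\eqref{thm:curvaturebounds:eq_curv}$ here is a red herring. Similarly, the near-isometry claim requires explicitly computing the pull-back of the catenoid metric $ds_{\text{cat}} = (1+|V_a'|^2)|dz|$ and comparing it to the induced metric $ds = (1+|U'|^2)|dz|$ on $B_4^+(u)$; this is a short computation but it must be done, and the conformal factor $\lambda = \tfrac12(|g|^2+1)$ appearing in the Traizet correspondence is what makes it go through.
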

Note that because $K_\rho(x)  \sim -\rho^2/|x|^4$ and $\eps \approx \rho$,
in the range $|x|  < \sqrt{\eps}$, the curvatures are close.
This is the same bound as (but in much less generality than) the sharpest 
result of Colding and Minicozzi (see \cite[Remark 3.8]{CMAnnuli}). 
On the other hand, our corollary gives nontrivial rigidity
  for both distance distortion and curvature
 in the range $\sqrt{\eps} \ll  |x| \ll 1$.
 This range is not addressed in \cite{CMAnnuli},
 and the present result suggests that there
 may be interpolating rigidity estimates all the way to
 unit scale that are  valid in the case of general embedded minimal
 annuli.

\subsection{Outline of the paper.}\label{subsec:outline}

The first seven sections of the paper are devoted to the proof of \mbox{Theorem \ref{theorem_Main}}.  In Section \ref{Sec_Prelims} we establish the universal Lipschitz and \emph{nondegeneracy} bounds
enjoyed by the sequence of solutions $u_k$. Section \ref{Sec_WeakSol}
describes the two notions of weak solutions -- viscosity and variational -- that are preserved under the limit. In Section \ref{Sec_Blows} we recall the Weiss Monotonicity Formula \cite{Weiss1} and use it to
characterize the blow-up/blow-down limit of a weak nondegenerate
solution; there are two possibilities (up to rigid motion): the
half-plane solution $P(x)=x_2^+$ or
\begin{equation*}
    V(x)= s|x_2| \quad
    \text{for some } 0<s\leq 1.
\end{equation*}
Weak solutions approaching the half-plane solution are well understood by
the classical results of Caffarelli \cite{CafI, CafII} and our focus
will be to understand the structure of classical solutions that are
close to $V$. The first step is carried out in
Section \ref{Sec_AuxLemmas}, where we prove some auxiliary lemmas
concerning the structure of their free boundary. We also establish
the key fact that the gradient magnitude of weak solutions, which blow down to $V$, is bounded above by $1$; this, in
turn, translates to the strong geometric property that $F(u)$ has
non-negative curvature wherever it's smooth. The latter will be a
key element in the proof of Theorem \ref{theorem_Main}, carried out in Section \ref{Sec_Classfn}.

In Section \ref{Sec:LocStruct} we start exploring the local
structure of a solution $u$, satisfying \eqref{topo_assmpt}, in the
unit disk $B_1$. We delineate a dichotomy -- if near a point $p$ of
the free boundary there are two connected components of the zero
phase close enough to each other at a distance $O(a)$, then $u$
resembles $|x_2|$ (up to a rigid motion) in a unit-size
neighborhood $B_{r_0}(p)$ (this scenario will ultimately lead to
$u$ resembling a hairpin solution); otherwise, the free boundary has
bounded curvature at $p$. Sections \ref{Sec:lipboundFBstrands} and
\ref{Sec:CurvBounds} are devoted to exploring the first branch of
the dichotomy. In Section \ref{Sec:lipboundFBstrands} we show that
that the free boundary from scale $r_0$ all the way down to scale
$O(a)$ consists of four curves that have bounded turning in the outer scales. In the
penultimate Section \ref{Sec:CurvBounds} we finally see the hairpin
arising in the inner scale and we systematically treat both scales
by constructing an injective holomorphic map (Lemma
\ref{lemma:confmapConstruction}) from the positive phase of $u$ in
$B_{r_0}(p)$ to the positive phase of an appropriate hairpin
solution $H_a$. Obtaining estimates on the second derivative of the
map in Lemma \ref{lemma:psiEstimates} allows us to relate the
curvature of $F(u)$ to the curvature of $F(H_a)$ of a model hairpin
solution.

In the last Section \ref{Sec:Traizet} we exploit the Traizet
correspondence to prove Corollary \ref{thm:minimal_annulus}.

\section{Notation.}\label{Sec_Notation}

The disk of radius $r$ centered at $x=(x_1,x_2)\in \real^2$ will be
denoted by $B_r(x)$. When the argument is absent, we are referring
to the disk centered at the origin, $B_r:=B_r(0)$. The unit vectors
along $x_1$ and $x_2$ will be denoted by $e_1$ and $e_2$,
respectively. The three-dimensional ball of radius $r$, centered at
$p\in \real^3$, will be denoted by $\mathcal{B}_r(p)$.

If $\Omega$ is an open set of $\real^2$ and $u:\Omega\rightarrow
\real$ is a non-negative function, define the \emph{positive phase}
of $u$ to be
\begin{equation*}
    \Omega^+(u) := \{x\in \Omega: u(x)>0\}
\end{equation*}
and its \emph{free boundary} $F(u):=\del \Omega^+(u)\cap \Omega$.

If $S\subseteq \real^2$, a $\delta$-neighborhood of $S$ will be
denoted by
\begin{equation*}
    \mathcal{N}_{\delta}(S) := \bigcup_{x\in S}B_{\delta}(x).
\end{equation*}
Denote the distance between two non-empty sets $U, V$ by
\begin{equation*}
    d(U,V) = \inf\{|p-q|: p\in U, q\in V\},
\end{equation*}
while the Hausdorff distance between two compact subsets $K_1, K_2$
of $\real^2$ will be denoted by
\begin{equation*}
    d_{H}(K_1, K_2) = \inf\{\delta>0: K_1\subseteq \mathcal{N}_{\delta}(K_2) \text{ and } K_2\subseteq
    \mathcal{N}_{\delta}(K_1)\}.
\end{equation*}
By $\hsd$ we shall refer to the one-dimensional Hausdorff measure.

In all that follows $C, c, c', \tilde{c}, c_0, c_1, c_2$, etc. will
denote positive numerical constants. The constants in the
$O$-notation, wherever used, are also meant to be numerical.

\section{Preliminaries.}\label{Sec_Prelims}


Let $u$ be a solution of \eqref{FBP_0} in a disk $B\subseteq
\real^2$ that satisfies \eqref{topo_assmpt}. In our forthcoming
arguments we shall often be working with some connected component
$U$ of $[B_{r}(x)]^+(u)$, where $B_{r}(x)\Subset K\Subset B$ for
some compact set $K$. Claim that $U$ is a piecewise smooth domain;
that will provide us with enough regularity to apply the Divergence
Theorem in $U$. It suffices to show that only finitely many
connected components of $F(u)$ intersect $\del B_r(x)$ and that each
intersects it only a finite number of times. Let $\gamma$ be any
connected component of $F(u)$ intersecting $K$. Since for each $p\in
F(u)$, $F(u)\cap{B_{\eps(p)}}(p)$ is locally the graph of a smooth
function when $\eps(p)$ is small enough, the compact $\gamma\cap K$
has a finite subcover $\{B_{\eps(p_i)}(p_i), p_i\in \gamma\cap
K\}_{i=1}^N$, so that
\begin{equation}\label{isolation}
    d\left( \gamma\cap K, (F(u) \setminus
    \gamma)\cap K \right) \geq \delta(\gamma):=\frac{1}{2}\min\{\eps_{p_i}\}_{i=1}^N.
\end{equation}
But $\{\mathcal{N}_{\delta(\gamma)}(\gamma\cap K)\}_{\gamma}$, where
$\gamma$ ranges over all connected curves of $F(u)$ intersecting
$K$, is a cover of the compact $F(u)\cap K$, so it has a finite
subcover $\{\mathcal{N}_{\delta(\gamma_j)}(\gamma_j\cap
K)\}_{j=1}^M$. Because of \eqref{isolation}, each element of the
subcover contains only $\gamma_j\cap K$ and nothing else from
$F(u)\cap K$, so there are only finitely many curves $\gamma$
intersecting $K$ and thus $B_r(x)$. Each such $\gamma$ intersects
$\del B_r(x)$ only a finite number of times, because by the
classical result of \cite{KindNiren}, the free boundary $F(u)$ is
real analytic.

We shall now prove two fundamental regularity properties that
classical solutions of \eqref{FBP_0} given \eqref{topo_assmpt}
satisfy: universal Lipschitz bound and universal non-degeneracy away
from the free boundary. To elucidate the latter part of our claim,
let us state the relevant definition.

\begin{definition} A non-negative function $u:\Omega \to \real$ is
\textbf{non-degenerate} if there exists a constant $c>0$, such that
\begin{equation*}
    \sup_{B_r(x)} u \geq cr
\end{equation*}
for every $B_r(x)\subseteq \Omega$ centered at a point $x_0\in
F(u)$.
\end{definition}

First, let us show that classical solutions enjoy a universal
Lipschitz bound.

\begin{prop}[Lipschitz bound]\label{prop_lipbd}
Let $u$ be a classical solution of \eqref{FBP_0} in $B_R(0)$. If the
largest disk in $B_R^+(u)$, centered at $x$, touches $F(u)$, then
\begin{equation*}
    |\nabla u|(x)\leq C.
\end{equation*}
for some numerical constant $C>0$. In particular, if $0 \in F(u)$
\begin{equation}\label{lip_bd}
\|\nabla u\|_{L^{\infty}(B_{R/2})} \leq C.
\end{equation}
\end{prop}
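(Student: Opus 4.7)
The plan is to reduce the pointwise gradient bound at $x$ to a linear growth estimate $u(x)\le C_1 d$ with $d:=\mathrm{dist}(x,F(u))$, and then obtain this growth estimate via a quantitative Hopf lemma in the largest disk $B_d(x)\subset B_R^+(u)$.

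By hypothesis there is $y_0\in F(u)\cap\partial B_d(x)$, so $u$ is harmonic and positive in $B_d(x)$, $u(y_0)=0$, and (since $y_0$ is a smooth point of the analytic curve $F(u)$) $|\nabla u(y_0)|=1$. The standard interior gradient estimate for positive harmonic functions, obtained by differentiating the Poisson integral at the center of $B_d(x)$, yields
\[
|\nabla u(x)|\le \frac{C_0}{d}\,u(x),
\]
so the proof reduces to the growth bound $u(x)\le C_1 d$.

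To establish the growth bound I would rescale to $\tilde u(y):=u(x+dy)/d$ on $B_1$: then $\tilde u$ is positive harmonic, $\tilde u(\tilde y_0)=0$ at $\tilde y_0:=(y_0-x)/d\in\partial B_1$, and $|\nabla \tilde u(\tilde y_0)|=1$, and the desired inequality becomes $\tilde u(0)\le C_1$. To see this, use the log barrier $h(z):=\log(1/|z|)$ in the annulus $A:=B_1\setminus\overline{B_{1/2}}$, which is harmonic in $A$, vanishes on $\partial B_1$, equals $\log 2$ on $\partial B_{1/2}$, and satisfies $|\nabla h(\tilde y_0)|=1$. By Harnack's inequality, $\tilde u\ge c_0\tilde u(0)$ on $\partial B_{1/2}$ for an absolute constant $c_0>0$, while both $\tilde u$ and $h$ vanish on $\partial B_1$. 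The maximum principle in $A$ then forces
\[
\tilde u(z)\ge \frac{c_0\,\tilde u(0)}{\log 2}\,h(z)\qquad\text{for all }z\in A,
\]
and comparing inward normal derivatives at $\tilde y_0$ (both sides vanish there) gives $1=|\nabla \tilde u(\tilde y_0)|\ge \frac{c_0}{\log 2}\,\tilde u(0)$, i.e.\ $\tilde u(0)\le (\log 2)/c_0$. Unrescaling yields $u(x)\le C_1 d$ and hence $|\nabla u(x)|\le C_0C_1$.

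The \emph{in particular} statement \eqref{lip_bd} then follows immediately: if $0\in F(u)$ and $x\in B_{R/2}\cap B_R^+(u)$, then $\mathrm{dist}(x,F(u))\le|x|\le R/2\le R-|x|=\mathrm{dist}(x,\partial B_R)$, so the largest disk in $B_R^+(u)$ centered at $x$ is indeed cut off by $F(u)$ and the pointwise bound applies; the bound on $\{u=0\}$ is trivial at interior points of the zero set and follows for $x\in F(u)\cap B_{R/2}$ by taking limits from $B_R^+(u)$. The one subtlety to keep in mind is that a priori $u$ can be large on the portion of $\partial B_d(x)$ far from $y_0$, so a naive Taylor expansion of $u$ at $y_0$ is useless; the Harnack-plus-log-barrier step above is precisely what converts the single pointwise boundary datum $|\nabla u(y_0)|=1$ into the bulk bound on $u(x)$.
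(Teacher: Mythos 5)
Your proof is correct and follows essentially the same route as the paper's: a Harnack bound on the half‑scale circle, a logarithmic barrier in the annulus $B_d(x)\setminus B_{d/2}(x)$ compared with $u$ via the maximum principle, a normal‑derivative (Hopf) comparison at the touching point using $|\nabla u|=1$ to get $u(x)\le C\,d$, and finally the interior gradient estimate for positive harmonic functions. The only cosmetic differences are that you rescale to the unit disk and write out the barrier explicitly as $\log(1/|z|)$, whereas the paper works at scale $r$ and defines the barrier by its boundary data.
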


\begin{proof}
If $u(x) = m$, then by Harnack's inequality $c_1 m\leq u(y)\leq c_2
m$ on $\del B_{r/2}(x)$. 
Let $h$ be the harmonic function in the annulus
$A_r(x):=B_{r}(x)\setminus B_{r/2}(x)$, whose boundary values are:
\begin{align*}
    h &  = c_1m \quad \text{on}\quad \del B_{r/2}(x) \\
    h & = 0\quad \text{on}\quad \del B_{r}(x).
\end{align*}
By the maximum principle $h\leq u$ in $A_r$ and so by the Hopf
lemma,
\begin{equation*}
    h_{\nu}(p)\leq u_{\nu}(p)=1,
\end{equation*}
where $\nu$ denotes the inner-normal to $B_R^+$ and $p\in F(u)$ is a
point of touching between $F(u)$ and $B_r(x)$. On the other hand,
$h_{\nu}(p) \geq c'm/r$, thus
\begin{equation*}
    m \leq C' r.
\end{equation*}
Thus,
\begin{equation*}
    |\nabla u|(x) \leq \frac{c_0}{r} \fint_{\del B_{r/2}} u ~d\hsd \leq
    \frac{c_0c_2m}{r} \leq C,
\end{equation*}
for some numerical constant $C$.

Statement \eqref{lip_bd} follows once we point out that for $x \in
B_{R/2}$ the largest ball contained in $B_R^+(u)$ and centered at
$x$, will certainly touch $F(u)$.
\end{proof}

The universal nondegeneracy property is established through the
following proposition.

\begin{prop}\label{prop_supnondeg} Let $u$ be a classical solution of \eqref{FBP_0} in $B_R(0)$, for which \eqref{topo_assmpt} is satisfied. Assume further that $0\in
F(u)$. Then 
\begin{equation*}
    \sup_{B_r(0)} u  = \max_{\del B_r(0)} u \geq \frac{1}{2\pi}r \quad \text{for all} \quad 0<r< R.
\end{equation*}
\end{prop}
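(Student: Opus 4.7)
The first equality $\sup_{B_r(0)} u = \max_{\partial B_r(0)} u$ is immediate from the maximum principle: $u$ is non-negative, harmonic on each component of $\{u>0\}$ and vanishes outside, so it is subharmonic in the sense of distributions on $B_R$ and attains its maximum on $\partial B_r$.

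For the quantitative lower bound, the plan is to run a divergence-theorem computation on $B_r^+(u)$ and convert the resulting inequality into an ODE for the mean
\[
M(r) := \int_{\partial B_r} u \, d\hsd.
\]
Using the piecewise smoothness of $B_r^+(u)$ established at the start of the section (this is the role of the analyticity of $F(u)$ together with the compactness argument preceding the proposition), I apply the divergence theorem to $\nabla u$ on $B_r^+(u)$. Since $\Delta u = 0$ in $B_r^+(u)$, $u=0$ on $F(u)\cap B_r$ with $\partial_\nu u = -|\nabla u| = -1$ for the outward normal $\nu$ of $B_r^+(u)$, and $\partial_\nu u = \partial_r u$ on $\partial B_r\cap\{u>0\}$, I obtain
\[
\int_{\partial B_r \cap \{u>0\}} \partial_r u \, d\hsd \; = \; \hsd(F(u)\cap B_r).
\]
For $r$ where $F(u)$ meets $\partial B_r$ non-transversally the identity should be recovered by a limiting argument in $r$, since the set of bad radii is discrete by analyticity of $F(u)$.

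The key geometric input is that the right-hand side is at least $r$. Since $0\in F(u)$, let $\gamma$ be the component of $F(u)$ in $B_R$ containing the origin. By the topological assumption \eqref{topo_assmpt}, $\gamma$ is not compact in $B_R$, so it exits $\partial B_R$; by connectedness, the subarc of $\gamma$ starting at $0$ must cross $\partial B_r$, producing a connected arc in $\overline{B_r}$ joining $0$ to $\partial B_r$. Its $\hsd$-length is at least the chord length $r$, so $\hsd(F(u)\cap B_r) \ge r$.

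The remaining step is an ODE argument. Writing $M(r) = r\int_0^{2\pi} u(r\theta)\,d\theta$ and differentiating gives $M'(r) - M(r)/r = \int_{\partial B_r}\partial_r u \, d\hsd$, where the integral may be restricted to $\{u>0\}$ since $\nabla u = 0$ a.e.\ on $\{u=0\}$. Combining with the inequality above yields
\[
\bigl(M(r)/r\bigr)' \ge 1,
\]
and integration from $0$ to $r$ (together with $M(r)/r \to 0$ as $r\to 0^+$, which follows from $u(0)=0$ and the Lipschitz bound of Proposition~\ref{prop_lipbd}) gives $M(r)\ge r^2$. Hence $\max_{\partial B_r} u \ge M(r)/(2\pi r) \ge r/(2\pi)$, as required. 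The only delicate point I anticipate is the justification of the divergence theorem identity for all $r\in(0,R)$, not just generic ones; but this is handled by approximating from radii where $F(u)\pitchfork\partial B_r$, where the identity is clean, and using continuity of both sides in $r$.
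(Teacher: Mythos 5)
Your proof is correct, and it takes a genuinely different route from the paper's. The paper rescales to $B_1$, restricts to the component $U$ of $\{\tilde u>0\}\cap B_1$ whose boundary contains the origin, and applies the divergence theorem once with a radial cutoff $\psi$, trading $\int_{\partial B_1\cap U}\tilde u$ against the weighted free-boundary length $\int_{\partial U\cap B_1}\psi$. You instead integrate $\Delta u=0$ over all of $B_r^+(u)$ to obtain the clean flux identity $\int_{\partial B_r\cap\{u>0\}}\partial_r u\,d\hsd=\hsd(F(u)\cap B_r)$, and then recast it as the differential inequality $\left(M(r)/r\right)'\ge 1$ for the boundary average $M(r)=\int_{\partial B_r}u\,d\hsd$, integrating from $0$ to get $M(r)\ge r^2$ and hence the bound. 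Both arguments ultimately rest on the same geometric input — the free-boundary component through $0$ is noncompact in $B_R$ by \eqref{topo_assmpt}, hence contributes $\hsd$-length at least $r$ inside $B_r$ — and both rely on the piecewise smoothness of $B_r^+(u)$ established at the start of Section~\ref{Sec_Prelims} to license the divergence theorem. Your ODE formulation is arguably cleaner: it avoids choosing a test function and tracking components, handles all radii simultaneously without rescaling, and yields the stated constant $1/(2\pi)$ with no slack (you could even improve it to $1/\pi$ by noting the arc through $0$ exits $B_r$ in both directions, contributing length $\ge 2r$). One small simplification: you do not actually need Proposition~\ref{prop_lipbd} to justify $M(r)/r\to 0$; continuity of $u$ together with $u(0)=0$ already gives $M(r)\le 2\pi r\max_{\partial B_r}u=o(r)$. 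Also, "discrete" for the bad radii is fine as long as one means discrete in any compact subinterval of $(0,R)$ — which is what analyticity gives — since that is all the absolute-continuity argument requires.
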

\begin{proof}
Since $u$ is continuous and subharmonic, the maximum principle
implies $\sup_{B_r(0)} u = \max_{\del B_r(0)} u$. Let $\tilde{u}(x)
:= r^{-1}u(rx)$ denote the $r$-rescale of $u$. It suffices to show
that $\sup_{\del B_1} \tilde{u} \geq 1/2\pi$.

Let $\phi:[0,1] \rightarrow \real$ be the function
\begin{equation*}
\phi(t) =    \left\{
\begin{array}{lr} \frac{1}{2} & 0\leq t\leq \frac{1}{2} \\
1-t & \frac{1}{2}<t\leq 1
    \end{array}\right..
\end{equation*}
and let $\psi(x) = \phi(|x|)$. Let $U$ be the component of
$B_{R/r}^+(\tilde{u}) = r^{-1}B_R^+(u)$ in $B_1$ whose boundary
contains the origin. Then if $\tilde{u}_{\nu}$ denotes the inner
normal to $U$,
\begin{equation*}
    -\int_{U} \nabla \psi \cdot \nabla \tilde{u} ~dx = -\int_U \text{div}(\psi \nabla
    \tilde{u}) ~dx = \int_{\del U\cap B_1}\psi \tilde{u}_{\nu} ~d\hsd = \int_{\del U\cap B_1}\psi
    ~d\hsd.
\end{equation*}
On the other hand, if $\hat{r}$ denotes the unit vector field in the
radial direction,
\begin{align*}
    -\int_{U} \nabla \psi \cdot \nabla \tilde{u} ~dx & = \int_{U\setminus
    B_{1/2}} \text{div}(\tilde{u}\hat{r}) - \text{div}(\hat{r})\tilde{u} ~dx
    = \\
    & =  \int_{\del B_1\cap U} \tilde{u}~d\hsd - \int_{\del B_{1/2}\cap U} \tilde{u}~d\hsd - \int_{U\setminus B_{1/2}}\frac{\tilde{u}}{|x|} ~dx.
\end{align*}
Therefore, as $\hsd(\del U \cap B_1) \geq 2$,
\begin{equation*}
    \int_{\del B_1\cap U} \tilde{u}~d\hsd \geq \int_{F(\tilde{u})\cap U}\psi
    ~d\hsd \geq \frac{1}{2}\hsd(\del U \cap B_1) \geq 1.
\end{equation*}
Hence, $\sup_{\del B_1\cap U}\tilde{u} \geq 1/2\pi$.
\end{proof}

\section{Weak solutions.}\label{Sec_WeakSol}

In this section we define the two notions of weak solutions that
will be useful in classifying the limits of sequences of classical
solutions. Let
\begin{equation*}
    I[u,\Omega] = \int_{\Omega} |\nabla u|^2 + \chrc{\{u>0\}}~dx \quad
    \Omega \subseteq \real^2
\end{equation*}
be the one-phase energy functional whose Euler-Lagrange equation is
the free boundary problem \eqref{FBP_0}.

\begin{definition} The function $u\in H^1_{\text{loc}}(\Omega)$ is a
\textbf{variational solution} of \eqref{FBP_0} if $u\in
C(\Omega)\cap C^2(\Omega^+(u))$ and
\begin{equation*}
    0 = L[u](\phi):=\left.\frac{d}{d\eps}\right|_{\eps=0} I[u(x+\eps\phi(x))] =
    \int_{\Omega}\big(|\nabla u|^2 + \chrc{\{u>0\}}\big)\text{div }\phi
    -2\nabla u D\phi (\nabla u)^T ~dx
\end{equation*}
for any $\phi \in C^{\infty}_c(\Omega; \real^2)$.
\end{definition}

The next proposition is standard and says that any globally defined
limit of uniformly convergent variational solutions that are
uniformly Lipschitz continuous and uniformly non-degenerate,
inherits the same properties.

\begin{prop}\label{prop_limitofsolns} Let $\{u_k\} \in H^1_{\text{loc}}(B_{R_k})$, $R_{k}\nearrow\infty$, be a sequence of variational solutions of
\eqref{FBP_0} which satisfies
\begin{itemize}
\item (Uniform Lischitz continuity) There exists a constant C, such that $\|\nabla u_k\|_{L^{\infty}(B_{R_k})} \leq C$;
\item (Uniform non-degeneracy) There exists a constant c, such that $\sup_{B_r(x)} u_k \geq cr$ for every $B_r(x) \subseteq
B_{R_k}$, centered at a free boundary point $x\in F(u_k)$.
\end{itemize}
Then any limit $u \in H^1_{\text{loc}}(\real^2)$ of a uniformly
convergent on compacts subsequence $u_k \to u$ satisfies
\begin{enumerate}[(a)]
\item $\overline{\{u_k>0\}} \rightarrow \overline{\{u>0\}}$ and $F(u_k) \rightarrow F(u)$ locally in the Hausdorff distance;
\item $\chrc{\{u_k>0\}} \rightarrow \chrc{\{u>0\}}$ in $L^1_{\text{loc}}(\real^2)$;
\item $\nabla u_k \rightarrow \nabla u$ a.e.
\end{enumerate}
Moreover, $u$ is a Lipschitz continuous, non-degenerate variational
solution of \eqref{FBP_0}.
\end{prop}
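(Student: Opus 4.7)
The plan is to inherit the Lipschitz and non-degeneracy bounds for $u$, establish (a), deduce the measure-zero property $|F(u)| = 0$ from density estimates so that (b) and (c) follow, and finally pass to the limit in the variational identity. The Lipschitz bound $|\nabla u| \le C$ is immediate from the $C^0$-limit.

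For (a) I would argue in matching directions. If $u(y) > 0$ then $u_k(y) > 0$ for large $k$ by uniform convergence; conversely, if $B_\rho(x) \subseteq \{u = 0\}$, then $u_k \to 0$ uniformly on $B_{\rho/2}(x)$, and any $y \in F(u_k) \cap B_{\rho/4}(x)$ would force $\sup_{B_{\rho/4}(y)} u_k \ge c\rho/4$ by non-degeneracy, contradicting $u_k \to 0$ on $B_{\rho/2}(x) \supseteq B_{\rho/4}(y)$; a connectivity/Harnack argument inside $B_{\rho/4}(x)$ then upgrades the absence of free boundary points to $u_k \equiv 0$ on $B_{\rho/4}(x)$ eventually. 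For $F(u_k) \to F(u)$: any limit point of $F(u_k)$ inherits non-degeneracy (hence lies in $\overline{\{u > 0\}}$) and has $u = 0$, so it lies in $F(u)$, which also yields non-degeneracy of $u$ on $F(u)$. Conversely, for $x \in F(u)$ and $y \in B_{\rho/2}(x)$ with $u(y) > 0$, the component of $\{u_k > 0\} \cap B_\rho(x)$ containing $y$ either has a free-boundary point inside $B_\rho(x)$ or equals all of $B_\rho(x)$; the latter contradicts $u_k(x) \to 0$, since Harnack applied to the positive harmonic $u_k$ on $B_{\rho/2}(x)$ would force $u_k(x) \ge c u(y) > 0$.

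Combining Lipschitz with non-degeneracy (passed to $u$ via (a)) yields the two-sided density estimate $|B_r(x) \cap \{u>0\}|, |B_r(x) \cap \{u=0\}| \ge c r^2$ at every $x \in F(u)$; hence no such point can be a Lebesgue density point of $F(u)$, and the Lebesgue density theorem gives $|F(u)| = 0$. Part (b) is then immediate from (a) together with bounded convergence on compacts, and (c) follows by decomposing $\real^2$ into $\{u > 0\}$ (where the $u_k$ are eventually harmonic, so interior estimates upgrade $C^0$ to $C^\infty_{\text{loc}}$ convergence), the interior of $\{u = 0\}$ (where $u_k \equiv 0$ eventually, so $\nabla u_k \equiv 0$), and the null set $F(u)$. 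To pass to the limit in $L[u_k](\phi) = 0$, I use (c) together with the uniform $L^\infty$ bound on $\nabla u_k$ for the quadratic gradient terms (dominated convergence), and (b) for the characteristic-function term, obtaining $L[u](\phi) = 0$; together with harmonicity of $u$ on $\{u > 0\}$, this shows $u$ is a variational solution.

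The main obstacle is establishing $|F(u)| = 0$, since without it neither (b) nor the passage to the limit in the variational identity is available; this is precisely where the two-sided density estimate -- i.e.\ the joint force of Lipschitz and non-degeneracy, not either alone -- is indispensable.
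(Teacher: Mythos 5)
Your proposal takes the same route as the paper, which simply cites Caffarelli--Salsa (Lemma 1.21) for parts (a)--(c) and then passes to the limit in $L[u_k](\phi)=0$ by dominated convergence exactly as you do; you are supplying the content of the cited lemma. Two remarks on the details.

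First, only the one-sided density bound $|B_r(x)\cap\{u>0\}|\geq cr^2$ at $x\in F(u)$ (immediate from non-degeneracy and Lipschitz) is needed: since $F(u)$ is disjoint from $\{u>0\}$, this already keeps the Lebesgue density of $F(u)$ at every such $x$ bounded strictly below $1$, whence $|F(u)|=0$. The zero-side estimate is harder to extract from the stated hypotheses alone and is not required. Second, the ``connectivity/Harnack argument'' is a genuine gap as written. After showing $F(u_k)\cap B_{\rho/4}(x)=\emptyset$, connectivity says $B_{\rho/4}(x)$ is entirely positive or entirely zero for $u_k$, and Harnack alone cannot exclude the case that $u_k$ is a small positive harmonic function there: indeed $u_k\equiv\epsilon_k$ with $\epsilon_k\to 0$ is a variational solution with no free boundary, uniformly Lipschitz and vacuously non-degenerate, converging uniformly to $u\equiv 0$ while conclusion (a) fails. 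What closes the gap is a pinning of the free boundary such as $0\in F(u_k)$, which is in force wherever the proposition is used in the paper (and is implicit in the statement). With it, take the maximal ball $B_{d_k}(x)\subseteq\{u_k>0\}$ touching $F(u_k)$ at some $p_k$; linear growth of $u_k$ away from $p_k$ (non-degeneracy, or the gradient condition and a Hopf comparison, as in Proposition~\ref{prop_lipbd}) combined with Harnack inside $B_{d_k}(x)$ gives $u_k(x)\gtrsim\rho^2/d_k$, so $u_k(x)\to 0$ forces $d_k\to\infty$, which is incompatible with $d_k\leq|x|$. You should make this dependence on a free boundary normalization explicit rather than attribute it to Harnack.
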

\begin{proof} Obviously, $u$ is a global Lipschitz continuous
function with $\|\nabla u\|_{L^{\infty}(\real^2)}\leq C$ and $u\in
H^1_{\text{loc}}(\real^2)$.  One proves properties a) through c)
arguing as in \cite[Lemma 1.21]{CafSalsa}. The non-degeneracy of $u$
follows from the non-degeneracy of $u_k$ combined with the fact that
$F(u_k) \to F(u)$ locally in the Hausdorff distance.

To show that $u$ is a variational solution as well, note that since
$\nabla u_k \to \nabla u$ a.e. and $|\nabla u_k|$ and $|\nabla u|$
are bounded above by $C$, the Dominated Convergence Theorem implies
that for every $\phi \in C^1_c(\real^2;\real^2)$
\begin{equation*}
    0 = \lim_{k\to\infty} L[u_k](\phi) = L[u](\phi).
\end{equation*}
\end{proof}

The second notion of weak solution that will make use of is that of
a viscosity super/sub-solution (\cite{CafSalsa}).
\begin{definition}\label{def_visco}
A viscosity supersolution (resp. subsolution) of \eqref{FBP_0} is a
non-negative continuous function $w$ in $\Omega$ such that
\begin{itemize}
\item $\Delta w \leq 0$ (resp. $\Delta w \geq 0$) in $\Omega^+(w)$;
\item If $x_0\in F(w)$ and there is a disk $B \subseteq \Omega^+(w)$ (resp. $B
\subseteq\{w=0\}$) that touches $F(w)$ at $x_0$, then near $x_0$ in
$B$ (resp. $B^c$), in every non-tangential region,
\begin{equation*}
w(x) = \alpha \langle x-x_0,\nu\rangle^+ + o(|x-x_0|) \quad
\text{for some} \quad \alpha \leq 1 \text{ (resp. }\alpha \geq 1),
\end{equation*}
where $\nu$ denotes the inner (resp. outer) unit normal to $\del B$
at $x_0$.
\end{itemize}
A function $w$ is a \textbf{viscosity solution} if w is both a
viscosity super- and subsolution.
\end{definition}

The class of viscosity solutions is well-suited for taking uniform
limits in compact sets.

\begin{lemma}[Limit of viscosity solutions]\label{lemma_limitvisco} Let $u_k\in C(\Omega)$ be a sequence of viscosity solutions of \eqref{FBP_0} in $\Omega$ such
that $u_k\rightarrow u$ uniformly and $u$ is Lipschitz continuous.
Then $u$ is also a viscosity supersolution of \eqref{FBP_0} in
$\Omega$. If, in addition, $\overline{\Omega^{+}(u_k)} \rightarrow
\overline{\Omega^{+}(u)}$ locally in the Hausdorff distance, then
$u$ is a viscosity subsolution, as well.
\end{lemma}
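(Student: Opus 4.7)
The plan is to verify the two bullets of Definition \ref{def_visco} for $u$, handling the supersolution case first and then upgrading to the subsolution case under the extra Hausdorff hypothesis.

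\textbf{Interior harmonicity.} For any $V \Subset \Omega^+(u)$, continuity and compactness give $u \ge c_V > 0$ on $\overline V$, so uniform convergence forces $u_k \ge c_V/2$ on $\overline V$ for large $k$. Thus $V \subset \Omega^+(u_k)$ and $u_k$ is harmonic on $V$; since uniform limits of harmonic functions are harmonic, $\Delta u = 0$ on $V$, and exhausting $\Omega^+(u)$ covers the first bullet of both the super- and subsolution definitions.

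\textbf{Supersolution boundary expansion.} Let $x_0 \in F(u)$ with inner tangent ball $B = B_r(y_0) \subset \Omega^+(u)$ and inner unit normal $\nu$. Because $u$ is harmonic and non-negative in $B$, Lipschitz up to $\partial B$, and $u(x_0)=0$, the Poisson representation furnishes a non-tangential expansion $u(x) = \alpha \langle x - x_0, \nu\rangle^+ + o(|x-x_0|)$ with $\alpha = \partial_\nu u(x_0) \ge 0$. To prove $\alpha \le 1$, shrink to $B^\sigma := B_{r(1-\sigma)}(y_0) \Subset B$: on $\overline{B^\sigma}$ one has $u \ge m_\sigma > 0$, so uniform convergence gives $u_k \ge m_\sigma/2$ there for $k \ge k_\sigma$. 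Inflate $B^\sigma$ to the largest ball $\tilde B_k^\sigma$ centered at $y_0$ contained in $\Omega^+(u_k)$, with radius $r_k^\sigma \ge r(1-\sigma)$ and touching point $z_k^\sigma \in \partial \tilde B_k^\sigma \cap F(u_k)$. The Lipschitz bound $u \le C|x-x_0|$, combined with $u_k(x_0) \to 0$ and the mean-value inequality for the harmonic $u_k$ on $\tilde B_k^\sigma$, forces $r_k^\sigma \le r + o_k(1)$; extracting a subsequence yields $r_k^\sigma \to r^\sigma \in [r(1-\sigma), r]$ and $z_k^\sigma \to z^\sigma$ along the outward ray from $y_0$ through $x_0$. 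The supersolution property of $u_k$ at $z_k^\sigma$ gives an expansion coefficient $\alpha_k^\sigma \le 1$; Poisson-kernel/boundary-Harnack continuity on the nested disks, together with the uniform convergence, yields $\alpha_k^\sigma \to \alpha^\sigma$, the analogous coefficient of $u$ at $z^\sigma$ relative to $B_{r^\sigma}(y_0)$. Finally, $\sigma \to 0$ sends $z^\sigma \to x_0$ and $\alpha^\sigma \to \alpha$, so $\alpha \le 1$.

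\textbf{Subsolution boundary expansion.} With the extra Hausdorff hypothesis $\overline{\Omega^+(u_k)} \to \overline{\Omega^+(u)}$, run the symmetric argument from the zero-phase side. Given $B \subset \{u=0\}$ tangent to $F(u)$ at $x_0$ with outer normal $\nu$, shrink to $B^\sigma \Subset B$ at positive distance from $\overline{\Omega^+(u)}$; the Hausdorff hypothesis then gives $u_k \equiv 0$ on $\overline{B^\sigma}$ for large $k$. Inflate to the largest ball $\tilde B_k^\sigma$ in $\{u_k = 0\}$ with the same center and touching point $z_k^\sigma \in F(u_k)$; Hausdorff convergence of the positive phases forces $z_k^\sigma \to x_0$ as $k \to \infty$, $\sigma \to 0$. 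The subsolution property of $u_k$ gives $\alpha_k^\sigma \ge 1$ in the expansion of $u_k$ at $z_k^\sigma$ in $(\tilde B_k^\sigma)^c$, and the same limit passage yields $\alpha \ge 1$ for $u$ at $x_0$. The main obstacle in both cases is precisely this limit passage in the expansion coefficient: uniform convergence of $u_k$ to $u$ alone does not give convergence of normal derivatives at shifting boundary points, and one must invoke the Poisson-kernel representation on the nested disks $\tilde B_k^\sigma$ together with the Lipschitz control on $u$ to establish continuity of $\alpha_k^\sigma$ under the joint perturbation of domain and boundary data.
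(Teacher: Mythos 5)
Your proposal follows the same high-level strategy as the paper (manufacture touching points for $u_k$ near $x_0$, use the viscosity property of $u_k$ there, pass to the limit), but it substitutes inflated balls centered at $y_0$ for the paper's bump-function perturbation domains, and this substitution leaves a genuine gap at the crucial step.

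The gap is exactly the one you flag as ``the main obstacle'': the claim that $\alpha_k^\sigma \to \alpha^\sigma$, i.e.\ that the non-tangential expansion coefficients of $u_k$ at the shifting touching points $z_k^\sigma$ converge to that of $u$ at the limiting point. You say one ``must invoke the Poisson-kernel representation \ldots to establish continuity'' but never carry out the argument, and as stated the claim is in fact \emph{false}: take $v(x) = (1+x_2)^+$ on $B_1$ with $x_0=(0,-1)$, and $v_k = v + \eps_k\,(1 - r^{n_k}\cos n_k\theta)$ with $\eps_k n_k \to c>0$ and $\eps_k\to 0$. Then $v_k\ge 0$ is harmonic, $v_k(x_0)=0$, $v_k\to v$ uniformly, yet the boundary derivative $\alpha_k = 1+\eps_k n_k \to 1+c \neq 1 = \alpha$. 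What is true, and what your argument actually needs, is the one-sided inequality $\alpha \le \liminf_k\alpha_k$; this can be extracted from the Poisson representation on $B_{r_k}(y_0)$ via Fatou's lemma for the kernel $1/|z_k-\theta|^2$, but this needs to be identified and proved rather than asserted as a continuity fact. For the subsolution direction the situation is worse: there the expansion takes place in $(\tilde B_k^\sigma)^c$, the \emph{complement} of a ball, where there is no Poisson representation, so the tool you invoke does not even apply; one really needs a barrier argument here.

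The paper sidesteps this entirely. Rather than attempting to pass the expansion coefficient to the limit, it blows up at $x_0$, perturbs the flat slab $B_1\cap\{x_2>\eps\}$ by a compactly supported bump to force a touching point $p$ of $F((u_k)_\lambda)$ with $|x_1(p)|<1/2$, builds an explicit harmonic barrier $v$ on the perturbed domain, and compares $\partial_\nu v(p)=\alpha+O(\eps)$ with the expansion coefficient $\tilde\alpha\le 1$ of $u_k$ at $p$ via the maximum principle and the Hopf lemma. This gives the inequality $\alpha+O(\eps)\le 1$ directly, with no limit passage of normal derivatives at moving points required. (A secondary issue: by inflating a ball concentric with $B$ you do not automatically select a touching point near $x_0$ --- if $\partial B_r(y_0)$ meets $F(u)$ at several points the inflated ball can settle on a different one. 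This is fixable by first replacing $B$ with a smaller ball internally tangent at $x_0$ alone, but you should say so; the paper's localized bump handles this for free.)
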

\begin{proof}
Clearly $\Delta u = 0$ in $\Omega^{+}(u)$, so we only need to check
that the appropriate free boundary conditions are satisfied.

Let us show that $u$ satisfies the viscosity supersolution condition
at the free boundary. Assume there is a disk $B$ touching $x_0\in
F(u)$ from the positive phase. Without loss of generality, $x_0 = 0$
and the unit normal of $\del B$ at $0$ is $\nu = e_2$. According to
\cite[Lemma 11.17]{CafSalsa}, $u$ has the linear behaviour:
\begin{equation*}
    u(x) = \alpha x_2 + o(|x|) \quad
    \text{in non-tangential regions of } B
\end{equation*}
for some $0<\alpha < \infty$ where $\nu$ denotes the inner unit
normal to $\del B$ at $x_0$. Claim that $\alpha \leq 1$. Fix $\eps
>0$ small. If we blow up at $0$,
\begin{equation*}
u_{\lambda}(x) := \lambda^{-1}u(x_0+\lambda x)  \rightarrow \alpha
x_2 \quad \text{in } B_1 \cap \{x_2 >\eps\} \quad \text{uniformly as
} \lambda \to 0.
\end{equation*}
Denote $(u_k)_{\lambda}(x) = \lambda^{-1} u(\lambda x)$ the dilate
of $u_k$ at $0$. By the uniform convergence of $u_k$ to $u$, for
some fixed small enough $\lambda>0$
\begin{equation}\label{viscolemma_clos1}
    |(u_k)_{\lambda}(x) - \alpha x_2| < \alpha\eps/2 \quad
    \text{in } B_1 \cap \{x_2>\eps\} \quad  \text{for all large enough } k.
\end{equation}
Consider the perturbation $D_t$ of the domain $B_1 \cap \{x_2>\eps
\}$ defined by
\begin{equation*}
    D_t = \{x\in B_1: x_2 > \eps - t \eta(x_1)\},
\end{equation*}
where $0\leq \eta(x_1)\leq 1$ is a smooth bump function supported in
$|x_1|<1/2$ with $\eta(x_1)=1$ for $|x_1|\leq 1/4$. We know that
$D_0 \Subset \Omega^+((u_k)_{\lambda})$ and since $0\in
F(u_{\lambda})$
\begin{equation}\label{viscolemma_clos2}
F((u_k)_{\lambda})\cap B_{\eps} \neq \emptyset.
\end{equation}
for all large enough $k$. Pick a $k$ such that both
\eqref{viscolemma_clos1} and \eqref{viscolemma_clos2} hold. Then for
some \mbox{$0< t_0 < 2\eps$} the domain
$D_{t_0}\subseteq\Omega^+((u_k)_{\lambda})$ will touch
$F((u_k)_{\lambda})$ at some $p\in F((u_k)_{\lambda}) \cap
\{|x_1|<1/2 \}$. Define a harmonic function $v$ in $D_{t_0}$ with
boundary values
\begin{equation*}
    v(x) = \left\{ \begin{array}{rcl} \alpha x_2 - \alpha\eps & \text{on} & \del B_1 \cap \{x_2 > \eps \} \\ 0 & \text{on} & B_1\cap \{x_2 = \eps - t_0 \eta(x_1) \}\end{array} \right.
\end{equation*}
Thus, by the maximum principle $v \leq (u_k)_{\lambda}$ in
$D_{t_0}$, so that near $p$ in non-tangential regions of $D_{t_0}$,
for some $\tilde{\alpha}\leq 1$
\begin{equation*}
    v(x)\leq (u_k)_{\lambda}(x) = \tilde{\alpha}\langle x-p, \nu(p) \rangle +
    o(|x-x_0|),
\end{equation*}
where $\nu(p)$ is the inner normal to $\del D_{t_0}$ at $p$. On the
other hand, a standard perturbation argument gives $v_{\nu}(p) =
\alpha + O(\eps)$. Since $\eps$ is arbitrary, we conclude $\alpha
\leq 1$.

Let us now assume that $\overline{\Omega^{+}(u_k)} \rightarrow
\overline{\Omega^+(u)}$ in the Hausdorff distance and show that $u$
satisfies the viscosity subsolution condition at the free boundary.
Let there be a disk $B$ touching $F(u)$ at $x_0$ from the zero
phase. Without loss of generality, $x_0 = 0$ and the unit outer
normal at $\del B$ is $e_2$. According to \cite[Lemma
11.17]{CafSalsa}, for some $0\leq \beta < \infty$
\begin{equation*}
    u(x)\leq \beta x_2^+ + o(|x|).
\end{equation*}
Given $\eps>0$ we can dilate $u$ and $u_k$ near $0$ sufficiently, so
that
\begin{equation*}
    (u_k)_{\lambda}(x) \leq u_{\lambda}(x) + \eps/2 \leq \beta
    x_2^{+} +\eps \quad \text{in } B_1
\end{equation*}
for some fixed large $\lambda$ and all large enough $k$. Moreover,
since $\Omega^{+}((u_k)_{\lambda})\rightarrow
\Omega^+(u_{\lambda})$, we can choose $k$ large enough such that
\begin{equation*}
    \Omega^{+}((u_k)_{\lambda})\cap B_1 \Subset \{x_2 > -\eps/2 \} \quad
    \text{and} \quad F((u_k)_{\lambda})\cap B_{\eps/2} \neq \emptyset.
\end{equation*}
Let $E_t$ be the domain
\begin{equation*}
    E_t = \{x\in B_1: x_2 > -\eps + t \eta(x_1)\}
\end{equation*}
and note that for some $0<t_0<2\eps$, $E_{t_0} \supseteq
\Omega^{+}((u_k)_{\lambda})\cap B_1$ and $\del E_{t_0}$ touches
$F((u_k)_{\lambda})\cap B_1$ at some point $q \in F((u_k)_{\lambda}
\cap \{|x_1|<1/2\}$. Define a harmonic function $w$ in $E_{t_0}$
having boundary values:
\begin{equation*}
        w(x) = \left\{ \begin{array}{rcl} \beta x_2^+ + \min\big((2(x_2+\eps)^+),\eps\big) & \text{on} & \del B_1 \cap \{x_2 > -\eps \} \\ 0 & \text{on} & B_1\cap \{x_2 = -\eps + t_0 \eta(x_1) \}\end{array} \right.
\end{equation*}
Thus, the maximum principle implies that near $q$, in non-tangential
regions of $\Omega^{+}((u_k)_{\lambda})$,
\begin{equation*}
    w(x)\geq (u_k)_{\lambda}(x) = \tilde{\beta}\langle x-q, \nu(q)\rangle +
    o(|x-x_0|),
\end{equation*}
for some $\tilde{\beta}\geq 1$. Hence, $w_{\nu}(q) \geq
\tilde{\beta}\geq 1$. On the other hand, a standard perturbation
argument gives $w_{\nu}(q) = \beta + O(\eps)$. Since $\eps$ is
arbitrary, we conclude that $\beta \geq 1$.
\end{proof}


\section{Characterization of blow-downs and
blow-ups.}\label{Sec_Blows}

The notion of a variational solution is incredibly useful precisely
because it admits the application of the powerful Weiss Monotonicity
Formula.
\begin{lemma}[Weiss' Monotonicity Formula, Theorem 3.1 in \cite{Weiss1}]\label{lemma_WeissMono} Let $u$ be a variational
solution of \eqref{FBP_0} in $\Omega\subseteq\real^n$ and that
$B_R(x_0)\subseteq \Omega$. Then
\begin{equation}\label{WeissMonoFctl}
    \Phi(u,r) := r^{-n}\int_{B_r(x_0)}\big(|\nabla u|^2 +
    \chrc{\{u>0\}}\big) ~dx - r^{-n-1} \int_{\del B_r(x_0)} u^2
    ~d\mathcal{H}^{n-1}
\end{equation}
satisfies the monotonicity formula
\begin{equation}\label{WeissMonoFla}
    \Phi(u,r_2) - \Phi(u,r_1) = \int_{B_{r_2}(x_0)\setminus B_{r_1}(x_0)} 2|x|^{-n-2} \big(\nabla u \cdot (x-x_0) - u\big)^2
    ~dx \geq 0
\end{equation}
for $0<r_1<r_2< R$.
\end{lemma}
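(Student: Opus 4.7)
The plan is to reduce to $x_0=0$ by translation and then differentiate $\Phi(u,r)$ in $r$, showing that $\Phi'(r)$ equals the radial integrand in \eqref{WeissMonoFla}. Integration in $r$ together with the coarea formula then yields the stated identity, and non-negativity is manifest.

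Let me write $I(r):=\int_{B_r}\bigl(|\nabla u|^2+\chrc{\{u>0\}}\bigr)\,dx$ and $J(r):=\int_{\partial B_r}u^2\,d\mathcal{H}^{n-1}$, so $\Phi(u,r)=r^{-n}I(r)-r^{-n-1}J(r)$. Differentiating directly,
\[
I'(r)=\int_{\partial B_r}\bigl(|\nabla u|^2+\chrc{\{u>0\}}\bigr)\,d\mathcal{H}^{n-1},\qquad
J'(r)=\tfrac{n-1}{r}J(r)+2\int_{\partial B_r}u\,u_\nu\,d\mathcal{H}^{n-1},
\]
the second identity coming from writing $J(r)=r^{n-1}\int_{S^{n-1}}u(r\omega)^2\,d\omega$ and differentiating under the integral, with $\nu=x/|x|$.

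The main algebraic input is a Pohozaev-type identity obtained from the variational condition $L[u](\phi)=0$. I would apply it to the radial cutoff field $\phi_\e(x)=\eta_\e(|x|)\,x$, where $\eta_\e$ is a smooth approximation of $\chrc{[0,r]}$ from below. A direct computation gives $\mathrm{div}\,\phi_\e=n\eta_\e+\eta_\e'(|x|)|x|$ and $\nabla u\,D\phi_\e\,(\nabla u)^T=\eta_\e|\nabla u|^2+\eta_\e'(|x|)(\nabla u\cdot x)^2/|x|$. Substituting into the variational identity and passing $\e\to 0$ via the coarea formula (so $\eta_\e'\to -\delta_{|x|=r}$ in the appropriate sense) produces
\[
n\,I(r)-r\,I'(r)-2\!\int_{B_r}\!|\nabla u|^2\,dx+2r\!\int_{\partial B_r}\!u_\nu^2\,d\mathcal{H}^{n-1}=0.
\]
Next, since $\Delta u=0$ in $\{u>0\}$ and $u=0$ on $F(u)$, Green's identity (applied to each component of $B_r\cap\{u>0\}$, whose piecewise-smoothness was established in the preceding section) yields
\[
\int_{B_r}|\nabla u|^2\,dx=\int_{\partial B_r}u\,u_\nu\,d\mathcal{H}^{n-1}.
\]

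Now I would substitute these two identities into the direct computation of $\Phi'(r)$. The term $-nr^{-n-1}I(r)+r^{-n}I'(r)$ becomes $-2r^{-n-1}\int_{\partial B_r}u\,u_\nu+2r^{-n}\int_{\partial B_r}u_\nu^2$, while $(n+1)r^{-n-2}J(r)-r^{-n-1}J'(r)=2r^{-n-2}J(r)-2r^{-n-1}\int_{\partial B_r}u\,u_\nu$. Adding and completing the square,
\[
\Phi'(r)=2r^{-n-2}\!\int_{\partial B_r}\!\bigl(r\,u_\nu-u\bigr)^2\,d\mathcal{H}^{n-1}=2r^{-n-2}\!\int_{\partial B_r}\!\bigl(\nabla u\cdot x-u\bigr)^2\,d\mathcal{H}^{n-1}\ge 0,
\]
using $ru_\nu=\nabla u\cdot x$ on $\partial B_r$. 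Integrating from $r_1$ to $r_2$ and invoking coarea (with $|x|=s$ on $\partial B_s$) gives exactly \eqref{WeissMonoFla}.

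The delicate point, and the main obstacle, is the justification of the Pohozaev identity and the Green identity up to the free boundary, where $u$ is merely $C^{0,1}$. For the Green identity one uses the piecewise-smooth structure of $B_r\cap\{u>0\}$ noted at the start of Section \ref{Sec_Prelims}, together with $|\nabla u|\equiv 1$ on the smooth parts of $F(u)$ so that $\int u\,u_\nu\,d\mathcal{H}^{n-1}=0$ on the free-boundary portion. For the Pohozaev identity, the variational-solution condition is precisely tailored to handle $\chrc{\{u>0\}}$, so no separate regularity at $F(u)$ is needed; one only needs to justify the limit $\e\to 0$, which is a routine application of dominated convergence since $u\in H^1_{\mathrm{loc}}$ and $\chrc{\{u>0\}}\in L^\infty$. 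These justifications are effectively identical to those in Weiss' original argument \cite{Weiss1}, so once the computation above is set up, no new difficulties arise in dimension $n=2$.
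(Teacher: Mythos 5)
The paper does not prove this lemma; it is stated as a direct citation of Theorem~3.1 in \cite{Weiss1}, so there is no in-paper proof to compare against. Your sketch correctly reconstructs the standard Weiss/Rellich--Pohozaev argument: differentiate $\Phi(u,r)=r^{-n}I(r)-r^{-n-1}J(r)$, substitute the domain-variation identity obtained from testing $L[u]$ against $\eta_\e(|x|)\,x$ and the Green identity $\int_{B_r}|\nabla u|^2=\int_{\partial B_r}u\,u_\nu$, complete the square to get $\Phi'(r)=2r^{-n-2}\int_{\partial B_r}(\nabla u\cdot x - u)^2$, and integrate with coarea. The algebra checks out.

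Two remarks on the places you flag as delicate, since they do need care at the level of variational solutions (not just the classical solutions to which the paper later applies this). First, the Green identity is most cleanly justified not by invoking the piecewise-smooth structure of $B_r\cap\{u>0\}$ (which is a property of \emph{classical} solutions established in Section~3, and is not available for a general variational solution), but by integrating by parts on $\{u>\delta\}\cap B_r$ for a.e.\ $\delta$ (Sard), where the inner boundary term $\int_{\{u=\delta\}}(u-\delta)\,u_\nu$ vanishes identically, and then letting $\delta\to 0$ using $u\in H^1_{\mathrm{loc}}\cap C(\Omega)$. Second, the limit $\e\to 0$ in the Pohozaev step requires that the traces of $|\nabla u|^2+\chrc{\{u>0\}}$ and of $(\nabla u\cdot x)^2$ on $\partial B_r$ exist for a.e.\ $r$ and that $r\mapsto I(r),J(r)$ be absolutely continuous; both follow from $u\in H^1_{\mathrm{loc}}$ by Fubini/coarea, which validates integrating $\Phi'$ from $r_1$ to $r_2$. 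With those two justifications made explicit, your proof is complete and is the proof in \cite{Weiss1}.
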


\begin{lemma}\label{lemma_varibdown} Let $u$ be a variational
solution of \eqref{FBP_0} in $\real^n$ which is globally Lipschitz.
Assume $0 \in F(u)$ and let $v$ be any limit of a uniformly
convergent on compacts subsequence of
\begin{equation*}
    v_j(x) = R_{j}^{-1}u(R_jx)
\end{equation*}
as $R_j \to \infty$. Then $v$ is Lipschitz continuous and
homogeneous of degree one.
\end{lemma}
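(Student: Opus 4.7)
The plan is to deduce Lipschitz continuity from the uniform Lipschitz bound on the blow-down sequence, and deduce $1$-homogeneity as a consequence of the Weiss monotonicity formula in the limit $r\to\infty$.

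\textbf{Lipschitz bound.} Since $|\nabla v_j(x)| = |\nabla u(R_j x)| \le C$ almost everywhere, the sequence $v_j$ is uniformly $C$-Lipschitz. Arzela-Ascoli guarantees that the uniform subsequential limit $v$ is also $C$-Lipschitz; moreover $v(0)=\lim_j v_j(0) = 0$.

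\textbf{Finiteness of the Weiss limit.} Using $|\nabla u|\le C$ and $u(0)=0$ (so that $u(x)\le C|x|$), both integrals in the definition \eqref{WeissMonoFctl} of $\Phi(u,r)$ scale out to give a uniform bound $|\Phi(u,r)|\le C'$ for all $r>0$. Since \eqref{WeissMonoFla} shows that $r\mapsto \Phi(u,r)$ is nondecreasing, there is a finite limit
\[
\Phi_\infty = \lim_{r\to\infty} \Phi(u,r).
\]
Thus the right-hand side of \eqref{WeissMonoFla} tends to $0$:
\[
\int_{B_{r_2}\setminus B_{r_1}} 2|x|^{-n-2}\bigl(\nabla u(x)\cdot x - u(x)\bigr)^2\,dx \longrightarrow 0 \qquad \text{as }r_1,r_2\to\infty.
\]

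\textbf{Rescaling to $v_j$.} For any $0<s_1<s_2$, the change of variables $x=R_j y$, together with the identities $\nabla v_j(y)=\nabla u(R_j y)$ and $\nabla v_j(y)\cdot y - v_j(y) = R_j^{-1}\bigl(\nabla u(R_j y)\cdot (R_j y) - u(R_j y)\bigr)$, gives the scale invariance
\[
\int_{B_{s_2}\setminus B_{s_1}} 2|y|^{-n-2}\bigl(\nabla v_j(y)\cdot y - v_j(y)\bigr)^2\,dy = \int_{B_{R_j s_2}\setminus B_{R_j s_1}} 2|x|^{-n-2}\bigl(\nabla u(x)\cdot x - u(x)\bigr)^2\,dx,
\]
which tends to $0$ as $j\to\infty$. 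Hence $\nabla v_j\cdot y - v_j \to 0$ strongly in $L^2_{\mathrm{loc}}(\real^n\setminus\{0\})$.

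\textbf{Passing to the limit and concluding homogeneity.} Since $\{v_j\}$ is bounded in $W^{1,\infty}_{\mathrm{loc}}$, we may assume (after a further subsequence extraction) that $\nabla v_j \rightharpoonup \nabla v$ weakly in $L^2_{\mathrm{loc}}$; combined with uniform convergence $v_j\to v$, the distribution $\nabla v_j\cdot y - v_j$ converges weakly in $L^2_{\mathrm{loc}}(\real^n\setminus\{0\})$ to $\nabla v\cdot y - v$. A strong limit agrees with a weak limit, so
\[
\nabla v(y)\cdot y - v(y) = 0 \qquad \text{a.e.~in }\real^n\setminus\{0\}.
\]
This Euler-type identity, together with $v(0)=0$ and the Lipschitz regularity of $v$, forces $v$ to be homogeneous of degree one: for almost every $y$, the locally absolutely continuous function $t\mapsto t^{-1}v(ty)$ has vanishing derivative on $(0,\infty)$, hence is constant, and a density/continuity argument extends this to all $y\in\real^n$.

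\textbf{Expected difficulty.} The only delicate point is identifying the limit of $\nabla v_j\cdot y - v_j$: strong $L^2$ convergence is available thanks to Weiss, while for $\nabla v_j$ alone one only has weak convergence. The device above---matching a strong limit against a weak limit---resolves this cleanly and is the crux of the argument.
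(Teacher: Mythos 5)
Your proposal is correct and follows essentially the same route as the paper: Weiss monotonicity plus the uniform bound $|\Phi(u,r)|\le C'$ forces the right-hand side of \eqref{WeissMonoFla} along the rescaled sequence to vanish, and then a weak-limit argument yields $\nabla v\cdot x - v = 0$ a.e. The paper invokes lower semicontinuity of the $L^2$-norm under weak convergence where you match the strong $L^2$ limit $0$ against the weak limit $\nabla v\cdot y-v$; these are interchangeable ways of finishing the same computation.
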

\begin{proof}
Denote $v_j(x) = R_j^{-1} u(R_j x)$ and note that $v_j$ are also
global variational solutions of \eqref{FBP_0} and $\Phi(v_j, r) =
\Phi(u, rR_j)$. According to Lemma \ref{lemma_WeissMono} the
quantity $\Phi(u, R)$ is non-decreasing as $R\to \infty$ and,
moreover, it is uniformly bounded since $u$ is Lipshitz continuous.
Hence, for any fixed $0<r_1<r_2$
\begin{equation*}
    0 = \lim_{j\to\infty} \big( \Phi(u, r_2 R_j) - \Phi(u,r_1 R_j) \big) =\lim_{j\to\infty} \big(\Phi(v_j, r_2) -
    \Phi(v_j,r_1)\big)
\end{equation*}
and \eqref{WeissMonoFla} yields
\begin{equation*}
   \lim_{j\to\infty}\int_{B_{r_2}\setminus B_{r_1}} 2|x|^{-n-2} \big(\nabla v_j \cdot x - v_j\big)^2
    ~dx = 0.
\end{equation*}
Possibly passing to a subsequence such that $\nabla v_j
\rightharpoonup \nabla v$ weakly in $L^2$, the lower semicontinuity
of the $L^2$-norm with respect to weak convergence implies
\begin{equation*}
    \int_{B_{r_2}\setminus B_{r_1}} 2|x|^{-n-2} \big(\nabla v \cdot x - v\big)^2
    ~dx  = 0.
\end{equation*}
Thus, $\nabla v \cdot x = v$ a.e. whence it is a standard exercise
to conclude that $v$ is homogeneous of degree one.

\end{proof}

\begin{prop}[Characterization of blowdowns]\label{prop_blowdowns} Let $u$ be both a viscosity and a variational solution of \eqref{FBP_0} in $\real^2$, which is
Lipschitz-continuous and non-degenerate. Assume $0 \in F(u)$ and let
$v$ be any limit of a uniformly convergent on compacts subsequence
of
\begin{equation*}
    v_j(x) = R_{j}^{-1}u(R_jx)
\end{equation*}
as $R_j \to \infty$. Then $v$ is either $V_1(x) = x_2^+$ or $V_2(x)
= s |x_2|$ for some $0< s\leq 1$ in an appropriately chosen
Euclidean coordinate system.
\end{prop}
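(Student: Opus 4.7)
The plan is to combine the 1-homogeneity already provided by Lemma \ref{lemma_varibdown} with a polar-coordinate analysis of the harmonic equation, and then close the remaining degrees of freedom using the viscosity and variational free-boundary conditions that pass to the blow-down limit. A preliminary step is to verify that $v$ genuinely inherits all the relevant structure: the dilates $v_j(x) = R_j^{-1} u(R_j x)$ are again viscosity and variational solutions (the functional $I$ rescales covariantly, and the viscosity conditions are scale-invariant), they share $u$'s Lipschitz and nondegeneracy constants, and $0 \in F(v_j)$ for every $j$. By Proposition \ref{prop_limitofsolns} the limit $v$ is therefore Lipschitz, nondegenerate, variational, with $\overline{\{v_j>0\}} \to \overline{\{v>0\}}$ locally in Hausdorff distance. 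That Hausdorff convergence is exactly the hypothesis that allows Lemma \ref{lemma_limitvisco} to upgrade $v$ to a full viscosity solution, and also forces $0 \in F(v)$. Lemma \ref{lemma_varibdown} then makes $v$ homogeneous of degree one.

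Next, writing $v(x) = r f(\theta)$ in polar coordinates, harmonicity on $\{v > 0\}$ gives $f'' + f = 0$ on every connected component of $\{f > 0\} \subset S^1$, which is an open arc $(\theta_1, \theta_2)$. The Dirichlet boundary conditions $f(\theta_1) = f(\theta_2) = 0$ force $\theta_2 - \theta_1 \in \pi\Z$; since $f > 0$ in the interior and $\theta_2 - \theta_1 \in (0, 2\pi]$, the only option is $\theta_2 - \theta_1 = \pi$ with $f(\theta) = s \sin(\theta - \theta_1)$ for some $s > 0$. Thus each connected component of $\{v > 0\}$ is an open half-plane $\{x \cdot \nu > 0\}$ on which $v(x) = s(x \cdot \nu)^+$. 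Two disjoint open half-planes in $\real^2$ must be opposite, so there are at most two components and, in the two-component case, they share a common boundary line.

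If there is a single component, then $v(x) = \alpha (x \cdot \nu)^+$ for some $\alpha > 0$ (positivity from nondegeneracy plus $0 \in F(v)$). Now $\{v = 0\}$ has nonempty interior, so at any point $x_0 \in F(v) \setminus \{0\}$ interior balls touch $F(v)$ from both phases; Definition \ref{def_visco} then yields $\alpha \le 1$ and $\alpha \ge 1$, hence $\alpha = 1$ and $v = V_1$ after rotation. If there are two components, $v(x) = s_1 (x \cdot \nu)^+ + s_2 (x \cdot \nu)^-$ with $s_1, s_2 > 0$. Only the supersolution side is now available (interior balls touch $F(v)$ only from the positive phase), giving $s_1, s_2 \le 1$. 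To force $s_1 = s_2$ I would test the variational identity $L[v](\phi) = 0$ with $\phi(x) = \eta(x) \nu$ for arbitrary $\eta \in C^\infty_c(\real^2)$; choosing coordinates so that $\nu = e_2$, a direct computation using $\partial_2 v = s_1 \chrc{\{x_2 > 0\}} - s_2 \chrc{\{x_2 < 0\}}$ and $\chrc{\{v>0\}} = \chrc{\{x_2 \ne 0\}}$ a.e.\ reduces the bulk integrals to a distributional identity on the line $\{x_2 = 0\}$, namely
\[
(s_1^2 - s_2^2) \int_{\real} \eta(x_1, 0) \, dx_1 = 0,
\]
which gives $s_1 = s_2 =: s \in (0, 1]$ and therefore $v = V_2$ after rotation.

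The main technical point in this argument is ensuring that both weak-solution properties survive the blow-down: the viscosity conditions require the Hausdorff convergence of zero sets from Proposition \ref{prop_limitofsolns}, while the variational identity on the straight free boundary in the two-component case is what rules out asymmetric two-plane configurations. Everything else — the ODE $f'' + f = 0$ on each angular arc, the topological count of disjoint open half-planes in $\real^2$, and the Hopf-type application of viscosity conditions at a flat free boundary — is routine once that setup is in place.
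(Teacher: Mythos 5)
Your proposal is correct and follows the paper's own route: reduce to a one-homogeneous global weak solution via Proposition \ref{prop_limitofsolns}, Lemma \ref{lemma_limitvisco} and Lemma \ref{lemma_varibdown}; split on one versus two components of the positive phase; use the viscosity sub- and supersolution conditions (both available when $\{v=0\}$ has interior) to get $\alpha=1$ in the one-component case; and in the two-component case combine the supersolution bound $s_1,s_2\le 1$ with the variational identity $(s_1^2-s_2^2)\int_{\real}\eta(x_1,0)\,dx_1=0$ to conclude $s_1=s_2$. The only added material is the polar-coordinate ODE argument $f''+f=0$ on the angular arcs of $\{v>0\}$, which derives the representation $v = c_1 x_2^+ + c_2 x_2^-$ that the paper asserts without proof ``after possibly rotating the coordinate axes''; this filled-in detail is correct.
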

\begin{proof}
As a consequence of Proposition \ref{prop_limitofsolns}, Lemma
\ref{lemma_limitvisco} and Lemma \ref{lemma_varibdown} applied to
the sequence $v_j$ we conclude that $v$ is a Lipschitz continuous,
non-degenerate, viscosity and variational solution of \eqref{FBP_0},
which is homogeneous of degree 1. Thus, after possibly rotating the
coordinate axes
\begin{equation*}
    v(x) = c_1 x_2^+ + c_2 x_2^-,
\end{equation*}
where $c_1 \geq c_2 \geq 0$. We have the following two cases.

\paragraph{\textbf{Case 1} ($c_2 = 0$)} By non-degeneracy we must have $c_1 > 0$ and
since every point $x_0\in F(v) = \{x_2=0\}$ has a tangent disk from
both the positive and zero set of $v$, then $c_1 =1$.

\paragraph{\textbf{Case 2} ($c_2 >0$)} Every point $x_0 \in F(v) = \{x_2=0\}$ has a
tangent disk from the positive phase of $v$ only, so from the fact
that $v$ is a viscosity solution we can just conclude that $1\geq
c_1 \geq c_2$. On the other hand, $v$ is also a variational solution
and an easy computation gives
\begin{equation*}
    0=L[v](\phi) = (c_1^2 - c_2^2)\int_{\real}\phi_2(x_1,0) dx_1
\end{equation*}
for any $\phi = (\phi_1, \phi_2)\in C^1_c(\real^2;\real^2).$ Thus,
$c_1 = c_2=s$.
\end{proof}

Exactly analogous arguments apply to \emph{blow-up} limits of
Lipschitz continuous weak solutions, so we have the analogous
characterization:
\begin{prop}[Characterization of blow-ups]\label{prop_blowups} Let $u$ be a 
both a viscosity and a variational solution of \eqref{FBP_0} in
$\Omega\subseteq\real^2$, which is Lipschitz-continuous and
non-degenerate. Assume $0 \in F(u)$ and let $v:\real^2\to\real$ be
any limit of a uniformly convergent on compacts subsequence of
\begin{equation*}
    v_j(x) = \eps_{j}^{-1}u(\eps_jx)
\end{equation*}
as $\eps_j \to 0$. Then $v$ is either $V_1(x) = x_2^+$ or $V_2(x) =
s |x_2|$ for some $0< s\leq 1$ in an appropriately chosen Euclidean
coordinate system.
\end{prop}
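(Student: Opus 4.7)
The plan is to mirror the proof of Proposition \ref{prop_blowdowns} almost verbatim, with the only essential change occurring in the analog of Lemma \ref{lemma_varibdown}. First I would apply Proposition \ref{prop_limitofsolns} to the rescaled sequence $v_j(x) = \eps_j^{-1} u(\eps_j x)$, noting that each $v_j$ is a variational solution of \eqref{FBP_0} on $B_{R/\eps_j}$ (for $B_R \subseteq \Omega$) and inherits the scale-invariant uniform Lipschitz and nondegeneracy bounds of $u$. Thus the limit $v$ is a globally defined Lipschitz, nondegenerate variational solution, and moreover the Hausdorff convergence $\overline{\Omega^+(v_j)} \to \overline{\Omega^+(v)}$ holds. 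Feeding this into Lemma \ref{lemma_limitvisco} then shows that $v$ is also a viscosity solution.

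Next I would establish that $v$ is homogeneous of degree one. Since $\Phi(v_j, r) = \Phi(u, \eps_j r)$ by the scaling invariance of the Weiss functional, and since $\Phi(u, \cdot)$ is monotone nondecreasing on $(0, R)$ by Lemma \ref{lemma_WeissMono}, I only need $\Phi(u, \cdot)$ to be bounded below so that $\lim_{s \to 0^+} \Phi(u, s)$ exists. This follows from the Lipschitz bound on $u$: the gradient-plus-characteristic integral in \eqref{WeissMonoFctl} is controlled by $\pi(C^2+1)$, while $0 \in F(u)$ and the Lipschitz bound give $|u(x)| \le C|x|$, so the boundary term is bounded by $C^2$. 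Consequently, for any $0 < r_1 < r_2$,
\[
\lim_{j \to \infty} \int_{B_{r_2} \setminus B_{r_1}} 2|x|^{-n-2} (\nabla v_j \cdot x - v_j)^2 \, dx = \lim_{j \to \infty} (\Phi(v_j, r_2) - \Phi(v_j, r_1)) = 0.
\]
Passing to a weakly convergent subsequence $\nabla v_j \rightharpoonup \nabla v$ in $L^2_{\text{loc}}$ and applying lower semicontinuity of the $L^2$ norm yields $\nabla v \cdot x = v$ a.e., which forces $v$ to be homogeneous of degree one.

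Once homogeneity is established, the case analysis from the proof of Proposition \ref{prop_blowdowns} transfers verbatim. In two dimensions a continuous, degree-one homogeneous function that is harmonic off a line and vanishes on that line must, after a rotation, be of the form $v(x) = c_1 x_2^+ + c_2 x_2^-$ with $c_1 \ge c_2 \ge 0$. Nondegeneracy rules out $c_1 = 0$; the viscosity condition at a point of $F(v)$ with a tangent ball from $\{v = 0\}$ (available when $c_2 = 0$) forces $c_1 = 1$, giving the half-plane solution $V_1$; if $c_2 > 0$, then only tangent balls from $\{v > 0\}$ exist, yielding $c_1 \le 1$, and the variational identity $L[v](\phi) = (c_1^2 - c_2^2)\int_{\real} \phi_2(x_1, 0)\, dx_1 = 0$ applied to test fields $\phi = (0, \phi_2)$ forces $c_1 = c_2 =: s \in (0, 1]$, giving $V_2$.

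The only real novelty relative to the blow-down argument is the boundedness-below of $\Phi(u, r)$ as $r \to 0^+$, which I do not expect to be a serious obstacle given the Lipschitz bound. All remaining ingredients -- the three preservation lemmas, the Weiss computation, and the classification of degree-one homogeneous planar solutions -- apply unchanged.
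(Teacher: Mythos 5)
Your proposal is correct and takes essentially the same approach as the paper, whose ``proof'' of this proposition is the single sentence that blow-ups are handled by ``exactly analogous arguments'' to the blow-down Proposition~\ref{prop_blowdowns}; you have simply filled in the suppressed details. In particular, you correctly isolate the one genuine asymmetry between the two cases: for blow-downs one needs the Weiss quantity $\Phi(u,\cdot)$ bounded \emph{above} as $r\to\infty$, whereas for blow-ups one needs it bounded \emph{below} so that $\lim_{r\to 0^+}\Phi(u,r)$ exists by monotonicity, and the lower bound $\Phi(u,r)\ge -r^{-3}\int_{\partial B_r}u^2\,d\mathcal{H}^1\ge -2\pi C^2$ (using $0\in F(u)$ and the Lipschitz bound) is exactly the right observation.
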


\section{Auxiliary Lemmas.}\label{Sec_AuxLemmas}

\begin{lemma}\label{lemma_gradEst} Let $u$ be a classical solution of \eqref{FBP_0} in
a domain $B_{2}$, which has Lipschitz norm $L$ and such that
\begin{equation}\label{lemma_gradEstIneqHypo}
    \left|u(x) - s|x_2|\right|<\eps \quad \text{in} \quad B_2
\end{equation}
for some $0<s\leq 1$ and some small $\eps>0$. Then there exists a
universal constant $c>0$ such that
\begin{equation*}
    \|\nabla u\|_{L^{\infty}(B_{1/2})} \leq 1 + cL\sqrt{\eps}.
\end{equation*}
\end{lemma}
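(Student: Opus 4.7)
\noindent\textit{Proof sketch.}
The strategy exploits the subharmonicity of $\log|\nabla u|$ (in two dimensions $u_{x_1} - iu_{x_2}$ is holomorphic on $\{u>0\}$, so $\log|\nabla u|$ is subharmonic there) combined with sharp boundary estimates coming from the closeness to the wedge $s|x_2|$. The hypothesis $\bigl|u - s|x_2|\bigr| < \epsilon$ immediately confines the free boundary: any zero of $u$ satisfies $s|x_2| < \epsilon$, so $F(u) \cap B_2 \subset \{|x_2| \le \epsilon/s\}$. Combining this with Proposition~\ref{prop_supnondeg} applied at any free-boundary point in $B_1$ (which gives $\sup_{B_1} u \gtrsim 1$) forces $s \gtrsim 1$ once $\epsilon$ is small, so the strip containing $F(u)$ has width $O(\epsilon)$.

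In each bulk region $B^\pm := \{\pm x_2 > \epsilon/s\} \cap B_{3/2}$, $u$ is positive and harmonic, and the harmonic function $w^\pm := u \mp sx_2$ satisfies $|w^\pm| < \epsilon$. The interior gradient estimate applied to $w^\pm$ on balls of radius $\sqrt{\epsilon}$ gives $|\nabla u \mp s e_2| \le C\sqrt{\epsilon}$ whenever $x \in B^\pm \cap B_{5/4}$ with $|x_2| \ge \epsilon/s + \sqrt{\epsilon}$; since $s \le 1$, this yields $\log|\nabla u(x)| \le C\sqrt{\epsilon}$ in these ``thick'' regions.

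Fix a connected component $D$ of $\{u > 0\} \cap B_1$. The subharmonic function $\log|\nabla u|$ has boundary data on $\partial D$ equal to $0$ on $F(u)\cap\partial D$, at most $C\sqrt{\epsilon}$ on the thick arcs $\partial B_1 \cap \partial D \cap \{|x_2| > \epsilon/s + \sqrt{\epsilon}\}$, and at most $\log L$ on the thin arc $A := \partial B_1 \cap \partial D \cap \{|x_2| \le \epsilon/s + \sqrt{\epsilon}\}$, whose total arclength is $O(\sqrt{\epsilon})$. The maximum principle then yields
\[
\log|\nabla u(x)| \le C\sqrt{\epsilon} + (\log L)\,\omega^D(x, A).
\]
Harmonic measure is controlled by monotonicity under domain inclusion: any Brownian exit of $D$ through $A \subset \partial B_1$ is simultaneously a Brownian exit of $B_1$ through $A$, so $\omega^D(x, A) \le \omega^{B_1}(x, A)$, and the latter is bounded by $C|A| \le C'\sqrt{\epsilon}$ for $x \in B_{1/2}$ via the uniform bound on the Poisson kernel of $B_1$. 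Exponentiating and using $\log L \le L$ (the bound being trivial if $L \le 1$) yields $|\nabla u(x)| \le 1 + cL\sqrt{\epsilon}$ on $D \cap B_{1/2}$, which is the claim. The chief technical point is the harmonic-measure estimate on the potentially irregular domain $D$, whose boundary may dip into the axis strip in complicated ways; this is sidestepped by the monotonicity argument just described.
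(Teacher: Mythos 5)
You take a genuinely different route from the paper. The paper exploits the subharmonicity of $|\nabla u|^2$ (valid in every dimension), observes that $v:=|\nabla u|^2-1$ extends continuously by zero across $F(u)$, and compares $v$ with an explicit harmonic function $v_h$ on the full disk $B_1$ via the Poisson formula, splitting $\partial B_1$ into a thin arc of width $M\eps/s$ (where $v_h\le L^2$) and the complementary thick arcs (where $v_h\lesssim s/M$), then optimizing the free parameter $M$. You instead use the two-dimensional fact that $\log|\nabla u|$ is subharmonic on $\{u>0\}$ (from holomorphy of $u_{x_1}-iu_{x_2}$) and estimate the harmonic measure of the thin arc on the component $D$ of the positive phase, handling the potentially wild geometry of $\partial D$ via domain monotonicity. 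Both proofs use the same thin/thick decomposition, the boundary datum $\le 1$ on $F(u)$, and an interior gradient estimate in the thick region; the paper's version is a bit cleaner since working on all of $B_1$ sidesteps harmonic measure entirely, and it carries over to higher dimensions.

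There is, however, a gap. You invoke Proposition~\ref{prop_supnondeg} to conclude $\sup_{B_1}u\gtrsim 1$ and hence $s\gtrsim 1$, but that proposition requires the topological assumption~\eqref{topo_assmpt}, which is not among the hypotheses of Lemma~\ref{lemma_gradEst}. Without it there is no a priori lower bound on $s$, so the strip $\{|x_2|<\eps/s\}$ confining $F(u)$ has width $\eps/s$ rather than $O(\eps)$; for $s$ small relative to $\sqrt\eps$ your thin arc $A$ has arclength $\gtrsim\eps/s$ rather than $O(\sqrt\eps)$, and the harmonic-measure bound $\omega^{B_1}(x,A)\lesssim\sqrt\eps$ fails. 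Your sketch fixes the ball radius at $\sqrt\eps$, which is precisely what forces the requirement $s\gtrsim\sqrt\eps$ that the invalid nondegeneracy step was meant to supply. The paper avoids this by leaving the radius $M\eps/s$ as a tunable parameter and choosing $M=s/(\sqrt\eps L)$, so that the thin-strip width $\sqrt\eps/L$ no longer depends on $s$; to repair your argument without adding~\eqref{topo_assmpt} to the hypotheses you would need to perform a similar optimization of the radius rather than fixing it at $\sqrt\eps$.
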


\begin{proof}
Assumption \eqref{lemma_gradEstIneqHypo} implies $B_2^+(u_R)\subset
\{|x_2|>\eps/s\}$. Thus, at any $p \in \del B_1\cap \{|x_2|>
2M\eps/s\}\}$ for a large $M\leq s/2\eps$, we have $B_{M\eps/s}(p)
\subset B_2^+(u)$ so that $u-s|x_2|$ is harmonic in
$B_{M\eps/s}(p)$. Hence,
\begin{equation*}
    |\nabla u(p) - s\nabla|x_2|(p)|\leq \frac{c'}{M\eps/s}\fint_{\del
    B_{M\eps/s}}\big|u-s|x_2|\big| ~ d\mathcal{H}^1 \leq
    \frac{c'}{M/s},
\end{equation*}
which in turn leads to
\begin{equation}\label{lemma_gradEstIneqGrad}
    |\nabla u(p)|^2 \leq \left(s+\frac{c'}{M/s}\right)^2 \leq 1+ \frac{3c'}{M/s}.
\end{equation}
Define the function $v:B_2\rightarrow \real$
\begin{equation*}
    v = \left\{\begin{array}{lr} |\nabla u|^2 -1 & \text{ in } B_2^+(u) \\
0 & \text{otherwise.}
    \end{array}\right.
\end{equation*}
Then $v$ is continuous in $B_2$ and since
\begin{equation*}
    \Delta |\nabla u|^2 = 2|D^2 u|^2 + 2 \nabla (\Delta
    u)\cdot \nabla u =2|D^2 u|^2 \geq 0 \quad \text{in }
    B_2^+(u),
\end{equation*}
$v$ is subharmonic in $B_2^+(u)$. Let $v_h:\overline{B_1}\to \real$
be the harmonic function whose boundary values on $\del B_1$ are
given by
\begin{equation*}
    v_h(x) = \max\{v(x), 3c's/M\} \quad x\in \del B_1.
\end{equation*}
By the maximum principle, $v_h > 0$ in $B_1$ and $v_h\geq v$ in
$B_1^+(u)$, whence $v\leq v_h$ in $B_1$. By Poisson's formula, for
any $x\in B_{1/2}$
\begin{align}
    v_h(x)\leq c \int_{\del B_1} v_h ~ d\hsd & = c\left(\int_{\del B_1 \cap \{|x_2|\leq
    M\eps/s\}}v_h ~d\hsd + \int_{\del B_1 \cap \{|x_2|>
    M\eps/s\}} v_h ~d\hsd \right) \notag \\
    & \leq \tilde{c}L^2\eps M/s + \frac{\tilde{c}}{M/s},
    \label{lemma_gradEst^v_h}
\end{align}
where the last inequality is a consequence of
\eqref{lemma_gradEstIneqGrad} and the Lipschitz control of $u$.
Choosing $M = s/(\sqrt{\eps}L\})$ yields
\begin{equation*}
    v \leq v_h \leq 2\tilde{c}L\sqrt{\eps} \quad \text{in } B_{1/2}
\end{equation*}
which is the desired estimate.
\end{proof}

\begin{lemma}\label{Lemma_TwoPlaneBdwnGrad} Let $u_k$ be a sequence of
classical solutions of \eqref{FBP_0} in $B_{R_k}$, $R_k\nearrow
\infty$ that are uniformly Lischitz continuous and assume the
sequence converges uniformly on compact subsets of $\real^2$ to
$u:\real^2\to\real$ with $0\in F(u)$. If a blowdown limit of $u$
\begin{equation*}
    u_{R_j}(x) = R_{j}^{-1} u(R_j x) \to s|x_2| \quad \text{uniformly on
    compacts} \quad \text{as } R_{j}\to\infty,
\end{equation*}
for some $0<s\leq 1$, then
\begin{equation*}
    |\nabla u| \leq 1 \quad \text{a.e.}
\end{equation*}
\end{lemma}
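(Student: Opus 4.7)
\medskip

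The plan is to reduce the statement to Lemma \ref{lemma_gradEst} by rescaling and a diagonal argument. The key point is that although $u$ itself need not be a \emph{classical} solution, each $u_k$ is, and rescaling preserves classicality together with the Lipschitz constant.

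First I would fix a sequence $R_j\to\infty$ realizing the blow-down convergence, and for each $j$ set
\[
u^{(j)}(x) := R_j^{-1}u(R_jx), \qquad u_k^{(j)}(x) := R_j^{-1}u_k(R_jx).
\]
Each $u_k^{(j)}$ is a classical solution of \eqref{FBP_0} in $B_{R_k/R_j}$, and a direct chain rule computation gives $\nabla u_k^{(j)}(x) = \nabla u_k(R_jx)$, so the Lipschitz norm of $u_k^{(j)}$ is bounded by the same uniform constant $L$ as that of $u_k$. Since $u_k \to u$ uniformly on compacts, one has $u_k^{(j)}\to u^{(j)}$ uniformly on $\overline{B_2}$ for each fixed $j$; together with the hypothesis $u^{(j)}\to s|x_2|$ uniformly on $\overline{B_2}$, a standard diagonal selection furnishes $k(j)$ with $R_{k(j)}/R_j\to \infty$ and
\[
\bigl\|u_{k(j)}^{(j)}(x) - s|x_2|\bigr\|_{L^\infty(B_2)} \le \eps_j, \qquad \eps_j\to 0.
\]

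Next I would apply Lemma \ref{lemma_gradEst} to each $u_{k(j)}^{(j)}$ (which is a classical solution in $B_2$ for $j$ large) to obtain
\[
\|\nabla u_{k(j)}^{(j)}\|_{L^\infty(B_{1/2})} \le 1 + cL\sqrt{\eps_j}.
\]
Translating back via $\nabla u_k^{(j)}(x) = \nabla u_k(R_jx)$ gives
\[
\|\nabla u_{k(j)}\|_{L^\infty(B_{R_j/2})} \le 1 + cL\sqrt{\eps_j}.
\]
Now I would invoke Proposition \ref{prop_limitofsolns}(c): along a subsequence $\nabla u_k\to \nabla u$ a.e. Fixing any Lebesgue point $x$ of $|\nabla u|$ with $|x|<R_j/2$, and passing to the limit along the subsequence through $k(j)$ (using a further diagonal extraction if necessary to ensure a.e. convergence is retained), we obtain $|\nabla u(x)|\le 1 + cL\sqrt{\eps_j}$ for a.e.\ $x\in B_{R_j/2}$.

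Finally, letting $j\to\infty$ so that $\eps_j\to 0$ and $R_j/2\to\infty$ yields $|\nabla u|\le 1$ almost everywhere in $\real^2$. The only delicate point is the interplay of the two limits $k\to\infty$ and $j\to\infty$; the subtlety is resolved by the diagonal choice $k=k(j)$, which is legitimate because the approximation $\|u_k^{(j)} - s|x_2|\|_{L^\infty(B_2)} \to 0$ as $j\to\infty$ and $k\ge k(j)$, and because the a.e.\ convergence of gradients is preserved along any subsequence. No other step presents an obstacle.
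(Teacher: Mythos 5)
Your proof follows the paper's own line closely: rescale, compare with $s|x_2|$, apply Lemma \ref{lemma_gradEst}, and pass to the limit. The one genuine issue is the last step, where you invoke Proposition \ref{prop_limitofsolns}(c) to obtain $\nabla u_k\to\nabla u$ a.e.\ --- that proposition carries a uniform non-degeneracy hypothesis which is not among the hypotheses of the present lemma (the lemma assumes only uniform Lipschitz continuity of the $u_k$). The paper sidesteps this by arguing directly: for $x\in\{u>0\}$ with $d=u(x)$, the ball $B_{d/2C}(x)$ lies in $\{u>0\}$ and, for $k$ large, in $\{u_k>0\}$, so $u_k\to u$ uniformly on a ball where both are harmonic yields $\nabla u_k(x)\to\nabla u(x)$; combined with the elementary fact that $\nabla u=0$ a.e.\ on the closed set $\{u=0\}$, this delivers $\|\nabla u\|_{L^\infty(B_{R_j/2})}\le 1+C\sqrt{\eps}$ with no appeal to non-degeneracy or to part (c) of the convergence proposition.

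In the lemma's actual use (inside Proposition \ref{prop_twoplane}, where the $u_k$ satisfy \eqref{topo_assmpt} and hence are uniformly non-degenerate by Proposition \ref{prop_supnondeg}), your argument goes through unchanged. But as a proof of the lemma as stated it should be patched, either by replacing the citation of Proposition \ref{prop_limitofsolns}(c) with the direct harmonicity argument above, or by adding the non-degeneracy hypothesis. Your diagonal selection $k(j)$ versus the paper's nested ordering (fix $\eps$, then $j$, then let $k\to\infty$, then $R_j\to\infty$, then $\eps\to0$) is a cosmetic difference and is fine; a.e.\ convergence is indeed preserved along subsequences, as you note.
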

\begin{proof}
Fix $\eps>0$ and find $j$ large enough so that
\begin{equation*}
    |u_{R_j} - s|x_2|| < \eps/2 \quad \text{in } B_2.
\end{equation*}
Then for all large enough $k$, such that $|u_{R_j} -
(u_k)_{R_j}|<\eps/2$ in $B_2$, we have
\begin{equation*}
    |(u_k)_{R_j} - s|x_2|| < \eps,
\end{equation*}
so that Lemma \ref{lemma_gradEst} yields the estimate
\begin{equation*}
    \|\nabla u_k\|_{L^{\infty}(B_{R_j/2})} = \|\nabla
    (u_k)_{R_j}\|_{L^{\infty}(B_{1/2})} \leq 1 + C \sqrt{\eps},
\end{equation*}
where $C$ is a bound on the Lipschitz norm of $u_k$. At every $x\in
\{u>0\}$, for $d= u(x)$, the disk $B_{d/2C}(x)\subseteq \{u>0\}$ as
well as $B_{d/2C}(x)\subseteq \{u_k>0\}$ for all $k$ large enough.
Since $u_{k}\to u$ uniformly in $B_{d/2C}(x)$, where the functions
are harmonic, we also get $\nabla u_k(x) \to \nabla u(x)$ as
$k\to\infty$. Since $\nabla u(x) = 0$ a.e. $x \in \{u=0\}$,
\begin{equation*}
    \|\nabla u\|_{L^{\infty}(B_{R_j/2})} = \lim_{k\to\infty}\|\nabla u_k\|_{L^{\infty}(B_{R_j/2})}  \leq 1+ C \sqrt{\eps}.
\end{equation*}
Letting $R_{j}\to\infty$, followed by $\eps\to 0$, yields the
result.
\end{proof}

\begin{lemma}\label{lemma_FBCurv} Let $u$ be a classical solution
of \eqref{FBP_0} in $\Omega\subseteq \real^2$. Then the signed
curvature $\kappa$ of $F(u)$ is given by
\begin{equation*}
    \kappa = - \frac{1}{2}\frac{\del(|\nabla u|^2)}{\del\nu},
\end{equation*}
where $\nu$ is the unit normal pointing towards $\Omega^+(u)$.
\end{lemma}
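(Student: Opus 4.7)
The plan is to verify the identity pointwise at an arbitrary point $p\in F(u)$ via a direct local computation. Since $F(u)$ is analytic by \cite{KindNiren}, one may choose a Euclidean coordinate system centered at $p$ such that $\nu(p)=e_2$ and the tangent line to $F(u)$ at $p$ is along $e_1$. In these coordinates $F(u)$ is locally the graph $x_2=f(x_1)$ with $f(0)=f'(0)=0$, and the signed curvature of $F(u)$ at $p$ (with respect to $\nu$) is read off from $f''(0)$.

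First I would extract the relevant first- and second-order data of $u$ at $p$. Differentiating the identity $u(x_1,f(x_1))\equiv 0$ once yields $u_1+u_2 f'=0$, and evaluating at $x_1=0$ gives $u_1(p)=0$. Since $|\nabla u|(p)=1$ and $\nabla u$ points into $\Omega^+(u)$, i.e.\ in the direction of $\nu(p)=e_2$, we must have $u_2(p)=1$. Differentiating the boundary identity once more and evaluating at $x_1=0$ gives
\[
u_{11}(p)+u_2(p)\,f''(0)=0,\qquad \text{so}\qquad u_{11}(p)=-f''(0).
\]
Harmonicity of $u$ in $\Omega^+(u)$ then forces $u_{22}(p)=-u_{11}(p)=f''(0)$.

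Next, compute the normal derivative of $|\nabla u|^2=u_1^2+u_2^2$ at $p$:
\[
\frac{\partial(|\nabla u|^2)}{\partial\nu}\bigg|_p
= 2u_1(p)\,u_{12}(p)+2u_2(p)\,u_{22}(p)
= 2f''(0).
\]
Combining this with the sign convention employed in the paper, namely that the signed curvature of $F(u)$ relative to $\nu$ equals $-f''(0)$ (so that a locally convex zero-set component corresponds to $\kappa>0$, which is consistent with the non-negativity of $\kappa$ derived from the bound $|\nabla u|\le 1$ in Section \ref{Sec_AuxLemmas}), one concludes $\kappa(p)=-\tfrac{1}{2}\,\partial_\nu(|\nabla u|^2)\big|_p$. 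Since $p\in F(u)$ is arbitrary and both sides are continuous along the analytic curve $F(u)$, the identity holds everywhere on $F(u)$.

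The proof is essentially a two-line Taylor expansion combined with harmonicity; the only delicate point worth flagging is consistency of the orientation/sign convention used for $\kappa$ with the choice of $\nu$, and this is pinned down by requiring that convex components of $\{u=0\}$ have positive curvature, which matches the computation above.
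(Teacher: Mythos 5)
Your proof is correct, and it takes a somewhat different computational route than the paper. The paper applies the general level-set curvature formula $\kappa = \mathrm{div}(\nabla v/|\nabla v|)$ to a local harmonic extension $v$ of $u$ and simplifies using $\Delta v = 0$ and $|\nabla v|=1$ on $F(u)$; this is coordinate-free and gives the result in a single displayed line. You instead pick adapted coordinates at $p$, parameterize $F(u)$ as a graph $x_2 = f(x_1)$, and extract $u_{11}(p)=-f''(0)$, $u_{22}(p)=f''(0)$ by differentiating the boundary identity and invoking harmonicity, then read off $\partial_\nu(|\nabla u|^2)\big|_p = 2f''(0) = -2\kappa(p)$. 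The two arguments are equivalent in substance (both reduce to the interplay of $\Delta u = 0$ with $|\nabla u|=1$ on $F(u)$), but yours is more elementary and explicit about where the sign convention for $\kappa$ enters; the paper's is shorter and avoids a choice of coordinates. One small point worth making explicit: differentiating $u(x_1, f(x_1))\equiv 0$ twice uses that $u$ is $C^2$ up to $F(u)$ from the positive phase, which holds here since $F(u)$ is analytic; this is the same regularity the paper invokes implicitly via the local harmonic extension $v$.
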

\begin{proof}
The curvature of a level set of a function $v$ at a point where
$|\nabla v| \neq 0$ is given by 
\begin{equation*}
    \kappa = \text{div}\left(\frac{\nabla v}{|\nabla v|}\right).
\end{equation*}
(Note that $\kappa >0$ when the curvature vector points in the
direction of $-\nabla v$, e.g. the curvature of the $0$-level set of
$v(x,y)=\log (x^2+y^2)$ is positive $1$). Since in a small enough
neighborhood $U$ of each $x\in F(u)$ we can define a harmonic $v$
which agrees with $u$ on $U\cap \Omega^+(u)$, the curvature of
$F(u)$ is given by the curvature of the $v=0$ level set. Using in
addition $|\nabla v|(x) = 1$, we compute
\begin{equation*}
    \kappa = \frac{\Delta v}{|\nabla v|} - \frac{\nabla v \cdot \nabla |\nabla v|}{|\nabla v|^2} = -\frac{(|\nabla v|^2)_{\nu}}{2|\nabla
    v|^2} = - (|\nabla u|^2)_{\nu}/2.
\end{equation*}
\end{proof}

\begin{remark}\label{rem_poscurv}  If $u$ is a classical solution of \eqref{FBP_0} in $\Omega\subseteq
\real^2$ with $|\nabla u|<1$ in $\Omega^+(u)$, then $F(u)$ has
strictly positive curvature.
\end{remark}
\begin{proof} The result follows immediately from Lemma
\ref{lemma_FBCurv} after an application of the Hopf Lemma to
$|\nabla u|^2$, which is subharmonic in $\Omega^+(u)$:
 \begin{equation*}
    \Delta |\nabla u|^2 = 2 |D^2 u|^2 + 2 \nabla u\cdot \nabla (\Delta
    u) = 2 |D^2 u|^2 \geq 0 \quad \text{in} \quad \Omega^+(u) .
 \end{equation*}
\end{proof}


\begin{lemma}\label{lemma_noShortNonFBseg}
Let $u$ be a classical solution of \eqref{FBP_0} in
$\Omega\subset\real^2$, whose Lipschitz norm is $L<\infty$. If
$V\subseteq \Omega^+(u)$, $V\Subset\Omega$ is a bounded, piecewise
$C^1$ domain, then
\begin{equation*}
    L^{-1}\hsd(\del U\cap F(u)) \leq \hsd(\del U \setminus F(u)).
\end{equation*}
\end{lemma}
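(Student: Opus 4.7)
The plan is a one-line application of the divergence theorem to the harmonic function $u$ on $V$, combined with the free boundary condition $|\nabla u|=1$ on $F(u)$ and the Lipschitz bound $|\nabla u|\le L$ elsewhere. First I would note that because $V\Subset\Omega$ is piecewise $C^1$ and $u$ is harmonic in $V$ (since $V\subseteq\Omega^+(u)$), one has $\Delta u=0$ in $V$ with $u$ continuous up to $\partial V$. The paragraph at the start of Section \ref{Sec_Prelims} already shows that $\partial V$ meets $F(u)$ in a finite union of analytic arcs (away from at most finitely many points), so the boundary decomposes cleanly as $\partial V=(\partial V\cap F(u))\sqcup (\partial V\setminus F(u))$ up to an $\hsd$-null set, and the divergence theorem applies.

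Next I would apply Green's identity:
\begin{equation*}
0=\int_V \Delta u\,dx = \int_{\partial V} \partial_\nu u\,d\hsd = \int_{\partial V\cap F(u)}\partial_\nu u\,d\hsd + \int_{\partial V\setminus F(u)}\partial_\nu u\,d\hsd,
\end{equation*}
where $\nu$ is the outer unit normal to $V$. On $\partial V\cap F(u)$, the set $V$ lies on the positive side of $F(u)$ (since $V\subseteq\Omega^+(u)$), so the outer normal $\nu$ points into $\{u=0\}$, i.e.\ opposite to the inner normal to $\Omega^+(u)$. The free boundary condition $|\nabla u|=1$, together with the fact that $\nabla u$ points from $\{u=0\}$ into $\Omega^+(u)$, then gives $\partial_\nu u=-1$ on $\partial V\cap F(u)$. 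On $\partial V\setminus F(u)$ we have $u>0$ in a neighborhood (in $\overline{\Omega^+(u)}$) and so $|\partial_\nu u|\le |\nabla u|\le L$ by the assumed Lipschitz bound.

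Substituting these two bounds into the boundary integral identity yields
\begin{equation*}
\hsd(\partial V\cap F(u)) = \int_{\partial V\setminus F(u)}\partial_\nu u\,d\hsd \le L\,\hsd(\partial V\setminus F(u)),
\end{equation*}
which rearranges to the desired inequality $L^{-1}\hsd(\partial V\cap F(u))\le \hsd(\partial V\setminus F(u))$.

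The only real technicality is verifying that $u$ is $C^1$ up to $\partial V\cap F(u)$ so that $\partial_\nu u=-1$ holds pointwise (and the boundary integral makes sense). This follows because $F(u)$ is analytic by the result of \cite{KindNiren} cited in the excerpt, so $u$ extends as a $C^1$ function across each analytic arc of $F(u)$ with $\nabla u$ equal to the inner unit normal of $\Omega^+(u)$ times $1$. Any finitely many singular points of $\partial V$ (corners, tangencies of analytic arcs) contribute $\hsd$-measure zero and can be excised by an exhaustion argument if one wishes to be pedantic. No other subtlety is expected to arise.
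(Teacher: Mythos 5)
Your proof is correct and is essentially the same argument the paper gives: apply the divergence theorem to the harmonic $u$ on the piecewise $C^1$ domain, use $|\nabla u|=1$ on the free-boundary portion of the boundary and $|\nabla u|\le L$ on the rest, and rearrange. The only cosmetic difference is that the paper uses the inner normal (so $u_\nu=+1$ on $F(u)$) while you use the outer normal (so $\partial_\nu u=-1$); both handle the $U$/$V$ typo in the statement sensibly.
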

\begin{proof}
Applying the Divergence Theorem in $U$:
\begin{equation*}
    0=-\int_U \Delta u ~dx = \int_{\del U\cap F(u)} u_{\nu} ~d\hsd +
    \int_{\del U \setminus F(u)} u_{\nu} ~ d\hsd = \hsd(\del U\cap F(u)) + \int_{\del U \setminus F(u)} u_{\nu}~d\hsd,
\end{equation*}
where $u_{\nu}$ is the inner unit normal to $\del U$. The result
then follows from $|u_{\nu}|\leq L$ \quad $\hsd$-a.e.\! on $\del U$.
\end{proof}

\begin{lemma}\label{lemma_atmostTwoComps}
Let $u$ be a classical solution of \eqref{FBP_0} in $B_3$, which is
Lipschitz continuous with norm $L$ and for which assumption
\eqref{topo_assmpt} is satisfied. There exists $\delta = \delta(L)
>0$ small enough such that if
\begin{equation*}
    \{u=0\}\subset B_3\cap\{|x_2|<\delta \}
\end{equation*}
there are at most two connected components of $B_2^+(u)$ which
intersect $B_1$, namely the connected component(s) containing
$N=(0,1)$ and $S=(0,-1)$.
\end{lemma}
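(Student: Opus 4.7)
The plan is a proof by contradiction via an isoperimetric balance on $\partial U$ furnished by Lemma \ref{lemma_noShortNonFBseg}. Suppose $U$ is a connected component of $B_2^+(u)$ meeting $B_1$, yet disjoint from the component(s) of $B_2^+(u)$ that contain $N$ and $S$. First I would observe that, since $\{u=0\}\subset B_3\cap\{|x_2|<\delta\}$, each of the open caps $B_2\cap\{x_2>\delta\}$ and $B_2\cap\{x_2<-\delta\}$ is connected, contained in $B_2^+(u)$, and contains $N$ respectively $S$; these caps therefore lie in the $N$- and $S$-components, and any other component $U$ is forced into the strip $B_2\cap\{|x_2|<\delta\}$.

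Next I would apply Lemma \ref{lemma_noShortNonFBseg} to $V=U$, which is piecewise $C^1$ (by the discussion at the start of Section \ref{Sec_Prelims}), is contained in $\Omega^+(u)=B_3^+(u)$, and satisfies $V\Subset B_3$. The inequality reads
\[
L^{-1}\hsd(\partial U\cap F(u)) \;\le\; \hsd(\partial U\setminus F(u)) \;=\; \hsd(\partial U\cap\partial B_2).
\]
If $U$ were compactly contained in $B_2$, the right-hand side would vanish, forcing $\partial U=\emptyset$, which is impossible for a nonempty bounded open set. Therefore $U$ must reach $\partial B_2$, and since $\bar U\subset\{|x_2|\le\delta\}$, the intersection $\partial U\cap\partial B_2$ sits inside $\partial B_2\cap\{|x_2|\le\delta\}$, whose total arc length is at most $5\delta$ once $\delta\le 1$. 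Hence $\hsd(\partial U\cap F(u))\le 5L\delta$.

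For the matching lower bound I would pick $q\in\bar U\cap\partial B_2$, so that $|q_1|\ge\sqrt{4-\delta^2}$, and recall that $p\in U\cap B_1$ has $|p_1|<1$. Because $U$ is connected, its $x_1$-projection is an interval containing both $p_1$ and $q_1$, hence of length at least $\sqrt{4-\delta^2}-1\ge 1/2$ for $\delta$ small. For each $x_1^*$ in the interior of this interval, a vertical ray upward from any point of $U\cap(\{x_1^*\}\times\R)$ must leave $U$ within vertical distance $\delta$ at a point that, because $|x_1^*|<\sqrt{4-\delta^2}$, still lies in $B_2$ and thus in $\partial U\cap B_2\subseteq F(u)$. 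Since the projection to the $x_1$-axis is $1$-Lipschitz, this forces $\hsd(\partial U\cap F(u))\ge 1/2$. Combining the two bounds yields $1/2\le 5L\delta$, which is false as soon as $\delta\le\delta(L):=1/(20L)$. The main delicate point, in my view, is ruling out $U\Subset B_2$ without unpacking the global topology of $F(u)$; Lemma \ref{lemma_noShortNonFBseg} handles this step automatically.
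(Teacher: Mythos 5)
Your proof is correct and follows the same overall strategy as the paper: both rest on the isoperimetric balance $L^{-1}\hsd(\partial U\cap F(u))\le\hsd(\partial U\setminus F(u))$ furnished by Lemma \ref{lemma_noShortNonFBseg}, with the thin-strip hypothesis controlling the non-free-boundary part of $\partial U$ and a uniform lower bound on the free-boundary part producing the contradiction for small $\delta$.

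Where you differ is in how the lower bound on $\hsd(\partial U\cap F(u))$ is obtained. The paper simply asserts that assumption \eqref{topo_assmpt} gives $\hsd(\partial U\cap F(u))\ge 2$; the implicit reasoning is that \eqref{topo_assmpt} forbids $\partial U$ from being a closed loop entirely inside $B_2$, so $U$ must reach $\partial B_2$, and then the two bounding free-boundary strands each traverse the strip from near $B_1$ to $\partial B_2$. You instead rule out $U\Subset B_2$ directly from Lemma \ref{lemma_noShortNonFBseg} itself (one could equally well invoke the maximum principle: if $\partial U\subset F(u)$ then $u=0$ on $\partial U$ while $u>0$ in $U$, absurd), and then run a clean $1$-Lipschitz projection argument to get $\hsd(\partial U\cap F(u))\ge \sqrt{4-\delta^2}-1 \ge 1/2$. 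This route has the modest advantage of never appealing to \eqref{topo_assmpt}, and it makes explicit a step the paper leaves to the reader. Two small notational slips worth fixing: $\hsd(\partial U\setminus F(u))=\hsd(\partial U\cap\partial B_2)$ should be an inequality $\le$, since a point of $\partial U$ on $\partial B_2$ can still lie in $F(u)$; and the restriction to ``the interior of this interval'' should be sharpened to the subinterval where $|x_1^*|<\sqrt{4-\delta^2}$, which still has length at least $\sqrt{4-\delta^2}-1$. Neither affects the conclusion.
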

\begin{proof}
Consider a connected component $U$ of $B_{2}^+(u)$ that contains
neither $N$ nor $S$; then it must be that $U\subseteq
B_2\cap\{|x_2|<\delta \}$. Assuming that $U$ intersects $B_1$, by
assumption \eqref{topo_assmpt} we have $\hsd(\del U\cap F(u)) \geq
2$. On the other hand, $U$ is a piecewise $C^1$ domain with $\del
U\setminus F(u) \subseteq \del B_2 \cap \{|x_2|<\delta \}$, so that
$\hsd(\del U\setminus F(u)) \leq c\delta$ for some numerical
constant $c>0$. But then by Lemma \ref{lemma_noShortNonFBseg}
\begin{equation*}
    2/L \leq L^{-1} \hsd(\del U \cap F(u)) \leq \hsd(\del U\setminus F(u)) \leq
    c\delta,
\end{equation*}
which would be impossible if $\delta < 2/Lc$.
\end{proof}

\begin{lemma}\label{lemma_fbgraphs} Let $u$ be a classical solution of \eqref{FBP_0} in
$B_4$ for which assumption \eqref{topo_assmpt} is satisfied. Assume
further that $(0,1)$ and $(0,-1)$ belong to two separate connected
components of $B_2^+(u)$. Then there exists $\delta_0 > 0$ small
enough such that if for any $0<\delta \leq \delta_0$
\begin{equation*}
    \{u=0\} \subseteq \{|x_2|<\delta \},
\end{equation*}
the free boundary $F(u)$ inside $\{|x_1|<1/2\}$ consists of two
disjoint graphs:
\begin{equation*}
    F(u)\cap \{|x_1|<1/2\} = \{x_2 = \phi_+(x_1): |x_1|<1/2 \} \sqcup
    \{x_2=\phi_-(x_1): |x_1|<1/2 \},
\end{equation*}
for which $\phi_+>\phi_-$ and
\begin{equation*}
    \|\phi_{\pm}\|_{C^{1,\alpha}(-1/2,1/2)} \leq C\delta
\end{equation*}
for some numerical positive constants $C$, $0<\alpha<1$.
\end{lemma}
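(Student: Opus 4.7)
The plan is to identify the two free-boundary graphs from the topology of the positive phase, then upgrade them to $C^{1,\alpha}$ via Caffarelli's flat-implies-smooth regularity. First, the assumption $\{u=0\}\subseteq\{|x_2|<\delta\}$ with $\delta_0<1/2$ forces the two connected regions $B_3\cap\{x_2>\delta\}$ and $B_3\cap\{x_2<-\delta\}$ to lie entirely in the positive phase; since they contain $(0,\pm1)$, the positive-phase components $U_{\pm}\ni(0,\pm1)$ satisfy
\[
U_+\supseteq B_3\cap\{x_2>\delta\},\qquad U_-\supseteq B_3\cap\{x_2<-\delta\}.
\]
After further shrinking $\delta_0\le\delta(L)$ so that Lemma \ref{lemma_atmostTwoComps} applies, $U_+$ and $U_-$ are the only components of $B_2^+(u)$ that meet $B_1$.

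For $x_1\in(-1/2,1/2)$ set
\[
\phi_+(x_1):=\inf\{x_2\in[-\delta,\delta]:(x_1,x_2)\in U_+\},\qquad
\phi_-(x_1):=\sup\{x_2\in[-\delta,\delta]:(x_1,x_2)\in U_-\};
\]
both are attained because $(x_1,\pm\delta)\in U_{\pm}$, and the points $(x_1,\phi_\pm(x_1))$ lie on $F(u)$. The simple connectedness of $U_{\pm}$ (equivalent to \eqref{topo_assmpt}) rules out any pocket of zero phase enclosed inside $U_+$ or $U_-$, which combined with the two-component structure above forces
\[
\{u=0\}\cap\{|x_1|<1/2\}=\{(x_1,x_2):|x_1|<1/2,\,\phi_-(x_1)\le x_2\le\phi_+(x_1)\}
\]
and makes $F(u)\cap\{|x_1|<1/2\}$ the disjoint union of the two graphs. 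The strict inequality $\phi_+>\phi_-$ follows from the free-boundary condition $|\nabla u|=1$: a coincidence $\phi_+(x_1^\ast)=\phi_-(x_1^\ast)=c$ would pinch the zero set to a point, forcing $u\approx|x_2-c|$ near $(x_1^\ast,c)$ by Hopf comparison on each side, which is incompatible with the non-degenerate positive-density structure of $\{u=0\}$ at a classical free-boundary point.

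To obtain the $C^{1,\alpha}$ norm, I would invoke Caffarelli's flat-implies-smooth theorem \cite{CafI,CafII}. At each $p=(x_1^\ast,\phi_+(x_1^\ast))$, for $r$ smaller than the pinch distance $\phi_+(x_1^\ast)-\phi_-(x_1^\ast)$ the ball $B_r(p)$ sees only the upper strand, and the inclusion $F(u)\subseteq\{|x_2|<\delta\}$ yields the linear sandwich
\[
(x_2-C\delta)^+\le u(p+x)\le(x_2+C\delta)^+\qquad\text{in }B_r(p),
\]
the upper bound from a harmonic barrier in $B_r(p)\cap\{x_2>-\delta\}$ with Lipschitz boundary data, and the lower bound from the non-degeneracy of Proposition \ref{prop_supnondeg} coupled with Harnack's inequality in $U_+$ to transport the linear profile $u\approx x_2$ from $\{x_2\ge\delta\}$ down to the strand. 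For $\delta_0$ below the universal Caffarelli threshold, this produces $\phi_+\in C^{1,\alpha}$ with $\|\phi_+\|_{C^{1,\alpha}(-1/2,1/2)}\le C\delta$, and symmetrically for $\phi_-$. The main technical delicacy is this last transfer in the pinched region where the vertical corridor through $U_+$ shrinks; but since $U_+$ still contains the entire half-disk $B_3\cap\{x_2\ge\delta\}$, the boundary Harnack inequality transfers the nearly linear profile down with multiplicative error $O(\delta)$, completing the argument.
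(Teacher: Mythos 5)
Your opening setup (the two half-disks in the positive phase, the appeal to Lemma \ref{lemma_atmostTwoComps} to isolate $U_+$ and $U_-$) matches the paper's, but the key device that makes the $C^{1,\alpha}$ estimate uniform is missing. The paper cuts the solution along its components: it sets $u_\pm := u\,\chrc{U_\pm\cap B_1}$ and observes that each $u_\pm$ is by itself a classical one-phase solution in $B_1$ whose free boundary lies in $\{|x_2|<\delta\}$ and whose zero phase fills the entire opposite half-ball ($u_+=0$ on $B_1\cap\{x_2<-\delta\}$, and symmetrically for $u_-$). Thus $u_+$ is $\delta$-flat at scale $1$, and Alt--Caffarelli applies to $u_+$ \emph{once, at unit scale}, giving the $C^{1,\alpha}$ graph $\phi_+$ with the $O(\delta)$ bound. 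The pinch between the strands is invisible to $u_+$ because everything below $\phi_+$, including the other strand, is in its zero set.

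Your proposal instead keeps working with the full solution $u$ and zooms into balls $B_r(p)$ with $r$ smaller than the local gap $\phi_+(x_1^*)-\phi_-(x_1^*)$, and this is where the argument fails. The hypothesis gives a strip bound of \emph{absolute} width $\delta$; rescaled to $B_r(p)$, the relative flatness becomes $\delta/r$, which exceeds the Caffarelli threshold as soon as $r\lesssim\delta$. Nothing in the hypotheses prevents the gap from being much smaller than $\delta$ --- the paper's definition of a classical solution explicitly allows the two analytic strands to become tangent, so the gap may even vanish --- and in that regime there is no scale $r$ for which $B_r(p)$ both avoids the second strand and has flatness below the universal threshold, so the flat-implies-Lipschitz machinery never engages. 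The boundary Harnack patch you sketch does not rescue this: boundary Harnack compares two positive harmonic functions vanishing on a common boundary portion and requires Lipschitz/NTA regularity of that boundary \emph{as input}, which is exactly what you are trying to derive. A secondary soft spot is that your $\inf/\sup$ definition of $\phi_\pm$, followed by the assertion that $\{u=0\}\cap\{|x_1|<1/2\}$ equals $\{\phi_-\le x_2\le\phi_+\}$, quietly presumes that $\partial U_\pm$ is already a vertical graph; tongues of $U_\pm$ dipping into the strip are only excluded once Lipschitz regularity is established. Both problems disappear if you pass to $u_\pm$ and run Alt--Caffarelli at unit scale as the paper does.
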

\begin{proof}
By Proposition \ref{prop_lipbd}, $\|\nabla u\|_{L^{\infty}(B_2)}\leq
L$ for some numerical constant $L$, so that by Lemma
\ref{lemma_atmostTwoComps}, there exists a small enough $\delta_0>0$
such that it is precisely the connected components $U_+$ and $U_-$
of $B_2^+(u)$, containing $(0,1)$ and $(0,-1)$ respectively, that
intersect $B_1$. Define the two functions $u_{+}$ and $u_{-}$ on
$B_1$ by $u_{\pm} = u \chrc{U_{\pm}\cap B_1}$. Then each $u_{\pm}$
is a classical solution of \eqref{FBP_0} in $B_1$ whose free
boundary $F(u_{\pm})$ is contained in a flat strip $|x_2|<\delta$
with $u_+ = 0$ in $B_1\cap \{x_2 <-\delta\}$ and $u_{-}=0$ in
$B_1\cap \{x_2>\delta\}$. By the classical result of Alt and
Caffarelli \cite{AC}, in $|x_1|< 1/2$ the free boundary $F(u_{\pm})$
is the graph of a function $\phi_{\pm}:(-1/2,1/2)\to \real$, which
satisfies
\begin{equation*}
    \|\phi_{\pm}\|_{C^{1,\alpha}(-1/2,1/2)} \leq C\delta
\end{equation*}
for some $\alpha>0, C>0$. Noting that $F(u)\cap B_1 = F(u_{+})\sqcup
F(u_{-})$, we are done.
\end{proof}

\section{Characterization of the limit.}\label{Sec_Classfn}

Recall the setup. We have a sequence $\{u_k\}$ of classical
solutions of \eqref{FBP_0} in expanding disks $B_{R_k}$,
$R_k\nearrow\infty$ with $0\in F(u_k)$. Because of Proposition
\ref{prop_lipbd}, $u_k$ are uniformly Lipschitz on compact subsets
of $\real^2$, so that up to a subsequence, $u_k$ converges uniformly
on compacts to some $u:\real^2\to\real$. Moreover, since $u_k$ are
uniformly non-degenerate by Proposition \ref{prop_supnondeg}, and,
trivially also, weak solutions (variational and viscosity), then by
Proposition \ref{prop_limitofsolns} and Lemma
\ref{lemma_limitvisco}, $u$ is a global weak solution, which is
Lipschitz continuous and non-degenerate. Thus, by Propositions
\ref{prop_blowdowns} and \ref{prop_blowups}, $u$ blows down/blows up
at a free boundary point to a half-plane or a wedge solution.

We shall show that, in for appropriately chosen Euclidean
coordinates, $u$ has to be one of the four:

\begin{itemize}
\item a \emph{half-plane solution} $P(x) = x_2^+$
\item a \emph{two-plane solution} $W_b(x) = x_2^+ + (x_2-b)^-$, for
some $b < 0$
\item a \emph{wedge solution} $W_0(x) = |x_2|$
\item a \emph{hairpin solution} $H_a$, whose $\{H_a>0\}=\{(x_1, x_2):
|ax_1-(1+\pi/2)|<\pi/2+\cosh(ax_2)\}$ for some $a>0$.
\end{itemize}


The proof of the classification Theorem \ref{theorem_Main} is
realized in a sequence of propositions. Proposition
\ref{prop_halfplane} covers the scenario when $u$ blows down to a
half-plane solution, while Proposition \ref{prop_twoplane} covers
the case when the blowdown is a wedge solution. In the latter there
is a dichotomy, $u$ can be either a two-plane solution or satisfy
$|\nabla u|<1$ globally in its positive phase. The second scenario
is the more subtle one and its treatment is carried out in several
steps assembled under Proposition \ref{prop_MainTheoAux}: in the
first we employ a 2-point simultaneous blow-up to show that the free
boundary is smooth everywhere but possibly one point; in the second
step we prove the free boundary is actually smooth everywhere using
the strong geometric constraint of positive free boundary curvature
to argue that the zero phase contains a nontrivial sector with
vertex at the exceptional point (cf. Lemmas \ref{lemma_NoExcPoint}
and \ref{lemma_TwoCurves}); in the final step we establish that the
free boundary must consist of exactly two smooth proper arcs, so
that $u$ has to be a hairpin solution, according to \cite[Theorem
12]{Traizet}.


Let us commence with
\begin{prop}\label{prop_halfplane}
Let $u_k$, $u$ be as in Theorem \ref{theorem_Main} and assume $0\in
F(u)$. If a blowdown limit of $u$ at $0$ is a half-plane solution:
\begin{equation*}
    R_{j}^{-1}u(R_j x) \rightarrow x_2^+ \quad \text{as} \quad
    R_j\nearrow \infty,
\end{equation*}
(with coordinates chosen appropriately), then $u=x_2^+$ itself.
\end{prop}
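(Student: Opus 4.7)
The plan is to exploit the Weiss Monotonicity Formula (Lemma \ref{lemma_WeissMono}) together with the fact that the two classes of homogeneous-degree-one weak solutions identified in Propositions \ref{prop_blowups} and \ref{prop_blowdowns} have \emph{distinct} Weiss densities. A direct computation in $n=2$ gives
\[
\Phi(P, r) = \pi - \pi/2 = \pi/2, \qquad \Phi(s|x_2|, r) = (s^2+1)\pi - s^2\pi = \pi
\]
for every $r>0$ and every $0<s\le 1$.

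Next, I would pass the Weiss functional through the blowdown limit. Setting $u_{R_j}(x) := R_j^{-1} u(R_j x)$, Proposition \ref{prop_limitofsolns} supplies $\chi_{\{u_{R_j}>0\}} \to \chi_{\{x_2>0\}}$ in $L^1_{\text{loc}}(\real^2)$ and $\nabla u_{R_j} \to \nabla P$ a.e. Combined with the uniform Lipschitz bound (dominated convergence on the gradient-squared term) and uniform convergence on $\partial B_1$ for the boundary term, this yields $\Phi(u, R_j) = \Phi(u_{R_j}, 1) \to \Phi(P,1) = \pi/2$. By the monotonicity \eqref{WeissMonoFla}, then $\Phi(u, r) \le \pi/2$ for every $r>0$.

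Now I turn to the blowup at the origin. Since $0\in F(u)$, Proposition \ref{prop_blowups} tells me any blowup limit is either $P$ (density $\pi/2$) or some $s|x_2|$ (density $\pi$). The upper bound $\Phi(u, 0^+) \le \pi/2$ rules out the wedge, so $\Phi(u, 0^+) = \pi/2$, and together with the blowdown computation I conclude $\Phi(u, r) \equiv \pi/2$ on $(0,\infty)$. Applying the rigidity half of \eqref{WeissMonoFla} on every annulus $B_{r_2}\setminus B_{r_1}$, the integrand $(\nabla u \cdot x - u)^2$ must vanish almost everywhere, i.e.\ $\nabla u(x)\cdot x = u(x)$ a.e.; by continuity of $u$, it is therefore positively $1$-homogeneous. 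Viewing $u$ as its own blowdown, Proposition \ref{prop_blowdowns} forces $u$ to be $P$ or some $V_s = s|x_2|$ in appropriate coordinates. The density value $\pi/2$ excludes the wedge, and the coordinate frame is the one in which the blowdown is $x_2^+$; hence $u \equiv x_2^+$.

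The only mildly delicate point is the continuity of the Weiss functional along the rescaled sequence, specifically the $L^1_{\text{loc}}$ convergence of the characteristic functions $\chi_{\{u_{R_j}>0\}}$; this is precisely the content of Proposition \ref{prop_limitofsolns}(b) applied to the rescaled variational solutions. Everything else is a mechanical density comparison driven by the rigidity case of Weiss' formula.
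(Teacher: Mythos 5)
Your argument is correct, but it follows a genuinely different route from the paper. The paper's proof is a two-line appeal to the Alt--Caffarelli improvement-of-flatness theorem: since $u_{R_j}\to x_2^+$ uniformly, the free boundary of $u$ lies in strips $\{|x_2|\le\eps_j\}$ of vanishing aspect ratio $\eps_j/R_j\to 0$ inside $B_{R_j}$, and the $\eps$-regularity theory of \cite{AC} then forces $F(u)$ to be the exact line $\{x_2=0\}$.

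Your route instead exploits the Weiss density gap $\Phi(P,\cdot)=\pi/2<\pi=\Phi(s|x_2|,\cdot)$. Your density computations are correct, and the continuity of $\Phi$ along the rescaled sequence is indeed supplied by Proposition~\ref{prop_limitofsolns}(b)--(c) (the $L^1_{\mathrm{loc}}$ convergence of $\chi_{\{u_{R_j}>0\}}$ and a.e.\ convergence of gradients under the uniform Lipschitz bound). Combined with Weiss monotonicity this yields $\Phi(u,r)\le\pi/2$ for all $r$; the blowup classification of Proposition~\ref{prop_blowups} then forces $\Phi(u,0^+)=\pi/2$, and the rigidity half of \eqref{WeissMonoFla} gives homogeneity, after which the density value singles out $P$. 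The one step worth spelling out a bit more carefully is the identity $\Phi(u,0^+)=\Phi(\text{blowup},1)$, which again needs the same $\chi$- and gradient-convergence for the blowup sequence, but this follows from Proposition~\ref{prop_limitofsolns} applied to $u_{\eps_j}$. What the two approaches buy is different: the paper's version is maximally short but outsources the whole argument to the improvement-of-flatness machinery; yours stays entirely within the monotonicity-formula apparatus already developed in Sections~\ref{Sec_WeakSol}--\ref{Sec_Blows}, which makes the density gap between $P$ and the wedge explicit and is a pattern that scales more readily to situations where the Alt--Caffarelli result is not available verbatim.
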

\begin{proof}
Since $u_{R_j} \to x_2^+$ uniformly on compacts, the free boundary
$F(u)$ is asymptotically flat, i.e.
\begin{equation*}
    F(u)\cap B_{R_{j}} \subseteq \{|x_2|\leq \eps_j\}
\end{equation*}
with the aspect ratio $\eps_j/R_{j}\to 0$ as $j\to\infty$. By the
classical theorem of Alt and Caffarelli \cite{AC}, $F(u)$ has to be
the straight line $\{x_2=0\}$ and $u=x_2^+$.
\end{proof}

\begin{prop}\label{prop_twoplane} Let $u_k$, $u$ be as in Theorem \ref{theorem_Main} and assume $0\in
F(u)$ and that a blowdown limit of $u$ at $0$ is a wedge solution:
\begin{equation*}
    R_{j}^{-1}u(R_j x) \rightarrow s|x_2| \quad \text{as} \quad
    R_j\nearrow \infty,
\end{equation*}
(with coordinates chosen appropriately) for some $0<s\leq 1$. Then
either $u=x_2^+ + (x_2-b)^-$ for some $b\leq 0$, or
\begin{equation*}
    |\nabla u| < 1 \quad \text{in} \quad \{u >0\}.
\end{equation*}
\end{prop}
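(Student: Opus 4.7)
The plan is to show that if the inequality $|\nabla u|<1$ in $\{u>0\}$ fails, then $u$ is a two-plane solution. Lemma~\ref{Lemma_TwoPlaneBdwnGrad} first gives the a.e.\ bound $|\nabla u|\le 1$, so the negation of the alternative is the existence of some $x_0\in\{u>0\}$ with $|\nabla u(x_0)|=1$. Since $\Delta|\nabla u|^2=2|D^2u|^2$ wherever $u$ is harmonic, $|\nabla u|^2$ is subharmonic on each component of $\{u>0\}$. Letting $U$ be the component containing $x_0$, the strong maximum principle gives $|\nabla u|^2\equiv 1$ on $U$, whence $D^2u\equiv 0$, $u$ is affine on $U$, and $U=\{\nu\cdot x>c\}$ is a half-plane with $u(x)=\nu\cdot x-c$ and $|\nu|=1$. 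Comparing the blow-down $(\nu\cdot x)^+$ of $u\chi_U$ against the ambient blow-down $s|x_2|$ forces $\nu=\pm e_2$ and $s=1$; after a reflection across $\{x_2=0\}$ and a translation along $e_2$, I may assume $U=\{x_2>0\}$, $u|_U(x)=x_2$, and the blow-down of $u$ is exactly $|x_2|$.

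Next, $\{u>0\}\cap\{x_2<0\}$ is nonempty by the blow-down, and I apply the same subharmonic dichotomy to each connected component $V$: either $|\nabla u|^2\equiv 1$ on $V$, or $|\nabla u|^2<1$ strictly. In the first case the argument used for $U$ repeats verbatim, giving $V=\{x_2<b\}$ for some $b\le 0$ with $u(x)=b-x_2$ on $V$. Since two such lower half-planes cannot be disjoint (each extends to $x_2\to-\infty$), the non-strict lower component is unique, and combined with $U$ this produces $u=x_2^+ +(x_2-b)^-$ as required.

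The main obstacle is to rule out the strict case $|\nabla u|^2<1$ on some $V$. I plan to combine Lemma~\ref{lemma_FBCurv} with the Hopf lemma applied to the subharmonic function $|\nabla u|^2$ on $V$, exactly as in the proof of Remark~\ref{rem_poscurv}, to conclude that in this case $\partial V$ has strictly positive curvature at every point. On the other hand, the $L^1_{\mathrm{loc}}$ convergence $\chi_V(Rx)\to\chi_{\{x_2<0\}}$ coming from Proposition~\ref{prop_limitofsolns}(b), together with the Hausdorff convergence of free boundaries in Proposition~\ref{prop_limitofsolns}(a) and the flat-implies-smooth theorem of Alt and Caffarelli (already invoked in Lemma~\ref{lemma_fbgraphs}), shows that far from the origin $\partial V$ is a small-Lipschitz graph $x_2=-g(x_1)$ with $V$ lying below and $g(x_1)/|x_1|\to 0$ as $|x_1|\to\infty$. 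Strict positive curvature of $\partial V$ here translates to strict convexity of $g$; but a short monotonicity argument on $g'$---non-decreasing by convexity, with necessarily vanishing limits at $\pm\infty$ for a sublinear function---shows $g'\equiv 0$, so $g$ is constant. This contradicts strict convexity, and hence the strict case cannot occur.

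I expect the principal technical delicacy to be the passage from the $L^1_{\mathrm{loc}}$ blow-down identification to the graph representation of $\partial V$ at infinity; once that is in place, the convex-function rigidity closes the argument, and $u$ must be a two-plane solution of the stated form.
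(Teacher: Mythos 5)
Your opening parallels the paper exactly: the a.e.\ bound $|\nabla u|\le 1$ from Lemma~\ref{Lemma_TwoPlaneBdwnGrad}, the subharmonicity of $|\nabla u|^2$, the strong maximum principle yielding a component $U$ with $|\nabla u|\equiv 1$, the conclusion that $U$ is a half-plane and $s=1$. From there the paths diverge completely. The paper forms $v = u\chi_{\real^2\setminus U}$, verifies the viscosity supersolution property trivially and the subsolution property via a two-case analysis at $F(v)$ (the delicate case being a touching ball $B\subseteq U$, handled by identifying the blow-up at $p\in \del U$ as $|(x-p)\cdot e_2|$ via Proposition~\ref{prop_blowups}), and then applies Proposition~\ref{prop_halfplane} to conclude $v=x_2^+$. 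You instead try to analyze each other component $V$ geometrically.

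There are genuine gaps in your geometric route. First, you invoke Remark~\ref{rem_poscurv} to get strictly positive curvature of $\del V$, but that remark is stated for classical solutions; at this stage $u$ is only a Lipschitz, non-degenerate viscosity/variational limit, and smoothness of $\del V$ is not a given. Points of $\del V$ at which the blow-up of $u$ is a wedge $s'|(x-p)\cdot\nu|$ (allowed by Proposition~\ref{prop_blowups}) are not covered by the Alt--Caffarelli regularity theory that underlies the curvature formula, and you have not ruled them out. Second, the passage from local flatness ``far from the origin'' to a single global Lipschitz graph $x_2=-g(x_1)$ over all of $\real$ is asserted, not proved; the $L^1_{\mathrm{loc}}$ and Hausdorff convergence in Proposition~\ref{prop_limitofsolns} give flatness in large annuli but do not by themselves produce one global graph, do not preclude several components in $\{x_2<0\}$, and do not rule out spiraling of a positively-curved proper arc near the origin. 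Closing these holes would essentially reproduce the machinery of Proposition~\ref{prop_MainTheoAux} (Steps 1--3), which is precisely what the paper's viscosity route avoids. Finally, a sign slip: with $V=\{x_2<-g(x_1)\}$ and the sign convention of Lemma~\ref{lemma_FBCurv}, positive curvature of $\del V$ forces $g$ to be \emph{concave}, not convex; your monotonicity-of-$g'$ argument still closes once the signs are flipped, but as written the chain of reasoning is wrong.

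Summarizing: same beginning, genuinely different second half, but the second half has real gaps (regularity of $\del V$ and the global graph/non-spiraling claim) and would require substantially more work than the paper's viscosity-solution argument.
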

\begin{proof}
From Lemma \ref{Lemma_TwoPlaneBdwnGrad} we have the bound $|\nabla
u| \leq 1$ a.e. Noting that $|\nabla u|^2$ is a smooth subhamornic
function in $\{u>0\}$:
\begin{equation}\label{prop_twoplane_SubH}
    \Delta |\nabla u|^2/2 = 2|D^2 u|^2 \geq 0,
\end{equation}
the Strong Maximum Principle entails that if $|\nabla u|^2(x_0) = 1$
at some $x_0\in \{u>0\}$, then $|\nabla u|^2 \equiv 1$ in the entire
connected component $U$ of $x_0$. Equation
\eqref{prop_twoplane_SubH} implies that $|D^2 u|^2 = 0$ in $U$, so
that $u$ is an affine function in $U$. Thus $U$ is a half-plane, say
$U = \{x_2<b\}$ for some $b\leq 0$ and $u = (x_2-b)^-$ in $U$. The
latter also implies that $u$ has to blow down precisely to $|x_2|$,
i.e. $s=1$.

We shall now show that $v=u\chrc{\real^2\setminus U}$ is a viscosity
solution itself. Once this is established, we can apply the previous
Proposition \ref{prop_halfplane} to $v$ (as $v$ has to blow down to
$x_2^+$), so that $v$ will have to be $v=x_2^+$ itself. We will then
be able to conclude that $u = v + (x_2-b)^- = x_2^+ + (x_2-b)^-$.

Note that $v$ is trivially a viscosity supersolution (as $u$ is), so
let us focus on showing that $v$ is also a viscosity subsolution.
Let $p\in F(v)$ and let $B\subseteq \{v=0\}$ be a touching disk to
$F(v)$ at $p$ from the zero phase. If there exists a disk
$B'\subseteq B$ such that $\del B \cap \del B' = p$ and $B'\subseteq
\{u=0\}$, then the subsolution condition for $v$ will be inherited
from $u$. Otherwise, every $B'\subseteq B$ with $\del B \cap \del B'
= p$ will have to intersect the half-plane $U = \{x_2<b\}$, and thus
$B\subseteq U$ and $p\in \del U$. But then, applying Proposition
\ref{prop_blowups}, we see that any blowup of $u$ at $p$ will have
to equal $|(x-p)\cdot e_2|$, where $e_2$ is a unit vector in the
direction of $x_2$. Hence, near $p$
\begin{equation*}
    v(x) = ((x-p)\cdot e_2)^+ + o(|x-p|) \quad \text{in any nontangential region of } B^c,
\end{equation*}
so that the subsolution condition is satisfied again.

\end{proof}

\begin{prop}\label{prop_MainTheoAux} Let $u_k$, $u$ be as in Theorem \ref{theorem_Main}. Further assume that $|\nabla u| < 1$ in
$\{u>0\}$. Then $u$ is a hairpin solution.
\end{prop}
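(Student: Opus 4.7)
The hypothesis $|\nabla u|<1$ on $\{u>0\}$ together with Remark \ref{rem_poscurv} gives that $F(u)$ has strictly positive curvature wherever it is smooth. By Proposition \ref{prop_blowups}, at each $p\in F(u)$ every subsequential blow-up of $u$ is either the half-plane $x_2^+$ or a wedge $s|x_2|$ with $0<s\leq 1$; in the half-plane case Caffarelli's flatness-implies-smoothness theorem yields analyticity of $F(u)$ near $p$, and I will call such points \emph{regular} and the remaining points \emph{singular}. The overall plan is to show that no singular points occur and then apply Traizet's classification.

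The first substantive step is to prove $F(u)$ contains at most one singular point, via a simultaneous two-point blow-up. Given two sequences $p_k,q_k$ of singular points (possibly constant), set $R_k=|p_k-q_k|$ and pass to a subsequential limit of $v_k(x)=R_k^{-1}u(p_k+R_kx)$. Proposition \ref{prop_limitofsolns} and Lemma \ref{lemma_limitvisco} produce a global Lipschitz, nondegenerate, viscosity-and-variational solution $v$ of \eqref{FBP_0} with singular points at both $0$ and some unit vector $e=\lim(q_k-p_k)/R_k$. The bound $|\nabla u|<1$ passes to $|\nabla v|\leq 1$, and the strong maximum principle argument from Proposition \ref{prop_twoplane} either peels off an affine two-plane summand of $v$ (whose straight free boundary precludes a tangentially-meeting singularity nearby) or leaves $|\nabla v|<1$ strictly on $\{v>0\}$. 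In the latter subcase the already-established parts of Theorem \ref{theorem_Main} (Propositions \ref{prop_halfplane} and \ref{prop_twoplane}), together with the preservation of \eqref{topo_assmpt} in the limit and the strict positive curvature of $F(v)$, yield the contradiction.

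Having reduced to at most one exceptional singular point, Lemma \ref{lemma_NoExcPoint} will rule out even that possibility: a wedge blow-up at the exceptional point forces two analytic strictly convex arcs of $F(u)$ to meet tangentially there, whereas strict positive curvature requires each component of $\{u=0\}$ adjoining a smooth arc of $F(u)$ to contain an open sector of positive opening at that tangential vertex, a geometric incompatibility. Consequently $F(u)$ is globally smooth and consists of disjoint strictly convex analytic curves, and Lemma \ref{lemma_TwoCurves} pins the number of proper arcs to exactly two. Traizet's classification \cite[Theorem 12]{Traizet} of entire classical solutions of \eqref{FBP_0} with simply-connected positive phase and two-arc free boundary then identifies $u$ with a hairpin $H_a$, $a>0$, up to rigid motion.

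The principal obstacle will be the two-point blow-up step: one must handle both $R_k\to 0$ and $R_k$ bounded away from zero, carry the strict inequality $|\nabla v|<1$ into the limit where possible, and avoid circularity by invoking only Propositions \ref{prop_halfplane} and \ref{prop_twoplane} rather than the conclusion of the present proposition. The subsequent exclusion of the exceptional point by positive curvature is geometric, but demands care at a point where one has no a priori smoothness of $F(u)$.
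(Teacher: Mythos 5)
Your overall skeleton matches the paper — at most one singular point, then no singular points via Lemma \ref{lemma_NoExcPoint}, then exactly two proper arcs and Traizet — but each of the three steps contains a serious gap or misattribution. The most important one is in the two-point blow-up. You assert that ``preservation of \eqref{topo_assmpt} in the limit'' plus Propositions \ref{prop_halfplane}, \ref{prop_twoplane} and strict positive curvature of $F(v)$ ``yield the contradiction,'' but property \eqref{topo_assmpt} is emphatically \emph{not} preserved in the limit (the paper flags this explicitly right after Theorem \ref{theorem_Main}), and Propositions \ref{prop_halfplane} and \ref{prop_twoplane} leave open exactly the case $|\nabla v|<1$, which is the content of the present proposition — so invoking them here is either circular or inconclusive. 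The paper avoids this by \emph{not} working at the level of the limit $v$: it rescales the approximating classical solutions $u_k$ (which do satisfy \eqref{topo_assmpt}), applies Lemma \ref{lemma_fbgraphs} to show a two-component picture near each $Q_i$ would force positively-curved arcs to meet tangentially (impossible), reduces to a one-component picture on each side, and then runs the quantitative length-comparison Lemma \ref{lemma_TwoPtsBlowup} on $v_k$ directly. None of that appears in your sketch, and without it the contradiction never materializes.

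The remaining two steps also need repair. Before Lemma \ref{lemma_NoExcPoint} can be applied you must rule out a closed-curve component of $F(u)\setminus\{0\}$; the paper does this by a Hausdorff-convergence argument against $F(u_k)$ and \eqref{topo_assmpt}, which you omit. And Lemma \ref{lemma_TwoCurves} does not ``pin the number of proper arcs to exactly two'' — it is an ingredient \emph{inside} the proof of Lemma \ref{lemma_NoExcPoint}, producing the open sector at the putative singular point so that the blow-up is forced to be a half-plane. The count of exactly two proper arcs is a separate Step 3 that uses the wedge blow-down of $u$, convexity to fix from which side each arc exits large balls, and the length-comparison Lemma \ref{lemma_noShortNonFBseg} to exclude three or more arcs. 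That argument is entirely missing from your proposal and cannot be supplied by Lemma \ref{lemma_TwoCurves}.
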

\begin{proof}
We shall prove the proposition in three steps. In the first we show
that $F(u)$ is smooth everywhere but possibly one point. In the
second step we invoke Lemma \ref{lemma_NoExcPoint} to establish that
$F(u)$ is, in fact, smooth everywhere. In the final step we show
that $F(u)$ consists of two disjoint proper arcs, so that by a
result of Traizet \cite[Theorem 12]{Traizet} $u$ has to be a hairpin
solution.

\paragraph{\textbf{Step 1.}} In order to establish the claim of the first step above, we have
to prove that for no two distinct points $P_1,P_2 \in F(u)$ it can
happen that $u$ blows up to wedge solutions at both $P_1$ and $P_2$.
From this it follows that at every point of $F(u)$ but possibly one,
$u$ has to blow up to the only other alternative -- a half-plane
solution (according to Proposition \ref{prop_blowups}) so that
$F(u)$ is smooth there.

Assume the contrary. Denote $u_{\eps}(x):= \eps^{-1} u(\eps x)$ and
$(u_k)_{\eps}(x) := \eps^{-1}u_k(\eps x)$. 
If $u$ blows up to wedge solutions at $P_1$ and $P_2$, Proposition
\ref{prop_limitofsolns}a) implies there exist some unit vectors
$a_1$ and $a_2$ such that given an arbitrary small $\lambda>0$, one
can find a sequence $\eps_j\searrow 0$ small enough such that for
any $\eps = \eps_j$ small enough
\begin{equation*}
    d_{H}\left(\{u_{\eps}=0\}\cap
    \overline{B_4}(P_i/\eps), \{P_i/\eps + t a_i : |t|\leq 4\} \right) < \lambda/2 \qquad i=1,2.
\end{equation*}
Further, for that particular fixed $\eps$, Proposition
\ref{prop_limitofsolns}a) implies that for all $k$ large enough
\begin{align*}
    & d_{H}\left(\{(u_k)_{\eps}=0\}\cap \overline{B_4}(P_i/\eps), \{u_{\eps}=0\}\cap
    \overline{B_4}(P_i/\eps)\right) < \lambda/2 \qquad i=1,2.
\end{align*}
As a consequence, for all $k$ large enough
\begin{equation}\label{prop_MainTheoAux_ZeroProx}
    d_H\left( \{(u_k)_{\eps}=0\}\cap \overline{B_4}(P_i/\eps), \{P_i/\eps + t a_i : |t|\leq 4\} \right)<\lambda \qquad i=1,2.
\end{equation}
For ease of notation, denote $v:=u_{\eps}$, $v_{k} :=
(u_{k})_{\eps}$ and $Q_i=P_i/\eps$, $i=1, 2$; let $b_i$ be the
vector $a_i$ rotated by $\pi/2$. According to Lemma
\ref{lemma_atmostTwoComps}, there are at most two connected
components of $B_2(Q_i)^+(v_k)$ that intersect $B_1(Q_i)$ -- the
one(s) that contain $N_i=Q_i+b_i$ and $S_i=Q_i - b_i$. We shall now
show that there has to be just one if $\lambda$ is small enough and
$k$ is large enough.

Assume $N_1$ and $S_1$ belong to two separate connected components
$U_{+,k}$ and $U_{-,k}$ of $B_2(Q_1)^+(v_k)$. Combining this with
\eqref{prop_MainTheoAux_ZeroProx} allows us to invoke Lemma
\ref{lemma_fbgraphs} and conclude that $F(v_k)\cap\{|(x-Q_1)\cdot
a_1| < 1/2\}$ consists of the graphs $\Sigma_{+,k}$ and
$\Sigma_{-,k}$ of some functions $\phi_{+,k} > \phi_{-,k}$ over the
line segment $I=\{Q_1+t a_1: |t|<1/2\}$:
\begin{equation*}
    F(v_k) \cap \{|(x-Q_1)\cdot a_1| < 1/2\} = \Sigma_{+,k} \sqcup
    \Sigma_{-,k} \quad \text{where}\quad \Sigma_{\pm,k} = \{y+ \phi_{\pm,k}(y)b_1: y\in I\}.
\end{equation*}
Moreover, the functions $\phi_{\pm,k}$ satisfy the uniform bound
\begin{equation*}
    \|\phi_{\pm,k}\|_{C^{1,\alpha}(I)} \leq C\lambda.
\end{equation*}
Hence, there exist $C^{1,\alpha}$ functions $\phi_{\pm}:
I\rightarrow \real$ and a subsequence $\phi_{\pm,k_l}$ such that
\begin{equation*}
    \phi_{\pm,k_l} \rightarrow \phi_{\pm} \quad
    \text{in } C^1(I) \quad \text{as } l\to \infty.
\end{equation*}
But since $F(v_k)\to F(v)$ locally in the Hausdorff distance, it
must be that $ F(v)\cap \{|(x-Q_1)\cdot a_1| < 1/2\}$ consists
precisely of the $C^{1,\alpha}$ graphs
\begin{equation*}
    \Sigma_{\pm} = \{y+\phi_{\pm}(y)b_1: y\in I\}
\end{equation*}
and
\begin{equation*}
    B_2(Q_1)^+(v) \cap \{|(x-Q_1)\cdot a_1| < 1/2\} = U_+\sqcup U_{-},
\end{equation*}
where $U_+=\{y+tb_1: t >\phi_+(y): y\in I\} \cap B_2(Q_1)$ and
$U_-=\{y+tb_1: t<\phi_-(y): y\in I\} \cap B_2(Q_1)$. Moreover, since
$v_k\chrc{U_{\pm,k}}$ are viscosity solutions in $B_2(Q_1)\cap
\{|(x-Q_1)\cdot a_1| < 1/2\}$ and $v_k\chrc{(U_{\pm})_k} \to v
\chrc{U_{\pm}}$ uniformly there, Lemma \ref{lemma_limitvisco}
implies that $v\chrc{U_{\pm}}$ are viscosity solutions there as
well. As their free boundary is $C^{1,\alpha}$, they are in fact
classical solutions. But $|\nabla v| < 1$ in $\{v>0\}$, so that
according to Corollary \ref{rem_poscurv}, both $\Sigma_{+}$ and
$\Sigma_-$ have positive curvature. However, this is impossible,
because $0\in \Sigma_{+}\cap\Sigma_{-}$, as $v$ blows up at $0$ to a
wedge solution.

Hence $N_1$ and $S_1$ belong to the same connected component of
$B_2(Q_1)^+(v_k)$ and similarly $N_2$ and $S_2$ belong to the same
connected component of $B_2(Q_2)^+(v_k)$ for $\lambda$ small enough
and all $k$ large enough.

Let
\begin{align*}
    \alpha_{L} &= \del B_4(Q_1) \cap \{x: |(x-Q_1)\cdot b_1| <\lambda, (x-Q_1)\cdot a_1 <0\}
    \\
    \alpha_{R} &= \del B_4(Q_1) \cap \{x: |(x-Q_1)\cdot b_1| <\lambda, (x-Q_1)\cdot a_1 >0\}.
\end{align*}
By our topological assumption $F(v_k)\cap \overline{B_4}(Q_1)$
consists of arcs whose ends lie on $\alpha_L\cup \alpha_R$. Define
$F_L$ (resp. $F_R$) to be the set of points of $F(v_k)\cap
\overline{B_4}(Q_1)$ that lie on arcs whose both ends are on
$\alpha_L$ (resp. $\alpha_R$). Then
\begin{equation*}
    F(v_k) \cap \overline{B_4}(Q_1) = F_L \sqcup F_R.
\end{equation*}
This is so, because the existence of an arc which has one end on
$\alpha_L$ and the other on $\alpha_R$ would contradict the fact
that $N_1$ from $S_1$ belong to the same connected component of
$B_2(Q_1)^+(v_k)$.

Note that $F(v_k) \cap \overline{B_4}(Q_1)$ consists of a finite
number of connected arcs. This follows from the analyticity of
$F(v_k)$ which implies that only finitely many connected arcs of
$F(v_k)$ can intersect $\del B_4(Q_1)$, each intersecting it
finitely many times. As a consequence, the sets $F_L$ and $F_R$ are
closed and being bounded, they are compact. Hence, there exists a
point $p\in F_L$ and a point $q\in F_R$ such that
\begin{equation*}
    |p-q| = \text{dist}(F_L, F_R) < 2\lambda,
\end{equation*}
where the bound follows from $\eqref{prop_MainTheoAux_ZeroProx}$.
Denote by $\gamma_L$ the arc of $F(v_k) \cap \overline{B_4}(Q_1)$
containing $p$, and by $\gamma_R$ -- the arc containing $q$.

Claim that the straight line segment $\tau_1 := \{(1-t)p+tq:
0<t<1\}$, connecting $p$ to $q$, lies in $B_4(Q_1)^+(v_k)$. Since
$p$ and $q$ realize the distance between $F_L$ and $F_R$, it must be
that $\tau_1 \cap F(v_k) = \emptyset$, so that either $\tau_1
\subseteq B_4(Q_1)^+(v_k)$ or $\tau_1 \subseteq\{v_k=0\}\cap
B_4(Q_1)$. The latter alternative, however, is impossible, since
$\gamma_L\cup\tau_1\cup\gamma_R$ would disconnect $N_1$ from $S_1$
in $B_2(Q_1)^+(v_k)$.

Let us look globally at the connected arc(s) of $F(v_k)$ that
contain $p$ and $q$. One possibility is that $p$ and $q$ belong to
the same arc. Let us argue that this is not the case. Denote by
$\gamma$ the arc of $F(v_k)$ with ends $p$ and $q$. Then $\gamma\cup
\tau$ is a simple closed arc and it encloses a piecewise $C^1$
Jordan domain in the positive phase. Applying Lemma
\ref{lemma_noShortNonFBseg} to it, we see that, as
$\hsd(\tau_1)<2\lambda$,
\begin{equation}\label{prop_MainTheoAux_Last}
    2\lambda L > \hsd(\tau_1) L \geq \hsd(\gamma),
\end{equation}
where $L$ denotes the Lipschitz norm of $v_k$. However, since
$\gamma_L\cup\tau_1\cup\gamma_R\subseteq \gamma\sqcup\tau_1$
connects $\alpha_L$ to $\alpha_R$,
\begin{equation*}
    \hsd(\gamma)+\hsd(\tau_1)\geq 2\sqrt{4^2-\lambda^2},
\end{equation*}
so that $\hsd(\gamma) > 7$ for all $\lambda$ small. Since $L$ is
bounded above by a universal constant, taking $\lambda$ small enough
leads to a contradiction in \eqref{prop_MainTheoAux_Last}.

Thus, we may assume that $p$ and $q$ belong to distinct arcs of
$F(v_k)$. We know that $v$ blows down to a wedge solution (otherwise
$|\nabla v(x)| = 1$ at some $x$) and without loss of generality, we
may assume the blowdown is exactly $s|x_2|$. Thus, for any
$\delta>0$ there exists $M=M(\delta)$ large enough such that for all
$k$ large enough
\begin{equation*}
    \{v_k=0\}\cap \overline{B_{M}} \subseteq \{|x_2|<\delta M\}.
\end{equation*}
Note that we may take $M$ large enough so that both $Q_1$ and $Q_2$
belong to $B_{M/3}$ and $\tau_1\subseteq B_{M/2}^+(v_k)$. Denote
\begin{equation*}
    \alpha_{L,M} = \del B_{M} \cap \{x_1<0, |x_2|<\delta M\} \qquad \alpha_{R, M} = \del B_{M} \cap \{x_1>0,
    |x_2|<\delta M\}.
\end{equation*}
and let $\gamma_p$ be the arc of $F(v_k)\cap \overline{B_M}$
containing $p$, and $\gamma_q$ -- the arc of $F(v_k)\cap
\overline{B_M}$ containing $q$. Let us show that $\gamma_p$ and
$\gamma_q$ cannot both have an end on $\alpha_{L,M}$ (and,
similarly, they cannot both have an end on $\alpha_{R,M}$). Assume
they do: let $\tilde{\gamma}_p\subseteq \gamma_p$ be the subarc
connecting $\alpha_{L,M}$ to $p$ and
$\tilde{\gamma}_q\subseteq\gamma_q$ be the subarc connecting
$\alpha_{L,M}$ to $q$ and let $\tilde{\alpha}$ be (smaller) circular
arc on $\del B_M$ from the end $\del B_M\cap \tilde{\gamma}_p$ to
the end $\del B_M \cap \tilde{\gamma}_q$. Then $\tilde{\alpha}\cup
\tilde{\gamma}_p\cup \tilde{\gamma}_q \cup \tau_1$ encloses a Jordan
domain $\tilde{O}$ and let $O\subseteq \tilde{O}^+(v_k)$ be the
connected component of $\tilde{O}^+(v_k)$ whose boundary contains
$\tau_1$. Applying Lemma \ref{lemma_noShortNonFBseg} to $O$, we
quickly reach a contradiction for small $\delta$, as
\begin{equation*}
    \hsd(F(v_k)\cap \del O) \geq
    \hsd(\tilde{\gamma}_p)+\hsd(\tilde{\gamma}_q) \geq M/2 + M/2 = M
\end{equation*}
while
\begin{equation*}
    \hsd(\del O\setminus F(v_k)) \leq \hsd(\tau_1)+\hsd(\tilde{\alpha}) <
    2\lambda + c\delta M < 1+ c\delta M.
\end{equation*}

Therefore, it must be that $\gamma_p$ has both its ends on
$\alpha_{L,M}$ while $\gamma_q$ has both its ends on $\alpha_{R,M}$.
Of course, an analogous scenario holds near $Q_2$ as well: we can
find a straight line segment $\tau_2\subseteq
B_4^+(Q_2)(v_k)\subseteq B_{M/2}^+(v_k)$ of length
$\hsd(\tau_2)<2\lambda$, one end of which belongs to a free boundary
arc with ends on $\alpha_{L,M}$ and the other contained in a free
boundary arc with ends on $\alpha_{R,M}$. Moreover,
$d(\tau_1,\tau_2)\approx d(Q_1, Q_2)=\eps^{-1}d(P_1,P_2)$ can be
taken to be of at least unit size
\begin{equation*}
    d(\tau_1,\tau_2)\geq 1
\end{equation*}
if $\eps$ is initially taken small enough. However, we can now
appeal to Lemma \ref{lemma_TwoPtsBlowup} below to rule out the
arising picture when $\lambda$ and $\delta$ are small enough. This
completes the first step of the proof.

\paragraph{\textbf{Step 2}.} We have just established that $F(u)$ is smooth everywhere but
possibly one point -- without loss of generality, this exceptional
point sits at the origin. Then each component of $F(u)\setminus 0$
is a smooth submanifold of $\real^2$, hence diffeomorphic to either
the circle $\mathbb{S}^1$ or the real line $\real$. Let us establish
that the former possibility does not arise. Assume that there is a
connected component of $F(u)\setminus 0$ that is a smooth, simple
closed curve $\alpha$. Choose $\delta >0$ small enough, such that
$\mathcal{N}_{2\delta}(\alpha) \cap (F(u)\setminus \alpha) =
\emptyset$ (such a $\delta$ exists since $\alpha$ is compact and
$F(u)\setminus \alpha$ is closed). But since $F(u_k) \to F(u)$
locally in the Hausdorff distance, for $K
:=\overline{\mathcal{N}_{3\delta/2}(\alpha)}$
\begin{equation*}
    F(u_k) \cap K \subseteq \mathcal{N}_{\delta}(F(u)\cap K) =
    \mathcal{N}_{\delta}(\alpha)
\end{equation*}
for all $k$ large enough. However, this is impossible, since by the
topological assumption \eqref{topo_assmpt}, the free boundary of
$u_k$ has to ``exit" $\mathcal{N}_{\delta}(\alpha)$:
\begin{equation*}
(F(u_k)\cap K) \setminus \mathcal{N}_{\delta}(\alpha) = F(u_k)\cap
\left(\overline{\mathcal{N}_{3\delta/2}(\alpha)}\setminus
\mathcal{N}_{\delta}(\alpha)\right) \neq \emptyset.
\end{equation*}
This places us in the assumptions of Lemma \ref{lemma_NoExcPoint}
below, through which we establish the smoothness of $F(u)$
everywhere.

\paragraph{\textbf{Step 3}.}

Each connected component of $F(u)$ is diffeomorphic to $\real$ and
thus, the image of some embedding $\beta:\real \to \real^2$. The
embedding has to be proper, i.e. $\lim_{t \to\pm \infty} \beta(t) =
\infty$. Otherwise, there exists a sequence, say $t_{i} \to \infty$,
such that $\lim_{i\to\infty}\beta(t_i) = Q \in\real^2$. But $Q\in
F(u)$ as $F(u)$ is closed, and since $F(u)$ is smooth at $Q$, for a
small enough $r>0$ $F(u)\cap \overline{B_{r}}(Q)$ is a connected arc
$\tilde{\beta}$ that contains $Q$ in its interior. But then it has
to be that $\tilde{\beta}\cap \beta \neq \emptyset$, so that by
connectedness $\tilde{\beta}\subseteq\beta$. The last statement
contradicts the finite limit of $\{\gamma(t_i)\}$. Therefore, each
connected component of $F(u)$ is a smooth curve, which is the image
of a proper embedding of $\real$ into the plane -- we shall call
such curves ``proper arcs". Furthermore, each proper arc of $F(u)$
has strictly positive curvature.

In this last step of the proof of the Proposition, we shall show
that $F(u)$ consists of precisely two proper arcs. Then we can
invoke \cite[Theorem 12]{Traizet} to conclude that $u$ is the
hairpin solution.

As we know, $u$ blows down to a wedge solution $s|x_2|$, so for a
sequence of $\delta_j\searrow 0$ we can find a sequence $R_j
\nearrow \infty$, so that $F(u)\cap B_{R_j} \subseteq
\{|x_2|<\delta_j R_j\}$ and $\{x_2=0\}\cap B_{R_j}\subseteq
\mathcal{N}_{\delta_j R_j}(F(u))$. Define
\begin{equation*}
       \alpha_{L,j} = \del B_{R_j} \cap \{x_1<0, |x_2|<\delta_j R_{j}\} \qquad \alpha_{R,j} = \del B_{R_j} \cap \{x_1>0,
    |x_2|<\delta_j R_{j}\}.
\end{equation*}
Claim that each connected arc $\gamma \in F(u)$ that intersects
$B_{R_j}$ ``enters and exits" $B_{R_{j}}$ either through $\alpha_{L,
j}$ or $\alpha_{R,j}$ if $R_j$ is large enough, i.e.
\begin{equation*}
    \del{B_{R_j}}\cap \gamma \subseteq \alpha_{L, j} \quad
    \text{or} \quad \del{B_{R_j}}\cap \gamma \subseteq \alpha_{R,
    j}.
\end{equation*}
If not, then let $U$ be the connected component of $\{u>0\}$ such
that $\gamma \subseteq \del U$. Then it's easy to see that
$u\chrc{U}$ is a viscosity solution of \eqref{FBP_0} whose free
boundary is asymptotically flat, and thus $u\chrc{U}$ has to be a
half-plane solution, which is impossible since $|\nabla u|<1$.

As a consequence of the above argument, combined with the fact that
$F(u_{R_j})\to \{x_2=0\}$ locally in the Hausdorff distance, it must
be that $F(u)$ consists of at least two proper arcs. Assume that
$F(u)$ has at least three: $\gamma_1$, $\gamma_2$ and $\gamma_3$,
and let $j_0$ be large enough such that $\del B_{R_{j_0}}\cap
\gamma_i \subseteq \alpha_{L,{j_0}}$ or $\del B_{R_{j_0}}\cap
\gamma_i \subseteq \alpha_{R,{j_0}}$, $i=1,2,3$. Note that because
$\gamma_i$ has positive curvature, if say $\del B_{R_{j_0}}\cap
\gamma_i \subseteq \alpha_{L,{j_0}}$, then $\del B_{R_{j}}\cap
\gamma_i \subseteq \alpha_{L,{j}}$ for all $j\geq j_0$ (similarly,
if $\gamma_i$ ``enters and exits from the right" at scale $R_{j_0}$,
it will do so at any larger scale). It has to be that at least two
of these arcs ``enter and exit" from the same side, say $\gamma_1$
and $\gamma_2$ ``enter and exit" from the right. Let $0<M<R_j/3$ be
large enough such that $\{x_1 = M\}$ intersects both $\gamma_1$ and
$\gamma_2$, and consider any connected component $V$ of
\begin{equation*}
    \{u > 0\} \cap \{M<x_1<M+R_{j}/3\}.
\end{equation*}
Applying Lemma \ref{lemma_noShortNonFBseg} to the piecewise $C^1$
Jordan domain $V$,
\begin{equation*}
    \hsd(\del V\cap F(u)) \leq C \hsd(\del V\setminus F(u)) \leq 4
    \delta_j R_j,
\end{equation*}
while clearly $\hsd (\del V\cap F(u)) \geq 2 R_j/3$. This leads to a
contradiction when $j\to \infty$ as $\delta_j\to 0$.

\end{proof}


The proof of the proposition is complete modulo the following three
technical lemmas.

\begin{lemma}\label{lemma_TwoPtsBlowup} Let $u$ be a classical
solution of \eqref{FBP_0} in $B_{2M}$ for some large $M$, whose
Lipschitz norm is $L$. Assume that \eqref{topo_assmpt} is satisfied
and that
\begin{equation*}
    \{u=0\}\cap
\overline{B_M}\subseteq \{|x_2|<\delta M\}
\end{equation*}
for some $\delta>0$. Assume further that $F(u)\cap \overline{B_{M}}$
consists of arcs, each having its two ends either in $\alpha_L$ or
in $\alpha_R$, where
\begin{equation*}
       \alpha_{L} = \del B_{M} \cap \{x_1<0, |x_2|<\delta M\} \qquad \alpha_{R} = \del B_{M} \cap \{x_1>0,
    |x_2|<\delta M\}.
\end{equation*}
Let $F_L$ (resp. $F_R$) denote the set of points of $F(u)\cap
\overline{B_M}$ that lie on arcs both whose ends belong to
$\alpha_L$ (resp. $\alpha_R$). Then there exist small positive
$\delta_0=\delta_0(L)$ and $\lambda=\lambda(L)<1$  such that if
$0<\delta < \delta_0$, one cannot find two straight-line open
segments $\tau_1$ and $\tau_2$ of length less that $\lambda$ in
$B_{M/2}^+(u)$, each having one end in $F_L$ and one in $F_R$, and
such that $\text{dist}(\tau_1, \tau_2)\geq 1$.
\end{lemma}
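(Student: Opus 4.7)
The plan is to assume such segments $\tau_1,\tau_2$ exist and derive a contradiction by constructing a Jordan domain $V\subset B_{2M}^+(u)$ whose boundary is mostly free boundary, then applying Lemma~\ref{lemma_noShortNonFBseg}. Denote by $p_i := \tau_i\cap F_L$ and $q_i := \tau_i\cap F_R$ the endpoints of $\tau_i$, and let $\gamma_{L,i}$ (resp.\ $\gamma_{R,i}$) be the arc of $F_L$ (resp.\ $F_R$) containing $p_i$ (resp.\ $q_i$). The argument splits into two cases according to whether the four arcs $\gamma_{L,1},\gamma_{L,2},\gamma_{R,1},\gamma_{R,2}$ come in two pairs or not.

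\emph{Case 1:} $\gamma_{L,1}=\gamma_{L,2}=:\gamma_L$ and $\gamma_{R,1}=\gamma_{R,2}=:\gamma_R$. Let $\beta_L\subset\gamma_L$ be the subarc from $p_1$ to $p_2$ and $\beta_R\subset\gamma_R$ the subarc from $q_1$ to $q_2$. I would verify that $J:=\tau_1\cup\beta_R\cup\tau_2\cup\beta_L$ is a Jordan curve whose bounded enclosure $V$ sits on the positive-phase side of both $\beta_L,\beta_R$ (since each $\tau_i$ transversally enters $B_{M/2}^+(u)$ at $p_i$ and $q_i$), so $V\subset B_M^+(u)\Subset B_{2M}$. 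From $d(\tau_1,\tau_2)\ge 1$ one has $|p_1-p_2|\ge 1$ and $|q_1-q_2|\ge 1$, whence $\hsd(\beta_L)+\hsd(\beta_R)\ge 2$. Lemma~\ref{lemma_noShortNonFBseg} then gives $2/L\le\hsd(\tau_1)+\hsd(\tau_2)<2\lambda$, contradicting any choice $\lambda(L)<1/L$.

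\emph{Case 2:} at least one of $\gamma_{L,1}\ne\gamma_{L,2}$ or $\gamma_{R,1}\ne\gamma_{R,2}$ holds; assume without loss of generality $\gamma_{L,1}\ne\gamma_{L,2}$. Choose an endpoint $a_i\in\alpha_L$ of each $\gamma_{L,i}$ and $c_i\in\alpha_R$ of each $\gamma_{R,i}$, and let $\sigma_{L,i}\subset\gamma_{L,i}$ and $\sigma_{R,i}\subset\gamma_{R,i}$ denote the subarcs from $a_i$ to $p_i$ and from $q_i$ to $c_i$, respectively. Close the curve with the shorter subarcs $\alpha'_L\subset\alpha_L$ from $a_2$ to $a_1$ and $\alpha'_R\subset\alpha_R$ from $c_1$ to $c_2$, yielding
\[
J:=\sigma_{L,1}\cup\tau_1\cup\sigma_{R,1}\cup\alpha'_R\cup\sigma_{R,2}^{-1}\cup\tau_2^{-1}\cup\sigma_{L,2}^{-1}\cup\alpha'_L,
\]
a Jordan curve by pairwise disjointness of distinct components of $F(u)$ (if $\gamma_{R,1}=\gamma_{R,2}$, replace the three middle-right pieces by the $\gamma_R$-subarc joining $q_1,q_2$; the estimate only improves). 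Letting $V'$ be the enclosed region with any interior zero-phase droplets excised---an operation that only enlarges $\hsd(\partial V'\cap F(u))$ without affecting $\hsd(\partial V'\setminus F(u))$---we have $V'\subset B_{2M}^+(u)\Subset B_{2M}$. Since $p_i,q_i\in B_{M/2}$ and $a_i,c_i\in\partial B_M$, each $\sigma_{L,i},\sigma_{R,i}$ has length $\ge M/2$, giving $\hsd(\partial V'\cap F(u))\ge 2M$; since $\alpha_L,\alpha_R\subset\{|x_2|<\delta M\}\cap\partial B_M$, one has $\hsd(\alpha'_L)+\hsd(\alpha'_R)\le c\delta M$, so $\hsd(\partial V'\setminus F(u))\le 2\lambda+c\delta M$. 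Lemma~\ref{lemma_noShortNonFBseg} then forces $1/L\le\lambda/M+c\delta/2$, which is impossible once $\delta_0(L)<1/(cL)$ and $M$ is taken sufficiently large.

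The main obstacle is the topological bookkeeping: checking that the concatenation $J$ is genuinely a Jordan curve (using pairwise disjointness of distinct components of $F(u)$ together with $d(\tau_1,\tau_2)\ge 1$), identifying which side of $J$ lies in $B_{2M}^+(u)$, and confirming that excising interior droplets in Case 2 only improves the length balance. Once these details are in place, the two cases reduce to a direct application of the divergence-based inequality of Lemma~\ref{lemma_noShortNonFBseg}.
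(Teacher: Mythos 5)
Your strategy matches the paper's: in both proofs one builds a piecewise-$C^1$ subdomain of the positive phase from the two short segments, free-boundary arcs reaching $\partial B_M$, and short arcs on $\partial B_M$, and applies Lemma~\ref{lemma_noShortNonFBseg} to force a contradiction. Your Case~1 is exactly the paper's Scenario~3, and your Case~2 attempts to merge the paper's Scenarios~1 and~2. Case~1 is essentially correct: since $\partial V$ consists only of $\tau_1,\tau_2$ (positive phase) and $\beta_L,\beta_R$ (free boundary), the zero set meets $V$ only in compactly contained droplets, so excision is benign and the length balance gives $2/L\le 2\lambda$.

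Case~2, however, has a genuine gap in the excision step. There $\partial V$ also contains the circular arcs $\alpha'_L,\alpha'_R\subset\partial B_M\cap\{|x_2|<\delta M\}$, which can pass through $\{u=0\}$; moreover the enclosed region $V$ may lie on the \emph{zero-phase} side of some of the $\sigma$-arcs. Consequently $V\cap\{u=0\}$ is not a union of ``interior droplets'': zero-phase components can touch $\partial V$ along $\alpha'_L,\alpha'_R$ or along the $\sigma$'s. Passing to $V'=V\cap\{u>0\}$ (which is what is actually required to apply Lemma~\ref{lemma_noShortNonFBseg}) may disconnect the region, and more importantly, it is no longer clear that the specific arcs $\sigma_{L,i},\sigma_{R,i}$ lie on $\partial V'$: each may be replaced by its complementary subarc $\gamma_{L,i}\setminus\sigma_{L,i}$ or $\gamma_{R,i}\setminus\sigma_{R,i}$, and a priori the free-boundary portion of the boundary of a chosen component could be quite different from what you wrote. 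This is precisely the point the paper addresses by instead cutting $B_M$ along the curves $\Gamma_p,\Gamma_q$ (or $\Gamma$), taking the connected component $U$ of $D^+(u)$ that contains $\tau_1$ on its boundary, and then bounding $\hsd(F(u)\cap\partial U)$ from below by $\min\{\hsd(\gamma'_{p,R}),\hsd(\gamma_{p,R}\setminus\gamma'_{p,R})\}$ and similar terms, which covers both possibilities. Your $\hsd(\partial V'\cap F(u))\ge 2M$ bound therefore isn't justified as stated; the argument can be repaired by working with a connected component of $V\cap\{u>0\}$ and observing that whichever subarc of $\gamma_{L,i}$ (resp. $\gamma_{R,i}$) appears on its boundary still has length $\geq M/2$, but this requires the case analysis you have deferred rather than a one-line remark.
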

\begin{proof}
Assume that for some $\delta$, $\lambda$ such segments exist; we'll
derive a contradiction by taking $\delta$ and $\lambda$ small enough
and universal. Let $\tau_1$ have ends $p_L\in F_L$ and $p_R\in F_R$,
and $\tau_2$ connect $q_L\in F_L$ to $q_R\in F_R$. The following
three different scenarios regarding the relation between these
points may hold.

\paragraph{\textbf{Scenario 1}}

The points $p_L$, $p_R$, $q_L$, $q_R$ belong to distinct arcs in
$F_L$ and $F_R$: $\gamma_{p,L}$, $\gamma_{p,R}$, $\gamma_{q,L}$ and
$\gamma_{q,R}$, respectively. Each of these arcs is divided into two
subarcs by its respective point -- that start on the point and end
on $\del B_M$; let us choose one of these two and denote it by
$\gamma'_{[\cdot],[\cdot]}$, say our choice of a subarc on
$\gamma_{p,L}$ will be denoted by $\gamma'_{p,L}$. Then note that
$\Gamma_p:=\gamma'_{p,L}\cup\tau_1\cup\gamma'_{p,R}$ and
$\Gamma_q:=\gamma'_{q,L}\cup\tau_2\cup\gamma'_{q,R}$ are disjoint
simple curves and $B_M\setminus (\Gamma_p \cup \Gamma_q)$ consists
of three connected components, only one of which is contained in
$\{|x_2|<\delta M\}\cap B_M$; let us call it $D$. Consider a
connected component $U$ of $D^+(u)$ that has $\tau_1$ as part of its
boundary. Obviously, $U$ is piecewise $C^1$ with
\begin{equation*}
    \hsd(F(u)\cap \del U)\geq 2M\sqrt{1-\delta^2} -\hsd(\tau_1) \geq 2M\sqrt{1-\delta^2} - \lambda \quad
    \text{and} \quad \hsd(\del U\setminus F(u)) \leq
    4\arcsin(\delta)M
\end{equation*}
Applying Lemma \ref{lemma_noShortNonFBseg} to $U$, we reach the
inequality
\begin{equation*}
    2M\sqrt{1-\delta^2} - \lambda \leq 4\arcsin(\delta)ML,
\end{equation*}
which cannot be satisfied if $\delta$ and $\lambda$ are small
enough.

\paragraph{\textbf{Scenario 2}}

Two of the points that ``connect to one side" belong to the same
arc, while their counterparts belong to distinct arcs: say the
points $p_L$ and $q_L$ belong to the same arc $\gamma_L$ in $F_L$,
while $p_R$ and $q_R$ belongs to two distinct arcs $\gamma_{p,R}$
and $\gamma_{q,R}$, respectively, in $F_R$. This time let
$\gamma'_L$ denote the subarc of $\gamma_L$ whose ends are $p_L$ and
$q_L$ and let $\gamma'_{p,R}$, $\gamma'_{q,R}$ be determined in the
same fashion as in Scenario 1 above. Then $\Gamma:= \gamma'_{p,R}
\cup \tau_1\cup\gamma'_L\cup\tau_2\cup\gamma'_{q,R}$ is a simple
curve in $B_M$ with ends on $\alpha_R$, so that $B_M\setminus
\Gamma$ consists of two connected components, only one of which is
contained in $\{|x_2|<\delta M\}$; let us again call it $D$.
Consider a connected component $U$ of $D^+(u)$ that has $\tau_1$ as
part of its boundary (and thus $\tau_2$ and $\gamma'_L$). Then $U$
is piecewise $C^1$ with
\begin{equation*}
    \hsd(F(u)\cap \del U)\geq \hsd(\gamma'_L)+ \min\{\hsd(\gamma'_{p,R}),
    \hsd(\gamma_{p,R}\setminus\gamma'_{p,R})\}+\min\{\hsd(\gamma'_{q,R}), \hsd(\gamma_{q,R}\setminus\gamma'_{q,R})\}
\end{equation*}
since it is either $\gamma'_{p,R}$ or
$(\gamma_{p,R}\setminus\gamma'_{p,R})$ (and similarly,
$\gamma'_{q,R}$ or $(\gamma_{q,R}\setminus\gamma'_{q,R})$) that
belongs to $\del U$. But each of these curves intersects both $\del
B_{M/2}$ and $\del B_{M}$, so that its length is at least $M/2$. As
$\hsd(\gamma'_L)\geq \text{dist}(\tau_1,\tau_2)\geq 1$, we get
\begin{equation*}
    \hsd(F(u)\cap \del U)\geq 1+ M,
\end{equation*}
while on the other hand
\begin{equation*}
    \hsd(\del U\setminus F(u)) \leq 2\arcsin(\delta) M.
\end{equation*}
Applying Lemma \ref{lemma_noShortNonFBseg} to $U$ we see that
\begin{equation*}
    1+M \leq 2\arcsin(\delta) M L,
\end{equation*}
which is violated when $\delta$ is small enough.

\paragraph{\textbf{Scenario 3}}

In this last scenario, $p_{L}$ and $q_L$ belong to the same arc
$\gamma_L$ of $F_L$, and $p_R$ and $q_R$ belong to the same arc
$\gamma_R$ of $F_R$. Let $\gamma'_L$ denote the subarc of $\gamma_L$
with ends $p_L, q_L$ and $\gamma'_R$ denote the subarc of $\gamma_R$
with ends $p_R, q_R$. Then $\Gamma :=
\tau_1\cup\gamma'_L\cup\tau_2\cup\gamma'_R$ is a simple closed curve
that encloses a piecewise $C^1$ Jordan domain $U\subseteq B_M^+(u)$
with
\begin{equation*}
    \hsd(F(u)\cap \del U) = \hsd(\gamma'_L)+\hsd(\gamma'_R) \geq 2
    \text{dist}(\tau_1,\tau_2)\geq 2,
\end{equation*}
while
\begin{equation*}
    \hsd(\del U \setminus F(u)) = \hsd(\tau_1)+\hsd(\tau_2) \leq
    2\lambda.
\end{equation*}
Lemma \ref{lemma_noShortNonFBseg} then yields
\begin{equation*}
    2\leq 2\lambda L,
\end{equation*}
which is impossible if $\lambda$ is small enough.
\end{proof}

\begin{lemma}\label{lemma_NoExcPoint} Let $u:\real^2\to \real$ be a non-degenerate
viscosity and variational solution of \eqref{FBP_0} with $|\nabla
u|<1$ in $\{u>0\}$. Assume further that $F(u)$ is smooth everywhere
but possibly the origin $0 \in F(u)$ and that $F(u)\setminus 0$ has
no connected component that is a closed curve. Then $F(u)$ is smooth
at the origin as well, and $u$ is a classical solution of
\eqref{FBP_0} globally.
\end{lemma}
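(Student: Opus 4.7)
The plan is to classify the blowups of $u$ at $0$ using Proposition \ref{prop_blowups}, and to rule out the degenerate wedge alternative using the strict positive curvature from Remark \ref{rem_poscurv}. Every limit of a rescaling $u_\lambda(x):=\lambda^{-1}u(\lambda x)$ along a subsequence $\lambda_j\to 0^+$ is, after rotation, either the half-plane $P(x)=x_2^+$ or a wedge $V_s(x)=s|x_2|$ for some $s\in(0,1]$. Since $u$ is a variational solution, Weiss' monotonicity (Lemma \ref{lemma_WeissMono}) guarantees that $\Phi(u,0^+):=\lim_{r\to 0^+}\Phi(u,r)$ exists and, by scale-invariance on $1$-homogeneous limits, equals $\Phi(\cdot,1)$ of any blowup. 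Explicit calculation gives $\Phi(P,1)=\pi/2$ and $\Phi(V_s,1)=\pi$ for every $s$, so this quantity selects a single family: either every blowup is a half-plane or every blowup is a wedge. In the half-plane case, $F(u)$ is $\varepsilon$-flat near $0$ at every small scale, and Caffarelli's classical flat-implies-smooth theorem for viscosity solutions of \eqref{FBP_0} yields analyticity of $F(u)$ at $0$.

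The substance of the proof is to rule out the wedge case. Suppose a subsequence gives $u_{\lambda_j}\to V_s=s|x_2|$ with axis $\{x_2=0\}$. Proposition \ref{prop_limitofsolns}(a) forces $\{u=0\}\cap B_{\lambda_j R}\subset\{|x_2|<\delta\lambda_j\}$ for any $\delta,R>0$ and $j$ large. The origin cannot be isolated in $F(u)$: otherwise $u$ would be harmonic on a punctured disk around $0$, extend harmonically through the removable singularity, and vanish identically by the strong maximum principle, contradicting nondegeneracy. Moreover, there can be no free-boundary arc with positive phase on both sides, for Remark \ref{rem_poscurv} and the curvature identity $\kappa=-\tfrac12\partial_\nu|\nabla u|^2$ would give strictly positive curvature with respect to two opposite normal directions, which is impossible. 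Hence $\{u=0\}$ has nonempty interior near $0$, and by the Hausdorff convergence to the line $\{x_2=0\}$, the zero phase straddles the origin along the $x_1$-axis and is bounded near $0$ above and below by two smooth arcs $\gamma^u,\gamma^l\subset F(u)\setminus\{0\}$ whose closures meet at $0$.

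Parametrize each arc by arclength. Strict positive curvature (Remark \ref{rem_poscurv}) makes the tangent angle strictly monotone, and the strip containment $|x_2|<\delta_r r$ with $\delta_r\to 0$ forces its limiting value at $0$ to be horizontal: any tangent bounded away from horizontal at a point of the arc at scale $r$ would drive the arc out of the strip. Near $0$ we therefore have $\gamma^u$ as the graph $x_2=f_u(x_1)$ with $f_u(0^+)=f_u'(0^+)=0$, and similarly $\gamma^l$ as the graph $x_2=f_l(x_1)$. Because the positive phase lies above $\gamma^u$ and below $\gamma^l$, applying Remark \ref{rem_poscurv} with the appropriate inward normal yields that $f_u$ is strictly concave and $f_l$ strictly convex on a small interval $(0,c)$. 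Strict concavity with $f_u'(0^+)=0$ forces $f_u'<0$ on $(0,c)$, hence $f_u<0$; symmetrically $f_l>0$. But $\gamma^u$ is the upper boundary of the zero phase and $\gamma^l$ is the lower, so by construction $f_u>f_l$ on $(0,c)$, contradicting $f_u<0<f_l$. This rules out the wedge case and completes the proof.

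The main obstacle I anticipate is the geometric setup above: precisely identifying the two-arc structure of the zero-phase boundary near $0$ and rigorously establishing the horizontal-tangent limit by showing that a smooth strictly convex arc confined to shrinking horizontal strips around $0$ cannot spiral in but must approach $0$ graph-like. Once these geometric facts are in hand, the final sign comparison is a one-line consequence of strict convexity.
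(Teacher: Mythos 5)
Your overall strategy matches the paper's at the level of ideas: classify the blowup of $u$ at $0$ (Proposition~\ref{prop_blowups}), dispose of the half-plane case by flat-implies-regular, and rule out the wedge case by exploiting strictly positive free-boundary curvature (Remark~\ref{rem_poscurv}). The Weiss-energy observation that $\Phi(P,1)=\pi/2 < \pi = \Phi(V_s,1)$ so that all blowups at $0$ are of the same type is correct and a nice way to set up the dichotomy. The ruling-out of a free-boundary arc with positive phase on both sides is also right. And your final step---that if $\gamma^u,\gamma^l$ are graphs $x_2=f_u(x_1),f_l(x_1)$ over $(0,c)$ with $f_u(0^+)=f_l(0^+)=0$, horizontal one-sided tangents, $f_u$ concave and $f_l$ convex, then $f_u<0<f_l$ contradicts $f_u>f_l$---is a clean and correct alternative to what the paper does (the paper instead concludes via Lemma~\ref{lemma_TwoCurves} that the zero set contains a nontrivial sector at $0$, which directly contradicts the wedge blowup since the wedge's zero set has empty interior).

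However, there is a genuine gap, and it sits exactly where you flag it in your final paragraph. You assert, but do not prove, two structural facts that form the real substance of the argument: (a) that near $0$ the zero phase has a component bounded by precisely two arcs $\gamma^u,\gamma^l\subset F(u)\setminus\{0\}$ whose closures meet at $0$, and (b) that these arcs have well-defined one-sided tangent directions at $0$ which, moreover, are horizontal. The paper proves (a) through a careful case analysis: it shows each end of each component of $F(u)\setminus\{0\}$ tends either to $0$ or to $\infty$, shows that $F(u)\cap\partial B_1$ is finite (so only finitely many components reach $B_1$), handles the ``closed-curve-minus-a-point'' configuration separately, and constructs a Jordan domain $V$ of $(\{u=0\}\cap B_1)^\circ$ with exactly two boundary arcs converging to $0$. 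For (b) the paper proves Lemma~\ref{lemma_TwoCurves}, which establishes---using only strict positivity of the curvature, not the flatness from the blowup---that the total turning $\theta(\gamma_i(0),\gamma_i(t))$ is bounded and hence limiting tangents exist, and that the Jordan domain contains a sector. Your intended shortcut (positive curvature plus the shrinking-strip containment $|x_2|<\delta_r r$ force a horizontal tangent and forbid spiraling) is plausible and is indeed a different mechanism from Lemma~\ref{lemma_TwoCurves}, but it is only a heuristic here: you neither show that the zero phase near $0$ has the two-arc boundary structure nor give the spiraling/tangent-limit argument in a form that closes the proof. Until these are supplied, the wedge case is not ruled out.
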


\begin{proof} Every connected component $\gamma$ of $F(u)\setminus 0$ is a smooth connected submanifold of $\real^2$, and since it is not
diffeomorphic to circle $\mathbb{S}^1$ by hypothesis, it has to be
diffeomorphic to the real line $\real$. Thus, it is the image of an
embedding $\gamma:\real \to \real^2$ and it must be that for any
sequence $t_n\to\infty$ (or $t_n\to-\infty$) $\lim_{n\to\infty}
\gamma(t_n)$ is either $\infty$ or $0$. Otherwise, there would be
some sequence $t_n\to\pm\infty$ such that $\gamma(t_n)$ converges to
some finite limit $q\in F(u)$, where $q\neq 0$, because $F(u)$ is
closed. But $F(u)$ is smooth at $q$, so that for a small enough
$r>0$, $F(u)\cap \overline{B_{r}}(q)$ is a connected arc $\beta$
that contains $q$ in its interior. Since $\beta\cap \gamma \neq
\emptyset$, it follows that $\beta\subseteq\gamma$ by connectedness,
which contradicts the convergence $\gamma(t_n) \to q$.

As a first step, we claim that $F(u)\cap \del B_1$ consists of
finitely many points. Otherwise, there would exist a sequence of
points $p_n\in F(u)\cap \del B_1$ that converges to some $p\in
F(u)\cap\del B_1$; denote by $\gamma$ the connected component of
$F(u)\setminus 0$ containing $p$. Since $F(u)$ is smooth at $p$ it
would have to be that for all large enough $n$, $p_n\in \gamma$.
However, that contradicts the fact that $\gamma$ is an analytic
curve different from the circle $\del B_1$.

Second, assume that $F(u)\setminus 0$ has a connected component
$\alpha$, an image of a smooth $\alpha:\real\to\real^2$, such that
$\lim_{t\to\pm\infty}\alpha(t)=0$. Then $\overline{\alpha}$ is a
simple closed curve, so that it encloses a bounded connected domain
$U$. Obviously $U\subseteq \{u=0\}$, as the Strong Maximimum
Principle prevents $U$ from containing points $x$ where $u(x)>0$.
Because of Corollary \ref{rem_poscurv}, $\alpha$ has positive
curvature, so we can apply Lemma \ref{lemma_TwoCurves} to conclude
that $\{u=0\}\supseteq U$ contains a non-trivial sector based at
$0$. As a result, the blow-up of $u$ at $0$ cannot be the wedge
solution and can only be the one-plane solution. Therefore $F(u)$
has to be smooth at $0$.

Thus, we may assume that for each connected component $\gamma$ of
$F(u)\setminus 0$, $\gamma(t_n)\to\infty$ for some sequence $t_n \to
\infty$ or $t_n\to -\infty$. In particular, each connected component
that intersects the unit ball $B_1$ will exit it at least once.
Thus, there are finitely many such connected components, as
$F(u)\cap \del B_1$ consists of finitely many points. Furthermore,
the very same reason implies that it is impossible to have
$\gamma(t_n)\to\infty$ for one sequence $t_n\to\infty$ (or
$-\infty$), while $\gamma(\tilde{t}_n)\to 0$ for another sequence
$\tilde{t}_n\to\infty$ (or $-\infty$). Thus, either
$\lim_{t\to\pm\infty} \gamma(t) = 0$ or $\lim_{t\to\pm \infty}
\gamma(t) =\infty$.

Next we note that $0$ cannot be an isolated point of $F(u)$, so
there exists a sequence of points $P_n \in F(u)$ such that $P_n\to
0$. Since there are only finitely many connected components of
$F(u)\setminus 0$ intersecting $B_1$, there exists a subsequence
$P_{n_k}$ that belongs to a single connected component $\gamma_1$,
so that $\lim_{t\to\infty}\gamma_1(t)= 0$. Then it must be
$\lim_{t\to -\infty}\gamma_1(t)= \infty$, so the latest ``entry
time" for $\gamma_1$ into $B_1$,
\begin{equation*}
    T:=\sup\{t:\gamma_1(t)\in (B_1)^c\}
\end{equation*}
satisfies $|T|<\infty$. Consider the connected component $V$ of
$(\{u=0\}\cap B_1)^{\circ}$ having $\gamma_1\big([T,\infty]\big)$ as
part of it boundary. Obviously, $0\in \del V$ and claim that there
exists another curve $\gamma_2:(-\infty,0]\to\real^2$ such that
$\gamma_2\big((-\infty,0]\big) \subset\del V\cap (F(u)\setminus 0)$
with $\lim_{t\to-\infty}\gamma_2(t)=0$. If not, $\Big(\del
V\setminus \gamma_1\big([T,\infty]\Big) \cap F(u)$ consists of
finitely many free boundary arcs with ends $p_{2k}$ and $p_{2k+1}$
on the unit circle $\del B_1$, $k=0,1, 2,\ldots,l$. Here we have
chosen the enumeration of the points $\{p_i\}$ in such a way that
the shorter circular arc $\widehat{p_1p_2}\subseteq \del V$, and
that has $p_{i+1}$ following $p_i$ in the direction (clockwise or
counterclockwise) set by $p_1$ and $p_2$. In this way, the circular
arcs $\widehat{p_{2k+1}p_{2k+2}}\subseteq \del V$,
$k=0,1,\ldots,l-1$. Let $q\in F(u)\cap \del B_1$ be the next point
after $p_{2l+1}$ on the unit circle as we traverse it in the same
direction. Then it must be that $\widehat{p_{2l+1} q}\subseteq \del
V$ and that the connected component of $(F(u)\setminus 0)\cap
\overline{B_1}$, having $q$ as one of its ends, is also part of
$\del V$. Since the other end of that component can neither lie on
$\del B_1$ nor be $0$, it has to be that $q=p_0$. This is, however,
impossible as $u$ cannot be zero on both sides of $\gamma_1$. So,
there exists a free boundary curve $\gamma_2\subset \del V\cap
(F(u)\setminus 0)$, disjoint from $\gamma_1$, with
$\gamma_2(-\infty)=0=\gamma_1(\infty)$. From here it is not hard to
see that $V$ is a Jordan domain. Again, Corollary \ref{rem_poscurv}
says that $\gamma_1$ and $\gamma_2$ have positive curvature, and we
can invoke Lemma \ref{lemma_TwoCurves} to establish that $V$
contains a non-trivial sector based at $0$. As before, the blow-up
limit of $u$ at zero has to be the half-plane solution, so that
$F(u)$ is smooth.
\end{proof}

\begin{lemma}\label{lemma_TwoCurves} Let $U\subseteq \real^2$ be a
Jordan domain with $0\in \del U$ and let $\gamma_1\in
C^2([0,\infty),\real^2)$ and $\gamma_2\in C^2((-\infty,0], \real^2)$
be some regular parameterizations ($\gamma_i'\neq 0$, $i=1,2$) of
two simple disjoint subarcs of $\del U$, for which
$\lim_{t\to\infty} \gamma_1(t)=\lim_{t\to-\infty}\gamma_2(t)=0$, and
such that traversing $\del U$ in the counterclockwise direction
corresponds to $t$ increasing. Assume further that their curvatures
are strictly positive. Then $B_r\cap U$ contains a non-trivial
sector of $B_r$.
\end{lemma}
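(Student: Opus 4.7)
The plan is to identify the limiting tangent directions of $\gamma_1$ and $\gamma_2$ at the origin and then use strict positivity of the curvatures to extract an open sector of $U$ based at $0$. Re-parametrizing each $\gamma_i$ by arclength and letting $\theta_i$ denote the angle its unit tangent makes with the $x_1$-axis, the curvature hypothesis combined with the counterclockwise orientation of $\partial U$ is equivalent to $\theta_i' > 0$, so each $\theta_i$ is strictly monotone along the arc.

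My first step would be to show that $\theta_1$ has a finite limit at the end of its parameter interval corresponding to $0$, and likewise for $\theta_2$; this yields well-defined unit vectors $T_1$ (the direction in which $\gamma_1$ enters $0$) and $T_2$ (the direction in which $\gamma_2$ leaves $0$). Since each $\theta_i$ is monotone, the only obstruction is divergence, which would produce a spiral into $0$. To rule this out I would apply the Hopf Umlaufsatz to suitable truncations of $\partial U$: cut $\gamma_1$ and $\gamma_2$ at large but finite parameters and close the resulting arc into a piecewise $C^2$ Jordan curve by a short transverse detour through $\real^2 \setminus \overline{U}$. The total signed turning of such a closed curve is $2\pi$. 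Since both $\int_{\gamma_i}\kappa\, ds$ are nonnegative and the contributions from the detour and any corners introduced are uniformly bounded, each $\int_{\gamma_i} \kappa\, ds$ is finite, which bounds the total change in $\theta_i$ and furnishes the limits $T_1, T_2$.

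With $T_1, T_2$ in hand, the tangent cone of $\partial U$ at $0$ consists of the two rays $\{-tT_1: t \ge 0\}$ and $\{tT_2 : t \ge 0\}$. The counterclockwise orientation of $\partial U$, together with the fact that the curvature vectors of the $\gamma_i$ point into $U$, uniquely identifies which of the (at most two) sectors these rays delimit is the $U$-side. If the two rays are distinct (the generic corner case), they bound an open sector of angle $\beta \in (0, 2\pi)$ that coincides with the interior angle of $U$ at $0$; positive curvature makes the tails of $\gamma_1, \gamma_2$ bend strictly to the $U$-side of their tangent rays, so a slightly narrower sub-sector lies in $U \cap B_r$ for small $r$. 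In the borderline outward-cusp case $T_2 = -T_1$, the two rays merge into a single one, but the second-order Taylor expansion of each $\gamma_i$ shows that they bulge off this common tangent line into $U$ in opposite perpendicular directions, so $U \cap B_r$ contains an entire half-disk on the side of the tangent line opposite to the cusp. Finally, if $T_1 = T_2$, then $\partial U$ is $C^1$ through $0$ and $U$ locally occupies a half-disk, which trivially contains a sector.

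The main obstacle is the first step, namely ruling out an infinite spiral without recourse to analyticity. The crux is to choose the truncation and the transverse detour so that the modified closed curve remains simple and piecewise $C^2$ with a uniform bound on its exterior corner angles; granted this, Umlaufsatz yields an a priori bound on $\int_{\gamma_i}\kappa\, ds$. Once $T_1, T_2$ are in hand, the sector extraction in each of the three sub-cases is a short computation using the second-order Taylor expansion of $\gamma_i$ near the origin endpoint.
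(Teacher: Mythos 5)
Your plan to replace the paper's argument by the Hopf Umlaufsatz is a genuinely different route: the paper rules out an infinite spiral through a lengthy nested-convex-domain argument that explicitly pits $\gamma_1$ against $\gamma_2$ (if $\gamma_1$'s tangent turned by more than $2\pi$, one constructs shrinking convex regions $D_{2k}\supset D_{2k+1}\supset\cdots$ converging to $0$ that $\gamma_2$ must exit through a fixed chord, which forces $\gamma_2$ to turn too much). Your Umlaufsatz idea is conceptually cleaner and, suitably implemented, does yield the same conclusion. However, as written the proposal has two genuine gaps in exactly the step you flag as the crux.

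First, the proposed detour ``through $\real^2\setminus\overline U$'' is the wrong choice and, if insisted upon, the argument is circular: if $\gamma_1,\gamma_2$ spiral into $0$, the complement of $\overline U$ near $0$ is itself a spiral channel, so any path in $\real^2\setminus\overline U$ joining $\gamma_1(T_1)$ to $\gamma_2(-T_2)$ must thread back out along the spiral, accumulating turning of the opposite sign that exactly cancels what you want to bound. A workable detour must be allowed to cut across $U$; e.g.\ take $T_1,T_2$ to be the \emph{first-entry} times of $\gamma_1,\gamma_2$ into $\overline{B_\eps}$ and use a subarc of $\partial B_\eps$. Then $\gamma_1|_{[0,T_1]}$ and $\gamma_2|_{[-T_2,0]}$ lie outside $\overline{B_\eps}$ except at their endpoints, the circle arc meets them only at endpoints, and the circle-arc turning is trivially between $-2\pi$ and $2\pi$. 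Second, the closed curve you feed into Umlaufsatz must also contain an arc joining $\gamma_1(0)$ to $\gamma_2(0)$. You implicitly use the remaining portion of $\partial U$, but the lemma gives no regularity there at all (only $\gamma_1$ and $\gamma_2$ are assumed $C^2$), so the resulting curve need not be piecewise $C^1$ and Umlaufsatz does not apply. One should instead fix once and for all a smooth auxiliary arc $\sigma$ joining $\gamma_2(0)$ to $\gamma_1(0)$ in the complement of the Jordan arc $\gamma_1\cup\{0\}\cup\gamma_2$ (such an arc exists and has some fixed total turning $R_\sigma$), and take $\eps < d(0,\sigma)$; then $\gamma_1|_{[0,T_1]}\cup(\text{circle arc})\cup\gamma_2|_{[-T_2,0]}\cup\sigma$ is simple, piecewise $C^2$, and Umlaufsatz bounds $\int\kappa_1+\int\kappa_2$ by $2\pi+2\pi+|R_\sigma|+4\pi$ uniformly as $\eps\to 0$. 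With these repairs your route goes through and is arguably shorter than the paper's, but as stated the detour would fail and the unregulated part of $\partial U$ is not addressed. The sector-extraction step is fine, though the case distinction is unnecessary: once the limit tangents exist, the paper's observation that $\partial B_r\cap\partial U=\{A_1,A_2\}$ for small $r$ and that both radii and the appropriate circular arc lie in $U$ handles all three of your subcases at once.
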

\begin{proof}
First let us introduce some notation. For a point $p$ in $\gamma_i$,
$i=1,2$,  let $L(p)$ be the tangent line to $\gamma_i$ at $p$. If
$p=\gamma_i(t_0)$ for some $t_0$, let
$\tau(p)=\gamma_i'(t_0)/|\gamma_i'(t_0)|$ be the unit tangent vector
to $p$ in the direction of $\gamma_i'(t_0)$; let $\nu(p)$ be the
unit normal vector to $\gamma_i$ at $p$ that one gets by rotating
$\tau(p)$ by $\pi/2$. Denote by $H^+(p)$ and $H^-(p)$ the two
half-planes:
\begin{equation*}
    H^{\pm}(p) = \{x:\in \real^2: (x-p)\cdot (\pm \nu(p)) > 0\}.
\end{equation*}
For any two points $p=\gamma_i(t_1)$ $q=\gamma_i(t_2)$, $t_1<t_2$,
define $\theta(p,q)$ to be the angle $\gamma_i'(t)$ sweeps as $t$
increases from $t_1$ to $t_2$. Then the fact that $\gamma_i$ has
positive curvature is equivalent to $\theta(\gamma_i(t),
\gamma_i(t+s))$ being a positive, strictly increasing function of
$s$ for $s>0$ and any fixed $t$. For any $p\in \gamma_i(t)$ and
$\alpha>0$, denote by $T^{\alpha}(p)$ the point $q\in\gamma_i$ such
that $\theta(p,q)=\alpha$, if it exists. Let $s(p, q)$ be the open
segment of $\gamma_i$ with ends $p, q\in \gamma_i$.

Now we proceed with the argument. We shall show that
$\theta(\gamma_1(0), \gamma_1(t))$ is bounded from above as a
function of $t$. Assume not; then $T^{\alpha}(p)$ exists for any
$p\in \gamma_1$ and any $\alpha>0$. Claim that there exists a
$p_0\in \gamma_1$ such that $T^{2\pi}(p_0)\in \overline{H^+(p_0)}$.
If not, then for any $p\in \gamma_1$ and $k\in \nat$,
\begin{align*}
    & T^{(2k+2)\pi}(p) \in H^-(T^{2k\pi}(p)) \Subset H^{-}(p) \quad
    \text{as well as} \\
    & T^{(2k+3)\pi}(p)\in H^-(T^{(2k+1)\pi}(p))\Subset
H^{-}(T^{\pi}(p)).
\end{align*}
However, note that because $\gamma_1$ has positive curvature, we
have $s(p,T^{\pi}(p))\subseteq H^{+}(p)$, as the smallest $\alpha>0$
for which $s(p, T^{\alpha}(p))$ can intersect $L(p)$ must be greater
than $\pi$. Thus, $H^{-}(T^{\pi}(p))\Subset H^{+}(p)$. But since
$\gamma_1(t)\to 0$ as $t\to\infty$ and $\gamma_1'\neq 0$, then both
\begin{equation*}
    T^{(2k+2)\pi}(p) \to 0 \quad \text{and} \quad
    T^{(2k+3)\pi}(p)\to 0 \quad \text{as } k\to\infty.
\end{equation*}
That contradicts the fact that $T^{(2k+2)\pi}(p) \in H^{-}(p)$
whereas $T^{(2k+3)\pi}(p) \in H^{+}(p)$.

Thus, for some $p_0$, $T^{2\pi}(p_0)\in \overline{H^+(p_0)}$, so
that the whole segment $s(p_0,T^{2\pi}(p_0))\subseteq H^{+}(p_0)$.
Denote
\begin{equation*}
    p_j := T^{j\pi}(p_0)   \quad j\in\nat.
\end{equation*}
Claim we then have $s(p_{2k-2}, p_{2k}) \subseteq H^+(p_{2k-2})$ for
all $k\in\nat$. Argue by induction. Note that since $s(p_{2k-1},
p_{2k})\subseteq H^{+}(p_{2k-1})\cap H^{+}(p_{2k})$, there are
exactly two intersection points between $\overline{s}(p_{2k-2},
p_{2k})$ and $L(p_{2k})$, namely $p_{2k}$ and a point $q_{2k}\in
\overline{s}(p_{2k-2}, p_{2k-1})$ (see Figure
1). If it were the case that $p_{2k+2}\in
H^{-}(p_{2k})$, the segment $s(p_{2k},p_{2k+2})$ would have to leave
the convex domain $D_{2k}\subseteq H^{+}(p_{2k})\cap
H^{+}(p_{2k-1})$, enclosed by $s(q_{2k},p_{2k})$ and the
straight-line segment $\sigma_{2k}:=p_{2k}q_{2k}$. But obviously
$s(p_{2k}, p_{2k+1})\subseteq D_{2k}$, so it would have to be
$s(p_{2k+1},p_{2k+2})$ that exits $D_{2k}$. Construct as above the
point $q_{2k+1}\in s(p_{2k-1},p_{2k})$ being the second intersection
point of $L(p_{2k+1})$ with $\overline{s}(p_{2k-1},p_{2k+1})$ and
let $\sigma_{2k+1}\subset D_{2k}$ be the straight-line segment
$p_{2k+1}q_{2k+1}$. Then the convex domain $D_{2k+1}$, enclosed by
$\sigma_{2k+1}$ and $s(q_{2k+1},p_{2k+1})$, is contained within the
convex $D_{2k}$, so that $s(p_{2k+1},p_{2k+2})$ which enters
$D_{2k+1}$ would have to exit $D_{2k+1}$ before it exits $D_{2k}$.
That is, however, impossible as $s(p_{2k+1},p_{2k+2})\subseteq
D_{2k+1}$. The induction step is complete.

\begin{figure}\label{figure_ConvCurves}
\centering \input{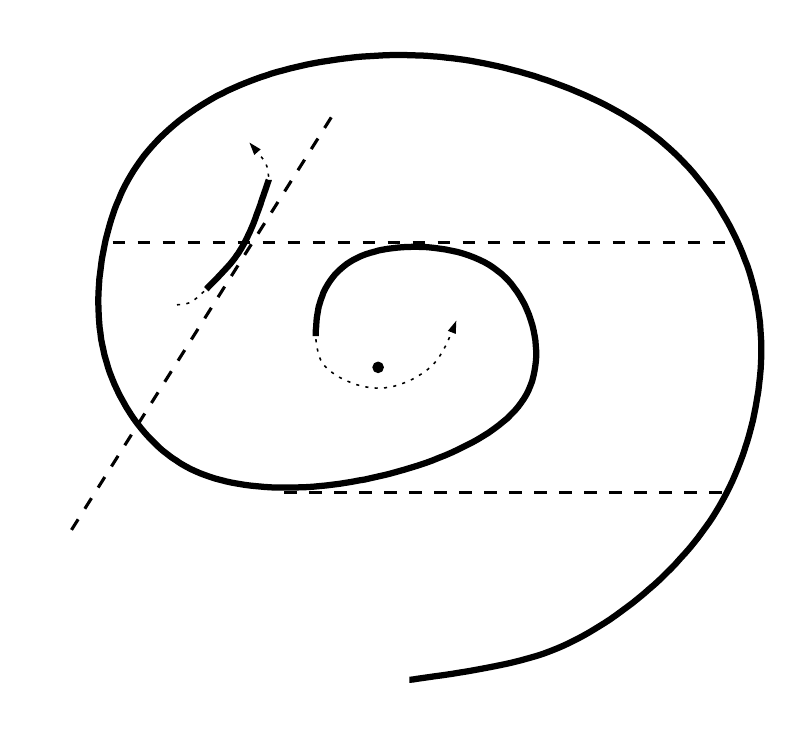_t} \caption{The curves $\gamma_1$
and $\gamma_2.$}
\end{figure}

Let now $K\in\nat$ be large enough such that for all $k\geq K$,
$s(p_{2k-2}, p_{2k})\subseteq B_{\delta}(0)$ where $\delta >0$ is
small enough, such that $\gamma_2(0)\in B_{2\delta}(0)^c$ (such a
$K$ exists since $\gamma_1(t)\to 0$ as $t\to\infty$). Since
$\gamma_2(t)\in D_{2k+1}$ for all large $|t|$, its last `time of
exit' from $D_{2k+1}$
\begin{equation*}
    T:=\sup\{t: \gamma_2(t)\in D_{2k+1}\}
\end{equation*}
exists and must satisfy $T<0$. Obviously, $\gamma_2(T)$ must belong
to $\sigma_{2k+1}$ and $\gamma_2$ must intersect $\sigma_{2k+1}$
transversally, for otherwise the fact that $\gamma_2$ has positive
curvature would imply that for some small $\eps<0$ $\gamma([T-\eps,
T+\eps])$ would lie on one side of $\sigma_{2k+1}$ which contradicts
the definition of $T$. Let $\tilde{q}_{2k+1}$ denote the
intersection point of $L(p_{2k+1})$ and $s(p_{2k-2},p_{2k-1})$, and
let $\tilde{\sigma}_{2k+1}$ be the straight-line segment
$p_{2k+1}\tilde{q}_{2k+1}$. Note that since $\gamma_2$ intersects
$\sigma_{2k+1}$ transversally, $\tilde{\sigma}_{2k+1}\subseteq
H^{-}(\gamma_2(T))$. Also, since $\gamma_2(0)\notin B_{\delta}(0)$,
$\gamma_2([T,0])$ must exit the domain $\tilde{D}_{2k+1}\subseteq
B_{\delta}(0)$, enclosed by $s(\tilde{q}_{2k+1},q_{2k+1})$ and the
straightline segment $q_{2k+1}\tilde{q}_{2k+1}$, having once entered
it. Thus $\gamma_2([T,0])$ intersects
$\tilde{\sigma}_{2k+1}\subseteq H^{-}(\gamma_2(T)))$, so that
$\gamma_2((T,0])$ must cross $L(\gamma_2(T))$. Let $T_1$ be the
first time $\gamma((T,0])$ crosses $L(\gamma_2(T))$:
\begin{equation*}
    T_1:=\inf\{t>T, \gamma_2(t)\in L(\gamma_2(T))\}.
\end{equation*}
Note that $T_1>T$ as $\gamma_2((T,T+\delta))\subseteq
H^{+}(\gamma_2(T))$) for all small enough $\delta>0$. Furthermore,
it must be that $\theta(\gamma_{2}(T),\gamma_2(T_1))\geq \pi$ but
$\theta(\gamma_{2}(T),\gamma_2(T_1))\leq 2\pi$. The former bound is
obvious; the latter is true for otherwise $T^{2\pi}(\gamma_2(T))\in
H^{+}(\gamma_2(T))$, so that by the same argument as before we would
have all of $\gamma_2((T,0])\subseteq H^{+}(\gamma_2(T))$, which
would prevent it from crossing $L(\gamma_2(T))$. As a result, it
must be that
\begin{equation*}
    (\gamma_2(T_1)-\gamma_2(T)\big)\cdot \gamma_2'(T)<0,
\end{equation*}
which in turn implies that $\gamma_2((T,T_1))$ must cross the
straighline segment $\gamma_2(T)q_{2k+1}$, which is impossible.

Therefore, $\theta(\gamma_1(0),\gamma(t))$ is bounded from above, so
that $\tau_1:=\lim_{t\to\infty}\gamma_1'(t)/|\gamma_1'(t)|$ exists.
Exchanging the roles of $\gamma_1$ and $\gamma_2$ in the argument
above, we can show that
$\tau_2:=\lim_{t\to-\infty}\gamma_2'(t)/|\gamma_2'(t)|$ exists, as
well. As a result, for all small enough $r>0$, $\gamma_1\cap B_r$
and $\gamma_2\cap B_r$ are flat graphs over the radii along $\tau_1$
and $\tau_2$, respectively. Let $A_i=\del B_r \cap \gamma_i$,
$i=1,2$ be the points of intersection of $\del B_r$ with $\gamma_1$
and $\gamma_2$. Because of the positivity of the curvature, the open
straight-line segments connecting $0$ to $A_1$ and $0$ to $A_2$ are
contained in $U$ for $r$ small enough. Also, since $\del B_r \cap
\del U = \{A_1, A_2\}$, the whole open circular arc
$\widehat{A_2A_1}$ (as we trace the circle in the counter-clockwise
direction) must be contained in the Jordan domain $U$. Thus, $U$
contains the entire open circular sector with vertex $0$ and arc
$\widehat{A_2A_1}$.


\end{proof}


\section{Local structure.} \label{Sec:LocStruct}

In this section we shall study the shape of the free boundary of
solutions of \eqref{FBP_0}, defined in the unit disk, satisfying the
topological assumption \eqref{topo_assmpt}. This will be carried out
by examining blow-up limits of sequences of solutions in $B_1$, for
which exact purpose the classification Theorem \ref{theorem_Main}
was developed. We encounter the following dichotomy: if a component
of the zero phase is well separated by the rest of the zero phase,
its boundary has bounded curvature (in terms of the separation) --
this is the content of Proposition \ref{prop:bounded curvature}
below. Once the separation becomes small enough relative a certain
universal scale, we shall see the signs of a hairpin-like structure
arising -- this is described in Propositions
\ref{prop:smalldistImpliesHPin} and \ref{prop:two_strands}.

Let us make the following definition for ease of reference.
\begin{definition} We shall call the free boundary $F(u)$ of a solution $u$ of \eqref{FBP_0}
$\d$-flat in $B=B_r(p)$ if for some rotation $\rho$,
\begin{equation*}
    |u(p+\rho x)- x_2^+| \leq \d r \quad \text{for} \quad x\in B_r(0).
\end{equation*}
\end{definition}

\begin{remark}\label{remark:deltaFlatness}
Denote by $\d_0$ the small universal constant, such that if
$0<\delta<\d_0$ small enough, the Alt-Caffarelli regularity theory
\cite{AC} states that $F(u)\cap B_{r/2}$ is a graph in the direction
of $\rho(e_2)$ with Lipschitz norm at most $C\d$. For such $\d$ we
also have the bound
\begin{equation*}
\|\rho \nabla u - e_2\|_{L^{\infty}(B_{r/2}^+(u))} + r\|D^2
u\|_{L^{\infty}(B_{r/2}^+(u))} \leq c\d.
\end{equation*}
It implies, in particular, that the curvature of $F(u)$ in
$B_{r_0/2}$ is $O(\d)$.
\end{remark}

The next proposition treats the scenario where a point of the free
boundary $F(u)$ is distance at least $s$ away from all other
components of the zero phase; then we expect a curvature bound on
$F(u)$ at the point.

\begin{prop}\label{prop:bounded curvature} Let $u$ be a classical
solution of \eqref{FBP_0} in $B_1$ that satisfies
\eqref{topo_assmpt} and assume $0\in F(u)$. Denote by $Z$ the
connected component of $0$ in $\{u=0\}$. For any $0<s<1$ there
exists $\k= \k(s)<\infty$ such that if \[d(0,\{u=0\}\setminus Z)
\geq s\] then the curvature of $F(u)$ at $0$ is at most $\k$.
\end{prop}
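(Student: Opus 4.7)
The plan is to argue by contradiction via a rescaling/compactness argument and the classification \mbox{Theorem \ref{theorem_Main}}.  Suppose the proposition fails for some $s>0$: there is a sequence $u_k$ of classical solutions in $B_1$ satisfying \eqref{topo_assmpt} and the hypothesis with $s$, whose free boundary curvatures $\kappa_k$ of $F(u_k)$ at $0$ satisfy $\kappa_k \to \infty$.  I would rescale by setting $v_k(x) := \kappa_k u_k(x/\kappa_k)$.  Each $v_k$ is a classical solution on $B_{\kappa_k}$ satisfying \eqref{topo_assmpt}, with $0 \in F(v_k)$ and curvature normalized to $1$ at $0$, and the distance hypothesis rescales to $d(0,\{v_k=0\}\setminus \widetilde Z_k) \ge s\kappa_k \to \infty$, where $\widetilde Z_k$ is the connected component of $0$ in $\{v_k=0\}$.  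Theorem \ref{theorem_Main} then applies, and along a subsequence $v_k \to v_\infty$ uniformly on compact sets, with $v_\infty$ a rigid motion of one of $P$, $W_b$, or $H_a$ for some $a>0$.

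Next I would verify that the curvature at $0$ passes to the limit.  Since the curvatures of $F(v_k)$ are eventually uniformly bounded near $0$, Alt--Caffarelli flatness followed by Kinderlehrer--Nirenberg analyticity yield uniform $C^{2,\alpha}$ bounds on the graph representations of $F(v_k)$ in a neighborhood of $0$, so $F(v_\infty)$ has curvature $1$ at the image of $0$.  This immediately excludes the cases $v_\infty = P$ and $v_\infty = W_b$, whose free boundaries are straight lines of zero curvature.

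The hairpin case $v_\infty = H_a$ is the main obstacle.  Since $\kappa_{\partial\Omega_a}(z) = 1/(a\cosh^2(z_2/a))$ has maximum $1/a$, curvature $1$ at (the image of) $0$ forces $a \le 1$.  Hence the two connected components of $\{H_a = 0\}$ lie within bounded mutual distance, and one can fix an interior point $p$ of the component of $\{H_a=0\}$ \emph{not} adjacent to the image of $0$, with $|p| \le C_0$ for an absolute constant $C_0$.  By the Hausdorff convergence $\overline{\{v_k>0\}} \to \overline{\Omega_a}$ from Proposition \ref{prop_limitofsolns}, a fixed neighborhood of $p$ lies in $\{v_k = 0\}$ for large $k$, so $p$ belongs to some component $C_k$ of $\{v_k = 0\}$.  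If $C_k \ne \widetilde Z_k$, then $d(0,\{v_k=0\}\setminus \widetilde Z_k) \le |p| \le C_0$, contradicting $s\kappa_k \to \infty$.  To rule out the alternative $C_k = \widetilde Z_k$, I would argue topologically: the axis of symmetry of $\Omega_a$ lies in $\{v_\infty > 0\}$, hence (by uniform convergence) in $\{v_k > 0\}$ on any fixed ball $B_R$ for $k$ large enough depending on $R$; since $\widetilde Z_k \subset \{v_k=0\}$ is connected and disjoint from this separating set on $B_R$, its intersection with $B_R$ can only touch one side of the limit hairpin.  Combined with the equality $\{v_k=0\}\cap B_{s\kappa_k/2} = \widetilde Z_k\cap B_{s\kappa_k/2}$ supplied by the separation hypothesis, iterating over $R \to \infty$ rules out any ``bridging'' of $\widetilde Z_k$ across the axis at any finite scale, so the alternative $C_k=\widetilde Z_k$ cannot occur.

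The hard step, as indicated, is this topological exclusion; it is what really uses the separation hypothesis $d(0,\{u=0\}\setminus Z) \ge s$ (via the rescaled distance $s\kappa_k \to \infty$) to force the two components of the limit hairpin's zero set to correspond to distinct components of $\{v_k = 0\}$ for large $k$.
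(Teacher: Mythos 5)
Your outline matches the paper's (rescale, classify the limit via Theorem \ref{theorem_Main}, derive a contradiction in each case), but you normalize the rescaled curvature at $0$ to equal $1$, whereas the paper arranges for the rescaled curvature $\tilde\kappa_l$ at $0$ to blow up ($\tilde\kappa_l \ge l \to \infty$). That choice is not cosmetic, and it creates two gaps.

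The more serious gap is the wedge case $v_\infty = W_0 = |x_2|$. You fold it into ``$W_b$ has a straight free boundary, so curvature $1$ at $0$ is impossible,'' which rests on the unjustified assertion that ``the curvatures of $F(v_k)$ are eventually uniformly bounded near $0$.'' The normalization controls curvature only at the single point $0$, and Alt--Caffarelli flatness requires $F(v_k)$ to be $\delta$-flat (uniformly close to a \emph{half-plane} $x_2^+$) at a fixed scale, which fails precisely when the limit is $|x_2|$: near $0$ the zero set can be a thin sliver whose two boundary arcs carry no a priori $C^2$ control, so the curvature at $0$ need not pass to the limit. This is exactly where the separation hypothesis is actually put to work in the paper. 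The paper's argument is a dichotomy: either the points $(0,\pm1)$ lie in distinct components of $B_2^+(\tilde u_l)$, whence Lemma \ref{lemma_fbgraphs} gives Lipschitz-graph control and a curvature bound contradicting $\tilde\kappa_l\to\infty$; or they lie in the same component, in which case there is a short segment $\tau$ of length $\le 6\delta_0$ joining the strands $F_L$ and $F_R$, the separation hypothesis forces both of its endpoints onto $\partial \tilde Z_l$, and Lemma \ref{lemma_noShortNonFBseg} applied to the Jordan domain bounded by $\tau$ and the long free boundary arc $\beta\subset\partial\tilde Z_l$ gives $\hsd(\beta)\le L\,\hsd(\tau)\le 6L\delta_0$, which is impossible since $\hsd(\beta)\ge 2$. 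Your proposal contains no analogue of this.

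The second gap is in the hairpin case, which is delicate only because you fixed the curvature at $1$: $H_a$ does attain curvature $1$ for $a\le1$, so flatness cannot rule it out, and you are forced into the topological exclusion of $C_k = \tilde Z_k$. The ``iterating over $R$'' sketch does not close that: it shows $\tilde Z_k$ cannot bridge across the limit's axis inside a fixed $B_R$ for $k$ large, but it does not preclude $\tilde Z_k$ bridging across $\{v_k>0\}$ at a radius $R_k\to\infty$ inside $B_{\kappa_k}$. The paper never needs any of this: because $\tilde\kappa_l\to\infty$ while $F(H_a)$ is $\delta_0$-flat at a fixed scale $s_0=s_0(a)$ near any point, a hairpin limit forces $\tilde\kappa_l \le C\delta_0/s_0$, an immediate contradiction requiring neither the separation hypothesis nor any topology. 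In short, the paper's rescaling makes the half-plane, two-plane $b>0$, and hairpin cases dispose of themselves by flatness, and isolates the separation hypothesis for the one case where it is needed (the wedge); your rescaling both loses that simplification and leaves the wedge case unaddressed.
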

\begin{proof}
Assume the proposition is false. Then we have a sequence of
counterexamples $u_l$ for which the curvature $\kappa_l$ of $F(u_l)$
at zero is
\begin{equation*}
    \kappa_l \geq l^2.
\end{equation*}
Define the rescales
\begin{equation*}
    \tilde{u}_l(x):=l u_l(x/l) \quad \text{for} \quad x\in B_{l}.
\end{equation*}
Then the curvature $\tilde{k}_l$ of $F(\tilde{u}_l)$ at $0$
satisfies
\begin{equation}\label{prop:bounded curvature:eqCurv}
    \tilde{\k}_l = \k_l/l \geq l.
\end{equation}
By our classification Theorem \ref{theorem_Main} we see that, up to
taking a subsequence, the $\tilde{u}_l$ converge uniformly on
compact subsets to a global solution $\tilde{u}$ that is either a
one-plane, a two-plane, a hairpin or a wedge solution.

Let $\delta_0>0$ be the small universal constant defined in Remark
\ref{remark:deltaFlatness}.
If $\tilde{u}$ is a one-plane solution, then for all large enough
$l$, in some Euclidean coordinates
\begin{equation*}
    |\tilde{u}_l-x_2^+| < \delta_0/2 \quad \text{in} \quad B_1,
\end{equation*}
hence $F(\tilde{u}_l\cap B_1)$ is $\d_0$-flat and $\tilde{k}_l\leq
C\delta_0$ which contradicts \eqref{prop:bounded curvature:eqCurv}.
Similarly, if $\tilde{u}$ is a two-plane solution, for some $b<0$
and all large enough $l$
\begin{equation*}
    |\tilde{u}_l - (x_2^+ + (x_2-b)^-)| < \min\{\delta_0/2, b/10\} \quad \text{in} \quad
    B_1.
\end{equation*}
Thus,
\begin{equation*}
    v_l:=\tilde{u}_l \chrc{B_1\cap \{x_2>b/2\}}
\end{equation*}
is a classical solution of \eqref{FBP_0} in $B_1$, whose free
boundary is $\d_0$-flat in $B_1$, so $\tilde{\k}_l\leq C\d_0$ -- a
contradiction.

Analogously, we can rule out $\tilde{u}$ being a hairpin solution.
Assume that it is; then we can find a scale $s_0$ such that for
every $p\in F(\tilde{u})$
\[d_H( F(\tilde{u})\cap B_{s_0}(p), L(x)\cap B_{s_0}(p)) <\delta_0 s_0/2,\]
where $L(p)$ denotes the tangent line to $F(\tilde{u})$ at $p$. Now
for all large enough $l$,
    \[d_H( F(\tilde{u}_l)\cap B_{s_0}, L(0)\cap B_{s_0}) < \delta_0 s_0 \]
so that $w_l(y):=\tilde{u}_l(s_0y)/s_0$ has a $\d_0$-flat free
boundary in $B_1$ and the curvature of $F(w_l)$ at $0$ is bounded by
$C\d_0$. Thus, the curvature of $F(\tilde{u}_l)$ at $0$
\begin{equation*}
    \kappa_l \leq C\d_0/s_0,
\end{equation*}
which again contradicts \eqref{prop:bounded curvature:eqCurv}.

Finally, assume that $\tilde{u}=|x_2|$ is the wedge-solution. Then
for all $l$ large enough
\begin{equation}\label{prop:bounded curvature:eqWedgeProx}
    d_H(F(\tilde{u}_l)\cap \overline{B_4}, \{x_2=0\}\cap \overline{B_4})\leq \d_0 .
\end{equation}
Let $N=(0,1)$ and $S=(0,-1)$. Note that $N$ and $S$ cannot belong to
two separate components of $B_2^+(\tilde{u}_l)$, for according to
Lemma \ref{lemma_fbgraphs}, $ F(\tilde{u}) \cap \{|x_1|<1/2\} \cap
B_4$ consists of two graphs of Lipschitz norm at most $c\d_0$, so
that we again get an upper bound for $\tilde{k}_l$ for all large
$l$. This means that if $F(\tilde{u}_l)\cap \overline{B_3}$ consists
of finitely many arcs, each of which ``attaches" either to
$\alpha_L$ or $\alpha_R$, where
\begin{equation*}
    \alpha_L = \del B_{3} \cap \{x_1<0, |x_2|<\d_0 \} \qquad \alpha_{R} = \del B_{3} \cap \{x_1>0,
    |x_2|<\d_0\}.
\end{equation*}
Thus, if $F_L$ ($F_R$) denotes the union of the arcs of
$F(\tilde{u}_l)\cap \overline{B_3}$ that attach to $\alpha_L$
($\alpha_R$), then $F_L$ and $F_R$ are disjoint compact sets and so
$d(F_L, F_R)$ is realized for some $p\in F_L$ and $q\in F_R$.
Moreover, the straight line (open) segment $\tau$ with ends $p$ and
$q$ is contained in $B_3^+(\tilde{u}_l)$ and because of
\eqref{prop:bounded curvature:eqWedgeProx}, we have
\[|p-q|=\hsd(\tau)\le6\d_0.\] On the other hand, note that if $\tilde{Z}_l$
denotes the connected component of $0$ in $\{\tilde{u}_l=0\}$ in
$B_l$, we have by assumption
\[
d(0, \{\tilde{u}_l=0\}\setminus \tilde{Z}_l)\geq l s \gg 1,
\]
hence it must be that both $p$ and $q$ belong to the same boundary
arc of $\del \tilde{Z}_l$ (they cannot belong to different boundary
arcs of $\del \tilde{Z}_l$, for $p$ and $q$ would have to lie on the
boundary of two different connected components of
$B_l^+(\tilde{u})$). Let $\beta\subseteq F(\tilde{u}_l)$ denote the
arc connecting $p$ to $q$. Then $\beta\cap \tau$ encloses a
piecewise-$C^2$ Jordan domain $V\subseteq B_l^+(\tilde{u}_l)$ and
applying Lemma \ref{lemma_noShortNonFBseg} to $V$, we find that
\begin{equation*}
6\d_0 \ge \hsd(\tau) \geq c \hsd(\beta)
\end{equation*}
which is impossible for small $\d_0$ as $\hsd(\beta)\geq 2$. This
completes the proof.
\end{proof}

\begin{prop}\label{prop:smalldistImpliesHPin}
Let $u$ be a classical solution of \eqref{FBP_0} in $B_1$ that
satisfies \eqref{topo_assmpt} and assume $0\in F(u)$. Denote by $Z$
the connected component of $0$ in $\{u=0\}$. Then for any
$0<\delta<\delta_0$ small enough there exist $0<\eps_0\ll 1$ and
$r_0>0$ such that if for any one $0<r\leq r_0$
\begin{equation*}
    d(0,\{u=0\}\setminus Z) < \eps_0 r
\end{equation*}
then for some rotation $\rho$,
\begin{equation*}
    |u(\rho x) - |x_2|| < \delta r \quad \text{in } B_r.
\end{equation*}
\end{prop}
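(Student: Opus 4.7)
The plan is to argue by contradiction using the compactness provided by \mbox{Theorem \ref{theorem_Main}}. Suppose the conclusion fails. Then for some fixed $\delta < \delta_0$ one can find classical solutions $u_k$ in $B_1$ satisfying \eqref{topo_assmpt} with $0 \in F(u_k)$, scales $r_k\searrow 0$, and $\eps_k\searrow 0$, such that
\begin{equation*}
d(0,\{u_k=0\}\setminus Z_k) < \eps_k r_k,
\end{equation*}
while for every rotation $\rho$, $\sup_{x\in B_{r_k}} |u_k(\rho x) - |x_2|| \ge \delta r_k$. Rescale via $\tilde u_k(x) := u_k(r_k x)/r_k$ on the expanding disks $B_{1/r_k}$. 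The $\tilde u_k$ are classical solutions of \eqref{FBP_0} satisfying \eqref{topo_assmpt} with $0\in F(\tilde u_k)$, and they carry points $\tilde p_k \in \{\tilde u_k=0\}\setminus \tilde Z_k$ with $|\tilde p_k| < \eps_k \to 0$, where $\tilde Z_k$ is the connected component of $0$ in $\{\tilde u_k = 0\}$.

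By \mbox{Theorem \ref{theorem_Main}}, a subsequence converges uniformly on compact subsets of $\real^2$ to a limit $u_\infty$ which, up to rigid motion, is either (a) the half-plane $x_2^+$, (b) a two-plane $W_b$ with $b\ge 0$, or (c) a hairpin $H_a$ with $a>0$. Invoking \mbox{Proposition \ref{prop_limitofsolns}}, $F(\tilde u_k) \to F(u_\infty)$ locally in Hausdorff distance, so $0 \in F(u_\infty)$. The aim is to show that $u_\infty$ must be the wedge $|x_2|$ (i.e.\ case (b) with $b=0$), for then uniform convergence on $B_1$ together with the appropriate rotation immediately contradicts the standing hypothesis that $\sup_{B_1}|\tilde u_k(\rho x) - |x_2|| \ge \delta$ for every $\rho$.

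The key step is the elimination of the remaining cases -- (a), (b) with $b>0$, and (c) -- by exploiting the coexistence of $0$ and $\tilde p_k$ in distinct components of $\{\tilde u_k=0\}$. In each of these three cases $F(u_\infty)$ is a single smooth curve in a fixed neighborhood $B_s$ of the origin; concretely, $F(u_\infty)$ can be made $(\delta_0/4)$-flat in $B_s$ by choosing any $s\le 1$ in (a), $s\le b/2$ in (b) with $b>0$, and in (c) by taking $s$ small compared to the reciprocal of the curvature of $\partial \Omega_a$ at the point corresponding to $0$. Uniform convergence then forces $F(\tilde u_k)$ to be $(\delta_0/2)$-flat in $B_s$ for all $k$ large, so \mbox{Remark \ref{remark:deltaFlatness}} produces $F(\tilde u_k) \cap B_{s/2}$ as a Lipschitz graph with small norm. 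Consequently $\{\tilde u_k = 0\} \cap B_{s/2}$ is the (connected) closed subgraph. For $k$ large both $0$ and $\tilde p_k$ lie in $B_{s/2}$ and in $\{\tilde u_k=0\}$, hence in the same component of $\{\tilde u_k = 0\}$ globally, contradicting $\tilde p_k \notin \tilde Z_k$.

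The only subtle point is verifying the local flatness scale $s$ in case (c), since the smooth hairpin boundary may have curvature as large as $1/a$; however, $a$ and the location of $0$ on $\partial \Omega_a$ are fixed along the limiting subsequence, so a corresponding $s>0$ depending only on $u_\infty$ is available, and uniform convergence transfers the flatness to $\tilde u_k$ for all large $k$. With the alternatives ruled out, $u_\infty = |x_2|$ after rotation, and the contradiction with the failed conclusion closes the argument.
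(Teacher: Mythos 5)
Your proof is correct and follows essentially the same route as the paper's: rescale the hypothesized counterexamples, apply Theorem \ref{theorem_Main} to classify the limit, rule out the half-plane, two-plane ($b>0$), and hairpin cases by observing that each has a locally flat free boundary near the origin, so that the Alt--Caffarelli flatness theory makes $\{\tilde u_k=0\}$ connected near $0$ for large $k$, contradicting the assumed separation. The remaining wedge case then directly contradicts the failed conclusion after rescaling back.
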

\begin{proof}
Fix $0<\delta <\delta_0$. By the scale-invariance of the problem it
suffices to show the conclusion of the proposition holds only for
$r=r_0$. Assume not; then for any sequences of $\eps_k \to 0$, $r_k
\to 0$, there exists a corresponding sequence of counterexamples
$u_k$ in $B_1$: namely, if $Z_k$ denotes the component of $0$ in
$\{u_k=0\}$, we have $d(0, \{u_k=0\}\setminus Z_k)\leq \eps_k r_k$,
but
\begin{equation}\label{prop:smalldistImpliesHPin:eqConPos}
    \|u_k(\rho x) - |x_2|\|_{L^{\infty}(B_{r_k})} > \delta r_k
\end{equation}
for all rotations $\rho$. Define the rescaled
\[
\tilde{u}_k(x):= u_k(r_k x)/r_k \quad\text{in } B_{1/r_k}.
\]
According to the Classification Theorem \ref{theorem_Main}, up to
taking a subsequence, $\tilde{u}_k$ converge uniformly on compact
subsets of $\real^2$ to $\tilde{u}$, being either the half-plane,
the wedge, a two-plane or a hairpin solution. 

If $\tilde{u} =x_2^+$ in an appropriate coordinate system, then for
all large enough $k$
\begin{equation*}
 \{\tilde{u}_k=0\}\cap B_{1}\cap \{|x_1|<1/2\} = \{x\in B_{1}: |x_1|<1/2, x_2<\phi(x_1)\}
\end{equation*}
for some $C\delta_0$--Lipschitz function $\phi:(-1/2,1/2)\to \real$.
In particular $\{\tilde{u}_k=0\}\cap B_{1/2}$ consists of a single
component (the one containing $0$). Hence, going back to the
original scale,
\[
    d(0, \{u_k=0\}\setminus Z_k)\geq r_k/2 > \eps_k r_k
\]
for $k$ large enough, which contradicts our assumption. Similarly,
we rule out the case when $\tilde{u}$ is the two-plane solution. If
$\tilde{u}$ is a hairpin, we can find a scale $s_0$, such that for
every $x\in F(\tilde{u})$
\[d_H( F(\tilde{u})\cap B_{s_0}(x), L(x)\cap B_{s_0}(x)) <\delta_0 s_0/2,\]
 where $L(x)$ denotes the tangent line through $x$ to the hairpin $F(\tilde{u})$. Then, for all large enough $k$,
\[
d_H( F(\tilde{u}_k)\cap B_{s_0}, L(0)\cap B_{s_0}) \leq d_H( F(\tilde{u})\cap B_{s_0}, L(0)\cap B_{s_0}) +d_H( F(\tilde{u}_k)\cap B_{s_0}, F(\tilde{u})\cap B_{s_0}) \leq s_0 \delta_0/2 + s_0 \delta_0/2 = s_0\delta_0,
\]
so that in $B_{s_0/2}$, $\{\tilde{u}_k=0\}\cap B_{s_0/2}$ consists
of a single component. Going back to scale $r_k$, we see that
\[
    d(0, \{u_k=0\}\setminus Z_k)\geq s_0 r_k/2 > \eps_k r_k
\]
which is a contradiction when $k$ is large enough.

Therefore, $\tilde{u}$ must be the wedge solution: $\tilde{u}=|x_2|$
in an appropriately rotated coordinate system. This leads to a
contradiction with \eqref{prop:smalldistImpliesHPin:eqConPos},
however, because it implies that for all $k$ large enough, we
actually have
\[
    \|u_k(x) - |x_2|\|_{L^{\infty}(B_{r_k})} \leq \delta r_k.
\]
\end{proof}

\begin{prop}\label{prop:two_strands}
For any given $0<\delta<\d_0$, let $\eps_0$, $r_0$ and $u:B_1\to
\real$ be as in Proposition \ref{prop:smalldistImpliesHPin}. Let $Z$
denote the component of $0$ in $\{u=0\}$. Then for any $0<r\leq r_0$
such that \[ d(0,\{u=0\}\setminus Z) < \eps_0 r,\] the free boundary
$F(u)\cap \overline{B_{r/2}}$ consists of exactly two arcs
$F_L\subseteq Z$ and $F_R\subseteq \{u=0\}\setminus Z$. Those are
contained in $\rho(S_{r/2,\delta r})$ for an appropriate rotation
$\rho = \rho_{r}$, where
\begin{equation*}
    S_{r,t}:= \{x\in\real^2: |x_1|\leq r, |x_2| \leq t\},
\end{equation*}
with the two ends of $F_L$ in $\rho(\alpha_{L,r/2})$ and the two
ends of $F_R$ in $\rho(\alpha_{R,r/2})$, where
\[
    \alpha_{L,r} = \{x\in \del B_{r}: x_1<0, |x_2|<\delta r \} \quad \text{and} \quad \alpha_{R,r} = \{x\in \del B_{r}: x_1>0, |x_2|<\delta r \}.
\]
Moreover, the minimum distance between the corresponding two
components of $\{u=0\}\cap \overline{B_{r/2}}$ is realized for some
points $p\in F_L$, $q\in F_R$ with both $p,q\in \rho(S_{r/3,\delta
r})$.
\end{prop}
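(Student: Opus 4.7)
The plan is to combine the wedge approximation from Proposition \ref{prop:smalldistImpliesHPin} with a compactness argument at the intrinsic hairpin scale using Theorem \ref{theorem_Main}, supplemented by Lemma \ref{lemma_noShortNonFBseg} to rule out spurious free-boundary components. After possibly rotating so that $\rho=\mathrm{id}$, we have $|u(x)-|x_2||<\delta r$ throughout $B_r$, which forces $\{u=0\}\cap B_r\subseteq \{|x_2|<\delta r\}$ and so $F(u)\cap\overline{B_{r/2}}\subseteq S_{r/2,\delta r}$. Real analyticity of $F(u)$ (as discussed at the start of Section \ref{Sec_Prelims}) together with the topological hypothesis \eqref{topo_assmpt} ensures that $F(u)\cap\overline{B_{r/2}}$ is a finite union of analytic arcs with endpoints confined to $\alpha_{L,r/2}\cup\alpha_{R,r/2}$.

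For the precise count of exactly two arcs of the hairpin type, I would argue by contradiction. Suppose the claim fails along sequences $\eps_k\to 0$, $r_k\searrow 0$ with counterexamples $u_k$; set $s_k := d(0,\{u_k=0\}\setminus Z_k)<\eps_k r_k$ and rescale $\hat u_k(y) := u_k(s_k y)/s_k$ on $B_{1/s_k}$. These classical solutions are uniformly Lipschitz and nondegenerate by Propositions \ref{prop_lipbd} and \ref{prop_supnondeg}, so by Theorem \ref{theorem_Main} a subsequence converges uniformly on compacts to an entire solution $\hat u$. Since the two zero components of $\hat u_k$ remain at separation exactly $1$, the limit $\hat u$ possesses two distinct zero components separated by $1$, ruling out the half-plane $P$ and every two-plane $W_b$ (both of which have connected zero sets); hence $\hat u=H_{a_0}$ for some $a_0>0$ comparable to $1$. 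The free boundary $F(H_{a_0})$ consists of exactly two analytic arcs (the two hairpin branches), and the Alt--Caffarelli flat-to-Lipschitz regularity applied around each smooth point gives, for every fixed $R$ and all large $k$, that $F(\hat u_k)\cap B_R$ consists of exactly two $C^{1,\alpha}$ arcs perturbing those branches.

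To extend this count from fixed $R$ to the growing ball $B_{1/(2\eps_k)}=B_{r_k/(2s_k)}$ (the image of $B_{r_k/2}$ under the hairpin-scale rescaling), I would use that the wedge approximation now reads $|\hat u_k-|y_2||<\delta r_k/s_k$ on $B_{r_k/s_k}$, which confines $F(\hat u_k)$ at large $|y|$ to a thin strip around $\{y_2=0\}$; on each dyadic annulus outside $B_R$, the Alt--Caffarelli flat-implies-Lipschitz step yields two $C^{1,\alpha}$ graphs extending the principal arcs, and any further free-boundary arc would, together with the two principal ones and a short piece of $\del B_{1/(2\eps_k)}$, bound a thin Jordan subdomain of the positive phase whose non-free-boundary perimeter is $O(\delta r_k/s_k)$, violating Lemma \ref{lemma_noShortNonFBseg} once $\delta$ is small enough. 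Scaling back yields exactly two arcs $F_L\subseteq Z$ with endpoints on $\alpha_{L,r/2}$ and $F_R\subseteq\{u=0\}\setminus Z$ with endpoints on $\alpha_{R,r/2}$ in $F(u)\cap\overline{B_{r/2}}$, contradicting the supposed failure. The minimum-distance clause then follows because the tips of $F_L$ and $F_R$ lie within $s<\eps_0 r$ of the origin and of each other, hence both inside $S_{r/3,\delta r}$ once $\eps_0$ is chosen small relative to $r_0$. The principal technical obstacle is precisely this diagonal passage from fixed-$R$ convergence to the ball $B_{1/(2\eps_k)}$ whose radius diverges, which requires combining the wedge approximation, dyadic-annular flat regularity, and the Jordan-perimeter exclusion of Lemma \ref{lemma_noShortNonFBseg}.
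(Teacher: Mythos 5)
Your approach --- rescale by $s_k := d(0,\{u_k=0\}\setminus Z_k)$ and invoke Theorem \ref{theorem_Main} to obtain a hairpin limit --- is genuinely different from the paper's, which is a direct local argument that applies Lemma \ref{lemma_fbgraphs} at the three points $0$ and $(\pm r/2,0)$ and never appeals to a compactness classification. However, there is a gap in your identification of the blow-up limit. You argue that because ``the two zero components of $\hat u_k$ remain at separation exactly $1$, the limit $\hat u$ possesses two distinct zero components separated by $1$,'' so $P$ and every $W_b$ are excluded. This does not follow. First, $s_k$ is the distance from the fixed point $0$ to the far component, not the separation $d(Z_k,\{u_k=0\}\setminus Z_k)$, which may be far smaller and realized at a point remote from $0$. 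Second, and decisively, the number of zero-set components need not persist in the uniform limit: both components can Hausdorff-collapse onto the one-dimensional set $\{x_2=0\}$ while remaining disjoint at unit distance for each finite $k$, in which case $\hat u = W_0 = |x_2|$, whose zero set is a single line. This really happens: recenter a fixed hairpin $H_a$ at a free-boundary point $p$ with $|p|$ large and rescale by $s := d(p,\{H_a=0\}\setminus Z_p)$; then $\hat u_k(y) = H_a(p+sy)/s = H_{a/s}(y+p/s)$ has parameter $a/s \to 0$ and so converges to $W_0$, not to any $H_{a_0}$ with $a_0>0$ --- the pinch survives but shrinks to a point, sitting at a scale far smaller than $s$. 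Since your two-arc count rests precisely on the limit having the hairpin's two-branch topology, the wedge case is left unhandled and the argument breaks.

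A repair would require rescaling by the true separation and re-centering at the pair that realizes it, which is essentially the content of Remark \ref{remark: recentering}; but that remark is deduced from Propositions \ref{prop:smalldistImpliesHPin} and \ref{prop:two_strands}, so invoking it here would be circular. The paper sidesteps the difficulty entirely with a local argument: applying Lemma \ref{lemma_fbgraphs} at $0$ shows the arc through $0$ must attach both of its ends to the same side $\alpha_{L,r}$ or $\alpha_{R,r}$ (otherwise one gets $d(0,\{u=0\}\setminus Z)\ge r/4$, a contradiction), and applying the lemma again in $B_{r/3}((-r/2,0))$ and $B_{r/3}((r/2,0))$ shows that exactly one arc crosses each gate $|x_1 \pm r/2|<r/6$, which simultaneously yields the two-arc decomposition of $F(u)\cap\overline{B_{r/2}}$ and confines the minimizing pair $p,q$ to $S_{r/3,\delta r}$, with no blow-up at all.
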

\begin{proof}
Fix $r$ and choose Euclidean coordinates appropriately so that
$\rho_{r}$ is the identity. Let $\gamma$ be the arc of $F(u)\cap
B_r$, containing $0$. Claim that the two ends of $\gamma$ both
belong to either $\alpha_{L,r}$ or $\alpha_{R,r}$. Assume not. Then
the points $N=(0,r/2)$ and $S=(0,r/2)$ belong to two distinct
connected components of $B_r^+(u)$, so that according to Lemma
\ref{lemma_fbgraphs}, $F(u)\cap B_r\cap \{|x_1|<r/4\}$ consists of
two disjoint graphs $\Sigma_{\pm} = \{x_2 = \phi_{\pm}(x_1):
|x_1|<r/4\}$ of Lipschitz norm at most $C\delta$ and
\begin{equation*}
    u(x) = 0 \quad \text{for}\quad  x\in \{\phi_-(x_1)\leq x_2\leq \phi_+(x_1): |x_1|<r/4\}
\end{equation*}
But then $d(0, \{u=0\}\setminus Z) \geq r/4 > \eps_0 r$, which
contradicts our hypothesis. Hence, we may assume that $\gamma$
attaches on $\alpha_{L,r}$.

Look now at the free boundary in $B_{r/3}(P)$, where $P=(-r/2,0)$.
Since $\gamma\subseteq S_{r,\delta r}$ connects $\alpha_L$ to $0$,
it must be that $\gamma$ disconnects $\tilde{N}_L:=P+(0,r/6)$ from
$\tilde{S}_L:=P+(0,-r/6)$ in $B_{r/2}(p)^+(u)$. Invoking Lemma
\ref{lemma_fbgraphs} again, we see that $F(u)\cap B_{r/3}(P)\cap
\{|x_1+r/2|<r/6\}$ consists of two graphs of Lipschitz norm at most
$C\delta$. As a result, the connected component $\tilde{Z}_L$ of $0$
in $\{u=0\}\cap B_{r/2}$ is bounded by a single free boundary arc
$F_L$  and a circular subarc of $\a_{L,r/2}$ that share ends.
Another even more significant consequence is that $F(u)\cap B_r$
contains no other arcs besides $\gamma$ that intersect $V_L:=B_r\cap
\{|x_1+r/2|<r/6\}$. Since $\{u=0\}$ has a component different from
$Z$ that is at most $\eps_0 r$ away from $0$, $F(u)\cap B_r$
contains at least one more arc $\tilde{\gamma}\neq \gamma$.
According to the observation above, $\tilde{\gamma}$ doesn't cross
into the region $V$, so it has to attach on $\alpha_{R,r}$. Consider
$F(u) \cap B_{r/3}(Q)$, where $Q=(r/2,0)$. Since $\tilde{\gamma}\cap
B_{\eps_0 r} \neq \emptyset$,  it must be that $\tilde{\gamma}$
disconnects $\tilde{N}_R=Q+(0,r/6)$ from $\tilde{S}_R = Q+(0,-r/6)$
in $B_{r/3}(Q)^+(u)$. Thus, by Lemma \ref{lemma_fbgraphs}, $F(u)\cap
B_{r/3}(Q)\cap \{|x_1-r/2|<r/6\}$ consists of two graphs of
Lipschitz norm at most $C\delta$. Hence, $\{u=0\}\cap B_{r/2}$ has
only one other connected component $\tilde{Z}_R$ and $\del Z_R \cap
F(u)$ consists of a single free boundary arc $F_R$. As $F_R$ cannot
intersect $V_L$ and, similarly, $F_L$ cannot intersect $V_R:=B_r\cap
\{|x_1-r/2|<r/6\} $, it must be that the minimum distance between
$\tilde{Z}_L$ and $\tilde{Z}_R$ is realized for some points $p\in
F_L$ and $q\in F_R$ with $|x_1(p)|< r/2-r/6 = r/3$ and $|x_1(q)|<
r/3$.
\end{proof}

\begin{remark}\label{remark: recentering} Assume we are in the situation of Propositions \ref{prop:smalldistImpliesHPin} and \ref{prop:two_strands} above for some fixed small $0<\delta<\delta_0$.
Then $F(u)\cap \overline{B_{r_0/2}}$ consists of two arcs $F_L$ and
$F_R$, and the minimum distance $s=d(F_L, F_R)$ is realized for some
points $p\in F_L$, $q\in F_R$ with both $p, q \in \rho_{r_0}
\big(S_{r_0/3,\delta r_0}\big)$. Now apply again Propositions
\ref{prop:smalldistImpliesHPin} and \ref{prop:two_strands} to the
translate
\begin{equation*}
    \tilde{u}(y):=u(p+y) \quad y\in B_{1/2}.
\end{equation*}
Call $\tilde{Z}$ the connected component of $\{\tilde{u}=0\}$
containing $0$. We establish that for every $r$ such that $s/\eps_0
< r \leq r_0$, there is a rotation $\tilde{\rho}=\tilde{\rho}_{r}$
such that
\[
    |\tilde{u}(\tilde{\rho} y) - |y_2|| < \delta r \quad \text{in } B_r
\]
and the free boundary in $B_{r/2}$, $F(\tilde{u})\cap
\overline{B_{r/2}}\subseteq \tilde{\rho} (S_{r/2,\delta r})$
consists of two arcs $\tilde{F}_L\subseteq \tilde{Z}$ and
$\tilde{F}_R\subseteq \{\tilde{u}=0\}\setminus Z$, the minimum
distance between which is realized for $0\in \tilde{F}_L$ and
$q-p\in \tilde{F}_R$.
\end{remark}

\section{Lipschitz bound of free boundary
strands.}\label{Sec:lipboundFBstrands} In this section we shall
further elaborate on the finer-scale structure of the free boundary
of a solution that falls under the scenario of Proposition
\ref{prop:smalldistImpliesHPin}. More specifically, we shall show
that if the separation $s$ between two components of the zero phase
becomes small enough, it forces the free boundary outside that scale
to be the union of four graphs of small Lipschitz constant over a
common line.

\begin{theorem}\label{thm:fourgraphs} For any given small
$0<\d<\d_0$, there exist $r_0>0$, $\eps_0>0$ such that if $u$ is a
classical solution of \eqref{FBP_0} in $B_1$, satisfying \eqref{topo_assmpt}, with $0\in F(u)$ and
\begin{equation*}
    \text{dist}(0, \{u=0\}\setminus Z) < \eps_0 r_0,
\end{equation*}
then for some $p\in B_{r_0/3}$, $B_{r_0/2}(p)\cap F(u)$ consists of
two free boundary arcs $F_L$ and $F_R$, the shortest segment between
which is centered at $p$, the separation
\begin{equation*}
    s:=dist(F_L, F_R)<\eps_0 r_0.
\end{equation*}
and for some rotation $\rho$ and functions  $f, g:\real\to\real$ with $f<g$ 
\begin{align*}
    \{u=0\} \cap \big(B_{r_0/2}(p)\setminus B_{4s/\eps_0}(p)\big)
     = p & + \rho \{4s/\eps_0<|x|<r_0/2: f(x_1)\leq |x_2|\leq
    g(x_1)\} \\
\text{where} \quad \|f\|_{L^\infty} + \|g\|_{L^\infty}  & \le \delta r, 
\quad
\|f'\|_{L^\infty} + \|g'\|_{L^\infty}  \le \delta.
\end{align*}
That is, $F(u)\cap \big(B_{r_0/2}(p)\setminus B_{4s/\eps_0}(p)\big)$ consists of four graphs over a common line with Lipschitz norm at most $\d$.
\end{theorem}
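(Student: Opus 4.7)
The strategy is to first harness the dyadic wedge-flatness furnished by Propositions \ref{prop:smalldistImpliesHPin} and \ref{prop:two_strands}, then apply the Alt--Caffarelli flat-implies-Lipschitz regularity at every scale, and finally confront the essential obstacle of ruling out spiraling of the tangent direction as one descends through the scales. Starting from the hypothesis $\mathrm{dist}(0,\{u=0\}\setminus Z) < \eps_0 r_0$, Proposition \ref{prop:smalldistImpliesHPin} gives a rotation so that $|u - |x_2|| < \delta r_0$ on $B_{r_0}$ (after rotating), and Proposition \ref{prop:two_strands} extracts the two arcs $F_L, F_R \subset F(u)\cap \overline{B_{r_0/2}}$ together with a pair of nearest points $p_L \in F_L$, $p_R\in F_R$ lying in $\rho(S_{r_0/3,\delta r_0})$. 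I take $p:=(p_L + p_R)/2$, so that $p \in B_{r_0/3}$ and the shortest gap $s = |p_L - p_R| = d(F_L, F_R) < \eps_0 r_0$ is centered at $p$. Crucially, Remark \ref{remark: recentering} now promotes this to a \emph{dyadic} scale-invariant statement: for the translate $\tilde u(y) = u(p+y)$, every scale $r \in (s/\eps_0,\, r_0/2]$ admits a rotation $\tilde\rho_r$ with $|\tilde u(\tilde\rho_r y) - |y_2|| < \delta r$ in $B_r$ and $F(\tilde u) \cap B_{r/2} \subset \tilde\rho_r(S_{r/2,\delta r})$.

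At each such scale $r$, Remark \ref{remark:deltaFlatness} (the Alt--Caffarelli flat-implies-Lipschitz theorem applied after rescaling $\tilde u$ to unit size) supplies Lipschitz functions $\tilde f_r < \tilde g_r$ with $\|\tilde f_r\|_\infty + \|\tilde g_r\|_\infty \le C\delta r$ and $\|\tilde f_r'\|_\infty + \|\tilde g_r'\|_\infty \le C\delta$, such that $\{\tilde u = 0\}\cap B_{r/2}$ is the $\tilde\rho_r$-rotated union of two strips $\{\tilde f_r(x_1) \le |x_2| \le \tilde g_r(x_1)\}$. Restricting to each dyadic annulus $\{r/4 < |y| < r/2\}$ then exhibits four Lipschitz graphs of slope $O(\delta)$ over the line $\R\cdot\tilde\rho_r(e_1)$. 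This is the flat-to-Lipschitz step, acting at each dyadic scale \emph{separately}.

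The heart of the proof, and its principal obstacle, is the upgrade of this scale-by-scale description to a uniform one: a \emph{single} rotation $\rho$ must be identified with $|\tilde\rho_r - \rho| = O(\delta)$ for every $r \in (s/\eps_0,\, r_0/2]$. Comparing the graph representations at consecutive scales $r$ and $r/2$ on their overlap $\{r/4 < |y| < r/2\}$ yields only the weak bound $|\tilde\rho_r - \tilde\rho_{r/2}| \le C\delta$, which when summed over the $\log_2(r_0\eps_0/s)$ many dyadic scales allows an accumulated angular drift that is not controlled by $\delta$. Geometrically, this is precisely the possibility that the free boundary \emph{spirals} around $p$ as one zooms toward the throat --- the ``spiral'' scenario of the introduction. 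To rule it out I plan to pass to a conformal model: each of the simply connected ``prong'' components $U$ of $\{\tilde u > 0\}\cap (B_{r_0/2}\setminus B_{4s/\eps_0})$ admits a harmonic conjugate $v$ of $\tilde u$, so $F := \tilde u + iv$ is holomorphic and univalent on $U$. The free boundary conditions $\tilde u = 0$ and $|\nabla \tilde u| = 1$ on $F(\tilde u)\cap \del U$ force $F$ to extend conformally up to the free-boundary portion of $\del U$ and map it isometrically onto two vertical segments in $\C$; thus $F^{-1}$ is a conformal map of a topological rectangle onto $U$ that matches the two vertical edges to the two free-boundary arcs of $U$. Since $\log F'$ is holomorphic on $U$ with harmonic argument, a standard distortion (or boundary-Harnack) estimate --- in the long, thin geometry of this rectangle --- forces $\arg F'$ to remain within $O(\delta)$ of a single angle $\theta_0$ throughout $U$. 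This translates into the sought-after uniform bound $|\tilde\rho_r - \rho| = O(\delta)$ for the tangent direction to $F(u)$ across all scales, in a manner that no longer depends on $\log_2(r_0\eps_0/s)$.

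With that uniform angular control in hand, the final assembly is mechanical. Fixing $\rho$ (say $\rho := \tilde\rho_{r_0/2}$) and rewriting each dyadic description in its frame, the auxiliary rotations $\tilde\rho_r\rho^{-1}$ (of size $O(\delta)$) perturb the graphing functions $\tilde f_r, \tilde g_r$ by $O(\delta r)$ in sup-norm and $O(\delta)$ in Lipschitz norm. The descriptions at consecutive scales now agree on their overlaps up to these errors, and patch together into globally defined Lipschitz functions $f, g : \R \to \R$ on $B_{r_0/2}(p) \setminus B_{4s/\eps_0}(p)$ satisfying $\|f\|_\infty + \|g\|_\infty \le \delta r_0$ and $\|f'\|_\infty + \|g'\|_\infty \le \delta$. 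The stated identity for $\{u=0\}$ on the annulus then follows by rotating and translating back.
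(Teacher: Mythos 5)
Your roadmap coincides with the paper's: Propositions \ref{prop:smalldistImpliesHPin}, \ref{prop:two_strands} and Remark \ref{remark: recentering} give the dyadic wedge-flatness; Alt--Caffarelli gives scale-by-scale Lipschitz graphs with the usual $O(\delta)$ drift between consecutive scales; and the obstruction is the potential accumulation of that drift over $\log_2(r_0\eps_0/s)$ scales, which the paper also resolves conformally. In Lemmas \ref{lemma:U_confMap} and \ref{lemma:removableSing}, the map $U = iu - v$ (a rotated version of your $F$) is shown injective on $\Omega_N\setminus B_{r_2}$, the composition $U^{-1}\circ\exp$ carries a long thin strip onto the prong, and the oscillation of $g=\operatorname{Im}\big(\log U'\circ U^{-1}\circ\exp\big)$ along the two long sides (i.e.\ the total turning of $\nabla u$ along the free boundary) is bounded by $O(\delta)$ via Schwarz reflection, a $\cosh t \cos\theta$ barrier and the Hopf lemma. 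So you have the same decomposition, the same conformal object $\log F'$, and the same ``Harnack in the strip'' endgame.

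One genuine gap: you assert that $F := \tilde u + iv$ is ``holomorphic and univalent'' on each prong $U$ as though univalence followed from the existence of the harmonic conjugate, but $F'\neq 0$ gives only \emph{local} injectivity. Global injectivity is precisely what Lemma \ref{lemma:U_confMap} establishes, and it is not free: it is derived from the quantitative estimate $|U(e^{i\theta_k}\zeta)-\zeta|\leq C\delta|\zeta|$ (which uses the $\delta$-closeness of the rotations on overlapping dyadic annuli and a uniform-in-$k$ induction) together with a comparison of consecutive annuli to preclude $U(w_1)=U(w_2)$ for $|w_1|\sim|w_2|$. Without injectivity the ``topological rectangle'' picture, $F^{-1}$, and the boundary-value problem for $\log F'$ on a strip are all unjustified; the argument-principle route the paper uses elsewhere (Lemma \ref{lemma:confmapConstruction}) would require knowing a priori that the boundary image is a Jordan curve, which is exactly what could fail if the prong spirals. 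A smaller point: the image of $U$ under $F$ has two vertical sides but two nearly semicircular ends, so to get the literal long, thin rectangle one must post-compose with a logarithm (as the paper does via $U^{-1}\circ\exp$) before running the barrier/boundary-Harnack argument; your sketch elides this but it is the step that makes the barrier $\cosh t\cos\theta$ and the $1/\cosh\tilde A$ decay --- which kills the length-$\log(r_0\eps_0/s)$ dependence --- available.
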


The proof will be carried out in Lemmas \ref{lemma:U_confMap} and \ref{lemma:removableSing} below. Assume
$\delta$, $r_0$, $\e_0$ are as in Proposition
\ref{prop:smalldistImpliesHPin}. In view of Remark \ref{remark:
recentering}, we may assume that we are dealing with a solution of
\eqref{FBP_0} in $B_{r_0}$, which satisfies:
\begin{itemize}
\item $F(u)\cap \overline{B_{r_0/2}}$ consists of two arcs $F_L$ and
$F_R$; for some rotation $\rho_{r_0}$ the ends of $F_L$ belong to $\rho_{r_0}(\a_{L,r_0/2})$
and the ends of $F_R$ belong to $\rho_{r_0}(\a_{R,r_0/2})$, where
\begin{equation*}
       \alpha_{L,r} = \{x\in \del B_{r}: x_1<0, |x_2|<\delta r \} \quad \text{and} \quad \alpha_{R,r} = \{x\in \del B_{r}: x_1>0, |x_2|<\delta r \}.
\end{equation*}
\item The minimum distance $d(F_L, F_R) = s$ is realized for $0\in
F_L$ and some point $q\in F_R$ with $0< s < \eps_0 r_0$.
\item For every $s/\eps_0<r\leq r_0$,
\[
    |u(\rho y) - |y_2|| < \delta r \quad \text{in } B_r
    \quad \text{for some rotation } \rho = \rho_{r}.
\]
\item For every $s/\eps_0<r\leq r_0/2$, $F(u)\cap \overline{B_r}$ consists of two
arcs $F_L(r)$ and $F_R(r)$ that attach on $\rho_{r}(\alpha_{L,r})$
and $\rho_{r}(\alpha_{R,r})$.
\end{itemize}

Set
\begin{equation*}
    r_k :=2^{k-1} s/\e_0 \qquad k\in \nat
\end{equation*}
and let $k_0=\lfloor\log_2(r_0\e_0/s)\rfloor$, so that
$r_{k_0}\approx r_0/2$. Define $F_L^{N}$ and $F_L^{S}$ to be the two
(closed) subarcs of $F_L(r_{k_0})$ that $0$ divides $F_L(r_{k_0})$
into: with $F_L^N$ being the one such that the end point
$\rho_{r_{k_0}}^{-1}(F_L^N) \cap \alpha_{L,r_{k_0}}$ has the greater
$x_2$-coordinate than the end point $\rho_{r_{k_0}}^{-1}(F_L^S) \cap
\alpha_{L,r_{k_0}}$. Define $F_R^N$ and $F_R^S$, the two subarcs of
$F_R(r_{k_0})$ that $q$ divides $F_R(r_{k_0})$ into, analogously.
Let $\tau$ be the straight-line close segment connecting $0$ to $q$,
and let $\beta^N$ and $\beta^S$ be the two circular arcs of $\del
B_{r_{k_0}} \cap \{u>0\}$ with $\beta^N$ containing
$\rho_{r_{k_0}}\big((0, r_{k_0})\big)$ and $\beta^S$ containing
$\rho_{r_{k_0}}\big((0, -r_{k_0})\big)$. Then $\tau$ splits
$B_{r_{k_0}}^+(u)$ into two simply-connected regions -- the ``top"
$\Omega_N$, bounded by $\beta^N$, $F_L^N$, $\tau$, $F_R^N$; and the
``bottom" $\Omega_S$, bounded by $\beta^S$, $F_L^S$, $\tau$,
$F_R^S$.

We may choose the coordinate system so that $\rho_{r_1}$ is the
identity. In the following series of arguments we shall adopt
complex notation: denoting the point $(x_1, x_2)\in \real^2$ by the
complex $z= x_1 + i x_2\in \C$.

Let $z_k\in \C$ be the unique point of intersection between $\del
B_{r_k}$ and $F_R^N$, $k= 1, 2, \ldots, k_0$. The region $\Omega_N$
is simply connected with piece-wise smooth boundary, so we may
define the harmonic conjugate $v: \Omega_N \to \real$ of $u$, such
that $v$ is continuous up to the boundary $\del \Omega_N$ and has
the normalization
\begin{equation*}
    v(z_2) = -|z_2|.
\end{equation*}
Now define the holomorphic map $U:\Omega_N \to \C$ by
\begin{equation*}
    U := i u - v.
\end{equation*}

\begin{lemma}\label{lemma:U_confMap} The map $U$ constructed above
is injective on $\overline{\Omega_N\setminus B_{r_2}}$ and its image
\begin{equation}\label{lemma:U_confMap:eq_Annulus}
    U(\overline{\Omega_N\setminus B_{r_2}}) \supseteq \{\xi\in \C: \text{Im}(\xi)\geq 0, r_2(1+C\delta) \leq |\xi| \leq r_{k_0}(1-C\delta)\}
\end{equation}
for some numerical constant $C$.
\end{lemma}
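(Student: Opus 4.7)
The plan is to estimate $U$ on the four pieces of $\partial D$, where $D := \Omega_N \setminus \overline{B_{r_2}}$, and then apply the argument principle. The boundary $\partial D$ consists of the outer circular arc $\beta^N \subset \partial B_{r_{k_0}}$, the portions of $F_R^N$ and $F_L^N$ lying outside $B_{r_2}$, and the inner arc $\partial B_{r_2} \cap \overline{\Omega_N}$.

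On the two circular arcs I would use the flatness estimate of Proposition \ref{prop:smalldistImpliesHPin}: writing $y := \rho_{r_{k_0}}^{-1} x$, the bound $u(x) = y_2 + O(\delta r_{k_0})$ holds on the top portion containing $\beta^N$. The Cauchy-Riemann identity $\nabla v = R_{\pi/2} \nabla u$ (with $R_{\pi/2}$ rotation by $+\pi/2$) then propagates this to $v(x) = -y_1 + O(\delta r_{k_0})$, so $U = iu - v$ traces an arc lying within Hausdorff distance $O(\delta r_{k_0})$ of the upper semicircle of radius $r_{k_0}$. The analogous computation at scale $r_2$, using $v(z_2) = -|z_2|$ to fix the constant, gives the image of $U$ on the inner arc. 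On the free boundaries, $u \equiv 0$ makes $U = -v$ real, and $|\nabla u|=1$ forces $v$ to change as signed arclength along each strand with a sign determined by the direction of the interior normal $\nabla u$; combined with the $O(\delta)$-Lipschitz bounds on the strands at each dyadic scale (Remark \ref{remark:deltaFlatness}), this shows that $U$ maps $F_R^N \cap \overline{D}$ and $F_L^N \cap \overline{D}$ homeomorphically onto real intervals close to $[r_2, r_{k_0}]$ and $[-r_{k_0}, -r_2]$, respectively.

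Concatenating the four descriptions, $U(\partial D)$ is a Jordan curve contained within Hausdorff distance $O(\delta r_{k_0})$ of the boundary of $A := \{r_2 \leq |\xi| \leq r_{k_0},\; \operatorname{Im}(\xi) \geq 0\}$, and $U|_{\partial D}$ is injective because on each arc it is strictly monotone onto its image and the four target arcs of $\partial A$ are mutually well-separated. Applying the argument principle to $U(z) - w$ for $w$ in the shrunken half-annulus $A' := \{r_2(1+C\delta) \leq |\xi| \leq r_{k_0}(1-C\delta),\; \operatorname{Im}(\xi) \geq 0\}$: the winding number of $U|_{\partial D}$ around $w$ agrees with that of $\partial A$, which is $1$. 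Hence $U$ takes every such $w$ exactly once in $D$, yielding simultaneously the injectivity of $U$ on $\overline{D}$ and the inclusion \eqref{lemma:U_confMap:eq_Annulus}.

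The main technical difficulty is organizing the rotation bookkeeping across the $\approx \log_2(r_0 \epsilon_0/s)$ dyadic scales separating $r_2$ and $r_{k_0}$: consecutive flatness frames $\rho_{r_k}, \rho_{r_{k+1}}$ differ by $O(\delta)$, and a naive sum of those drifts would accumulate to something far too large to use. This is overcome by exploiting the coordinate-free definition of $U$: only the two extremal scales $r_2, r_{k_0}$ enter the boundary estimates on the circular arcs, while the intermediate scales appear only through the uniform Lipschitz bound on $F_L^N, F_R^N$, which does not accumulate angular error. The two normalizations (the one fixing $v(z_2) = -|z_2|$ and the asymptotic $v \approx -y_1$ at scale $r_{k_0}$) are tied together by the arclength transport identity for $v$ along the strand $F_R^N$, closing the loop with total error $O(\delta r_{k_0})$ and making all four boundary pieces compatible.
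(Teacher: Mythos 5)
Your proposal takes a genuinely different route from the paper's. The paper establishes the pointwise interior estimate $|U(e^{i\theta_k}\zeta) - \zeta| \le C\delta|\zeta|$ on each dyadic annulus $\tilde A_k$ by first bounding $U'$ at each scale and then running an induction across scales, in which the errors are absorbed by the doubling of $r_k$; injectivity is then argued directly (if $U(w_1)=U(w_2)$, the modulus estimate forces $|w_1|\sim|w_2|$, and the derivative bound on the resulting pair of annuli closes the argument). You instead estimate $U$ only on $\partial(\Omega_N\setminus \overline{B_{r_2}})$, using the flatness frames only at the two extremal scales and the exact arclength transport of $v$ along the strands (where $U=-v$) in between, and then invoke the winding-number argument. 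Your key observation --- that the intermediate-scale rotation drift never enters because $U$ is identically the signed arclength on the strands, with the $O(\delta)$-Lipschitz bounds contributing only a summable $O(\delta^2 r_k)$ correction per dyadic annulus --- is correct and is exactly the reason the boundary-only approach can close. Both arguments end up proving comparable estimates; the paper's gives the interior pointwise control for free (which it reuses in Step 3 to get the image inclusion directly), while yours is closer in spirit to a removable-singularity argument and avoids the induction.

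Two places deserve more care. First, the conclusion that the argument principle yields injectivity on all of $\overline{\Omega_N\setminus B_{r_2}}$ ``simultaneously'' with the inclusion is slightly too fast: the argument principle applied to $w$ in the shrunken half-annulus $A'$ only gives one preimage for such $w$; to upgrade to injectivity on the full closure you should invoke the Darboux/Carath\'eodory form of the statement (if $U$ is continuous on $\overline D$, holomorphic in $D$, and restricts to a homeomorphism of $\partial D$ onto a Jordan curve $\Gamma$, then $U$ maps $D$ biholomorphically onto $\operatorname{int}\Gamma$ and is injective on $\overline D$). Second, the claim that $U|_{\partial D}$ is injective because the four target arcs are ``mutually well-separated'' needs the errors to be tracked at the right scale near the corners: the perturbation near the inner arc is $O(\delta r_2)$, not $O(\delta r_{k_0})$, and it is the scale-appropriate flatness estimate at $r_2$ (available because $r_2 > s/\eps_0$) that guarantees the inner semicircle image and the two real intervals do not overlap except at their shared endpoints. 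With those two points addressed the proof is complete.
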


\begin{proof}
First, let us note that for $k=2,\ldots, k_0-1$, the free boundary
in each dyadic annulus $F(u)\cap \overline{B_{r_{k+1}}\setminus
B_{r_k}}$ consists of four graphs of Lipschitz norm at most
$c'\delta$ for some numerical constant $c'>0$. This is a direct
consequence of Lemma \ref{lemma_fbgraphs} applied to $u$ in
$B_{r_k}(\pm p_k)$, where $p_k=\rho_{3r_k}\big((3r_k/2,0)\big)$,
since the zero phase of $u$ in $B_{3r_k}\supseteq B_{r_k}(\pm p_k)$
is contained in a $(3\delta r_k)$-strip that disconnects
$B_{r_k}(\pm p_k)^+(u)$ into two components. An a result, if we
represent the rotation $\rho_{r_k}$ as a complex phase
$e^{i\theta_k}$, we must have
\begin{equation}\label{lemma:U_confMap:eq_PhaseDif}
    |e^{i\theta_{k+1}} - e^{i\theta_k}|\leq c\delta,
\end{equation}
for the Lipschitz graph pieces $F(u)\cap
\overline{B_{r_{k+1}}\setminus B_{r_k}}$ to be appropriately aligned
in successive dyadic annuli.

We shall carry out the proof of the lemma in a couple of steps.
\item \textbf{Step 1}. Define $A_k := \Omega_N \cap \overline{B_{r_{k+1}}\setminus
B_{r_k}}$. We shall show that
\begin{equation}\label{lemma:U_confMap:eq_Main}
    |U( e^{i\theta_k}\z) - \z| \leq C\delta |\z| \quad \text{for} \quad
    \z\in \tilde{A}_k:= e^{-i\theta_k}A_k \qquad k=2, 3, \ldots,
    k_0-1.
\end{equation}
Define $\tilde{U}(\z):=U( e^{i\theta_k}\z)$ and let $\tilde{u} :=
\text{Im}(\tilde{U})$. First, claim that
\begin{equation}\label{lemma:U_confMap:eq_Der}
    |\tilde{U}'(\z) - 1| \leq c \delta \quad \text{for }  \z\in
    \tilde{A}_k.
\end{equation}
The Cauchy-Riemann equations say
\begin{equation*}
    \tilde{U}'(\z) = i\del_{y_1} \tilde{u} + \del_{y_2} \tilde{u},
    \qquad \z = y_1 + i y_2,
\end{equation*}
so it suffices to show that \[\nabla_y \tilde{u} = e_2 + O(\delta)
\quad \text{in} \quad \tilde{A}_k,\] where $e_2$ is the unit vector
in the direction of $y_2$. This is a straightforward corollary of
\begin{equation*}
    |\tilde{u} - y_2^+| < 3\delta r_k \quad \text{in} \quad e^{-i\theta_k}
    \big(\Omega_N \cap (B_{3r_{k}}\setminus
B_{r_k/2})\big) \supseteq \tilde{A}_k.
\end{equation*}
and the fact that $F(\tilde{u}) \cap (B_{3r_{k}}\setminus
B_{r_k/2})$ consists of two graphs of Lipschitz norm at most
$c'\delta$.

Going back to the complex coordinate $z = e^{i\theta_k} \z$, we see
that \eqref{lemma:U_confMap:eq_Der} becomes
\begin{equation}\label{lemma:U_confMap:eq_Der_Rot}
    |U'(z) - e^{-i\theta_k}| = |U'(z)e^{i\theta_k} - 1| \leq
    c\delta \quad \text{for}\quad z\in A_k.
\end{equation}
Let $z_k$ be defined as the unique intersection point between $\del
B_{r_k}$ and $F_R^N$ for $k= 1, 2, \ldots, k_0$, as above. Since
there is a piece-wise smooth curve $\gamma(z_k,z) \subseteq A_k$ of
length $O(r_k)$ connecting $z_k$ to any other point $z\in A_k$,
integrating $(U'(s) - e^{-i\theta_k})$ along $\gamma(z_k, z)$, we
obtain using \eqref{lemma:U_confMap:eq_Der_Rot}
\begin{equation}\label{lemma:U_confMap:eq_DerInteg}
    |U(z) - e^{-i\theta_k}z - (U(z_k) - e^{-i\theta_k}z_k)| \leq
    C'\delta r_k \quad z\in A_k.
\end{equation}
In order to establish \eqref{lemma:U_confMap:eq_Main}, it suffices
therefore to show that for some large enough numerical constant
$\tilde{c}$
\begin{equation*}
    |U(z_k) - e^{-i\theta_k}z_k|  \leq \tilde{c} r_k, \quad k=2, 3,
    \ldots, k_0-1.
\end{equation*}
We shall use induction. Without of loss of generality, the complex
coordinate $z$ is chosen so that $\theta_2 = 0$. Then, since $z_2\in
\alpha_{R, \delta r_2}$,
\begin{equation*}
    |U(z_2) - e^{-i\theta_2}z_2| = |-v(z_2) - z_2| = ||z_2| - z_2||
    \leq 2 \delta r_2.
\end{equation*}
Assume the statement is true for $k$. Applying
\eqref{lemma:U_confMap:eq_DerInteg} for $z = z_{k+1}\in A_k$
\begin{equation*}
    |U(z_{k+1}) - e^{-i\theta_k}z_{k+1}| \leq C'\delta r_k + |U(z_k) -
    e^{-i\theta_k}z_k| \leq (C'+\tilde{c})\delta r_{k}.
\end{equation*}
Taking into account \eqref{lemma:U_confMap:eq_PhaseDif}, we see that
\begin{equation*}
    |U(z_{k+1}) - e^{-i\theta_{k+1}}z_{k+1}| \leq
    (C'+\tilde{c})\delta r_{k} + |e^{-i\theta_{k+1}} -
    e^{-i\theta_{k}}||z_{k+1}| \leq  (C'/2 + \tilde{c}/2 + c)\delta
    r_{k+1}.
\end{equation*}
and the induction step is complete once we pick $\tilde{c} =
\max\{2, C' + 2c)\}$.

\item \textbf{Step 2}. We are now ready to show that $U$ is injective on $\overline{\Omega_N\setminus B_{r_2}}$. Let $w_1, w_2 \in \overline{\Omega_N\setminus B_{r_2}}$ be such
that $U(w_1) = U(w_2)$; without loss of generality $|w_1|\leq
|w_2|$. Because of \eqref{lemma:U_confMap:eq_Main}, we have
\begin{equation*}
    |U(w_1)|\leq (1+C\delta)|w_1| \quad \text{while} \quad |U(w_2)|
    \geq (1-C\delta)|w_2|.
\end{equation*}
Hence,
\begin{equation*}
    1 \leq \frac{|w_2|}{|w_1|}\leq
    \frac{|U(w_2)|/(1-C\delta)}{|U(w_1)|/(1+C\delta)}  = \frac{1 +
    C\delta}{1-C\delta} < 2.
\end{equation*}
so it has to be the case that both $w_1, w_2$ belong to $A_{k-1}\cup
A_k$ for some $k$. Because of \eqref{lemma:U_confMap:eq_Der_Rot} and
\eqref{lemma:U_confMap:eq_PhaseDif}, we have
\begin{equation*}
    |U'(z) - e^{-i\theta_k}| \leq
    c'\delta \quad \text{for}\quad z\in A_{k-1} \cup A_k.
\end{equation*}
Let $\gamma(w_1, w_2)$ be a piece-wise smooth curve in $D_k:=A_{k-1}
\cup A_k$ connecting $w_1$ to $w_2$. It is not hard to see that,
because $\del D_k$ can be locally represented as a graph of a
Lipschitz function with Lipschitz norm bounded by some universal
constant $L$, $\gamma(w_1, w_2)$ can be taken such that
\begin{equation*}
    \hsd(\gamma(w_1, w_2)) \leq \sqrt{1+L^2} |w_1-w_2|.
\end{equation*}
Then
\begin{equation*}
    0 = U(w_2) - U(w_1) = \int_{\gamma(w_1, w_2)} U'(z) dz =
    e^{-i\theta_k} (w_2 - w_1) + \int_{\gamma(w_1, w_2)} (U'(z) -
    e^{-i\theta_k}) dz,
\end{equation*}
so that
\begin{equation*}
    |w_1 - w_2| = \left|\int_{\gamma(w_1, w_2)} (U'(z) -
    e^{-i\theta_k}) dz\right| \leq c'\delta \hsd(\gamma(w_1, w_2))
    \leq  c'\sqrt{1+L^2} \delta |w_1-w_2|,
\end{equation*}
which implies that $w_1 = w_2$ when $\delta$ is small enough.
\item \textbf{Step 3.} Finally, we see that \eqref{lemma:U_confMap:eq_Annulus}
follows from \eqref{lemma:U_confMap:eq_Main} and the fact that
$\text{Im}(U) = u \geq 0$ with $\text{Im}(U)(z) = 0$ precisely when
$z\in F(u)\cap \overline{\Omega_N\setminus B_{r_2}}$.
\end{proof}

\begin{lemma}\label{lemma:removableSing} The two curves $F(u) \cap \overline{\Omega_N\setminus
B_{r_3}}$ are graphs over the line $\rho_{r_{k_0}}\{y_2=0\}$ with
Lipschitz norm at most $c\delta$ for some numerical constant $c$.
\end{lemma}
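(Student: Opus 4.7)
The plan is to realize each strand of $F(u)\cap \overline{\Omega_N\setminus B_{r_3}}$ as a graph by means of the inverse $\phi:=U^{-1}$ of the conformal map constructed in Lemma \ref{lemma:U_confMap}. Since $u=\text{Im}\,U$ vanishes precisely on $F(u)\cap\Omega_N$, and $U$ is a bijection of $\overline{\Omega_N\setminus B_{r_2}}$ onto an image containing the half-annulus $H=\{\xi\in\C:\text{Im}\,\xi\ge 0,\ r_2(1+C\delta)\le|\xi|\le r_{k_0}(1-C\delta)\}$, the two arcs of $F(u)\cap \overline{\Omega_N\setminus B_{r_3}}$ correspond under $\phi$ to the two real segments $\pm[r_3(1+C'\delta),\,r_{k_0}(1-C\delta)]$ sitting on $\partial H$. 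The tangent direction to each strand at $\phi(t)$ is $\phi'(t)=1/U'(\phi(t))$, so obtaining a Lipschitz bound reduces to controlling $\arg\phi'$ along these real segments.

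The scale-wise derivative bound \eqref{lemma:U_confMap:eq_Der_Rot} immediately yields $|\phi'(t)-e^{i\theta_k}|\le c\delta$ whenever $\phi(t)\in A_k$. The key step is to promote this to the scale-\emph{independent} estimate $|\phi'(t)-e^{i\theta_{k_0}}|\le c\delta$ demanded by the lemma. To do so I would examine the holomorphic function $F(z):=U(z)-e^{-i\theta_{k_0}}z$ on the simply-connected region $\Omega_N$. On the outer arc $\beta^N$, \eqref{lemma:U_confMap:eq_Main} with $k=k_0-1$ gives $|F|\le C\delta r_{k_0}$. On $F(u)\cap\partial\Omega_N$, since $\text{Im}\,U=0$ and the free boundary lies within $C\delta r$ of the $\theta_{k_0}$-rotated real line inside each ball $B_r$ with $s/\e_0<r\le r_0$ (applying Proposition \ref{prop:smalldistImpliesHPin} at the outermost scale and then propagating inwards using Cauchy--Riemann on $F/z$), one obtains $|\text{Im}\,F|\le C\delta|z|$. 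On the inner pieces $\tau$ and $\partial B_{r_2}\cap\partial\Omega_N$ there is the crude bound $|F|\lesssim r_2$. A two-constants/harmonic-measure estimate in $\Omega_N$ combined with these three boundary controls gives $|F(z)|\le C''\delta|z|$ on $\Omega_N\cap\{|z|\ge 2r_3\}$, and a Cauchy integral over a disk of radius $\sim|z|/10$ upgrades this to $|U'(z)-e^{-i\theta_{k_0}}|\le c'\delta$, hence to $|\phi'(t)-e^{i\theta_{k_0}}|\le c''\delta$ along the two real segments.

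Finally, after rotating coordinates by $\rho_{r_{k_0}}^{-1}$ so that $\theta_{k_0}=0$, the parametrization $t\mapsto\phi(t)$ has tangent $1+O(\delta)$, which expresses each strand as a graph $x_2=\psi_\pm(x_1)$ with $|\psi_\pm'|\le c\delta$; undoing the rotation gives the desired conclusion.

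The main obstacle is the middle step. Summing the phase-matching condition $|\theta_{k+1}-\theta_k|\le c\delta$ of \eqref{lemma:U_confMap:eq_PhaseDif} across the $O(\log_2(r_0/s))$ dyadic scales produces only a logarithmic bound $|\theta_k-\theta_{k_0}|=O(\delta \log_2(r_0/s))$, which is insufficient for the sharp Lipschitz norm $c\delta$. The resolution has to exploit the global holomorphic structure of $U$ on the simply-connected domain $\Omega_N$ rather than any scale-by-scale comparison, and it is precisely the injectivity of $U$ from Lemma \ref{lemma:U_confMap} that rules out the spiralling of the free boundary between scales $r_3$ and $r_{k_0}$ which would otherwise obstruct the uniform bound.
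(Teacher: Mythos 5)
Your plan has a circularity at the central step. With $F(z)=U(z)-e^{-i\theta_{k_0}}z$, on the free-boundary portion of $\partial\Omega_N$ one has $\text{Im}\,U\equiv 0$, hence $|\text{Im}\,F(z)| = |\text{Im}(e^{-i\theta_{k_0}}z)|$ is exactly the distance from $z$ to the \emph{fixed} line $\rho_{r_{k_0}}\{y_2=0\}$. So the boundary control $|\text{Im}\,F|\le C\delta|z|$ that you feed into the harmonic-measure argument is precisely the assertion that the free boundary stays within $C\delta|z|$ of that one line at every scale $2r_3\le|z|\le r_{k_0}$ --- which is the conclusion you are trying to reach. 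Proposition \ref{prop:smalldistImpliesHPin} only gives flatness in $B_r$ relative to the \emph{scale-dependent} rotation $\rho_r$, and as you yourself observe, summing \eqref{lemma:U_confMap:eq_PhaseDif} across the dyadic scales gives only $|\theta_k-\theta_{k_0}|=O(\delta\log(r_0/s))$. The phrase ``propagating inwards using Cauchy--Riemann on $F/z$'' does not close this gap, and the injectivity of $U$ by itself cannot upgrade a logarithmic spiral to a uniform $O(\delta)$ bound (injectivity still permits turning of order $2\pi$ before self-intersection).

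The ingredient your proposal does not exploit, and which makes the paper's argument work, is the free-boundary condition $|U'|=|\nabla u|=1$ on $F(u)\cap\Omega_N$. The paper uniformizes $\Omega_N\setminus B_{r_2}$ via $U^{-1}\circ\exp$ onto a long strip $S$ and studies $\mathcal{F}=\log U'\circ U^{-1}\circ\exp$, writing $f=\mathrm{Re}\,\mathcal{F}=\log|U'|$ and $g=\mathrm{Im}\,\mathcal{F}=\arg U'$. The harmonic function $f$ then \emph{vanishes identically} on the two long sides $\theta=\pm\pi/2$ (this is where $|\nabla u|=1$ enters), and is $O(\delta)$ on the two short ends by \eqref{lemma:U_confMap:eq_Der}. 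A Schwarz reflection, comparison with barriers $\cosh t\cos\theta$, and the Hopf lemma give $|\partial_\theta f(t,\pm\pi/2)|\lesssim\delta\cosh t/\cosh A$, and Cauchy--Riemann ($|\nabla g|=|\nabla f|$) transfers this to $\nabla g$ on the long sides. Integrating in $t$ over the whole strip produces total turning $O(\delta)$; the exponential decay $\cosh t/\cosh A$ towards the middle of the strip is precisely what defeats the logarithmic accumulation over scales that blocks your approach. The argument should be reorganized around this Dirichlet datum for $\log|U'|$ rather than around the displacement $U(z)-e^{-i\theta_{k_0}}z$.
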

\begin{proof}
From Lemma \ref{lemma:U_confMap} we know that the inverse of $U$ is
well defined on the annulus
\begin{equation*}
    A:=\{\xi\in \C: \text{Im}(\xi)\geq 0, r_2(1+C\delta) \leq |\xi| \leq
    r_{k_0}(1-C\delta)\}.
\end{equation*}
Then $U^{-1}\circ \exp$ maps the strip
\begin{equation*}
    S=\{z\in \C: 0\leq \text{Im}(z)\leq
    \pi, \log \big(r_2(1+C\delta)\big)\leq \text{Re}(z) \leq  \log\big(r_{k_0}(1-C\delta)\big)\}.
\end{equation*}
conformally onto its image in $\overline{\Omega_N\setminus
B_{r_2}}$: with $S\cap \{\text{Im}(z)=0\})$ parameterizing a subarc
of the ``right" strand $F_R^N$, and $S\cap \{\text{Im}(z)=\pi\})$
parameterizing a subarc of the ``left" strand $F_L^N$, under
$U^{-1}\circ \exp$ (see the discussion at the beginning of the
section for definitions). Since $U' \neq 0$ on $\Omega_N\setminus
B_{r_2}$ and $\Omega_N\setminus B_{r_2}$ is simply-connected, one
may define a branch of its logarithm $\log U'$. Finally, define the
holomorphic function $\mathcal{F}:S\to\C$ via:
\begin{equation*}
    \mathcal{F}:= \log U'\circ U^{-1}\circ \exp,
\end{equation*}
and let
\begin{equation*}
    f= \text{Re}(\mathcal{F}) \quad \text{and}\quad g=\text{Im}(\mathcal{F}).
\end{equation*}
Since for $\z_1, \z_2\in F(u)$, $\left|\text{Im}\big(\log U'(\z_2) -
\log U'(\z_1)\big)\right|$  measures the angle of turning of $\nabla
u$ along $F(u)$ from $\z_1$ to $\z_2$, we are going to be interested
in estimating the oscillations
\begin{equation*}
    \omega_{g, L} := \text{osc}\{g(z): z \in \tilde{S} \cap \{\text{Im}(z)=\pi\}\} \qquad \omega_{g, R} := \text{osc}\{g(z): z \in \tilde{S} \cap
    \{\text{Im}(z)=0\}\}.
\end{equation*}
where
\begin{equation*}
    \tilde{S}=\{z\in \C: 0\leq \text{Im}(z)\leq
    \pi, ~ \log r_2 + c_0 \leq \text{Re}(z) \leq  \log r_{k_0} - c_0 \} \subseteq
    S \qquad c_0=\log 2.
\end{equation*}
We would like to show that both $\omega_{g, L}$ and $\omega_{g, R} =
O(\d)$, as this would imply that the amount of turning of $\nabla u$
along $F_{L}^N$ ($F_R^N$), from $\del B_{2r_2}$ to $\del
B_{r_{k_0}/2}$, is $O(\d)$, which, in turn, would be enough to
conclude that $F_{L}^N \cap \overline{B_{r_{k_0}}\setminus B_{r_3}}$
and $F_{R}^N \cap \overline{B_{r_{k_0}}\setminus B_{r_3}}$ are in
fact graphs of Lipschitz constant $O(\delta)$ (as we already know
that $F_{L}^N \cap \overline{B_{r_{k_0}}\setminus B_{r_{k_0}/2}}$
and $F_{R}^N \cap \overline{B_{r_{k_0}}\setminus B_{r_{k_0}/2}}$ are
graphs of Lipschitz constant $O(\delta)$). To that goal we would
like to obtain estimates on $|\nabla g|$ in $\del S$, which by the
Cauchy--Riemann equations satisfies
\begin{equation*}
    |\nabla g| = |\nabla f| \quad \text{in } S.
\end{equation*}
For convenience, define the following coordinates on $S$
\begin{align*}
     t = \text{Re}(z) - (\log\big(r_2(1+C\delta)\big) +A) \qquad & \theta = \text{Im}(z)-\pi/2, \quad   \text{where
    } \\ 2A  = \log\big(r_{k_0}(1-C\delta)\big)-\log
    \big(r_2(1+C\delta)\big) & = \log \frac{1-C\delta}{1+C\delta} + (k_0-2)\log
    2 = (k_0-2)\log 2 + O(\d)
\end{align*}
by translating the coordinates $(\text{Re}(z), \text{Im}(z))$
appropriately, so that $S$ is parameterized by
\begin{equation*}
    S = \{|t|\leq A, |\theta| \leq \pi/2\}.
\end{equation*}
Note that since $|U'|=|\nabla u|$ on $F(u)\cap (\Omega_N\setminus
B_{r_2})$, we have
\begin{equation*}
    |f(t, \pm\pi/2)|= \log |\mathcal{F}| = \log 1 =0 \quad
    \text{for} \quad    |t|\leq A.
\end{equation*}
Also, by the estimate \eqref{lemma:U_confMap:eq_Der} of Lemma
\ref{lemma:U_confMap}, we have
\begin{equation*}
    |f(\pm A, \theta)| \leq c\delta \quad
    \text{for} \quad  |\theta|\leq \pi/2.
\end{equation*}
Applying Schwarz reflection across $\theta =-\pi/2$ and $\theta =
\pi/2$, we can extend $f$ to a harmonic function on
$\hat{S}:=\{|t|\leq A,|\theta|\leq 3\pi/2 \}$. By the maximum
principle $|f|\leq c\delta$ in $\hat{S}$. Denote
$\tilde{A}:=A-c_0/2$. Interior estimates for $f$ then yield
\begin{equation}\label{lemma:removableSing:eq_derfSide2}
    |\nabla f(\pm\tilde{A},\th)|\leq \tilde{c}
    \|f\|_{L^{\infty}(\hat{S})} \leq \tilde{C} \delta \quad \text{for}\quad |\th|\leq
    \pi/2,
\end{equation}
which we can integrate to get
\begin{equation*}
    |f(\pm\tilde{A},\th)|\leq \tilde{C}\d \cos\theta \quad \text{for}\quad |\th|\leq
    \pi/2.
\end{equation*}
Using multiples of $\cosh t\cos \th$ as upper and lower barriers for
$f$, we have the bound
\begin{equation*}
    -(c\delta/\cosh \tilde{A}) \cosh t \cos\th\leq f \leq (c\delta/\cosh \tilde{A}) \cosh t \cos\th \quad \text{in} \quad
    \tilde{S},
\end{equation*}
so that, by the Hopf Lemma, we can conclude
\begin{equation}
    |\nabla f(t, \pm \pi/2)| = |\del_{\th}f(t,\pm\pi/2)| \leq (c\delta/\cosh \tilde{A}) \cosh
    t \quad \text{for} \quad |t|\leq \tilde{A}.
\end{equation}
This in turn implies the desired
\begin{equation*}
    \omega_{g, L} \leq \int_{-\tilde{A}}^{\tilde{A}} |\nabla g(t,\pi/2)|~dt \leq 2c\d,
    \qquad \omega_{g, R} \leq \int_{-\tilde{A}}^{\tilde{A}} |\nabla g(t,-\pi/2)|~dt
    \leq 2c\d.
\end{equation*}
\end{proof}

\section{Curvature bounds.}\label{Sec:CurvBounds}

Let us describe the family of hairpin solutions explicitly. Define
\begin{equation*}
    \f(\z) = i(\z + \sinh \z).
\end{equation*}
Then $\f$ maps the strip $S=\{|\text{Im}\z|<\pi/2\}$ conformally
onto the domain
\begin{equation*}
    \Omega_1:=\{z\in\C: |\text{Re}z|< \pi/2 + \cosh (\text{Im}z) \}
\end{equation*}
which supports the positive phase of the hairpin solution of
\eqref{FBP_0}
\begin{equation}\label{eq:haipin_def}
    H(z) = \left\{\begin{array}{cl}\text{Re}\big(V(z)\big) & \text{when } z\in \Omega_1 \\ 0 & \text{otherwise.} \end{array}\right.
\end{equation}
where $V(z):= \cosh(\f^{-1}(z))$. The dilates
\begin{equation*}
    H_a(z) = a H(z/a)
\end{equation*}
complete the family of hairpin solution (up to rigid motions).
Denote by $\Omega_a = a\Omega_1$.

We note a couple of geometric features of these solutions.
\begin{itemize}
\item The zero phase $\{H_a = 0\}$ consists of two connected components separated by
distance $s=a(2+\pi)$.
\item $H_a\big|_{\Omega_a}$ has a unique critical point (a non-degenerate saddle) and it is situated at
the origin. Indeed, to verify this, we have to simply check this is
the obviously the case for
\begin{equation*}
    H(\f(\z)) = \text{Re} \cosh(\z) = \cosh(y_1)\cos(y_2) \quad \text{when}\quad \z=y_1+iy_2\in S.
\end{equation*}
The value of $H_a$ at the saddle is precisely $H_a(0)=a$.
\item The segments $\tau_{a,L} := [-s/2, 0]\subseteq \C$ and $\tau_{a,R} := [0, s/2]\subseteq
\C$ are the steepest descent paths from $0$ to each of the two
components of $\{H_a=0\}$, respectively. We shall denote
$\tau_a:=\tau_{a, L}\cup\tau_{a,R}$.
\end{itemize}
The following information about the gradient $\nabla H$ will also be
useful.
\begin{lemma}\label{lemma:hairpinGrad} For some numerical constant $c_0>0$, the gradient $\nabla H$ satisfies
\begin{equation*}
|\nabla H(x)|\geq \min(1/2,c_0|x|) \quad \text{when} \quad x\in
\Omega_1.
\end{equation*}
\end{lemma}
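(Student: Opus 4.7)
The plan is to exploit the conformal representation of $H$. Since $V=\cosh\circ\f^{-1}$ is holomorphic on $\Omega_1$ with $H=\text{Re}\,V$, the Cauchy--Riemann equations give $|\nabla H(z)|=|V'(z)|$. Using $\f'(\z)=i(1+\cosh\z)$ together with the half-angle identity $\sinh\z/(1+\cosh\z)=\tanh(\z/2)$, a short calculation yields
\[
V'(z)=\frac{\sinh\z}{\f'(\z)}=-i\tanh(\z/2),\qquad \z:=\f^{-1}(z),
\]
so that $|\nabla H(z)|=|\tanh(\z/2)|$. The lemma then reduces to showing this quantity is at least $\min(1/2,\,c_0|\f(\z)|)$ for $\z\in S$.

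I would split into two regimes. In the outer regime, the relation $|z|=|\f(\z)|\to\infty$ in $\Omega_1$ forces $|\text{Re}(\z)|\to\infty$, since $\f$ is entire and therefore maps every horizontally bounded substrip of $\overline{S}$ into a bounded subset of $\overline{\Omega_1}$. Because $\tanh(\z/2)\to\pm 1$ as $\text{Re}(\z)\to\pm\infty$ uniformly in $|\text{Im}(\z)|\leq\pi/2$, there is a universal constant $R_0<\infty$ with $|\nabla H(z)|\geq 1/2$ whenever $z\in\Omega_1$ and $|z|\geq R_0$.

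For the inner regime I would establish the stronger linear bound $|\nabla H(z)|\geq c_2|z|$ on $\Omega_1\cap\overline{B_{R_0}}$. Near the origin the expansions $\f(\z)=2i\z+O(\z^3)$ and $\tanh(\z/2)=\z/2+O(\z^3)$ give $|\nabla H(z)|/|z|\to 1/4$ as $z\to 0$. Moreover $\tanh(\cdot/2)$ is holomorphic on $\overline{S}$ (its nearest poles lie at $\pm i\pi$, outside $\overline{S}$), so $|\nabla H|=|\tanh(\f^{-1}(\cdot)/2)|$ extends continuously to $\overline{\Omega_1}$; the free-boundary identity $|\nabla H|=1$ on $\partial\Omega_1$ can be verified directly from the formula, since on $|\text{Im}(\z)|=\pi/2$ one has $|\tanh(\z/2)|^2=(\sinh^2(y_1/2)+1/2)/(\cosh^2(y_1/2)-1/2)=1$. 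Since $\tanh(\z/2)$ vanishes on $\overline{S}$ only at $\z=0$, the continuous extension of $|\nabla H(z)|/|z|$ to the compact set $\overline{\Omega_1}\cap\overline{B_{R_0}}$ is strictly positive and therefore attains some positive minimum $c_2>0$.

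Setting $c_0:=\min(c_2,\,1/(2R_0))$ then combines the two regimes: for $|z|\leq R_0$ one has $|\nabla H(z)|\geq c_2|z|\geq c_0|z|\geq\min(1/2,c_0|z|)$, while for $|z|\geq R_0$ one has $|\nabla H(z)|\geq 1/2\geq\min(1/2,c_0|z|)$. The main technical subtlety is the compactness argument near the unbounded free boundary; I sidestep this by first cutting off to $\overline{B_{R_0}}$ via the outer regime analysis, after which $\overline{\Omega_1}\cap\overline{B_{R_0}}$ is genuinely compact and continuous-positive functions on it attain positive minima.
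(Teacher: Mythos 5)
Your proof is correct and rests on the same key identity the paper uses, namely that under the conformal chart $\f$ one has $|\nabla H(\f(\z))| = |\tanh(\z/2)|$, followed by a far/near regime split. The difference is purely in execution: the paper runs the split on $|\text{Re}(\z)|$ and produces explicit constants by direct estimation ($|\tanh(y_1/2)|\geq 1/2$ for $|y_1|\geq 1.2$; $c_1|\z|\leq|\sinh\z|\leq c_2|\z|$ and $|\f(\z)|\leq(1+c_2)|\z|$ in the inner regime), while you run the split on $|z|$ and replace the inner-regime computation with a soft compactness argument on $\overline{\Omega_1}\cap\overline{B_{R_0}}$, feeding in the limiting value $|\nabla H(z)|/|z|\to 1/4$ at the origin. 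Both are valid; the paper's version is more elementary and yields quantitative constants, whereas yours is shorter but buries one routine step — that $\f^{-1}$ extends continuously to $\overline{\Omega_1}$ (which holds because $\f$ is entire with $\f'=i(1+\cosh\z)\neq 0$ on $\overline{S}$, hence a conformal map on a neighborhood of $\overline{S}$) — inside the claim that $|\nabla H|/|z|$ extends continuously to the closed region. That step deserves an explicit sentence, but it is not a gap in substance.
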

\begin{proof}
We have
\begin{equation*}
|\nabla H |(\f(\z)) = \left|\frac{i\sinh\z}{\f'(\z)}\right| =
\sqrt{\frac{\sinh^2 y_1 + \sin^2 y_2}{(1+\cosh y_1\cos y_2)^2 +
\sinh^2y_1 \sin^2 y_2 }} = \frac{\sqrt{\sinh^2 y_1 + \sin^2
y_2}}{\cosh y_1 + \cos y_2} \quad \z=y_1+iy_2 \in S.
\end{equation*}
Thus,
\begin{equation*}
    |\nabla H |(\f(\z)) \geq \frac{|\sinh y_1|}{1 + \cosh y_1} =
    |\tanh (y_1/2)| \geq 1/2 \quad \text{when} \quad |y_1|\geq 1.2
\end{equation*}
We'll be done once we show that
\begin{equation*}
     |\nabla H |(\f(\z)) \geq c_0 |\f(\z)| \quad \text{when }|\text{Re}\z| =
     |y_1| < 1.2.
\end{equation*}
Since for some numerical constants $0<c_1<c_2$
\begin{equation*}
    c_1 |\z| \leq |\sinh \z| \leq c_2 |\z| \quad \text{when } |\text{Re}\z| =
     |y_1| < 1.2
\end{equation*}
we have for $|\text{Re}\z| < 1.2$,
\begin{equation*}
    |\nabla H |(\f(\z)) \geq \frac{|\sinh \z|}{\cosh y_1 + \cos y_2}
    \geq \frac{c_1|\z|}{\cosh(1.2)+ 1} \geq \tilde{c}_1 |\z|.
\end{equation*}
Noting that $|\f(\z)|\leq |\z|+|\sinh z|\leq (1+c_2)|\z|$ when
$|\text{Re}\z| < 1.2$, we complete the proof of the lemma.
\end{proof}

\begin{remark}\label{remark:mapprops}
Finally, we would like to make the following remark regarding the
mapping properties of $V_a(z) := aV(z/a)$ on $\Omega_a$. Claim that
$V_a$ maps both $\Omega_a^+:=\Omega_a\cap \{x_2>0\}$ and
$\Omega_a^-:=\Omega_a\cap \{x_2<0\}$ conformally onto
\begin{equation*}
    \tilde{\H}_a:=\{\xi: \text{Re}(\xi)>0\}\setminus(0,a].
\end{equation*}
Indeed, let $S_{\pm} = \{\zeta \in \C: \pm\text{Re }\z>0, |\text{Im
}\z|<\pi/2\}$. Then $(a\f)$ is a conformal map from $S_{\pm}$ onto
$\Omega_a^{\pm}$ and
\begin{equation*}
    V_a(a\f(\z)) = a\cosh(\z) =  a\cosh y_1\cos y_2 + i a \sinh
    y_1\sin y_2 \quad \text{when}\quad \z=y_1+iy_2 \in S_{\pm}.
\end{equation*}
Write
\begin{equation*}
    V_a(a\phi(\z)) = r(\z)e^{i\th(z)}
\end{equation*}
where
\begin{equation*}
    r(\z)^2 /a^2 = |V_{a}\phi(\z))|^2 = \sinh^2y_1+\cos^2y_2 \qquad
    \tan\theta(\z) = \tanh y_1 \tan y_2.
\end{equation*}
If $\theta_0$ is any angle in $(-\pi/2,0)\cap(0,\pi/2)$ and
$c:=\tan\theta_0$, then $\tan\theta(\z) = c$ whenever $\tan y_2 = c
\coth y_1$, so that for these values of $\zeta$,
\begin{equation*}
    r(\z)^2 /a^2 = \sinh^2 y_1 + 1/(\tan^2 y_2+ 1) =  \sinh^2 y_1 +
    1/(1 + c^2\coth^2(y_1)).
\end{equation*}
We see that $r(\z)\to 0$ as $y_1\to 0$ and $r(\z)\to \infty$ as
$y_1\to \pm \infty$, so $V_a(a\f)|_{S^{\pm}}$ is onto $\H_{\infty}$.
When $\theta_0 = 0$, i.e. $\text{Im}(V_a(a\f)) = 0$, so it must be
that $y_2 = 0$ and thus, $r(\z) = a\cosh y_1$ which ranges in
$(a,\infty)$. Hence, $V_a(a\f)|_{S^{\pm}}$ is surjective onto
$\H_a$. To show that say $V_a(a\f)|_{S^{+}}$ is injective, assume
that for some $y_1 + iy_2, \tilde{y}_1+i\tilde{y_2}\in S^+$ with
$y_1\leq \tilde{y}_1$,
\begin{equation*}
    \cosh y_1\cos y_2 = \cosh \tilde{y}_1\cos \tilde{y}_2 \quad
    \text{and} \quad \sinh y_1\sin y_2 = \sinh \tilde{y}_1\sin
    \tilde{y}_2.
\end{equation*}
Divide the second equation by the first to get
\begin{equation*}
    \tanh y_1\tan y_2 = \tanh \tilde{y}_1\tan \tilde{y}_2.
\end{equation*}
We see that if $y_1=\tilde{y_1}$ we must have $y_2=\tilde{y}_2$ too.
Assume $y_1 < \tilde{y}_1$; then either $y_2 = \tilde{y}_2 = 0$,
which then implies $\cosh y_1 = \cosh \tilde{y_1}$ contradicting the
assumption $0<y_1<\tilde{y}_1$, or $|\tan y_2|>|\tan \tilde{y}_2|$
which implies $\cos y_2 < \cos \tilde{y}_2$. But in the latter case,
\begin{equation*}
    \cosh y_1\cos y_2 <\cosh \tilde{y}_1\cos \tilde{y}_2,
\end{equation*}
so we get a contradiction again. Similarly, we show the injectivity
of $V_a(a\f)|_{S^{-}}$.

\end{remark}

Having amassed enough information about the model hairpin solutions
let us explore how well they approximate classical solutions of
\eqref{FBP_0} whose zero phase has two connected components that are
sufficiently close to each other.

\begin{prop}\label{prop:hairpin_loc} Let $u$ be a classical solution of \eqref{FBP_0} in $B_1$ that satisfies \eqref{topo_assmpt}. Assume that $\{u=0\}$
consists of two connected components $Z_L$ and $Z_R$ and that $0$ is
the midpoint of a shortest segment between $Z_L$ and $Z_R$. For any
given $\d_1>0$ and every $M>0$, there exists $s_1>0$ such that if
\begin{equation*}
    s:=d(Z_L, Z_R) \leq s_1,
\end{equation*}
then after a rotation
\begin{equation}\label{prop:hairpin_loc:eq_estimate}
    |u(ax)/a - H(x)| \leq \d_1 \quad \text{for all} \quad |x|\leq M,
    \end{equation}
where $a=s/(2+\pi)$.
\end{prop}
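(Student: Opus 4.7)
The plan is to argue by contradiction via the classification Theorem \ref{theorem_Main}. Assuming the conclusion fails, I obtain $\delta_1, M > 0$ and classical solutions $u_k$ in $B_1$ with $s_k := d(Z_L^k, Z_R^k) \to 0$ such that, for every rotation $\rho$, the rescaled function $u_k(a_k \rho \cdot)/a_k$ differs from $H$ by more than $\delta_1$ somewhere in $B_M$, where $a_k := s_k/(2+\pi)$. After pre-rotating each $u_k$, I may assume the shortest segment joining $Z_L^k$ and $Z_R^k$ lies on the $x_1$-axis with midpoint $0$, and set $\tilde u_k(x) := u_k(a_k x)/a_k$. Then $\tilde u_k$ is a classical solution on the expanding ball $B_{1/a_k}$ satisfying \eqref{topo_assmpt}; its zero set has two components $\tilde Z_L^k, \tilde Z_R^k$ at minimum distance exactly $\pi+2$, realized by the segment joining $P^L := -(1+\pi/2)e_1$ and $P^R := (1+\pi/2)e_1$, both of which lie on $F(\tilde u_k)$.

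By the uniform Lipschitz estimate (Proposition \ref{prop_lipbd}) and Arzela--Ascoli, a subsequence of $\tilde u_k$ converges locally uniformly to a limit $\tilde u_\infty$. Translating so that $P^L$ sits at the origin, Theorem \ref{theorem_Main} identifies $\tilde u_\infty$, up to a rigid motion, with one of the models $P$, $W_b$, or $H_a$. Proposition \ref{prop_limitofsolns} ensures local Hausdorff convergence of positive phases and free boundaries, so in particular $\tilde u_\infty(P^L) = \tilde u_\infty(P^R) = 0$.

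The half-plane and two-plane possibilities I would rule out by exploiting connectedness of their zero sets. For $R > 2(\pi+2)$ and $k$ large, the uniform closeness of $\tilde u_k$ to $P$ or $W_b$ on $B_R$ combined with Alt--Caffarelli flat-implies-Lipschitz regularity makes $F(\tilde u_k) \cap B_{R/2}$ a union of one or two $C^{1,\alpha}$ Lipschitz graphs, so that $\{\tilde u_k = 0\} \cap B_{R/2}$ is a single connected region lying between (or below) these graphs. This region is contained in a single global component of $\{\tilde u_k = 0\}$, yet contains both $P^L \in \tilde Z_L^k$ and $P^R \in \tilde Z_R^k$, contradicting their disjointness.

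In the remaining hairpin case $\tilde u_\infty = H_a \circ T^{-1}$, the zero set of $H_a$ has two components at minimum distance $a(\pi+2)$. I first observe that $P^L, P^R$ must lie in distinct components of $\{\tilde u_\infty = 0\}$: otherwise local Hausdorff convergence of $\tilde Z_L^k$ (the global component containing $P^L$) to a single limiting component would place $P^R$ within $o(1)$ of $\tilde Z_L^k$, contradicting $d(\tilde Z_L^k, \tilde Z_R^k) = \pi+2$. Passing Hausdorff limits in both directions between the two pairs of components then forces $a(\pi+2) = \pi+2$, so $a = 1$. The rigid motion $T$ must fix both the midpoint $0$ of the shortest segment and preserve the $x_1$-axis, so $T$ belongs to the symmetry group of $H$ (which contains $x \mapsto -x$ and $x_2 \mapsto -x_2$, as one verifies from the conformal parametrization of Section \ref{Sec:CurvBounds}); hence $\tilde u_\infty = H$, and $\tilde u_k \to H$ uniformly on $B_M$, contradicting the standing hypothesis for large $k$. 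The delicate step throughout is transporting the exact separation $\pi+2$ across the limit; once this is done, the rest is a routine application of the classification together with Alt--Caffarelli regularity.
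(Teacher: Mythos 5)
Your overall strategy coincides with the paper's: argue by contradiction, rescale so that the separation between the two zero components is exactly $\pi+2$ with midpoint $0$, extract a locally uniform limit, invoke the classification Theorem~\ref{theorem_Main} (after translating a free boundary point to the origin), and transport the separation/midpoint data across the limit to force $\tilde u_\infty=H$. Your identification of $a=1$ and the reduction of the rigid motion to the symmetry group of $H$ are spelled out in more detail than the paper, which asserts the hairpin conclusion rather tersely, and those steps are sound.

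The gap is in ruling out the degenerate wedge $W_0=|x_2|$, i.e.\ $W_b$ with $b=0$, which is permitted by Theorem~\ref{theorem_Main}. You appeal to Alt--Caffarelli flat-implies-Lipschitz to conclude that $F(\tilde u_k)\cap B_{R/2}$ is a union of one or two graphs whose in-between region is a \emph{connected} piece of $\{\tilde u_k=0\}$. For a wedge limit the positive phase lies on \emph{both} sides of the line, so the one-sided flatness hypothesis of Alt--Caffarelli fails; and Lemma~\ref{lemma_fbgraphs} does not apply either, since $N$ and $S$ are joined inside $\{\tilde u_k>0\}$ by the corridor $(P^L,P^R)$, violating that lemma's two-component hypothesis. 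In fact $\{\tilde u_k=0\}\cap B_{R/2}$ really is disconnected (two tongues ending at $P^L$ and $P^R$), so the connectedness contradiction you seek never materializes. The clean fix --- which disposes of $P$, $W_b$ with $b>0$, \emph{and} $W_0$ simultaneously --- is the triangle-inequality observation that $B_{(\pi+2)/2}(0)\subset\{\tilde u_k>0\}$ for every $k$: for $z\in\tilde Z_L^k$ one has $\pi+2\le d(z,P^R)\le d(z,0)+(\pi+2)/2$, so $d(z,0)\ge(\pi+2)/2$, and symmetrically for $\tilde Z_R^k$. Hence $\tilde u_\infty$ is harmonic on $B_{(\pi+2)/2}(0)$, while any rigid motion of $P$, $W_b$ or $W_0$ whose free boundary contains both $P^L$ and $P^R$ has a free-boundary line through the midpoint $0$, across which such a function fails to be $C^1$ and so cannot be harmonic. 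Only the hairpin (which satisfies $B_{(\pi+2)/2}(0)\subset\Omega_1$) survives. The paper is equally terse at this point, so your reconstruction of the route is essentially correct once this single step is repaired.
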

\begin{proof}
Fix $\d_1$ and $M$ and assume no such $s_1$ exists that makes the
proposition valid. Then for some sequence of $s_k\to 0$, there is a
sequence of counterexamples $u_k$ with the separation between the
two components of $\{u_=0\}$ being $s_k$. Set $a_k = s_k/(2+\pi)$.
We can then define the rescales
\begin{equation*}
    \tilde{u}_k (x):=u(a_k x)/a_k \quad \text{for } x\in B_{1/s_k}
\end{equation*}
so that the separation between the two components of the zero phase
of $\tilde{u}_k$ is precisely $(2+\pi)$ and $0$ is at the midpoint
of a shortest segment connecting them. A subsequence $u_{k_j}$
converges uniformly on $\overline{B_M}$ to a global solution
$\tilde{u}$ and since
\begin{equation*}
    d_H((\overline{B_M})^+(\tilde{u}_{k_j}), (\overline{B_M})^+(\tilde{u}))
    \to 0 \quad \text{as } j\to \infty,
\end{equation*}
it has to be the case that $\tilde{u}$ is a hairpin solution, with
separation between the two components of $\{\tilde{u}=0\}$ precisely
$(2+\pi)$ and $0$ at the midpoint of the shortest segment. Thus, in
a rotated coordinate system
\begin{equation*}
    \tilde{u} = H,
\end{equation*}
so we have for all $j$ large enough
\begin{equation*}
    |u_{k_j}(a_{k_j} x)/a_{k_j} - H(x)|\leq \delta_1 \quad \text{in
    } \overline{B_{M}}.
\end{equation*}
This contradicts the assumption that the $u_{k_j}$ are
counterexamples.
\end{proof}

The next corollary is a direct consequence of interior estimates
applied to the proposition above.
\begin{coro}\label{coro:hairpinC^2} Let $u$ be as in Proposition \ref{prop:hairpin_loc} and let $H$ be the hairpin solution defined above. For every
$M>0$, any compact domains $K, K'$ such that $K\Subset K' \Subset
(\overline{B_M})^+(H)$ and any $\d_1>0$ such that
$\mathcal{N}_{2\d_1}(K')\subseteq (\overline{B_M})^+(H)$, there
exists $s_1>0$, such that if the separation between the two
components $Z_L$ and $Z_R$ of $\{u=0\}$ satisfies
\begin{equation*}
    s: = d(Z_L, Z_R) \leq s_1,
\end{equation*}
then for $a=s/(2+\pi)$, in some rotated coordinate system, the
rescale $u_a:=u(ax)/a$ satisfies
\begin{equation}\label{coro:hairpinC^2:eq_estimate}
    \|u_a - H\|_{C^2(K)} \leq C_{K,K'}\d_1
\end{equation}
for some constant $C_{K,K'}$, dependent on $K$ and $K'$.
Furthermore, if $\d_1 = \d_1(H)$ is small enough, $u$ has a unique
critical point $x_{0}$ in $B_{a M/2}^+(u)$ which is a non-degenerate
saddle point with $|x_{0}| = O(\d_1 s)$, and the steepest descent
paths $\beta_L$, $\beta_R$ for $u$ from $x_0$ to $Z_L$ and $Z_R$,
respectively, are contained in an $O(\d_1s)$-neighborhood of
$\tau_a$ (defined above).
\end{coro}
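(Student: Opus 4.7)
The plan is to upgrade the uniform convergence $u_a \to H$ on $B_M$ supplied by Proposition~\ref{prop:hairpin_loc} to $C^2$-convergence on interior compacts via standard interior estimates for harmonic functions, and then to read off the location of the saddle $x_0$ and the geometry of the descent paths from the explicit non-degenerate-saddle structure of $H$ at the origin.

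For the $C^2$-estimate, set $c_0 := \min_{K'} H > 0$ (positive since $K' \Subset B_M^+(H)$) and apply Proposition~\ref{prop:hairpin_loc} with its $\delta_1$ replaced by $\eta := \min(\delta_1, c_0/2)$, producing $s_1 > 0$ such that $\|u_a - H\|_{L^\infty(B_M)} \le \eta$ whenever $s \le s_1$. Then $u_a \ge H - \eta \ge c_0/2$ on $K'$, so $u_a$ and $H$ are both harmonic there and so is their difference. Interior derivative estimates on balls of radius $d := d(K, \partial K')$ centered at points of $K$ give
\[
\|u_a - H\|_{C^2(K)} \le C d^{-2}\|u_a - H\|_{L^\infty(K')} \le C d^{-2}\eta \le C_{K,K'}\delta_1,
\]
which is the first assertion.

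For the second part, choose a fixed compact neighborhood $K_0 \Subset B_M^+(H)$ of the origin on which $D^2 H$ is invertible; this is possible because $0$ is the unique non-degenerate saddle of $H$. The $C^2$-estimate specialized to $K_0$ yields $C^1$-closeness of $\nabla u_a$ to $\nabla H$, so the implicit function theorem applied to $\nabla u_a$ as a perturbation of $\nabla H$ gives, for $\delta_1$ small enough, a unique zero $\tilde x_0$ of $\nabla u_a$ in $K_0$; the non-degeneracy bound $|\nabla H(y)| \ge c|y|$ near $0$, combined with $|\nabla H(\tilde x_0)| = |\nabla(H - u_a)(\tilde x_0)| \le C\delta_1$, yields $|\tilde x_0| \le C\delta_1$, and scaling back gives $|x_0| = a|\tilde x_0| = O(\delta_1 s)$. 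To exclude further critical points of $u_a$ in $B_{M/2}^+(u_a)$, I would split the complement of $K_0$ into an interior piece lying in a compact subset $K \Subset B_M^+(H)$ — on which Lemma~\ref{lemma:hairpinGrad} forces $|\nabla H| \ge c_0 \rho$ for a fixed small radius $\rho$, so the $C^2$-estimate gives $|\nabla u_a| \ge c_0\rho - C\delta_1 > 0$ when $\delta_1 \ll \rho$ — and a thin tubular neighborhood of $F(u_a)$, on which classical boundary regularity of $u_a$ together with $|\nabla u_a|_{F(u_a)} = 1$ yields $|\nabla u_a| \ge 1/2$.

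Finally, $\tau_L$ and $\tau_R$ are the integral curves of $-\nabla H/|\nabla H|$ emanating from $0$ along the two eigendirections of $D^2 H(0)$ on which $H$ decreases. On a compact neighborhood of $\tau \setminus F(H)$ lying in $B_M^+(H)$, the vector field $-\nabla u_a/|\nabla u_a|$ is $C^0$-close to $-\nabla H/|\nabla H|$ by the $C^2$-estimate; structural stability of the unstable manifold of the hyperbolic fixed point $\tilde x_0$ of $-\nabla u_a$ then yields that the two unstable curves stay within $O(\delta_1)$ of $\tau_L \cup \tau_R$ up to hitting $F(u_a)$, and rescaling by $a$ gives the $O(\delta_1 s)$-neighborhood statement. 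The hardest step, I expect, is the uniqueness of the critical point in the layer near $F(u_a)$, where the interior $C^2$-estimate does not apply directly: one has to combine the interior closeness with classical regularity of classical solutions up to the free boundary, together with the Hausdorff convergence $F(u_a) \to F(H)$ coming from Proposition~\ref{prop_limitofsolns}, to secure a uniformly thick tube on which $|\nabla u_a|$ remains close to $1$.
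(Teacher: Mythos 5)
Your proposal is correct and follows essentially the same route as the paper: uniform $\to C^2$ convergence via harmonicity of $u_a - H$ on $K'$ and interior estimates, inverse/implicit function theorem plus the lower bound $|\nabla H(y)| \ge c|y|$ from Lemma~\ref{lemma:hairpinGrad} to locate and isolate the saddle, and Alt--Caffarelli boundary regularity (exploiting Hausdorff convergence of $F(u_a)$ to $F(H)$ and flatness at a fixed small scale) to rule out critical points in a uniform tube around $F(u_a)$. Your "structural stability of the unstable manifold" phrasing for the descent-path proximity is a reasonable packaging of the same $O(\delta_1)$ gradient estimate the paper invokes directly.
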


\begin{proof}
Let $s_1$ be such that according to
\eqref{prop:hairpin_loc:eq_estimate}
\begin{equation*}
|u_a - H(x)| \leq c\d_1\quad \text{for all} \quad |x|\leq M,
\end{equation*}
where we pick $c$ such that $H(x)\geq c d(x,F(H))$ (such a $c$
exists because of the non-degeneracy of the hairpin solution). Then
for all $x \in K'$
\begin{equation*}
    u_a(x) \geq H(x) - c\delta_1 \geq c d(x, F(H)) - c\delta_1 > 2c\delta_1 - c\d_1 >
    0.
\end{equation*}
Hence $v = u_a - H$ is harmonic in $K'$ and we get
\eqref{coro:hairpinC^2:eq_estimate} by standard interior estimates.

Let us use this to show that $u$ has a unique critical point in
$B_{a M/2}^+(u)$ if $\d_1$ is small enough. Fix $\d_0>0$ small and
find a scale $r_0=r_0(\d_0, H)$ such that for every $p\in F(H)\cap
\overline{B_{M/2}}$
\[d_H( F(H)\cap B_{r_0}(p), L(p)\cap B_{r_0}(p)) <\delta_0 r_0,\]
where $L(p)$ denote the straight line tangent to $F(H)$ at $p$. Now,
for all small enough $\d_1<\d_0 r_0$
\[d_H( F(u_a)\cap B_{r_0}(p), L(p)\cap B_{r_0}(p)) <c'\delta_0 r_0.\]
Hence, $F(u_a)$ is $c'\d_0$-flat in $B_{r_0}(p)$ and it must be that
for a small enough $\d_0$,
\begin{equation}\label{coro:hairpinC^2:eq_bryestimate}
    |\nabla u_a - \nabla H(p)| \leq C\delta_0 \quad \text{in} \quad
    B_{r_0/2}(p)^+(u_a) \quad \text{for any } p\in F(H)\cap
\overline{B_{M/2}}
\end{equation}
by the classical Alt-Caffarelli theory \cite{AC}. Thus,
\begin{equation*}
    |\nabla u_a| \geq 1-C\delta_0 \quad \text{in} \quad
    B_{r_0/2}(p)^+(u_a)
\end{equation*}
and so it suffices to show that $u_a$ has a unique critical point in
\begin{equation*}
    K:=\{x\in B_{M/2}^+(H): d(x, \del\Omega_1\}\geq r_0/4\}.
\end{equation*}
Set $K':= \{x\in B_{2M/3}^+(H): d(x, \del\Omega_1\}\geq r_0/8\}$. We
would like to show that
\begin{equation*}
    0 = \nabla u_a = \nabla H + \nabla v
\end{equation*}
has a unique solution $x_{0,a}$ in $K$. Since the Jacobian of
$\nabla H = D^2 H$ is invertible at $0$, the Inverse Function
Theorem implies that for some $c_1>0$, $\nabla H$ maps $B_{c_1}$
diffeomorphically onto a neighborhood $O$ of $0$. As $|\nabla
v|\leq C_{K,K'}\d_1$ in $K$, we can choose $\d_1$ small enough such
that $\nabla v \in O$, whence
\begin{equation*}
    \nabla H(x) = -\nabla v(x)
\end{equation*}
has a unique solution $x=x_{0,a}\in B_{c_1}$. Applying Lemma
\ref{lemma:hairpinGrad}, we obtain
\begin{equation*}
    |x_{0,a}|\leq c_0^{-1} |\nabla H(x_{0,a})| = c_0^{-1} |\nabla v(x_{0,a})| \leq
    c_0^{-1}C_{K,K'}\d_1.
\end{equation*}
Furthermore, the equation cannot have another solution in $K$ if
$\d_1$ is small enough, because Lemma \ref{lemma:hairpinGrad}
implies that
\begin{equation*}
    |\nabla H(x)| \geq \min(1/2, cc_1) > C_{K,K'}\d_1 \geq |\nabla
    v(x)| \quad \text{for all}\quad x\in K\setminus B_{c_1}.
\end{equation*}
Thus, whenever $\d_1$ is small enough, $u_a$ has a unique critical
point $x_{0,a}$ in $K$ and since
\begin{align*}
    |D^2 u_a(x_{0,a}) - D^2 H(0)|& \leq |D^2 u_a(x_{0,a}) - D^2
    H(x_{0,a})| + |D^2 H(x_{0,a}) - D^2 H(0)| \\ & = O(\d_1) +
    O(|x_{0,a}|) = O(\d_1),
\end{align*}
$x_{0,a}$ is a non-degenerate saddle point.

The $O(\d_1)$ proximity between the steepest descent paths for $u_a$
and $H$ from $x_{0,a}$ and $0$, respectively, to their zero sets,
follows from the $O(\d_1)$ bound for $\nabla(u_a - H)$ in $K$ and
\eqref{coro:hairpinC^2:eq_bryestimate}.
\end{proof}

Let $\d_0>0$ be small constant from Remark
\ref{remark:deltaFlatness}. Let us present the set-up that we shall
be working in for the rest of the section. The object of interest is
\begin{enumerate}
\item A classical solution $u$ of \eqref{FBP_0} in $B_1$ that satisfies \eqref{topo_assmpt}
such that $\{u=0\}$ consists of two connected components $Z_L$ and
$Z_R$ with $0$ being at the midpoint of the shortest segment between
the two.
\item The free boundary $F(u)$ consists of two arcs $F_L:=F(u)\cap\del Z_L$ and $F_R=F(u)\cap\del Z_R$.
\item We assume that $\d_1 \leq \d < \d_0$, $M$, $s_1$, $s$ are as in Proposition
\ref{prop:hairpin_loc} and Corollary \ref{coro:hairpinC^2}, i.e. the
fact that $d(Z_L, Z_R) = s < s_1$ implies
\begin{equation*}
    |u - H_a|\leq \d_1 a \quad \text{in}\quad B_{aM} \quad \text{where}\quad a=(2+\pi)s
\end{equation*}
and $u$ has a unique critical point $x_0$ in $B_{aM/2}^+(u)$. We
denote by $\beta_L$ be the steepest descent path for $u$ that
connects $x_0$ to some $p\in F_L$ and by $\beta_R$ be the steepest
descent path from $x_0$ to some $q\in F_R$. Then $\beta:= \b_L\cup
\b_R$ is a smooth arc connecting $F_L$ to $F_R$ and $\beta\subseteq
\mathcal{N}_{a\d_1}(\tau_a)$. Without loss of generality, we may
assume that our coordinate system is chosen in such a way that
\begin{equation*}
    \nabla u(p) = e_1.
\end{equation*}

\item Furthermore, for some rotation $\rho$
\begin{equation*}
    |u(\rho x) - |x_2||\leq \d \quad \text{in all of} \quad B_1.
\end{equation*}
and $F(u)\cap (B_{2/3}\setminus B_{4Ms})$ consists of four graphs
over $\rho(\{x_2=0\})$ of Lipschitz norm at most $C\d$.
\end{enumerate}

Let $\del B_{1/2}$ intersect $F_L$ at the two points $p_N$ and $p_S$
(subscripts $N$ and $S$ are determined by $x_2(\rho^{-1}(p_N))>
x_2(\rho^{-1}(p_S))$) and similarly $\del B_{1/2}$ intersects $F_R$
at the two points $q_N$ and $q_S$. Define $\Omega_N\subseteq
B_{1/2}^+(u)$ to be the domain bounded by the subarc of $F_L$ from
$p_N$ to $p$, the arc $\beta$, the subarc of $F_R$ from $q$ to $q_N$ and by
the circular arc of $\del B_{1/2}$ with ends $p_N$ and $q_N$, which
contains $\rho(0,1/2)$. Analogously, define $\Omega_S$ to be the
domain bounded by subarc of $F_L$ from $p_S$ to $p$, the arc $\beta$, the
subarc of $F_R$ from $q$ to $q_S$ and by the circular arc of $\del
B_{1/2}$ with ends $p_S$ and $q_S$, which contains $\rho(0,-1/2)$.
Then
\begin{equation*}
    B_{1/2}^+(u) = \Omega_N\sqcup\beta\sqcup \Omega_S.
\end{equation*}
Let $v$ be the harmonic conjugate of $u$ in the simply-connected
$B_1^+(u)$ where we choose the normalization
\begin{equation*}
    v(x_0) = 0.
\end{equation*}
Note that this implies $v = 0$ on all of $\beta$, as $\nabla v$ is a
rotation by $\pi/2$ of $\nabla u$ which itself is tangent to
$\beta$. Furthermore $v$ is increasing (decreasing) at unit speed
along $F_L\cap\Omega_N$ ($F_L\cap\Omega_S$) and decreasing
(increasing) at unit speed along $F_R\cap\Omega_N$
($F_R\cap\Omega_S$) as we move towards $\del B_{1/2}$.

Define the holomorphic map $U:B_1^+(u)\to \C$ via
\begin{equation*}
    U = u + i v.
\end{equation*}

The next lemma confirms that the mapping properties of $U$ are
similar to those of $V_a$ (defined in Remark \ref{remark:mapprops}),
which allows us to construct an injective holomorphic map from
$B_{1/2}^+(u)$ to $\Omega_{a_0}$ for some $a_0>0$.

\begin{lemma}\label{lemma:confmapConstruction} Provided $\d$ and $\d_1$ small
enough, $U$ is injective on each of $\Omega_N$ and $\Omega_S$ and
maps each of $\beta_L$ and $\beta_R$ injectively onto $i[0,a_0]$,
where $a_0 := u(x_0) = a(1 + O(\d_1))$. Then the map $\tilde{\psi}:
B_{1/2}^+(u)\setminus \beta \to \Omega_{a_0}$ given by
\begin{equation*}
    \tilde{\psi}(z):=\begin{array}{cl}\big(V_{a_0}|_{\Omega_{a_0}^+}\big)^{-1}\circ U(z) & \text{when} \quad z\in \Omega_N \\
\big(V_{a_0}|_{\Omega_{a_0}^-}\big)^{-1}\circ U(z) & \text{when}
\quad z\in \Omega_S
\end{array}
\end{equation*}
is injective and in fact extends continuously to $\beta$. The
extension $\psi: B_{1/2}^+(u) \to \Omega_{a_0}$ defines, therefore,
an injective holomorphic map whose image contains
\begin{equation*}
     \psi(B_{1/2}^+(u)) \supseteq \Omega_{a_0}\cap B_{1/4}
\end{equation*}

\end{lemma}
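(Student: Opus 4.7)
The plan is to combine a near-field hairpin approximation (provided by Corollary \ref{coro:hairpinC^2}) with a far-field dyadic-annulus analysis (in the spirit of Lemma \ref{lemma:U_confMap}) to show that $U=u+iv$ is injective on each of $\Omega_N$ and $\Omega_S$ with image in $\tilde{\H}_{a_0}$, and then to glue the two halves via the two inverse branches of $V_{a_0}$ along the slit $(0,a_0]$, which is precisely the image of $\beta$.

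I would first record the boundary behavior of $U$ on $\partial\Omega_N$. Since $U'(z)=u_{x_1}-iu_{x_2}=\overline{\nabla u}$ vanishes only at $x_0\in\partial\Omega_N$, $U$ is a local biholomorphism on the interior. On the free-boundary arcs $F_L\cap\partial\Omega_N$ and $F_R\cap\partial\Omega_N$, $u=0$ and $|\nabla u|=1$; integrating $\partial_T v=\pm 1$ (from Cauchy--Riemann) with the normalization $v(p)=v(q)=0$ shows that $U$ sends these arcs homeomorphically onto the imaginary-axis segments $i[0,L_L]$ and $-i[0,L_R]$, respectively (with $L_L,L_R$ the arclengths). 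On $\beta$, $v\equiv 0$ since $\nabla u$ is tangent to $\beta$, and $u$ rises from $0$ at $p$ to $a_0=u(x_0)$ and back to $0$ at $q$; hence $U$ sends each of $\beta_L,\beta_R$ homeomorphically onto $[0,a_0]\subset\mathbb{R}$ (the ``$i$'' in the statement's ``$i[0,a_0]$'' appears to be a typo). These two preimages of $[0,a_0]$ will correspond to the upper and lower lips of the slit of $\tilde{\H}_{a_0}$.

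Next, on the near field $\Omega_N\cap B_{aM/2}$, Corollary \ref{coro:hairpinC^2} gives $\|u_a-H\|_{C^2(K)}=O(\delta_1)$ on compact subsets $K\subset\Omega_1^+$, where $u_a(x)=u(ax)/a$. Since $a_0/a=1+O(\delta_1)$ and the additive constant in the harmonic conjugate is pinned down by $v(x_0)=\mathrm{Im}\,V(0)=0$, one has $a^{-1}U(ax)=V(x)+O(\delta_1)$ uniformly on $K$. Because $V|_{\Omega_1^+}:\Omega_1^+\to\tilde{\H}_1$ is a biholomorphism whose image stays uniformly away from the slit on compact subsets of $\Omega_1^+$, the composition $V_{a_0}^{-1}\circ U$ is well-defined in this region and $O(\delta_1)$-close to the identity. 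In the far field $\Omega_N\cap(B_{1/2}\setminus B_{aM})$ the four-graph structure from Theorem \ref{thm:fourgraphs} is in force, and the three-step argument of Lemma \ref{lemma:U_confMap} (estimate of $U'$ in each dyadic annulus, integration along paths, then global injectivity via Lipschitz-length control) carries over with minor modification, giving $|U'(z)-ie^{-i\theta_k}|=O(\delta)$ on each dyadic annulus $A_k$ and $|U(z)+ie^{-i\theta_k}z-c_k|=O(\delta|z|)$ with $|c_{k+1}-c_k|=O(\delta|z|)$. The near-field estimate fixes the $c_k$ globally by matching at the interface scale $|z|\sim aM$. Together these give injectivity of $U$ on $\Omega_N$, $U(\Omega_N)\subseteq\tilde{\H}_{a_0}$, and the non-vanishing of $v$ on the interior of $\Omega_N$ (so $U$ avoids the slit $(0,a_0]$).

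With $U$ injective on each of $\Omega_N,\Omega_S$ and image in $\tilde{\H}_{a_0}$, the maps $(V_{a_0}|_{\Omega_{a_0}^+})^{-1}\circ U$ on $\Omega_N$ and $(V_{a_0}|_{\Omega_{a_0}^-})^{-1}\circ U$ on $\Omega_S$ are well-defined injective holomorphic maps into $\Omega_{a_0}^\pm$, so $\tilde\psi$ is injective on $B_{1/2}^+(u)\setminus\beta$. The two branches of $V_{a_0}^{-1}$ extend continuously to the slit and agree there (both take $(0,a_0]$ to the $x_1$-axis segment $[-(\pi/2+1)a_0,(\pi/2+1)a_0]$, with $\beta_L,\beta_R$ corresponding to its two halves), so $\tilde\psi$ extends continuously across $\beta$, and the extension $\psi$ is holomorphic by Morera. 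Finally, $\psi(B_{1/2}^+(u))\supseteq\Omega_{a_0}\cap B_{1/4}$ follows from identity-closeness in the near field (covering $\Omega_{a_0}\cap B_{aM/4}$) together with the far-field expansion (showing $\psi$ is close to a rigid motion, covering the remainder out to $B_{1/4}$). The chief obstacle is the matching of the additive constants $c_k$ of the far-field expansion with the hairpin center at scale $|z|\sim aM$, so that the piecewise description assembles into a single well-defined holomorphic $\psi$; this requires careful quantitative tracking of errors across the interface and sign control of $v$ in both regimes to keep the image clear of the slit.
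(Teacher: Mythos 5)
Your boundary analysis is right on target (including the observation that the ``$i$'' in ``$i[0,a_0]$'' should not be there: on $\beta$ one has $v\equiv 0$, so $U(\beta_L)=U(\beta_R)=[0,a_0]\subseteq\real$), and the big-picture structure — two branches of $V_{a_0}^{-1}$, Morera gluing across $\beta$, image containment — matches the paper. But the core of the lemma is the injectivity of $U$ on $\Omega_N$ (and $\Omega_S$), and here your proposed route has a genuine gap that you yourself flag. You suggest porting the three-step dyadic $U'$-estimate argument of Lemma~\ref{lemma:U_confMap} to the far field, using Corollary~\ref{coro:hairpinC^2} for the near field, and then matching the two. That matching is not a bookkeeping detail: the far-field estimate controls $U'$ only up to an additive integration constant in each annulus, the near-field control from Corollary~\ref{coro:hairpinC^2} lives on compact subsets $K\Subset\Omega_1^+$ which by construction exclude a neighborhood of $\beta$ (so it does not directly control $U$ on the points of $\Omega_N$ closest to $\beta$, exactly where the slit must be avoided), and injectivity of $U$ across the interface scale $|z|\sim aM$ — when one point is in the near field and one is in the far field — is not implied by injectivity in each regime separately. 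Without carrying this through, the proof is incomplete.

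The paper sidesteps the entire issue with a topological argument that you do not use. It constructs an arc $\beta_\eps\subseteq\Omega_N\cap\mathcal N_\eps(\beta)$ (exploiting the quadratic saddle form of $U$ near $x_0$) so that $\Omega_{N,\eps}$ is a Jordan domain, and it shows that $U$ is injective on the \emph{boundary} $\partial\Omega_{N,\eps}$: the free-boundary arcs and $\beta_\eps$ go to a Jordan arc $\Gamma_\eps$ near the imaginary axis and the slit, and the circular arc $\widehat{p_Nq_N}$ goes to a nearby half-circle by the flatness estimate $|U'(z)e^{i\theta}-(-i)|\le c\delta$ on $\partial B_{1/2}$. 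Since $U$ is injective on $\partial\Omega_{N,\eps}$, the Argument Principle (the winding number integral $Q(\xi_0)=\frac{1}{2\pi i}\oint_{\partial\Omega_{N,\eps}}\frac{dz}{U(z)-\xi_0}$ equals $0$ or $1$) gives injectivity of $U$ on the interior, and letting $\eps\to 0$ gives injectivity on all of $\Omega_N$. This requires only the boundary behavior you already established, and no derivative estimates or matching of expansion constants in the interior. Recasting your argument in this form closes the gap and makes the $O(\delta)$ bookkeeping unnecessary for the injectivity step (the quantitative estimates are then only needed later, in Lemma~\ref{lemma:psiEstimates}).
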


\begin{proof}
Let us first show that $U$ is injective in $\Omega_N$. Since $U$
maps each of $\beta_L$ and $\beta_R$ injectively onto $[0,a_0]$ and
since near $x_0$
\begin{equation*}
    U(z) = a_0 + c(z-x_0)^2 + O(|z-x_0|^3)
\end{equation*}
by the smoothness of $U$, for every small enough $\eps>0$ we can
find an arc $\b_{\eps}\subseteq \Omega_N\cap
\mathcal{N}_{\eps}(\beta)$ connecting $p_{\eps}\in F_L$ to
$q_{\eps}\in F_R$, such that $U$ maps it bijectively onto an arc
$\gamma_{\eps}\subseteq \H_{a}$ with ends $U(p_{\eps})$ and
$U(q_{\eps})$, where
\begin{equation*}
    \text{Re}(U(p_{\eps})) = \text{Re} (U(q_{\eps})) = 0 \quad \text{and}
    \quad \text{Im}(U(p_{\eps})) = v(p_{\eps}) > 0 > v (q_{\eps}) = \text{Im} (U(q_{\eps}))
\end{equation*}
Let $\Omega_{N,\eps}$ be the domain bounded by the subarc of $F_L$
with ends $p_N$ and $p_{\eps}$, $\beta_{\eps}$, the subarc of $F_R$
with ends $q_{\eps}$ and $q_N$, and the corresponding circular arc
$\widehat{p_Nq_N}$ of $\del B_{1/2}$. Claim that $U$ is injective on
the closed Jordan arc $\del \Omega_{N,\eps}$. We can easily see that
$U$ maps $(F(u)\cap \del \Omega_{N,\eps})\cup \b_{\eps}$ injectively
onto
\begin{equation*}
    \Gamma_{\eps}:= \gamma_{\eps} \cup \{y_1= 0, y_2\in [-l_L, l_R]\}\setminus \{y_1 = 0, y_2\in (v(q_{\eps}),
    v(p_{\eps}))\}
\end{equation*}
where \[l_L := \hsd(\del \Omega_N \cap F_L) \geq 2/5 \quad
\text{and}\quad l_R := \hsd(\del \Omega_N \cap F_R) \geq 2/5.\] It
remains to confirm that $U$ is injective on $\widehat{p_Nq_N}$ and
that $U(\widehat{p_Nq_N})\cap \Gamma_{\e}=U(\widehat{p_Nq_N})\cap
\gamma_{\eps}=\emptyset$. Those follow easily from the fact that
\begin{equation}\label{lemma:confmapConstruction_eq_halfcirc}
    |U'(z)e^{i\theta} - (-i)| \leq c\delta \quad z\in\widehat{p_Nq_N}
\end{equation}
where $e^{i\theta}$ represents the rotation $\rho$. 

Since $U$ is injective on $\del \Omega_{N,\eps}$, $U(\del
\Omega_{N,\eps})$ is a closed Jordan arc that divides $\C$ into a
bounded domain $D_b$ and an unbounded domain $D_u$. For $\xi_0
\notin U(\del \Omega_{N,\eps})$,
\begin{equation*}
    Q(\xi_0):=\frac{1}{2\pi i}\oint_{\del \Omega_{N,\eps}} \frac{dz}{U(z) - \xi_0} = \frac{1}{2\pi i}\oint_{U(\del \Omega_{N,\eps})} \frac{d\xi}{\xi - \xi_0}
\end{equation*}
equals the winding number of the closed Jordan arc $U(\del
\Omega_{N,\eps})$ around $\xi_0$, i.e. $Q(\xi_0) = 1$ when $\xi_0\in
D_b$ and $Q(\xi_0) = 0$ when $\xi\in D_u$. On the other hand, by the
Argument Principle, $Q(\xi_0)$ equals the number of zeros (with
multiplicities) of $U(z) = \xi_0$ in $\Omega_{N,\eps}$. We can thus
conclude that $U$ is injective on $\Omega_{N,\eps}$.

Taking a sequence $\eps_k\to 0$ we construct a sequence of domains
$\Omega_{N,\eps_k}$ such that $\Omega_N = \bigcup_k
\Omega_{N,\eps_k}$ with $U$ injective on each $\Omega_{N,\eps_k}$.
Therefore, $U$ is injective on all of $\Omega_N$. Analogously, we
establish the injectivity of $U$ on $\Omega_S$.

Finally, let's show that $\psi$ extends continuously to $\beta$. Let
$z$ belong to the interior of the arc $\beta_L$, and let
$\{z_{N,k}\}\subseteq \Omega_N$, $\{z_{S,k}\}\subseteq \Omega_S$ be
two sequences such that both
\begin{equation*}
    z_{N,k} \to z \quad  z_{S,k} \to z.
\end{equation*}
Denote $\xi_{N,k} := U(z_{N,k})$ and $\xi_{S,k}:=U(z_{S,k})$. Then
we see that both
\begin{equation*}
    \xi_{N,k} \to u(z) + i0^+   \quad \text{and} \quad \xi_{S,k} \to u(z) + i 0^+
\end{equation*}
with $u(z)\in (0,a_0)$. Then if $\z_{N,k} =
V_{a_0}^{-1}|_{\Omega_{a_0}^+}(\xi_{N,k})$ and $\z_{S,k} =
V_{a_0}^{-1}|_{\Omega_{a_0}^-}(\xi_{N,k})$, we can easily verify
that
\begin{equation*}
    \z_{N,k} \to b+i0^+ \quad \text{and} \quad \z_{N,k} \to b+i0^-
\end{equation*}
with $b\in \tau_{a_0,L}$ being the unique point of $\tau_{a_0, L}$
that $V_{a_0}$ maps to $u(z)\in (0,a_0)$. Hence, $\tilde{\psi}$ can
be continuously extended on the interior of $\beta_L$ and similarly,
onto the interior of $\beta_R$. Since this extension is bounded in
the vicinity of $x_0$, it further extends to a holomorphic function
$\psi$ in all of $B_{1/2}^+(u)$ with $\psi(x_0)=0$. Since $\psi$
maps $\Omega_N$ injectively into $\Omega_{a_0}^+$ and $\Omega_S$
injectively into $\Omega_{a_0}^-$, $(\beta_L)^{\circ}$ injectively
into $(\tau_{a_0,L})^{\circ}$ and $(\beta_R)^{\circ}$ injectively
into $(\tau_{a_0,R})^{\circ}$, we conclude that $\psi$ is injective
on all of $B_{1/2}^+(u)$.

Lastly, we point out that since $U$ maps $\del\Omega_N\cap F(u)$
onto $[-l_L, l_R]$ and maps $\del \Omega_N \cap \del B_{1/2}$ into a
curve that is $O(\d_1)$-close to a half-circle of radius $1/2$,
according to \eqref{lemma:confmapConstruction_eq_halfcirc}, it has
to be that
\begin{equation*}
    U(\Omega_N) \supseteq \H_{a_0}\cap B_{1/3}.
\end{equation*}
Thus for all small $a_0$, $\psi(\Omega_N)=
(V_{a_0}|_{\Omega_{a_0}^+})^{-1} \big(U(\Omega_n)\big) \supseteq
\Omega_{a_0}^+\cap B_{1/4}$. After applying the same argument for
$\Omega_S$, we establish the full statement $\psi(B_{1/2}^+(u))
\supseteq \Omega_{a_0}\cap B_{1/4}$.

\end{proof}

We shall now use the map $\psi$ to obtain curvature bounds of $F(u)$
in $B_{1/4}$. On the road to do so, we will obtain the following
crucial estimates on $\psi'$ and $\psi''$.

\begin{lemma}\label{lemma:psiEstimates} The injective holomorphic
map $\psi:B_{1/2}^+(u)\to \Omega_{a_0}$ constructed in Lemma
\ref{lemma:confmapConstruction} satisfies:
\begin{equation*}
    |\psi''(z)|  \leq C\delta \quad\text{and}\quad
    |\psi'(z) - 1| \leq C\d (|z|+a_0) \quad \text{for} \quad z\in
    B_{1/4}^+(u).
\end{equation*}
\end{lemma}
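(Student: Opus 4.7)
I would compare the holomorphic map $U = u + iv$ with the explicit model $V_{a_0}$ at two natural scales---the inner hairpin scale $|z|\lesssim a_0$ and the outer wedge scale $a_0 \ll |z| \le 1/2$---and then bootstrap the resulting pointwise estimates into the claimed $C^1$ bound via Schwarz reflection of the holomorphic function $\eta := \log \psi'$ on the simply connected domain $B_{1/2}^+(u)$. The key structural fact is that $\psi$ sends $F(u)\cap B_{1/2}$ into $F(H_{a_0})$, and since $|\nabla u|\equiv |\nabla H_{a_0}|\equiv 1$ on the respective free boundaries, the Cauchy--Riemann equations give $|\psi'(z)| = |U'(z)|/|V_{a_0}'(\psi(z))| \equiv 1$ on $F(u)\cap B_{1/2}$; equivalently, $\operatorname{Re} \eta \equiv 0$ there.

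On the outer scale I would adapt the dyadic-annulus computation of Lemma \ref{lemma:U_confMap}: on each dyadic annulus $B_{2r}\setminus B_r$ with $r \ge A a_0$, the four strands of $F(u)$ are $C\delta$-Lipschitz graphs (Theorem \ref{thm:fourgraphs}), so in coordinates aligned with the hairpin axis one has $U'(z) = \mp i + O(\delta)$ on $\Omega_N$, $\Omega_S$ respectively; and a direct asymptotic expansion of the formula in Remark \ref{remark:mapprops} gives $V_{a_0}'(w) = \mp i + O(a_0/|w|)$ on $\Omega_{a_0}^{\pm} \cap \{|w|\gtrsim a_0\}$. Dividing, $|\psi'(z) - 1| \le C\delta + C a_0/|z|$, so choosing $A$ large produces $|\eta| \le C\delta$ on the outer scale. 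On the inner scale, Corollary \ref{coro:hairpinC^2} translates to $|U - V_{a_0}| = O(\delta_1 a_0)$ and $|U' - V_{a_0}'| = O(\delta_1)$ on compacta of the positive phase, giving $|\eta| \le C\delta_1$ there; the saddle-point factorizations $U(z) - a_0 = (z-x_0)^2 g(z)$, $V_{a_0}(w) - a_0 = w^2 h(w)$ combined with the $C^2$-closeness yield the crucial refined bound $\psi'(x_0) = \sqrt{g(x_0)/h(0)} = 1 + O(\delta_1)$ at the saddle.

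Combining the two scales gives $|\eta| \le C\delta$ on $\partial B_{1/4}^+(u) \setminus F(u)$, while on $F(u) \cap B_{1/2}$ one has $\operatorname{Re} \eta \equiv 0$ and $|\operatorname{Im} \eta| = |\arg \psi'| \le C\delta$ (on $F(u)$, $\arg \psi'$ is the difference of tangent angles of $F(u)$ and $F(H_{a_0})$ at corresponding points; both differ from a common axis by $O(\delta)$ on the outer scale and by $O(\delta_1)$ on the inner scale). The maximum principle applied separately to $\operatorname{Re} \eta$ and $\operatorname{Im} \eta$ then gives $|\eta| \le C\delta$ throughout $B_{1/4}^+(u)$. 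Schwarz reflection of $\eta$ across the real-analytic curves of $F(u) \cap B_{1/2}$ (using $\operatorname{Re} \eta \equiv 0$ as the reflection condition) extends $\eta$ holomorphically to a uniform neighborhood of $\overline{B_{1/4}^+(u)}$ with the same $C\delta$ sup bound. Interior Cauchy estimates then deliver $|\eta'(z)| = |\psi''(z)/\psi'(z)| \le C\delta$ on $B_{1/4}^+(u)$, hence $|\psi''| \le C\delta$. Choosing $s$ small enough that $\delta_1 \le \delta a_0$ (permitted by Corollary \ref{coro:hairpinC^2}) and integrating $\psi''$ along a path from $x_0$ to $z$ of length $O(|z| + a_0)$ yields
\[
|\psi'(z) - 1| \le |\psi'(x_0) - 1| + C\delta(|z| + a_0) \le C\delta(|z| + a_0),
\]
as required.

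The principal obstacles are twofold: (i) preventing accumulated drift in the dyadic rotation angles $\theta_k$ across the many outer annuli, which is controlled by the total-turning estimate in the proof of Theorem \ref{thm:fourgraphs} and by choosing coordinates aligned with the hairpin axis; and (ii) ensuring that Schwarz reflection through $F(u)$ extends $\eta$ into a uniform neighborhood of $\overline{B_{1/4}^+(u)}$, for which the bounded-turning and real-analyticity of $F(u)\cap B_{1/2}$ supplied by Section \ref{Sec:lipboundFBstrands} and \cite{KindNiren} are essential. A subtler point is the saddle-scale bound $|\psi'(x_0)-1|\le C\delta a_0$, which exploits the freedom to choose $\delta_1$ arbitrarily small at the cost of shrinking~$s$.
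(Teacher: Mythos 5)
Your plan correctly identifies the two key observations -- that $|\psi'|=1$ exactly on $F(u)$ (hence $\operatorname{Re}\log\psi'=0$ there) and $|\psi'|=1+O(\delta)$ on the rest of $\partial B_{1/4}^+(u)$, hence $|\log\psi'|=O(\delta)$ by the maximum principle -- but the step where you upgrade the sup bound on $\eta=\log\psi'$ to a derivative bound has a genuine gap. You propose Schwarz reflection of $\eta$ across $F(u)$ followed by interior Cauchy estimates, claiming this yields $|\eta'|\le C\delta$. The difficulty is that the reflected domain is \emph{not} a uniform-size neighborhood of $\overline{B_{1/4}^+(u)}$: near the saddle $x_0$, the two strands $F_L$ and $F_R$ are only a distance $O(a_0)$ apart and the free boundary has curvature of order $1/a_0$ at the hairpin tips, so the Schwarz reflection extends $\eta$ only to a collar of width $O(a_0)$. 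A Cauchy estimate at a point $z$ with $|z-x_0|\sim a_0$ then gives $|\eta'(z)|\le C\delta/a_0$, which is worse by a factor $a_0$ than what the lemma asserts. You flag this as a ``subtle point'' in passing, but it is precisely where the naive reflection-plus-Cauchy route breaks down, and it is not repaired by the total-turning or analyticity inputs you cite.

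The paper avoids this by working only with the real part $g=\operatorname{Re}\log\psi'$, which is harmonic, vanishes on $F(u)$, and satisfies $|g|=O(\delta)$. The boundary Harnack inequality in the $C\delta$-Lipschitz domains $B_{1/4}(z_\pm)^+(u)$ upgrades the $L^\infty$ bound to the \emph{weighted} bound $|g|\le C\delta\, u$ on $B_{1/4}^+(u)$, and then a Hopf-lemma (normal-derivative comparison) argument gives $|\nabla g|\le C\delta|\nabla u|=C\delta$ on $F(u)\cap B_{1/4}$, uniformly in the separation. Since $|\nabla g|=|(\log\psi')'|$ is the modulus of a holomorphic function and hence subharmonic, the maximum principle -- together with a direct estimate of $|\nabla g|$ on $\partial B_{1/4}\cap B_1^+(u)$ from the outer-scale flatness -- closes the argument. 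This route never needs to control $\operatorname{Im}\log\psi'=\arg\psi'$ on the boundary, which your proposal does (via a tangent-angle comparison that, while plausible, requires additional care about alignment). Finally, the paper integrates $\psi''$ from the boundary point $p\in F_L\cap\beta$, where the normalization $\nabla u(p)=e_1$ gives $\psi'(p)=U'(p)/V'(\psi(p))=1$ exactly; this bypasses the refined saddle estimate $|\psi'(x_0)-1|\le C\delta a_0$ that your version needs, which in turn would require $\delta_1\le\delta a_0$ -- a dependence of $\delta_1$ on the unknown separation $a_0$ that is not compatible with the order in which the constants are fixed in the setup.
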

\begin{proof}
We know that for $z\in \del B_{1/2}\cap \del B_{1/2}^+(u)$
\begin{equation*}
    |\psi'(z)| = \frac{|U'(z)|}{|V_{a_0}'(\psi(z))|} = 1 + O(\d)
\end{equation*}
because $|U'(z)| = 1+ O(\d)$  for $\del B_{1/2}\cap B_{1}^+(u)$ and
\begin{equation*}
    |V_{a_0}'(\psi(z))| = |V'(\psi(z)/a_0)| = 1 + O(\d) \quad
    z\in\del B_{1/2}\cap B_{1}^+(u)
\end{equation*}
for all $a_0 = a(1+O(\d_1))$ small enough (depending on $\d$),
because according to Lemma \ref{lemma:confmapConstruction}
\begin{equation*}
    |\psi(z)|\geq 1/4 \quad \text{when}\quad
    z\in\del B_{1/2}\cap B_{1}^+(u).
\end{equation*}
Furthermore, $|\psi'| = 1$ on $F(u)\cap B_{1/2}$, so that by the
maximum (and minimum) modulus principle,
\begin{equation}\label{lemma:psiEstimates:eq_psiprime}
    |\psi'| = 1+O(\d) \quad \text{in}\quad  B_{1/2}^+(u).
\end{equation}
Since $B_{1/2}^+(u)$ is simply-connected and since $\psi'\neq 0$ as
$\psi$ is conformal, we can write
\begin{equation*}
    \psi' = e^{G}
\end{equation*}
for some holomorphic function $G$ on $B_{1/2}^+(u)$. Then
\begin{equation*}
    \psi'' = G'\psi'
\end{equation*}
and in view of \eqref{lemma:psiEstimates:eq_psiprime}, we shall have
$\psi'' = O(\d)$ in $B_{1/4}^+(u)$ once we establish
\begin{equation*}
    |G'| \leq c\d \quad \text{in}\quad B_{1/4}^+(u).
\end{equation*}
Let $g = \text{Re}(G)$; as $|G'|=|\nabla g|$ it suffices to obtain
bounds on $|\nabla g|$ and we know that
\begin{equation*}
    g(z) = \log|\psi'(z)| = \left\{\begin{array}{cl} 0 & \ z\in F(u)\cap B_{1/2} \\ O(\d) & z\in  B_{1/2}^+(u)\end{array}\right.
\end{equation*}
In particular $g$ vanishes on $F(u)\cap B_{1/2}$ and we can apply
the boundary Harnack inequality in the $C\d$-Lipschitz domains
$B_{1/4}(z_{\pm})^+(u)$, where $z_{\pm}:=\rho(\pm 1/4,0)$, in order
to establish that
\begin{equation*}
    |g(z)|\leq c\d u(z)/u(z_{\pm} \pm i/8) \leq c'\d
    u(z) \quad \text{in}\quad B_{1/8}(z_{\pm})^+(u)
\end{equation*}
(since by assumption $(4)$, $u(z_{\pm} \pm i/8) \approx 1/8$).
Because we have
\begin{equation*}
    u\geq 1/8-\d\geq 1/10 \quad \text{on} \quad \del B_{1/4}\setminus
    \big(B_{1/8}(z_{+}) \cup B_{1/8}(z_{-})\big),
\end{equation*}
we see that $|g|\leq C\d u$ on $\del B_{1/4}\cap B_1^+(u)$ and thus
by the maximum principle,
\begin{equation*}
|g|\leq C \d u \quad \text{in all of} \quad B_{1/4}^+(u).
\end{equation*}
An application of the Hopf Lemma yields
\begin{equation*}
    |\nabla g| \leq C\d |\nabla u| = C\d \quad \text{on} \quad
    F(u)\cap B_{1/4}.
\end{equation*}
Finally, we have
\begin{equation*}
    |\nabla g| = \frac{|\nabla |\psi'|^2|}{2|\psi'|^2} \leq C\d \quad \text{on} \quad
    B_1^+(u)\cap \del B_{1/4}
\end{equation*}
because of \eqref{lemma:psiEstimates:eq_psiprime} and the fact that
on $B_1^+(u)\cap \del B_{1/4}$
\begin{equation*}
\nabla |\psi'|^2 = 2 \text{Re}\left(\nabla \big(U'/(V'_{a_0}\circ
\psi)\big) \overline{\psi'} \right) = O(|U''| + |V_{a}''\circ \psi|)
= O(\d).
\end{equation*}
Hence $|\nabla g| \leq C\d$ in all of $B_{1/4}^+(u)$ as desired.

To get the first derivative bound, we integrate the second
derivative bound along a curve $\gamma\subseteq B_{1/4}^+(u)$
connecting $p\in F_L\cap \beta$ (the ``left" end of the steepest
path) to $z$:
\begin{equation*}
    \psi'(z) = \psi'(x_0) + \int_{\gamma} \psi''(\z) ~d\z =
    \psi'(p)  + O(\d \hsd(\gamma)).
\end{equation*}
Since by definition $V'(\psi(p)) = U'(p)=1$ (as $\nabla u(p) = e_1$)
\[\psi'(p) = U'(p)/V'(\psi(p)) = 1 \]
As $\gamma$ can be taken to be of length $O(|z|+a_0)$, we obtain the
desired bound
\begin{equation*}
    |\psi'(z)| - 1| \leq C\d(|z|+a_0) \quad z\in
    B_{1/4}^+(u).
\end{equation*}


\end{proof}

\begin{theorem}\label{thm:curvaturebounds} Given $\delta>0$ small
enough, there exist $r_0>0$, $\eps_1>0$ such that if $u$ is a
classical solution of \eqref{FBP_0} in $B_1$, $0\in F(u)$ and
\begin{equation*}
    \text{dist}(0, \{u=0\}\setminus Z) < \eps_1 r_0
\end{equation*}
then there exists a point $p\in B_{r_0/3}$ such that
$B_{r_0/2}(p)\cap F(u)$ consists of two free boundary arcs $F_L$ and
$F_R$, the shortest segment between which is centered at $p$, the
separation
\begin{equation*}
    s:=dist(F_L, F_R)<\eps_1 r_0.
\end{equation*}
Furthermore, $u$ has a unique saddle point $x_0$ in $B_{r_0/2}(p)$ and there is an injective holomorphic map \[\psi:
B_{r_0/2}(p)^+(u) \to \Omega_{a} \quad \text{where}\quad a=u(x_0)\] that extends continuously to
$\del B_{r_0/2}(p)^+(u)$, mapping $\psi(x_0)=0$ and \mbox{$F(u)\cap B_{r_0/2}(p)$} into $\del
\Omega_{a}$ and satisfying
\begin{equation}\label{thm:curvaturebounds:eq_psiEstimates}
    |\psi ''| \leq C\d/r_0 \qquad |\psi' - e^{i\theta}| < C\d(|z|+a)/r_0 \quad
     \text{in} \quad B_{r_0/2}(p)^+(u)
\end{equation}
for some $\theta\in\real$.
It relates the curvature $\kappa$ of $F(u)$ in $B_{r_0/2}(p)$ to
the curvature $\k_{a}$ of $\del \Omega_{ar_0}$ via
\begin{equation}\label{thm:curvaturebounds:eq_curv}
    |\k(z) - \k_{a}(\psi(z))| \leq C\d/r_0 \quad z\in F(u)\cap
    B_{r_0/2}(p)
\end{equation}
for some numerical constants $C, c>0$.
\end{theorem}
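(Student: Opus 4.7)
My plan is to package the material of Sections~\ref{Sec:LocStruct}--\ref{Sec:CurvBounds} at scale $r_0$. The three ingredients are: locating $p$ and the two free-boundary arcs together with a unique saddle $x_0$; producing the conformal map $\psi$ and its derivative bounds; and converting those bounds into the curvature comparison.

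\emph{Setup and saddle.} Fix $\delta<\delta_0$ with $\delta_0$ the flatness threshold of Remark~\ref{remark:deltaFlatness}; take $\eps_1\le \eps_0$ with $\eps_0,r_0$ as in Propositions~\ref{prop:smalldistImpliesHPin} and~\ref{prop:two_strands}. Those propositions together with Remark~\ref{remark: recentering} locate $p\in B_{r_0/3}$ at the midpoint of the shortest segment between two arcs $F_L,F_R$ of $F(u)\cap\overline{B_{r_0/2}(p)}$, with separation $s<\eps_1 r_0$, and deliver the hierarchy of flatness at every dyadic scale between $s/\eps_1$ and $r_0/2$ around $p$. Rescaling by $\tilde u(y) := (2/r_0)\, u(p+(r_0/2)y)$ on $B_1$ places us in the framework of Section~\ref{Sec:CurvBounds}: the separation $\tilde s=2s/r_0$ can be forced below the threshold of Corollary~\ref{coro:hairpinC^2} by shrinking $\eps_1$, yielding a unique nondegenerate saddle $\tilde x_0$ of $\tilde u$ whose steepest-descent arc $\tilde\beta$ lies close to $\tau_{\tilde a_0}$, where $\tilde a_0 := \tilde u(\tilde x_0)=(1+O(\delta))\tilde s/(2+\pi)$. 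Then $x_0 := p+(r_0/2)\tilde x_0$ is the unique saddle of $u$ in $B_{r_0/2}(p)^+(u)$ and $a := u(x_0)=(r_0/2)\tilde a_0$.

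\emph{Conformal map.} After an ambient rotation aligning the unit gradient at the $F(\tilde u)$-endpoint of $\tilde\beta$ on the $F_L$-side with $e_1$ (this rotation becomes the $e^{i\theta}$ in the statement), Lemma~\ref{lemma:confmapConstruction} furnishes an injective holomorphic $\tilde\psi:B_{1/2}^+(\tilde u)\to\Omega_{\tilde a_0}$, continuous up to the boundary, with $\tilde\psi(\tilde x_0)=0$ and $\tilde\psi\bigl(F(\tilde u)\cap B_{1/2}\bigr)\subset\partial\Omega_{\tilde a_0}$; Lemma~\ref{lemma:psiEstimates} supplies $|\tilde\psi''|\le C\delta$ and $|\tilde\psi'(\zeta)-1|\le C\delta(|\zeta|+\tilde a_0)$ on $B_{1/4}^+(\tilde u)$. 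Undoing the rescaling and rotation, I set
\begin{equation*}
\psi(z) := (r_0/2)\,e^{i\theta}\,\tilde\psi\bigl(2e^{-i\theta}(z-p)/r_0\bigr),\qquad z\in B_{r_0/2}(p)^+(u);
\end{equation*}
since $(r_0/2)\Omega_{\tilde a_0}=\Omega_a$, this is the required injective holomorphic map into $\Omega_a$ with $\psi(x_0)=0$ and $\psi(F(u)\cap B_{r_0/2}(p))\subset\partial\Omega_a$. The chain rule gives $\psi'(z)=e^{i\theta}\tilde\psi'(\cdot)$ and $\psi''(z)=(2/r_0)e^{-i\theta}\tilde\psi''(\cdot)$, which together with the bounds on $\tilde\psi',\tilde\psi''$ produce \eqref{thm:curvaturebounds:eq_psiEstimates}.

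\emph{Curvature comparison.} For $z\in F(u)\cap B_{r_0/2}(p)$ one has $|\psi'(z)|=|\nabla u(z)|/|\nabla H_a(\psi(z))|=1$, so if $\gamma(s)$ is an arc-length parametrization of $F(u)$ near $z$, then $\psi\circ\gamma$ is also arc-length parametrized in $\partial\Omega_a$. A direct differentiation yields the signed-curvature identity
\begin{equation*}
\kappa_a(\psi(z)) - \kappa(z) = \operatorname{Im}\bigl(\psi''(z)\,\overline{\psi'(z)}\,\gamma'(s)\bigr),
\end{equation*}
and since $|\overline{\psi'}\gamma'|=1$ the bound $|\psi''|\le C\delta/r_0$ from \eqref{thm:curvaturebounds:eq_psiEstimates} yields \eqref{thm:curvaturebounds:eq_curv}. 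The main obstacle is really the bookkeeping in the setup step: $\eps_1$ must be shrunk so as to place us simultaneously under Corollary~\ref{coro:hairpinC^2} (sharp hairpin approximation), the flatness assumptions of Section~\ref{Sec:CurvBounds}, and the regime in which Lemma~\ref{lemma:psiEstimates} delivers its bounds with the prescribed $\delta$.
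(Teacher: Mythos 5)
Your proposal follows the paper's proof quite closely: fix $\delta$, shrink $\eps_1$ so that Theorem \ref{thm:fourgraphs}, Corollary \ref{coro:hairpinC^2} and the set-up (1)--(4) of Section \ref{Sec:CurvBounds} all apply to the rescale $\tilde u$, invoke Lemmas \ref{lemma:confmapConstruction} and \ref{lemma:psiEstimates}, and then transfer the derivative bounds back to $u$. The only structural difference is in the curvature step, where you differentiate an arc-length parametrization $\gamma$ of $F(u)$ and use $|\psi'|=1$ on $F(u)$ to write $\kappa_a(\psi(z))-\kappa(z)=\operatorname{Im}\bigl(\psi''\overline{\psi'}\gamma'\bigr)$, while the paper substitutes $\tilde U = V_{2a/r_0}\circ\tilde\psi$ into $\kappa=-\operatorname{Re}\bigl(\tilde U''(\overline{\tilde U'})^2\bigr)/|\tilde U'|^3$ and expands by the chain rule. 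Both identities are equivalent (your arc-length formula is, if anything, the cleaner packaging of the same computation, and it sidesteps the stray $\tfrac12$ in the paper's display), and both reduce the curvature comparison to the bound $|\psi''|\le C\delta/r_0$.

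Two small bookkeeping slips to fix. First, if you post-compose $\tilde\psi$ with the rotation $e^{i\theta}$ in defining $\psi$, the target becomes $e^{i\theta}\Omega_a$ rather than $\Omega_a$; the intended map is $\psi(z)=(r_0/2)\,\tilde\psi\bigl(2e^{-i\theta}(z-p)/r_0\bigr)$, which gives $\psi'\approx e^{-i\theta}$ and lands in the standard $\Omega_a$ exactly as the statement requires. Second, the saddle should be $x_0 = p+(r_0/2)e^{i\theta}\tilde x_0$, and the rotation $e^{i\theta}$ used to normalize $\nabla u$ at the $F_L$-endpoint of $\beta$ to $e_1$ (set-up item (3)) should be absorbed into the rescaling $\tilde u(y)=(2/r_0)u(p+\rho y r_0/2)$ from the start, as the paper does; as written you introduce $\tilde u$ without the rotation and then rotate, which is fine in substance but makes the formulas you write inconsistent. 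Neither point affects the validity of the argument.
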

\begin{proof}
For $\d$ fixed we find $r_0$, $\eps_0$ as in Theorem
\ref{thm:fourgraphs}. Set $\d_1= \d$, $M=8/\eps_0$, apply
Proposition \ref{prop:hairpin_loc} to find $s_1=s_1(\delta, M)$ and
set $\eps_1=\min(\eps_0,s_1)$. Then $\tilde{u}(y):= (2/r_0)u(p+
\rho yr_0/2)$, defined in $B_1$ for some appropriate rotation $\rho\sim e^{i \theta}$, falls under the set-up (1-4) and we
construct the injective holomorophic map $\tilde{\psi}$ as in Lemma
\ref{lemma:confmapConstruction} satisfying the estimates of Lemma
\ref{lemma:psiEstimates}, whose rescaled statement is precisely
\eqref{thm:curvaturebounds:eq_psiEstimates}.

Let $\tilde{U}$ and $V_{a/r_0}$ be the holomorphic extensions of
$\tilde{u} = \text{Re}(\tilde{U})$ and $H_{2a/r_0} =
\text{Re}(V_{2a/r_0})$ such that
\begin{equation*}
    \tilde{U}(z) = V_{2a/r_0}(\tilde\psi(z)) \quad \text{in}\quad
    B_{1/2}.
\end{equation*}
Since the curvature of $F(\tilde{u})$ at $z$
\begin{equation*}
    \tilde{\kappa}(z) = \text{div}\frac{\nabla \tilde{u}}{|\nabla \tilde{u}|} = -\frac{\nabla \tilde{u}\cdot |\nabla \tilde{u}|^2}{2|\nabla
    \tilde{u}|^3} = -\frac{\text{Re}(\tilde{U}''(\overline{\tilde{U}'})^2)}{2|U'|^3} =
    -\frac{1}{2} \text{Re}(\tilde{U}''(\overline{\tilde{U}'})^2)
\end{equation*}
we have, in view of $|\tilde{\psi}'(z)| = 1 =
|V'_{2a/r_0}(\tilde{\psi}(z))|$,
\begin{align*}
    \tilde{\kappa}  & = -\frac{1}{2} \text{Re}\Big(\big(V''_{2a/r_0}\tilde{\psi}'^2 +
    V'_{2a/r_0}\tilde{\psi}''\big)\big(\overline{V'_{2a/r_0}\tilde{\psi}'}\big)^2
    \Big)  = -\frac{1}{2}\text{Re}\big(V''_{2a/r_0}(\overline{V'_{2a/r_0}})^2
+ \tilde{\psi}'' \overline{V'_{2a/r_0}(\tilde{\psi})'^2} \big) \\
& = \k_{2a/r_0}\circ\tilde{\psi} + O(\d),
\end{align*}
which is the rescaled version of
\eqref{thm:curvaturebounds:eq_curv}.
\end{proof}

We now have all the ingredients for Theorems \ref{thm:BigPicture}
and Theorem \ref{thm:CurvBounds}.

\begin{proof}[Proof of Theorem \ref{thm:BigPicture}]
Fix $\delta > 0$  to be smaller than the flatness constant $\d_0$ (Remark \ref{remark:deltaFlatness}). Let $r_0$, $\eps_0$ be as in Theorem \ref{thm:fourgraphs}. Set $\d_1=\d$, $M=8\eps_0$ and apply Proposition \ref{prop:hairpin_loc} to find $s_1=s_1(\delta, M)$. Finally, set $\eps_1 = \min\{s_1, \eps_0\}$.  For any point $q\in F(u)$, let
$Z_q$ be the component of the zero phase to which $q$ belongs.
Define the set of points
\begin{equation*}
    \mathcal{C}_{\text{prox}} = \{q\in F(u)\cap \overline{B_{1/2}}: \text{dist}(q, \{u=0\}\setminus Z_q)<\eps_1
    r_0\}
\end{equation*}
in whose neighborhood we expect to see a hairpin structure.
According to Theorem \ref{thm:fourgraphs}, for every $q\in
\mathcal{C}_{\text{prox}}$ there exists a $z(q)\in B_{r_0/3}(q)$
such that $F(u)\cap B_{r_0/2}(z)$ is an approximate hairpin centered
at $z=z(q)$ in the sense that: 
\begin{itemize}
\item $F(u)\cap B_{r_0/2}(z)$ consists of two
arcs $F_L$ and $F_R$;
\item if $s=\text{dist}(F_L,F_R)$, we have for some rotation
$\rho$ and functions $f,g : \real\to\real$ with $f<g$,
\begin{align*}
    \{u=0\} \cap \big(B_{r_0/2}(z)\setminus B_{4s/\eps_0}(z)\big)
     = z & + \rho \{4s/\eps_0<|x|<r_0/2: f(x_1)\leq |x_2|\leq
    g(x_1)\} \\
\text{where} \quad \|f\|_{L^\infty} + \|g\|_{L^\infty}  & \le \delta r, 
\quad
\|f'\|_{L^\infty} + \|g'\|_{L^\infty}  \le \delta.
\end{align*}
\end{itemize}

At the same time, Proposition \ref{prop:hairpin_loc} says that inside
$B_{8s\eps_0}(z)$,
\begin{equation*}
    |u(z+\tilde{\rho} x) - H_{a}(x)| \leq \d a.
\end{equation*}
for $a=s/(2+\pi)$ and some rotation $\tilde{\rho}$. Since the free boundary outside $B_{8s\eps_0}(z)$ has to match with the one inside, we may take $\tilde{\rho}=\rho$.

A standard covering argument yields a finite number of disks
$\{B_{r_0/2}(z_j)\}_{j=1}^N$, where $N\leq N_0 = O(r_0^{-2})$, which
cover $\mathcal{C}_{\text{prox}}$ with the centers $z_j$ constructed as above. For points $p\in F(u)\cap
B_{1/2}\setminus \bigcup_{j=1}^N B_{r_0/2}(z_j)$, we know that
\[\text{dist}(p, \{u=0\}\setminus Z_p) \geq \eps_1 r_0,\] so by
Proposition \ref{prop:bounded curvature}, the curvature of $F(u)$ at
$p$ is at most $\kappa :=\kappa_0(\d)$.

Defining $r:=4r_0$, $\eps:= \eps_0/4(2+\pi)$, we get the precise form of the statements in Theorem \ref{thm:BigPicture}. 
\end{proof}

\begin{proof}[Proof of Theorem \ref{thm:CurvBounds}]
Fix $0<\d<1/100$ small and let $r_0 = r(\d)/2$ where $r$ is as in Theorem \ref{thm:BigPicture}. Running the same covering argument in the proof above, we have a collection of disks $\{B_{4r_0}(p_j)\}$ for each of which Theorem \ref{thm:curvaturebounds} gives:  a unique saddle point $z_j$ of $u$ in $B_{4r_0}(p_j)$ and an injective holomorphic map \[\psi_j:B_{4r_0}(p_j)^+(u) \to \Omega_{a_j} \quad \text{where}\quad a_j=u(z_j)\] with all the enumerated properties in Theorem \ref{thm:curvaturebounds}. Defining $\phi_j: B_{2r_0}\cap \Omega_{a_j} \to \real^2$ by $\phi_j := \psi_j^{-1}$ we obtain the precise form of the statements in Theorem \ref{thm:CurvBounds}.
\end{proof}

\section{The minimal surface analogue.}\label{Sec:Traizet}

In \cite{Traizet} (see Theorems $9$ and $10$) Traizet discovered a
remarkable correspondence between global solutions of \eqref{FBP_0}
with $|\nabla u| < 1$ and complete embedded minimal bigraphs
(minimal surfaces symmetric with respect to a plane with the two
halves, ``above" and ``below" the plane, being graphical). The
correspondence is expressed via the Weierstrass representation
formula for \emph{immersed} minimal surfaces. Recall, if
$X:M\subseteq \real^3$ denotes the minimal immersion, the coordinate
$X_3$ is a harmonic function on $M$ and one can locally define a
harmonic conjugate $X_3^*$, so that
\begin{equation*}
    dh = dX_3+ i dX_3^*
\end{equation*}
is a well-defined holomorphic differential on $M$ (viewed as a
Riemann surface), the so-called \emph{height differential}.
Furthermore, the stereographically projected Gauss map $g:M\to
\C\cup\{\infty\}$ is a meromorphic function on $M$. The pair $(g,
dh)$ is called the Weierstrass data of the minimal surface and the
minimal immersion $X$ is given, up to translation, by
\begin{equation}\label{eq:Weierstrass}
    X(p)= (X_1(p), X_2(p), X_3(p)) = \text{Re}\int_{p_0}^p
    \left(\frac{1}{2}(g^{-1}- g)dh, \frac{i}{2}(g^{-1}+g)dh, dh  \right)
\end{equation}
where $p_0$ is a fixed point in $M$. Conversely, if $M$ is a Riemann
surface, and $(g, dh)$ is a pair of a meromorphic function and a
holomorphic $1$-form on $M$, satisfying certain compatibility
conditions (\cite{osserman}), then \eqref{eq:Weierstrass} defines a
minimal immersion of $M$ in $\real^3$.

Traizet's brilliant insight was to define
\begin{equation*}
    g = 2\frac{\del u}{\del z} \quad \text{and} \quad dh = 2\frac{\del u}{\del
z} dz
\end{equation*}
in terms of a solution $u$ of \eqref{FBP_0}, and show that, under
certain conditions, the Weierstrass data $(g, dh)$ give rise to the
upper half $(X_3>0)$ of a minimal bigraph. Conversely, a solution
$u$ of \eqref{FBP_0} can be constructed using the Weierstrass data
of a complete embedded minimal bigraph.

We have used Traizet's correspondence to state Corollary
\ref{thm:minimal_annulus}, the minimal surface version of Theorem
\ref{thm:CurvBounds}. We can now turn to the proof.


\begin{proof}[Proof of Corollary \ref{thm:minimal_annulus}]


Following the argument of
\cite[Theorem 10]{Traizet}, we shall construct a solution of
\eqref{FBP_0}, corresponding to the minimal bigraph $M$. Let $\z$ be
a complex coordinate on $M$, let $g$ be the stereographically
projected Gauss map and $dh = (2 \del X_3/\del \z) ~d\z$ be the
height differential. Note that $|g|=1$ on $M\cap \{X_3 = 0\}$ as the
normal points horizontally there and we may assume that the
orientation of $M$ is chosen so that the normal points down in
$M^+$, i.e. $|g|<1$ in $M^+$. Furthermore $g$ has the same zeros and
poles as $dh$ (with same multiplicities), thus $g^{-1}dh$ defines a
holomorphic non-vanishing one-form on $M^+$. Since $M^+$ is
simply-connected,
\begin{equation*}
    \f(p)=\int_{0}^p g^{-1}dh \quad p\in M^+
\end{equation*}
defines a holomorphic function on $M^+$ (recall $0\in M$). Claim
that $\f$ is injective. Define
\begin{equation*}
    \Xi:= X_1 + i X_2
\end{equation*}
on $M^+$ and let $\hat{\Omega} = \Xi(M^+)$ be the projection of
$M^+$ down to the horizontal plane $\{X_3=0\}$. Since $M^+$ is a
graph, $\Xi$ is a diffeomorphism from $\overline{M^+}$ to
$\overline{\hat{\Omega}}$, so $\f$ will be injective if and only if
$\phi:=\f\circ \Xi^{-1}$ is injective on $\hat{\Omega}$. Let $a, b$
be arbitrary points of $\hat{\Omega}$ and let $[a,b]\subseteq\C$
denote the straight-line closed segment from $a$ to $b$. Then for
some $N\in\nat$ we can write
\begin{equation*}
    [a,b]= \bigcup_{k=1}^N [z_{2k-1}, z_{2k}] \cup
    \bigcup_{k=1}^{N-1} [z_{2k}, z_{2k+1}],
\end{equation*}
where $z_1 = a$, $z_{2N}=b$, the interior of $[z_{2k-1}, z_{2k}]$
belongs to $\hat{\Omega}$, while $z_{2k}$ and $z_{2k+1}$ belong to
the same connected component of $\del \hat{\Omega}$. Claim that
\begin{equation}\label{thm:minimal_annulus:eq_Inject}
    \langle \overline{\phi}(z_{2k})-\overline{\phi}(z_{2k-1}), \frac{b-a}{|b-a|}
    \rangle > |z_{2k} - z_{2k-1}|,
\end{equation}
where $\langle w_1, w_2 \rangle := \text{Re}(\overline{w_1} w_2)$
denotes the standard inner product on $\C$. Let $\alpha:[0,1]\to
M^+$ be such that $\Xi\circ \alpha$ is the constant speed
parameterization of $[z_{2k-1}, z_{2k}]$. For each fixed time $t\in
(0,1)$, denote
\begin{equation*}
v := \frac{1}{2}\overline{g^{-1}dh}(\alpha'(t)) \qquad w := -
\frac{1}{2} gdh(\alpha'(t)),
\end{equation*}
we have $|v|>|w|$ because $|g|<1$. Since $ d\f(\alpha'(t)) =
g^{-1}dh (\alpha'(t)) = 2 \overline{v}$ and
\begin{equation*}
     z_{2k} - z_{2k-1}= d\Xi(\alpha'(t)) = (dX_1 + i dX_2)(\alpha'(t)) = \frac{1}{2}\overline{g^{-1}dh}(\alpha'(t)) -
    \frac{1}{2} gdh(\alpha'(t)) = v+w,
\end{equation*}
we have
\begin{equation*}
    \langle \overline{d\f}(\alpha'(t)), \frac{b-a}{|b-a|} \rangle =
    |z_{2k}-z_{2k-1}|^{-1}\langle 2 v, v+w \rangle > |z_{2k}-z_{2k-1}|^{-1}
    |v+w|^2 = |z_{2k}-z_{2k-1}|
\end{equation*}
which leads to \eqref{thm:minimal_annulus:eq_Inject} once we
integrate in $t$ from $0$ to $1$. On the other hand,
\begin{equation}\label{thm:minimal_annulus:eq_Inject2}
    \langle \overline{\phi}(z_{2k+1})-\overline{\phi}(z_{2k}), \frac{b-a}{|b-a|}
    \rangle = |z_{2k+1} - z_{2k}|.
\end{equation}
This is the case, because on the component $\beta$ of $M\cap
\{X_3=0\}$, to which $\Xi^{-1}(z_{2k+1})$ and $\Xi^{-1}(z_{2k})$
belong, we know $g^{-1} = \overline{g}$ and $\overline{dh} = - dh$,
so that
\begin{equation*}
    \overline{d \f}(\beta') = \overline{g^{-1}dh}(\beta') =
    -g dh(\beta') = \frac{1}{2}\overline{g^{-1}dh}(\beta') -
    \frac{1}{2} gdh(\beta') = d\Xi(\beta')
\end{equation*}
and thus, $\overline{\phi}(z_{2k+1})-\overline{\phi}(z_{2k}) =
z_{2k+1} - z_{2k}$. Adding up \eqref{thm:minimal_annulus:eq_Inject}
and \eqref{thm:minimal_annulus:eq_Inject2} from $k=1$ to $N$, we
derive
\begin{equation*}
    \langle \phi(b) -\phi(a), (b-a)/|b-a| \rangle > |b-a|
\end{equation*}
from which the injectivity of $\phi$ follows.

We can now define the function
\begin{equation*}
    u = X_3\circ \f^{-1}
\end{equation*}
on the domain $\Omega = \f(M^+)$, 
and we can easily verify that $u$
is a positive, harmonic function in $\Omega$ that vanishes on $\del \Omega \cap
B_R$ where, for $z=\f(\z)\in F(u)$
\begin{equation*}
    |\nabla u|(z) = \left|2\frac{\del X_3}{\del \z} \frac{1}{\f'(\z)}\right| =  |g(\z)|
    =1.
\end{equation*}
Furthermore, the metric induced on $\Omega$ by the conformal
immersion $X\circ \f^{-1}$ is given by the standard formula
\begin{equation*}
    ds = \frac{1}{2}(|g||dh| + |g|^{-1}|dh|) = \frac{1}{2}(|g|^2+ 1)
    |dz| = \lambda(z) |dz|
\end{equation*}
where $\frac{1}{2}\leq \lambda(z) \leq 1$, as $|g|\leq 1$ on $M^+$.
So, if $\gamma^+ = \gamma\cap M^+$ denotes the piece of the shortest
geodesic lying in $M^+$, it is mapped by $\f$ to a curve
$\tilde{\gamma}=\f(\gamma^+) \subseteq \Omega$ with Euclidean length
$O(\hsd(\gamma^+))  = O(\eps)$ which connects the two pieces of $\del \Omega$. 

Fix $\d<1/1000$ a small positive numerical constant and let $r_0$, $\eps_1$ be as in Theorem  \ref{thm:curvaturebounds}. Set $R_0=1/r_0$ and $\eps_0=\eps_1$. Extend $u$ by zero in $B_{R_0}\setminus \Omega$. Then $u$ is a solution of \eqref{FBP_0} in $B_{R_0}$, satisfying \eqref{topo_assmpt}, so Theorem \ref{thm:curvaturebounds} gives us an injective conformal map $\tilde{\psi}: B_{4}^+(u) \to \Omega_{a}$ for some appropriate $a = O(\eps)$, such that 
\begin{equation}\label{thm:minimal_annulus:eq_psiEst}
    \tilde{\psi}'(z) = 1 + O(\d(|z|+a)), \qquad 
    \tilde{\psi}''(z)= O(\d) \qquad \text{for}\quad z\in B_{4}^+(u)
\end{equation}
and $U := V_{a}\circ \tilde{\psi}$ is a holomorphic extension of $u$ in $B_4^+(u)$ (recall $V_{a}$ is the
holomorphic extension of $H_{a}$ given in Section \ref{Sec:CurvBounds}). It's easy to see that $\tilde{\psi}$ gives
rise to an injective conformal map from $M^+\cap \mathcal{B}_2$ into the standard catenoid $\Sigma_{\rho}^+ := \Sigma_{\rho}\cap \{X_3>0\}$ (the counterpart to $\Omega_{a}$ in the Traizet correspondence), which then extends
by symmetry to a conformal map $\psi$ on all of $M\cap \mathcal{B}_{2}$. 
The metric on $\Omega_{a}$ induced by its immersion as
$\Sigma_{\rho}$ is
\begin{equation*}
    ds_{\text{cat}} = (1+ |V_{a}'|^2)|dz|
\end{equation*}
while the metric on $B_{4}^+(u)$ is
\begin{equation*}
    ds = (1+ |U'|^2)|dz|
\end{equation*}
and we check that the pull-back metric
$\tilde{\psi}^*(ds_{\text{cat}})$ satisfies
\begin{equation*}
\tilde{\psi}^*(ds_{\text{cat}}) = (1 +
|U'|^2/|\tilde{\psi'}|^2)|\tilde{\psi}'| |dz| =
\big(1+O(\d (|z|+a))\big)ds.
\end{equation*}
Since $a \sim \rho$, the induced conformal map $\psi$ is an isometry up to a factor
of $\big(1+ O(\d (|x|+\rho)\big)$
and
\begin{equation*}
    \eps =\hsd(\gamma) = (1+ O(\d))2\pi\rho \quad \implies \quad |\eps - 2\pi \rho| = O(\d \eps) < \eps/100.
\end{equation*}
Furthermore, the Gauss curvature of $M$ is given by the standard
formula for the curvature of a conformal metric $\lambda(z)|dz| =
(1+|U'|^2)|dz|$
\begin{equation*}
    K = -\frac{\Delta \log\lambda(z)}{\lambda^2(z)} = - \frac{4|U''|^2}{(1+|U'|^2)^4}
\end{equation*}
Plugging in $U(z) = V_{a}(\tilde{\psi}(z))$ and applying the
estimates \eqref{thm:minimal_annulus:eq_psiEst}, we get
\begin{align*}
    K & = - \frac{4|V_{a}''(\tilde{\psi}')^2 + V_{a}'\tilde{\psi}''|^2}{(1 +
    |V_{a}'|^2|\tilde{\psi'}|^2)^4} = -
    \frac{4|V_{a}''|^2}{(1+|V_{a}'|^2)^4} (1 + O(\delta (|z|+ \rho)) + O\Big(\d \frac{2|V_{a}''|}{(1+|V_{a}'|^2)^2} \Big)
   + O(\delta^2)
    = \\
    & = K_{\rho}+ O\big(\delta (r+\rho) K_{\rho} \big) + O(\delta
    \sqrt{|K_{\rho}|}) + O(\delta^2) 
\end{align*}
Noting that
\begin{equation*}
    \sqrt{|K_{\rho}(q)|} = O(\rho/r(q)^2)
\end{equation*}
and that $|r(\psi(p)) - r(p)| \sim \rho + \delta r(p)$ we obtain the
desired estimate
\begin{equation*}
    K(p) = K_{\rho}(\psi(p)) + O\left(\delta \frac{\rho}{r+\rho}\sqrt{|K_{\rho}|}\right) + O\left(\delta
    \sqrt{|K_{\rho}|}\right) + O(\delta^2) = K_{\rho_0} +  O\left(\delta +
    \sqrt{|K_{\rho}|}\right)\delta.
\end{equation*}
\end{proof}

\bibliography{FB_Bib}
\end{document}